\newcommand{\cC}{\mathcal{C}}\newcommand{\cD}{\mathcal{D}}
\newcommand{\cE}{\mathcal{E}}\newcommand{\cF}{\mathcal{F}}
\newcommand{\cG}{\mathcal{G}}
\newcommand{\cI}{\mathcal{I}}
\newcommand{\cK}{\mathcal{K}}\newcommand{\cL}{\mathcal{L}}
\newcommand{\cM}{\mathcal{M}}\newcommand{\cN}{\mathcal{N}}
\newcommand{\cO}{\mathcal{O}}
\newcommand{\cW}{\mathcal{W}}
\newcommand{\cZ}{\mathcal{Z}}
\newcommand{\bC}{\mathbb{C}}
\newcommand{\bN}{\mathbb{N}}
\newcommand{\bQ}{\mathbb{Q}}\newcommand{\bR}{\mathbb{R}}
\newcommand{\bZ}{\mathbb{Z}}
\newtheorem{theorem}{Theorem}[section]
\newtheorem{lemma}[theorem]{Lemma}
\newtheorem{proposition}[theorem]{Proposition}
\newtheorem{corollary}[theorem]{Corollary}
\newtheorem{claim}[theorem]{Claim}
\theoremstyle{definition}
\newtheorem{definition}[theorem]{Definition}
\newtheorem{construction}[theorem]{Construction}
\newtheorem{example}[theorem]{Example}
\newtheorem{inductive lemma}[theorem]{Inductive Lemma}
\newtheorem{remark}[theorem]{Remark}
\newtheorem{warning}[theorem]{Warning}
\begin{document}
\title[]{\large The simplification of singularities of Lagrangian and Legendrian fronts}
\maketitle
\begin{center}
\Large  Daniel  \'Alvarez-Gavela \let\thefootnote\relax\footnote{The author was partially supported by NSF grant DMS-1505910.} \\ $ $ \\ 
\large \itshape Stanford University
\end{center}
\begin{abstract}
We establish a full $h$-principle ($C^0$-close, relative, parametric) for the simplification of singularities of Lagrangian and Legendrian fronts. More precisely, we prove that if there is no homotopy theoretic obstruction to simplifying the singularities of tangency of a Lagrangian or Legendrian submanifold with respect to an ambient foliation by Lagrangian or Legendrian leaves, then the simplification can be achieved by means of a Hamiltonian isotopy. 
 \end{abstract}
\onehalfspacing
\tableofcontents

\section{Introduction and statement of results}\label{introduction and statement of results}

\subsection{Panoramic overview}\label{overview} 

In this paper we establish a general $h$-principle for the simplification of singularities of Lagrangian and Legendrian fronts. The precise formulation is given in Theorem \ref{main result} below. Here is a sample corollary of our results, where $\pi:T^*S^n \to S^n$ denotes the cotangent bundle of the standard $n$-dimensional sphere.

\begin{corollary}\label{sphere1}  Let $S \subset  T^*S^n$ be any embedded Lagrangian sphere. If $n$ is even, then there exists a compactly supported Hamiltonian isotopy $\varphi_t:T^*S^n \to T^*S^n$ such that the singularities of the projection $\pi|_{\varphi_1(S)}: \varphi_1(S) \to S^n$ consist only of folds. An analogous result holds for even-dimensional Legendrian spheres in the $1$-jet space $J^1(S^n, \bR) = T^*S^n \times \bR.$
\end{corollary}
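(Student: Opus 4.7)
The plan is to apply the main $h$-principle, Theorem \ref{main result}, which reduces the existence of the desired Hamiltonian isotopy to that of a formal simplification: a homotopy through formal Lagrangian tangent data deforming the Gauss map of $S \subset T^*S^n$ to one whose front projection has only fold singularities. Thus everything comes down to a homotopy-theoretic question on $S \cong S^n$.

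To organize this formal problem, I would stratify the space of Lagrangian jets by singularity type: folds form a dense open stratum, while cusps, swallowtails, and the higher Morin singularities form substrata of codimension at least $2$. The Gauss map of $S$ is classified by a map $S^n \to X$ into the appropriate Lagrangian jet space, and the primary obstruction to deforming this map away from the non-fold strata lives in a cohomology group $H^n(S^n; A)$ for a coefficient module $A$ determined by the normal data of the fold stratum --- concretely, an Euler-type class of the kernel line bundle along the fold locus, twisted by the ambient Lagrangian normal structure.

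The main obstacle will be to verify that this obstruction class vanishes when $n$ is even. My expectation is that the calculation reduces, via the standard model $U(n)/O(n)$ of the Lagrangian Grassmannian, to a parity condition: for even $n$ the relevant characteristic class of the kernel bundle along the fold locus admits a formal splitting (essentially because $\chi(S^n) = 2$ controls the primary obstruction and is consistent with a fold structure), yielding the required formal homotopy; for odd $n$ the analogous class need not vanish. Once the formal simplification is produced, Theorem \ref{main result} delivers the honest Hamiltonian isotopy $\varphi_t$ claimed in the corollary.

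For the Legendrian analogue in $J^1(S^n, \bR) = T^*S^n \times \bR$, the Legendrian version of Theorem \ref{main result} reduces the question to a parallel formal problem on $\Lambda \cong S^n$; the same parity argument establishes vanishing of the obstruction, so the Legendrian part of the corollary follows by the identical route.
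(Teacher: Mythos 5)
Your general strategy---reduce to a homotopy-theoretic problem about Lagrangian tangent data over $S^n$---is in the right spirit, but there are two concrete gaps that prevent it from being a proof, and both are places where the paper does something more careful.

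First, the wrong theorem is invoked. Theorem~\ref{main result} requires a tangential rotation $G_t$ with $G_1 \pitchfork \cF$, i.e.\ a formal homotopy to a Lagrangian distribution which is everywhere transverse to the cotangent fibres. For an arbitrary embedded Lagrangian sphere this formal transversality condition is generally \emph{not} achievable, since the homotopy class of $G(df)$ in $\pi_n(\Lambda_n)$ need not coincide with that of a distribution transverse to $\cF$. The paper instead goes through Theorem~\ref{Entov theorem redux} (the $h$-principle for the \emph{prescription} of singularities), via Corollary~\ref{caustics of spheres}. This weakens the required formal condition to a homotopy $D_t$ of Lagrangian distributions from $T\cF|_S$ to one with respect to which $f$ is $\Sigma^2$-nonsingular, and the conclusion then contains a nonempty fold locus. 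Your write-up conflates the hypothesis of Theorem~\ref{main result} (transversality) with the conclusion you want (folds only), and this matters: the fold locus is where the obstruction that cannot be killed is allowed to live.

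Second, the actual computation is left as ``my expectation'' rather than carried out, and the heuristic offered is off target. The paper's argument is: (i) observe $V = T\cF|_S$ is stably trivial (here, automatically, since $T\cF = \pi^*T^*S^n$ is the pullback of a stably trivial bundle); (ii) this forces the clutching function $\beta \in \pi_{n-1}(O_n)$ of $V$ to lie in $\mathrm{im}\big(\pi_n(S^n) \to \pi_{n-1}(O_n)\big)$, so $\beta = k\gamma$ with $\gamma$ the clutching function of $TS^n$; (iii) construct a candidate fold locus $E$ with $\chi(Y) = 1-k$ so that the folded bundle $T_ES^n$ realizes $\beta$ as an oriented real bundle; (iv) for $n$ even, $\pi_n(U_n) = 0$ so $\pi_n(\Lambda_n) \hookrightarrow \pi_{n-1}(O_n)$, and therefore equality of the real clutching classes lifts to a homotopy of \emph{Lagrangian} distributions between $V$ and $V_E$; (v) apply Theorem~\ref{Entov theorem redux}. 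The crucial role of $n$ even is the vanishing of $\pi_n(U_n)$ in step~(iv), not a parity condition on $\chi(S^n)$ or an Euler-class ``formal splitting.'' Without steps (i)--(iv) spelled out---and in particular without noticing that evenness enters through $\pi_n(U_n)=0$---there is no proof, just a plan. The Legendrian case does follow the same route once the Lagrangian case is correctly established.
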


More generally, let $S\subset M$ be any embedded Lagrangian sphere, where $(M^{2n},\omega)$ is a symplectic manifold equipped with a foliation $\cF$ by Lagrangian leaves. Denote by $T\cF$ the distribution of Lagrangian planes tangent to the foliation $\cF$ and let $V$ be the restriction of $T\cF$ to $S$. It is easy to see that a necessary condition for $S$ to be Hamiltonian isotopic to a Lagrangian sphere whose singularities of tangency with respect to $\cF$ consist only of folds is that $V$ is stably trivial as a real vector bundle over the sphere. When $n$ is even, our $h$-principle implies the following converse.

\begin{corollary}\label{sphere2}  Suppose that $V= T \cF|_S$ is stably trivial as a real vector bundle over the sphere. If $n$ is even, then there exists a compactly supported Hamiltonian isotopy $\varphi_t:M \to M$ such that the singularities of tangency of $\varphi_1(S)$ with respect to the foliation $\cF$ consist only of folds. An analogous result holds for even-dimensional Legendrian spheres.
\end{corollary}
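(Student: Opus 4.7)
The strategy is to deduce the corollary from the main $h$-principle (Theorem \ref{main result}) by verifying that the stated hypotheses imply the vanishing of the formal obstruction to folds-only simplification. According to Theorem \ref{main result}, the desired Hamiltonian isotopy exists if and only if the tangential data of $(S,\cF)$ admits a formal simplification to folds, that is, a lift of the classifying map $S \to BO(n)$ of $V = T\cF|_S$ through an appropriate classifying space $B_{\text{fold}} \to BO(n)$ encoding formal fold-type tangential structures on rank-$n$ Lagrangian bundles.

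First I would identify the homotopy fiber $F$ of $B_{\text{fold}} \to BO(n)$. A formal folds-only structure on a rank-$n$ bundle is specified, over the (unknown) fold locus, by a codimension-one subbundle together with a transverse line equipped with a non-degenerate quadratic form, and elsewhere by a section realizing transversality with $\cF$; this is the standard Morin data for folds. Up to homotopy $F$ can be identified with a space closely related to $O/O(n-1)$, for which $\pi_{n-1}$ splits naturally into a stable part $\pi_{n-1}(O)$ and an unstable residue.

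Next I would carry out the obstruction computation. Since $S \cong S^n$, the only primary obstruction to the lift lies in $H^n(S^n;\pi_{n-1}(F)) = \pi_{n-1}(F)$. Stable triviality of $V$ kills the image of this obstruction in $\pi_{n-1}(O)$, reducing it to an unstable class. The parity hypothesis that $n$ is even is then used to conclude that the unstable residue vanishes as well: the residue is controlled by the Euler class of $V$, and for a stably trivial rank-$n$ bundle over $S^n$ with $n$ even the Euler class is forced to be zero by standard characteristic class arguments. Thus the lift exists, the $h$-principle provides the Hamiltonian isotopy, and compact support is automatic since $S$ is compact. The Legendrian statement follows identically from the Legendrian version of Theorem \ref{main result} (alternatively by a Lagrangian suspension argument).

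The main obstacle I anticipate is pinning down the exact homotopy type of the fiber $F$ and verifying that its unstable residue is genuinely detected by the Euler class of $V$; once this identification is in place, the parity hypothesis drops out cleanly from the obstruction-theoretic calculation on $S^n$. A secondary, purely bookkeeping, concern is checking that the hypotheses of Theorem \ref{main result} are set up so that stable triviality of $V$ (rather than some a priori stronger condition on the formal tangential data of $S$ itself) is exactly the input needed to trivialize the stable part of the obstruction.
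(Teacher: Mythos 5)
Your proposed proof contains a genuine error in the obstruction computation, and it invokes the wrong $h$-principle.

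First, the claim that ``for a stably trivial rank-$n$ bundle over $S^n$ with $n$ even the Euler class is forced to be zero'' is false: $TS^n$ itself is stably trivial (since $TS^n \oplus \varepsilon \simeq \varepsilon^{n+1}$) yet has Euler number $\chi(S^n)=2 \neq 0$ when $n$ is even. In fact, the presence of a nonzero Euler class is precisely \emph{why} folds must occur at all: if $e(V)=0$ and $V$ were trivial, one could make $S$ transverse to $\cF$ with no caustic whatsoever. The fold locus $\Sigma$ that appears in the conclusion is exactly what accounts for $e(V) \neq 0$, via the regluing construction $T_{\Sigma}S^n$ whose Euler number can be tuned by choosing $\Sigma = \partial Y$ with prescribed $\chi(Y)$. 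So the strategy cannot be to show the unstable residue vanishes; rather, one must show it can be \emph{matched} by one of the standard fold models $V_\Sigma$.

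Second, Theorem \ref{main result} is not the right tool here and is not an if-and-only-if statement of the shape you describe. Its hypothesis requires $G_1 \pitchfork \cF$ everywhere, i.e.\ that the Gauss map can be homotoped to be fully transverse, which in the presence of a nontrivial $V$ is too strong. What is needed (and what the paper uses) is the prescription $h$-principle, Theorem \ref{Entov theorem redux}, which allows one to realize any homotopically allowable chain of $\Sigma^1$-type singularities — in this case, a fold locus along a chosen hypersurface $\Sigma$. The actual role of the parity hypothesis is also different from what you suggest: $n$ even enters because $\pi_n(U_n)=0$, which forces the boundary map $\pi_n(\Lambda_n) \to \pi_{n-1}(O_n)$ in the fibration $O_n \to U_n \to \Lambda_n$ to be injective. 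That injectivity is what upgrades ``$V$ and $V_\Sigma$ have the same clutching function in $\pi_{n-1}(O_n)$'' to ``$V$ and $V_\Sigma$ are homotopic as Lagrangian distributions,'' which is the hypothesis of Theorem \ref{Entov theorem redux}. For odd $n$ the map need not be injective, and stable triviality of the underlying real bundle is then insufficient, as noted in Remark \ref{stably trivial assumption}.

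Your general instinct to reduce to a lifting/obstruction problem on $S^n$ is sound, but the problem is a matching problem rather than a vanishing problem, and identifying the correct homotopy-theoretic invariant (the class in $\pi_n(\Lambda_n)$, and its image in $\pi_{n-1}(O_n)$) is where the argument has to go.
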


\begin{remark}\label{stably trivial assumption}
As we will see, the assumption that $V$ is stably trivial is automatically satisfied for all even $n$ such that $n \not\equiv 2$ mod $8$. The simplest example in which more complicated singularities are necessary occurs when $n=2$ and corresponds to the Hopf bundle on $S^2$, where in addition to the $\Sigma^{10}$ folds we find that a $\Sigma^{110}$ pleat is unavoidable. When $n$ is odd the problem is not as straightforward due to the fact that $\pi_n(U_n) \neq 0$. Nevertheless, we will apply our $h$-principle to give a necessary and sufficient condition for the simplification of singularities to be possible in terms of the homotopy class of the distribution of Lagrangian planes $V$.
\end{remark}

As another application of our $h$-principle, we establish that higher singularities are unnecessary for the homotopy theoretic study of the space of Legendrian knots in the standard contact $\bR^3$. Before we can state our result we need to set some notation.

Recall that the front projection $\bR^3 \to \bR^2$ corresponds to the forgetful map $J^1(\bR,\bR) \to J^0(\bR,\bR)$ where we identify $J^1(\bR,\bR)=\bR^3$ and $J^0(\bR,\bR)=\bR^2$. In coordinates, we have $\bR^3=\bR(q) \times \bR(p) \times \bR(z)$, $\bR^2=\bR(q) \times \bR(z)$, $\xi_{std}=\ker(dz - pdq)$ and the front projection is the map $(q,p,z)\mapsto(q,z)$. The front of a Legendrian knot $f:S^1 \to \bR^3$ is the composition of $f$ with the front projection, which results in a map $S^1 \to \bR^2$. Let $\cL$ be the space of all (parametrized) Legendrian knots  $f:S^1 \to \bR^3$ and let $\cM \subset \cL$ be the subspace consisting of those Legendrian knots whose front only has mild singularities, namely cusps and embryos. A cusp of the front corresponds to a fold type singularity of tangency of $f$ with respect to the foliation given by the fibres of the front projection. An embryo is the instance of birth/death of two cusps and corresponds to the familiar Reidemesiter Type I move. 

The inclusion $\cM \hookrightarrow \cL$ is not a homotopy equivalence. Indeed, it is easy to see that $\pi_2(\cL, \cM) \neq 0$. However, by decorating the mild singularities of the Legendrian knots in $\cM$ we define a space $\cD$, equipped with a map $\cD \to \cM$ which forgets the decoration, such that the composition $\cD \to \cM \hookrightarrow \cL$ is surjective on $\pi_0$ and restricts to a weak homotopy equivalence on each connected component. The precise definition of the space $\cD$ is as follows.

For any $k \geq 0$, consider the unordered configuration space $\text{C}_k(S^1)$ of $k$ distinct points on the circle $S^1=\bR/\bZ$. Define a space $\widetilde{\text{C}}_k(S^1)$ fibered over $\text{C}_k(S^1)$ such that the fibre over the configuration $\{t_1,\ldots ,t_k\} \subset S^1$ consists of all unordered collections of closed intervals $I_1,\ldots ,I_m \subset S^1$ which are disjoint from the points $t_1, \ldots , t_k$ and such that $I_i \cap I_j \neq \varnothing$ implies either $I_i \subset \text{int}(I_j)$ or $I_j \subset \text{int}(I_i)$. In the degenerate case where the endpoints of an interval $I_j$ coincide, the interval consists of a point and this is allowed. The topology is such that an interval $I_j$ which contains no other intervals in its interior can continuously shrink to a point and disappear. Observe therefore that the fibre of the map $\widetilde{\text{C}}_k(S^1) \to \text{C}_k(S^1)$ is contractible. We give $\widetilde{\text{C}}(S^1)= \bigsqcup_k  \widetilde{\text{C}}_k(S^1)$ the disjoint union topology, so that the points $t_i$ are not allowed to collide. We will refer to the elements of $\widetilde{\text{C}}(S^1)$ as decorations.

Let $D=\big( \{ t_i \} , \{ I_j \} \big) \in \widetilde{\text{C}}(S^1)$ be any decoration. We say that a Legendrian knot $f:S^1 \to \bR^3$ is compatible with $D$ if its front has cusp singularities at each of the points $t_j$ and if moreover for each interval $I_j$ the following holds. If $I_j$ is not degenerate, then we demand that the front has cusp singularities at each of the two endpoints of $I_j$ and moreover we require that the two cusps have opposite Maslov co-orientations. If $I_j$ is degenerate and thus consists of a single point, then we demand that the front of $f$ has an embryo singularity at that point. At all other points of $S^1$ we demand that the front is regular. 

Define $\cD$ to be the space of all pairs $(f,D)$ such that $f:S^1 \to \bR^3$ is a Legendrian knot compatible with a decoration $D \in \widetilde{\text{C}}(S^1)$. Note in particular that $f \in \cM$. The composition of the forgetful map $\cD \to \cM$ given by $(f,D) \mapsto f$ with the inclusion $\cM \hookrightarrow \cL$ gives a map $\cD \to \cL$. It is easy to see that the induced map $\pi_0(\cD) \to \pi_0(\cL)$ is surjective but not injective. The parametric version of our $h$-principle implies the following result.
 
\begin{corollary}\label{Reidemeister}
The map $\cD \to \cL$ is a weak homotopy equivalence on each connected component.
\end{corollary}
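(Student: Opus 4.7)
The plan is to combine the parametric version of Theorem \ref{main result}, specialized to the Legendrian setting of $\bR^3$ foliated by the fibres of the front projection, with a combinatorial construction promoting a family of Legendrians with cusp/embryo fronts to a family of decorated Legendrians. Throughout, let $K$ denote a compact manifold with boundary (we will take $K = D^n$ for the surjectivity half and $K = D^n \times I$ for the injectivity half) and let $\phi: K \to \cL$ be a family of Legendrian knots whose restriction to $\partial K$ already factors through $\cD$.

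\textbf{Step 1 (Reduction to $\cM$).} The setting of Legendrians $S^1 \to \bR^3$ with their front-projection foliation fits within the scope of Theorem \ref{main result}, with target singularity class being cusps (folds of the front) together with embryos. The relevant homotopy-theoretic obstruction vanishes: the formal tangential data of a Legendrian knot lives over the $1$-dimensional domain $S^1$, where it is automatically trivial, modulo a parity constraint on up- versus down-cusps which is automatic for a knot. The relative parametric form of Theorem \ref{main result} then produces a homotopy of $\phi$ rel $\partial K$ to a family $\phi': K \to \cM$ in which each knot has only cusps and embryos in its front.

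\textbf{Step 2 (Construction of decorations).} After a further generic perturbation, the embryo locus $E \subset K$ is a codimension-$1$ stratified submanifold whose codimension-$k$ stratum consists of parameters at which exactly $k$ embryos occur simultaneously; on $K \setminus E$ the cusp configuration varies locally trivially. We define the decoration $D_s$ at each $s \in K$ by tracing cusps through the family: the cusps that persist without undergoing any birth/death event in the family form the marked points $\{t_i\}$, while partnered pairs of cusps that are born or die together at some embryo are enclosed in an innermost nested interval $I_j$, degenerating to a single point precisely at the embryo itself. The boundary agreement with the given $\tilde{\phi}|_{\partial K}$ is enforced by absorbing any discrepancies into a collar of $\partial K$.

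\textbf{Step 3 (Coherence and conclusion).} The key technical content is to verify that the assignment $s \mapsto D_s$ is genuinely continuous across the higher-codimensional strata of $E$, where several embryos collide or interact. This reduces inductively on codimension to showing that, over each fixed cusp pattern, the space of compatible nested-interval decorations is contractible; this is the parametric analogue of the fibre contractibility of $\widetilde{\text{C}}_k(S^1) \to \text{C}_k(S^1)$ recorded in the definition of $\cD$. Running the construction with $K = D^n$ and arbitrary boundary data produces surjectivity of $\pi_n$ on each connected component, while the case $K = D^n \times I$ with two decorated lifts on the boundary produces injectivity. I expect this coherence step to be the main obstacle: organizing the tree-like structure of cusp births and deaths in a high-dimensional family so that a globally consistent nested decoration exists demands care, particularly at parameter points where the combinatorial type of the decoration jumps.
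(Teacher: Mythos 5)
Your proposal identifies the right ingredients but misorders them in a way that leaves genuine gaps; the paper's proof is structured quite differently precisely to avoid the problems your Steps 2 and 3 run into.

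The first gap is in Step 1. You propose to apply the relative parametric $h$-principle rel $\partial K$ to land in $\cM$. But the relative form of Theorem \ref{main result} requires the tangential rotation $G^z_t$ to equal $G(df^z)$ on $Op(\partial Z)$, which forces $G(df^z)\pitchfork\cF$ there; this fails here, since the Legendrians over $\partial K$ already have cusps. The paper does not work rel $\partial D^n$. Instead, it first performs two explicit preparatory modifications (Lemmas \ref{preparatory} and \ref{prep2}) supported near $\partial D^n$ and in a collar, producing a prescribed fold locus $\cK\cup\cI\subset D^n\times S^1$ (the killing arcs $\cK$ for the boundary intervals, via the models $L_\eta$, $L_{\eta,\tau}$, and the extended fold locus $\cI$ for the boundary points). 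Only then does it apply Theorem \ref{parametric main result again} rel a neighborhood of $(\partial D^n\times S^1)\cup\cK\cup\cI$ inside $D^n\times S^1$, not rel $\partial D^n$ alone. Extending $\cI$ into the interior is itself nontrivial: it requires showing $\text{conf}\circ\widetilde{F}:\partial D^n\to\text{C}(S^1)$ extends to $D^n$, which for $n=2$ needs a cohomological argument with the Gauss map, a step your proposal doesn't address.

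The second gap is the a posteriori construction of the decoration in Steps 2--3. Given an arbitrary family in $\cM$, there is no canonical way to decide which cusps are standalone points $t_i$ and which are interval endpoints, nor how to nest the intervals coherently across a high-dimensional parameter space. You acknowledge this as "the main obstacle" but defer it to fibre contractibility of $\widetilde{\text{C}}_k(S^1)\to\text{C}_k(S^1)$, which only addresses ambiguity over a \emph{fixed} cusp configuration, not the global coherence of the assignment as cusps are born, collide, and split across strata. The paper evades this difficulty entirely by never attempting to reconstruct a decoration afterward: the family is built so that the decoration is immediate, with $\cI$ supplying the marked points, $\cK$ supplying some intervals, and the fibered regularized wrinkles produced by the $h$-principle coming automatically equipped with a canonical pairing into nested intervals. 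In short, the decoration must be designed into the homotopy, not extracted from its output.
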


Given a family of Legendrian knots in $\bR^3$ parametrized by a space of arbitrarily high dimension, Corollary \ref{Reidemeister} allows us to simplify the singularities of the corresponding family of fronts so that we end up having only cusps and embryos. Moreover we have a strong control on the structure of the singularity locus (in the source) given by the family of configurations decorating the mild singularities. Proofs of Corollaries  \ref{sphere1}, \ref{sphere2} and \ref{Reidemeister}, as well as of the claims made in Remark \ref{stably trivial assumption} and elsewhere in the above overview will be given in Section \ref{applications to the simplification of singularities}. 

The singularities of Lagrangian and Legendrian fronts, also known as caustics in the literature, were first extensively studied by Arnold and his collaborators, see \cite{A90} for an introduction to the theory. Today, caustics still play a central role in modern symplectic and contact topology, both rigid and flexible. In many situations it is desirable for a Lagrangian or Legendrian front to have singularities which are as simple as possible. For example, Ekholm's method of Morse flow-trees \cite{E07} for the computation of Legendrian contact homology can only be applied if the caustic of the Legendrian consists only of folds. 

The simplification of singularities of Lagrangian and Legendrian fronts is of course not always possible, since there exists a homotopy theoretic obstruction to removing higher singularities. The main point of this paper is to prove that whenever this formal obstruction vanishes, the simplification can indeed be achieved by means of an ambient Hamiltonian isotopy. Our $h$-principle is \emph{full} in the sense of \cite{EM02} ($C^0$-close, relative parametric). See Section \ref{Main results}, where we state the result precisely, for further details. The key ingredients in the proof are (1) an explicit model for the local wrinkling of Lagrangian and Legendrian submanifolds and (2) our holonomic approximation lemma for $\perp$-holonomic sections from \cite{AG15}, which is a refinement of Eliashberg and Mishachev's holonomic approximation lemma \cite{EM01}.

Our work builds on Entov's paper \cite{E97}, where the first $h$-principle for the simplification of caustics was proved. See Section \ref{Surgery of singularities} for a discussion of his results, which consist of an adaptation of Eliashberg's surgery of singularities \cite{E70}, \cite{E72} to the setting of Lagrangian and Legendrian fronts. Our paper instead follows the strategy employed by Eliashberg and Mishachev in the proof of their wrinkled embeddings theorem \cite{EM09}. The main advantage of the wrinkled approach is the following. The surgery technique can only be applied to $\Sigma^2$-nonsingular fronts, which are fronts whose singularities have the lowest corank possible. This condition is not generic except in low dimensions. By contrast, the wrinkling technique can be applied to any front. By removing the $\Sigma^2$-nonsingularity restriction, we extend considerably the range of application of the $h$-principle.

Given any smooth manifold equipped with a smooth foliation, there is the analogous problem in geometric topology of simplifying of the singularities of tangency of a smooth submanifold with respect to the foliation by means of an ambient smooth isotopy. This problem also abides by an $h$-principle and has been studied by several authors. Gromov's method of continuous sheaves \cite{G69}, \cite{G86}, as well as Eliashberg and Mishachev's holonomic approximation lemma \cite{EM01}, \cite{EM02} can be used to simplify the singularities of tangency when the submanifold is open. Gromov's theory of convex integration \cite{G73}, \cite{G86} also yields the same result. When the submanifold is closed, neither continuous sheaves nor holonomic approximation work, but there are several other methods which do work. We have already mentioned two of them, namely Eliashberg's surgery of singularities \cite{E70}, \cite{E72} and the wrinkling embeddings theorem of Eliashberg and Mishachev \cite{EM09}. Additionally, Spring showed in \cite{S02}, \cite{S05} that convex integration can be applied to the closed case. See also the approach of Rourke and Sanderson \cite{RS01}, \cite{RS03}. 

We should also mention that Corollary \ref{Reidemeister} can be thought of as a Legendrian analogue of Igusa's theorem \cite{I84} which states that higher singularities of smooth functions are unnecessary. The analogy becomes clearer from the viewpoint of generating functions. Closely related is another result of Igusa \cite{I87} on the high connectivity of the space of framed functions and Lurie's improvement in \cite{L09} which sketches a proof of the fact that the space of framed functions is contractible. Eliashberg and Mishachev generalized Igusa's original result in \cite{EM00} and gave a complete proof of the contracibility of framed functions in \cite{EM12}, in both cases using the wrinkling philosophy. There also exists a folklore approach for proving $h$-principles using a categorical delooping technique which was used by Galatius in unpublished work to obtain a different proof of the contractibility of framed functions. The approach of Galatius inspired Kupers' recent paper \cite{K17}, which provides an exposition to the delooping technique and includes yet another proof of the contractibility of framed functions as one of three applications.

\subsection{Singularities of tangency}
Let $g:L \to B$ be any map between smooth manifolds, where we assume $\dim(L) \leq \dim(B)$ for simplicity. A point $q \in L$ is called a singularity of the map $g$ if the differential $dg:T_qL \to T_{g(q)}B$ is not injective. The subset of $L$ consisting of singular points is denoted by $\Sigma(g)$. 
Next, let $\pi:M \to B$ be a fibration of smooth manifolds and let $f:L \to M$ be a smooth embedding. The singularities of the composition $g=\pi \circ f : L \to B$ are precisely the singularities of tangency of the submanifold $f(L) \subset M$ with respect to the foliation $\cF$ of $M$ given by the fibres $\cF_b=\pi^{-1}(b)$, $b \in B$. This latter notion makes sense for arbitrary foliations $\cF$ not necessarily given by a globally defined fibration.

\begin{definition}\label{singularity}
A singularity of tangency of an embedding $f:L \to M$ with respect to a foliation $\cF$ of $M$ is a point $q \in L$ such that $df(T_qL) \cap T_{f(q)}\cF \neq 0$. The subset of $L$ consisting of singular points is denoted by $\Sigma(f, \cF)$.
\end{definition}

We will be interested in the special case in which $(M, \omega)$ is a symplectic $2n$-dimensional manifold and  $\cF$ is a foliation of $M$ by Lagrangian leaves. Such a setup could arise from a Lagrangian fibration $\pi:M \to B$, where $B$ is any $n$-dimensional manifold. A good example to keep in mind is the cotangent bundle $M=T^*B$ with $\pi:T^*B \to B$ the standard projection. 

\begin{figure}[h]
\includegraphics[scale=0.6]{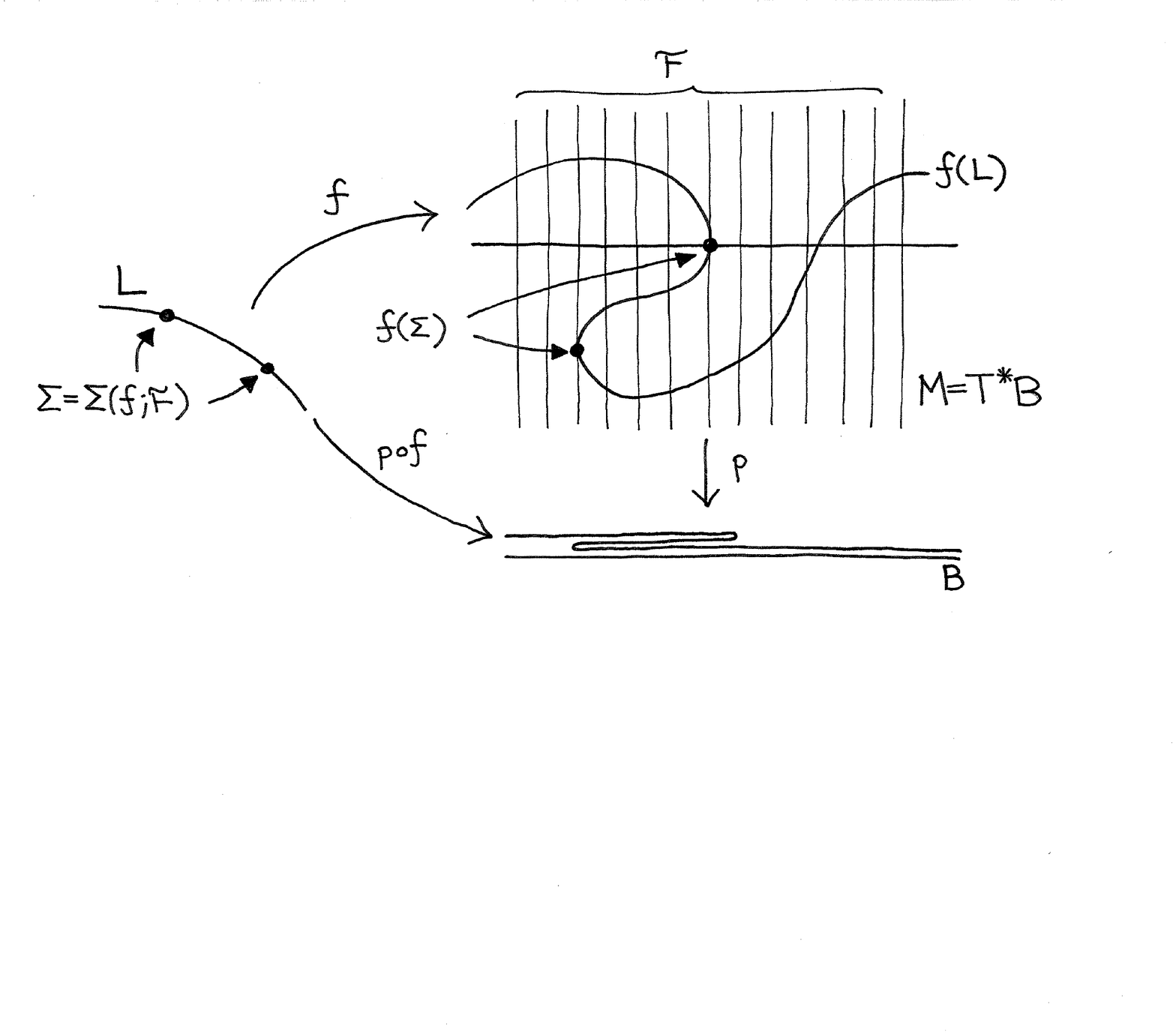}
\caption{The singularities of tangency of a Lagrangian embedding $f:L \to T^*B$.}
\label{singularity}
\end{figure}

We will also consider the analogous notion in contact topology. Here $(M, \xi)$ is a $(2n+1)$-dimensional contact manifold and $\cF$ is a foliation of $M$ by Legendrian leaves. Such a setup could arise from a Legendrian fibration $\pi: M \to B$, where $B$ is an $(n+1)$-dimensional manifold. A good example to keep in mind is the $1$-jet space $M=J^1(E,\bR)$, where $E$ is any $n$-dimensional manifold, $B=J^0(E, \bR)$ and $\pi:J^1(E, \bR) \to J^0(E, \bR)$ is the  forgetful map (which in the literature is usually referred to as the front projection). We remark for future reference that $J^1(E, \bR) = T^*E \times \bR$,  $\, J^0(E, \bR) = E \times \bR$ and that the front projection $T^*E \times \bR \to E \times \bR$ is the product of the cotangent bundle projection $T^*E \to E$ and the identity map $\bR \to \bR$.

Suppose that $\cF$ is induced by a Lagrangian or Legendrian fibration $\pi:M \to B$, so that the singularities of tangency $\Sigma(f, \cF) = \{ q \in L : \, \, df_q(T_qL) \cap T_{f(q)}\cF \neq 0 \} $ coincide with the singularity locus $\Sigma( p\circ f)= \{ q \in L : \, \, \ker\big ( d( p \circ f)_q \big) \neq 0 \}$ of the smooth map $p \circ f:L \to B$. Then the composition $p \circ f$ is called the Lagrangian or Legendrian front associated to $f$. The image of the singularity locups $p \circ f ( \Sigma ) \subset B$ is called the caustic of the front. 

\subsection{The Thom-Boardman hierarchy}\label{Hierarchy of singularities} 
To state our results precisely, we first need to recall some notions from the Thom-Boardman hierarchy of singularities. We do not intend to be thorough and only discuss the basic facts which are necessary to frame our discussion. For a detailed exposition to the theory of singularities we refer the reader to the original papers, including those of Thom \cite{T56}, Boardman \cite{B67} and Morin \cite{MO65}, as well as to the books \cite{AGV88I}, \cite{AGV88II} by Arnold, Gusein-Zade and Varchenko.

Suppose first that $g:L\to B$ is any smooth map between smooth manifolds, where $\dim(L)=n$ and $\dim(B)=m$. The singularity locus $\Sigma=\Sigma(g) \subset L$ of $g$  can be stratified in the following way.
\[ \Sigma=\Sigma^1 \cup \Sigma^2 \cup \cdots \cup \Sigma^n , \qquad \Sigma^k= \{ q \in L : \, \, \, \dim \big( \ker (dg_q) \big) = k \} . \]

The Thom transversality theorem implies that generically $\Sigma^k$ is a smooth submanifold of $L$, whose codimension equals $k\big(m-n+ k \big)$. In fact, to any non-increasing sequence $I$ of non-negative integers $i_1 \geq i_2 \geq \cdots \geq i_k$ we can associate a singularity locus $\Sigma^I \subset L$. Provided that $g$ is generic enough so that its $k$-jet extension $j^k(g)$ satisfies a certain transversality condition, $\Sigma^I$ is a smooth submanifold whose codimension is given by an explicit combinatorial formula. For such $g$, the locus $\Sigma^I$ is determined inductively by $\Sigma^I= \Sigma^{i_k}\big(g|_{\Sigma^{I'}} : \Sigma^{I'} \to B \big) $, where $I'$ denotes the truncated sequence $i_1 \geq i_2 \geq \cdots \geq i_{k-1}$. In particular, $\Sigma^I \subset \Sigma^{I'}$. 

We will mainly be interested in the flag of submanifolds $\Sigma^1 \supset \Sigma^{11} \supset \cdots \supset \Sigma^{1^n}$, where we denote a string of $1'$s of length $k$ by $1^k$. Generically, $\Sigma^{1^k}$ is a smooth submanifold of $L$ with $\dim( \Sigma^{1^k})=n-k$. To understand this flag geometrically it is useful to think of the line field $l= \ker(dg) \subset TL$, which is defined along $\Sigma^1$. Inside $\Sigma^1$ we have the secondary singularity $\Sigma^{11} = \Sigma^1\big( g|_{\Sigma^1} : \Sigma^1\to B \big) $, which consists of the set of points $q \in \Sigma^1$ where $l$ is tangent to $\Sigma^1$. Points in the complement $\Sigma^{10}= \Sigma^{1} \setminus \Sigma^{11}$, where $l$ is transverse to $\Sigma^1$, are called fold points. Similarly, the singularity $\Sigma^{111}$ consists of the set of points $q \in \Sigma^{11}$ where $l$ is tangent to $\Sigma^{11}$. Points in the complement $\Sigma^{110}= \Sigma^{11} \setminus \Sigma^{111}$, where $l$ is transverse to $\Sigma^{11}$ inside $\Sigma^1$, are called pleats. And so on. See Figure \ref{foldingtypesingularities} for an illustration of $\Sigma^{10}$ and $\Sigma^{110}$. Each of the singularities $\Sigma^{1^k0}=\Sigma^{1^k} \setminus \Sigma^{1^{k+1}}$ has a unique local model and is easy to understand explicitly. We call them $\Sigma^1$-type singularities. 

Singularities of type $\Sigma^k$, $k>1$ are much more complicated than $\Sigma^1$-type singularities. In particular, there is no finite list of possible local models for the generic $\Sigma^k$ singularity when $k>1$. The situation is in fact much worse: except in simple cases where the source and target manifolds have low dimension, the generic singularities of smooth maps have moduli. Furthermore, when the dimension is sufficiently high the number of moduli is infinite. Whence the desire to simplify these complicated singularities into singularities which are at least of type $\Sigma^1$ and ideally consisting only of $\Sigma^{10}$ folds. 

\begin{figure}[h]
\includegraphics[scale=0.7]{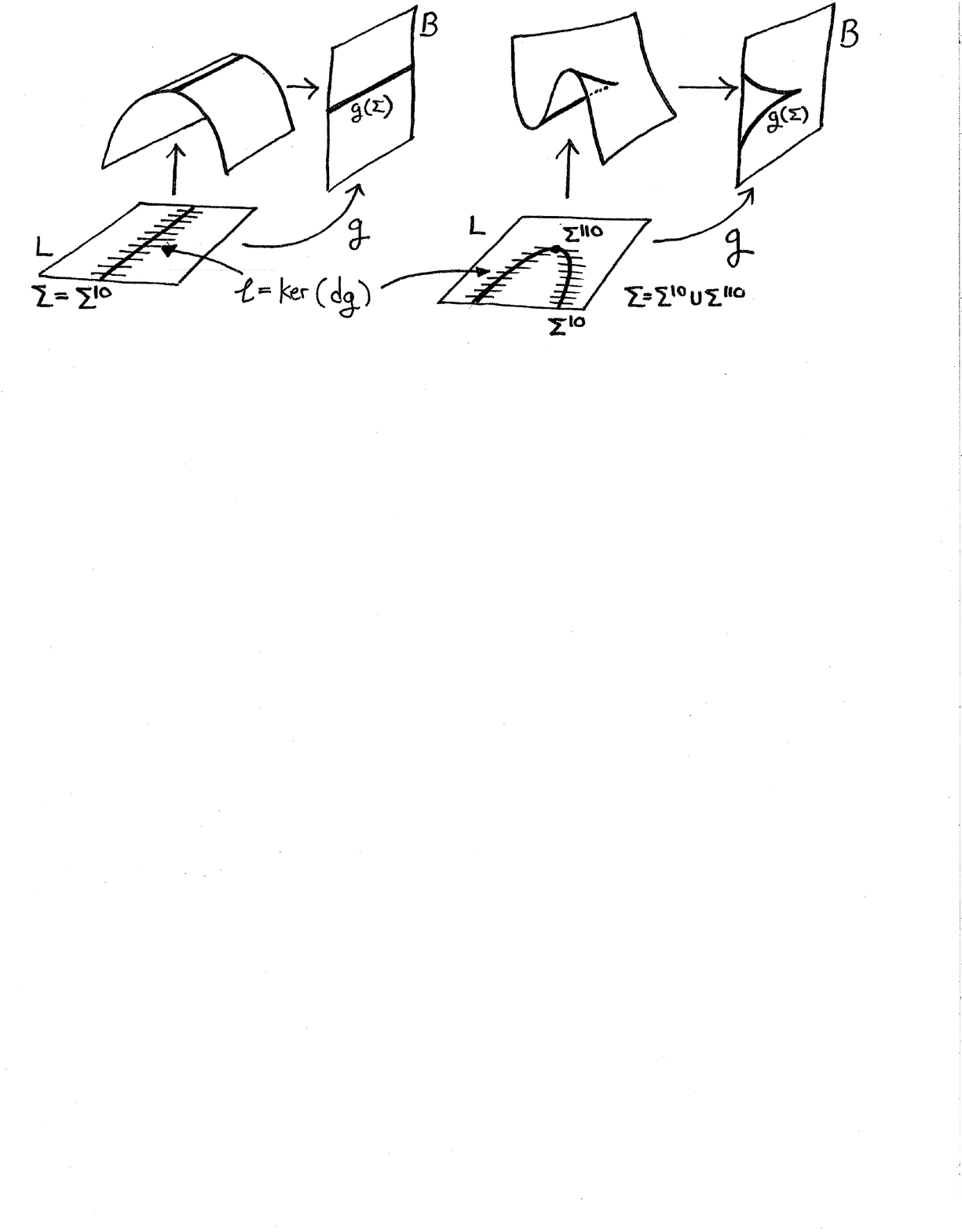}
\caption{The singularities $\Sigma^{10}$ and $\Sigma^{110}$.}
\label{foldingtypesingularities}
\end{figure}

We now return to the setting where $f:L \to M$ is a Lagrangian or Legendrian embedding into a symplectic or contact manifold $M$ equipped with an ambient foliation $\cF$ by Lagrangian or Legendrian leaves. We will assume that the foliation is given by the fibres of a Lagrangian or Legendrian fibration $\pi:M \to B$, indeed there is no harm in doing so since this is always the case locally. Hence the singularities of tangency $\Sigma(f;\cF)$ of $f$ with respect to $\cF$ are the same as the singularities $\Sigma(g)$ of the smooth mapping $g= \pi \circ f$, the Lagrangian or Legendrian front of $f$. Since the map $g$ is constrained by the condition of being a Lagrangian or Legendrian front, its $k$-jet extension $j^k(g)$ cannot always satisfy the transversality condition mentioned in the definition of the loci $\Sigma^I$. For example, the generic codimension of $\Sigma^k(f ; \cF)$ in $L$ is $k(k+1)/2$, which differs from the formula given above for the singularities of smooth maps. This point is better understood from the viewpoint of generating functions, which remove the Lagrangian or Legendrian condition in exchange of increasing the jet order by one. However, we will not pause to discuss this subtlety any further since transversality can be generically achieved at the level of fronts for the singularities that we will be  interested in: the $\Sigma^1$-type singularities. In particular, the generic codimension of $\Sigma^{1^k}(f; \cF)$ in $L$ is $k$, just like in the case of smooth mappings.

Figure \ref{standardfold} illustrates the $\Sigma^{10}$ fold and Figure \ref{standardcusp} illustrates the $\Sigma^{110}$ pleat, both in their Lagrangian and Legendrian realizations. Here and below we use the standard coordinates $(q,p) \in \bR^n \times \bR^n=T^*\bR^n$ and $(q,p,z) \in T^*\bR^n \times \bR = J^1(\bR^n, \bR)$, where the symplectic form on $T^*\bR^n$ is $dp \wedge dq$ and the contact form on $J^1(\bR^n,\bR)$ is $dz-pdq$.

\begin{example}\label{modelforfold} A Lagrangian or Legendrian front has the following unique local model in a neighborhood of any fold point $q \in \Sigma^{10}$.
\begin{itemize}
\item In the symplectic setting where the Lagrangian fibration is $\pi:T^*B \to B$, the front $\pi \circ f : L^n \to B^n$ is locally equivalent near the point $q$ to the map $(q_1 ,q_2  \ldots , q_n) \mapsto (q_1^2, q_2, \ldots ,q_n)$ near the origin. 
\item In the contact setting where the Legendrian fibration is $\pi:J^1(E, \bR) \to J^0(E,\bR)$, the front $\pi\circ f :L^n \to E^n \times \bR$ is locally equivalent near the point $q$ to the map $(q_1,  \ldots , q_n ) \mapsto (q_1^2, q_2, \ldots , q_n, q_1^3)$ near the origin. \end{itemize}
\end{example}

\begin{figure}[h]
\includegraphics[scale=0.55]{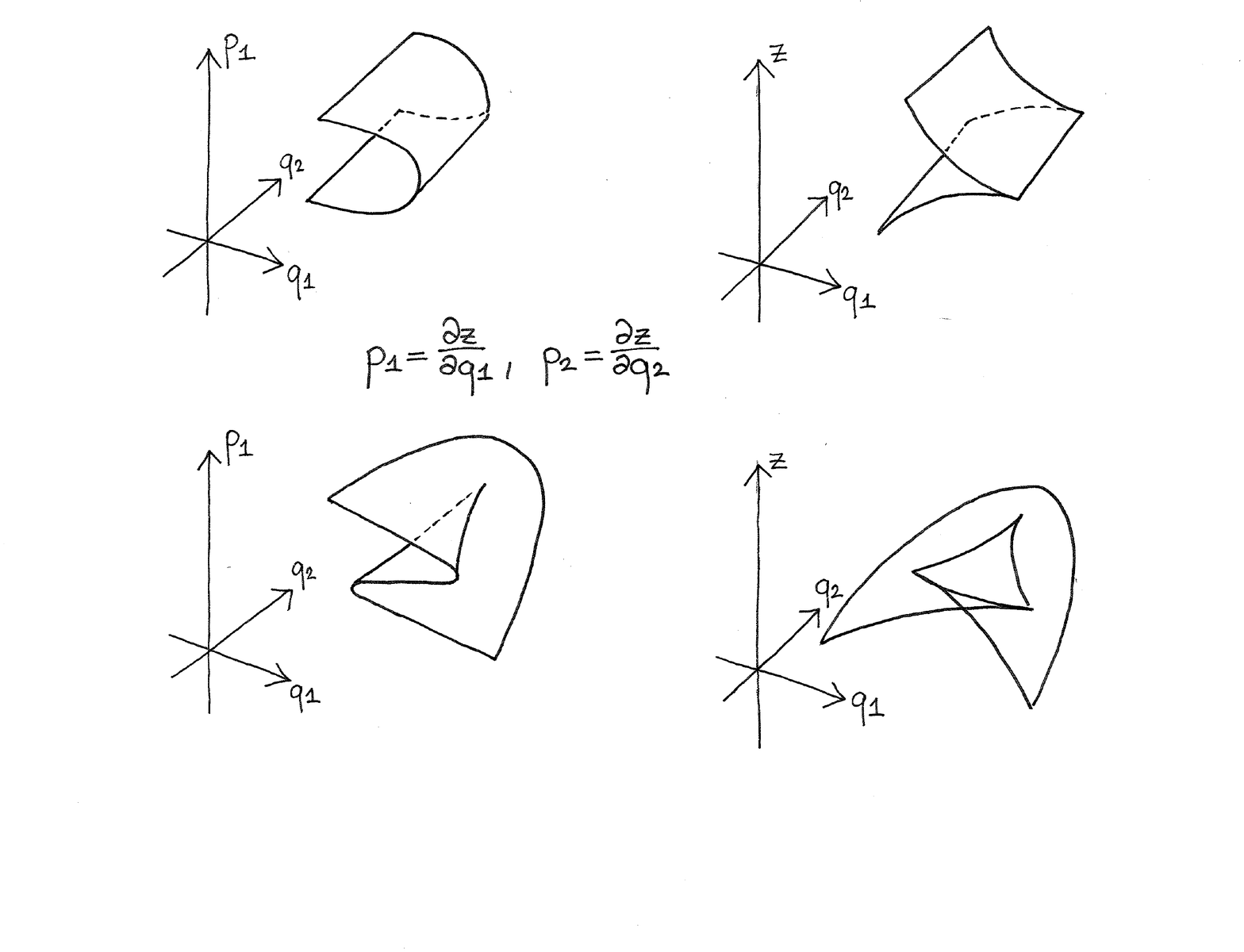}
\caption{The standard $\Sigma^{10}$ fold. The Lagrangian submanifold on the left corresponds to the Legendrian front on the right. The former is the trivial product of a parabola $q_1=p_1^2$ with $\bR^{n-1}$ and the latter is the trivial product of a semi-cubical cusp $q_1^3=z^2$ with $\bR^{n-1}$.}
\label{standardfold}
\end{figure}

\begin{figure}[h]
\includegraphics[scale=0.55]{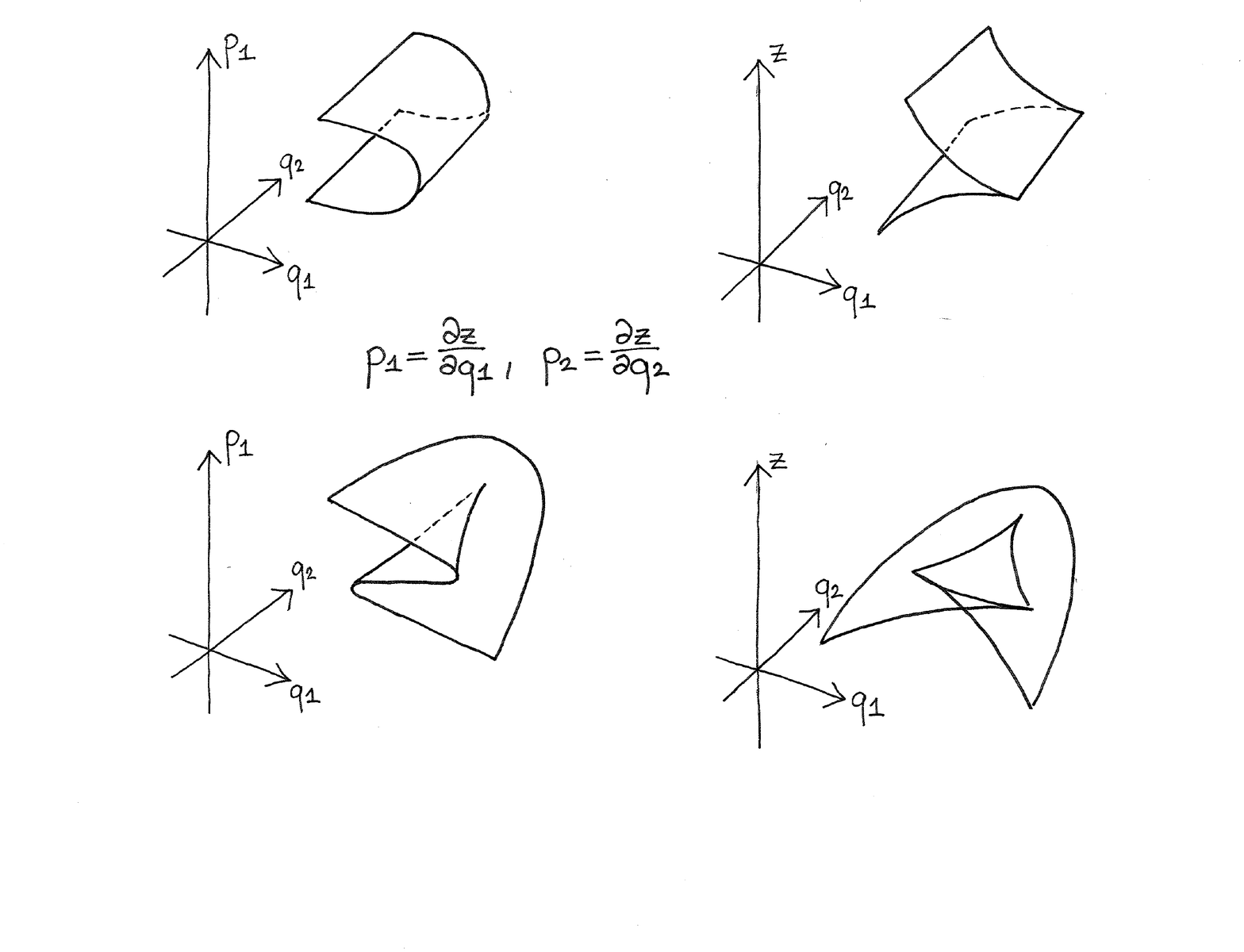}
\caption{The standard $\Sigma^{110}$ pleat. The Lagrangian submanifold on the left corresponds to the Legendrian front on the right. The former is the birth/death of two parabolas and the latter is the birth/death of two semi-cubical cusps.}
\label{standardcusp}
\end{figure}

\subsection{The double fold}

An example of a singularity locus which will be particularly relevant to our discussion is the so-called double fold, which we now describe. For an illustration, see Figure \ref{doublefold} below. Before we give the definition, observe that near a fold point $q \in \Sigma^{10}$, the Lagrangian or Legendrian submanifold $f(L) \subset M$ could be turning in one of two possible directions with respect to $\cF$. This direction can be specified by a co-orientation of the $(n-1)$-dimensional submanifold $\Sigma^{1}$ inside $L$, which is called the Maslov co-orientation and was implicitly introduced in \cite{A67}. Informally, we can view $df(T_qL)$ as a quadratic form over $T_{f(q)}\cF$ whose signature changes by one as $q$ crosses $\Sigma^{10}$ transversely. The Maslov co-orientation specifies the direction in which the signature is increasing. This is the same Maslov co-orientation which appears in Entov's work \cite{E97}. 

\begin{definition} A double fold is a pair of topologically trivial $(n-1)$-spheres $S_1$ and $S_2$ in the fold locus $\Sigma^{10}$ which have opposite Maslov co-orientations and such that $S_1 \cup S_2$ is the boundary of an embedded annulus $A \subset L$.
\end{definition} 

\begin{figure}[h]
\includegraphics[scale=0.5]{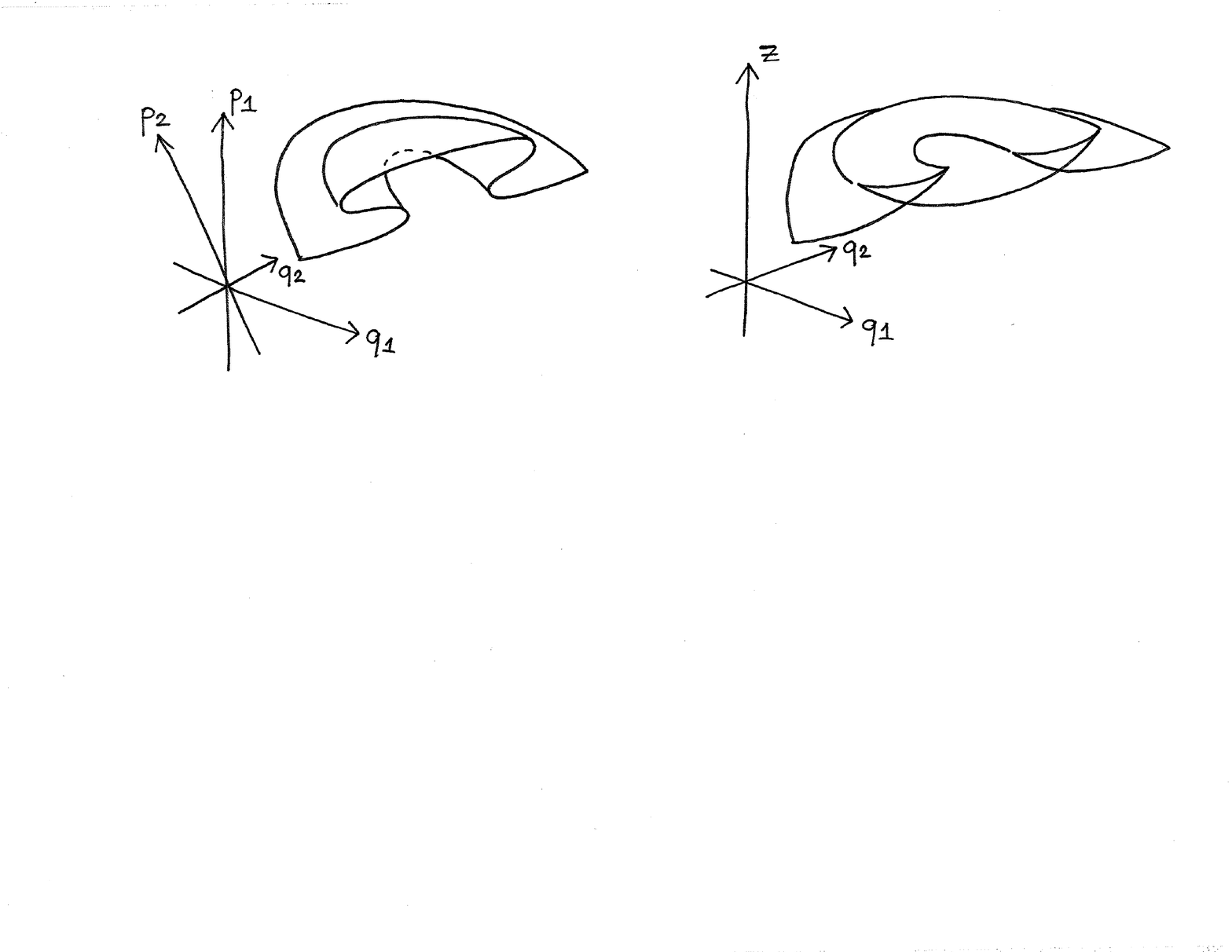}
\caption{One half of a double fold. The Lagrangian submanifold on the left corresponds to the Legendrian front on the right.}
\label{doublefold}
\end{figure}

By a topologically trivial sphere we mean a sphere which bounds an embedded $n$-ball in $L$. We say that a pair of double folds $F=S_1\cup S_2$ and $\widetilde{F}=\widetilde{S}_1 \cup \widetilde{S}_2$ bounding annuli $A$ and $\widetilde{A}$ in $L$ are nested if one annulus is contained inside the other, say $A \subset \widetilde{A}$, and furthermore $A$ bounds an $n$-ball $B \subset L$ which is completely contained in $\widetilde{A}$. See Figure \ref{nesteddoublefold} for an illustration. 

\begin{figure}[h]
\includegraphics[scale=0.5]{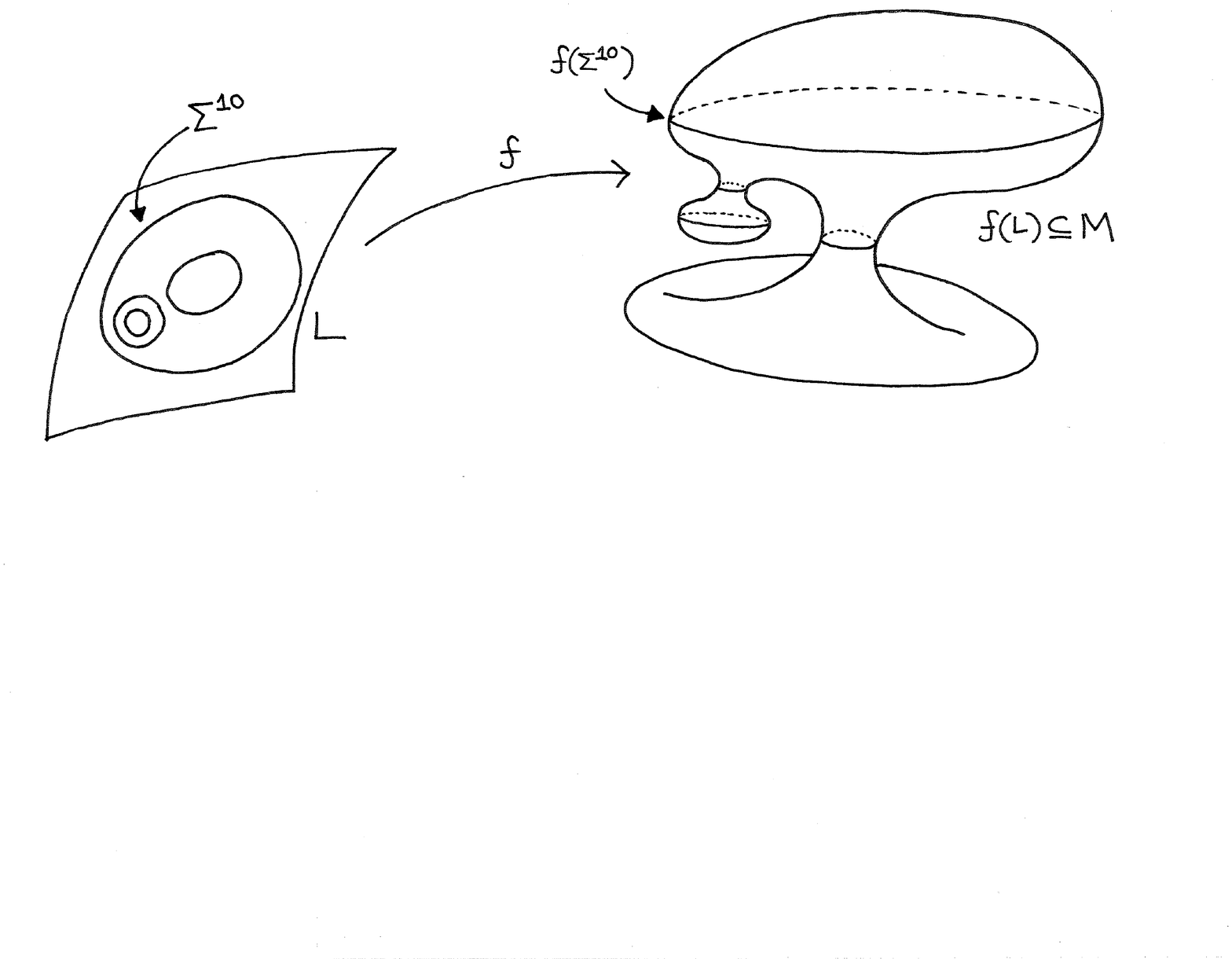}
\caption{A nested double fold.}
\label{nesteddoublefold}
\end{figure}

\subsection{Tangential rotations}\label{Tangential homotopies}
The Lagrangian Grassmannian of a symplectic manifold $(M^{2n}, \omega)$ is a fibre bundle $\Pi:\Lambda_n(M) \to M$ whose fibre $\Pi^{-1}(x)$ over a point $x \in M$ consists of all linear Lagrangian subspaces of the symplectic vector space $(T_xM, \omega_x)$. To each Lagrangian embedding $f:L \to M$ we can associate its Gauss map $G(df):L \to \Lambda_n(M)$, given by $G(df)(q)=  df(T_qL) \subset T_{f(q)}M$. Observe that $\Pi \circ G(df)=f$, in other words, $G(df)$ covers $f$. 

Similarly, given a contact manifold $(M^{2n+1}, \xi)$, where locally $\xi=\ker(\alpha)$ for some $1$-form $\alpha$ for which $d \alpha$ is non-degenerate on $\xi$, the Lagrangian Grassmannian is a fibre bundle $\Pi: \Lambda_n(M) \to M$ whose fibre $\Pi^{-1}(x)$ over a point $x \in M$ consists of all linear Lagrangian subspaces of the symplectic vector space $(\xi_x, d \alpha_x)$. To each Legendrian embedding $f:L \to M$ we associate its Gauss map $G(df):L \to \Lambda_n(M)$, given as before by $G(df)(q)=df(T_qL) \subset \xi_{f(q)}$. 

The formal analogue of the Gauss map is obtained by decoupling a Lagrangian or Legendrian embedding from its tangential information. 

\begin{definition}
A tangential rotation of a Lagrangian or Legendrian embedding $f:L \to M$ is a compactly supported deformation $G_t:L \to \Lambda_n(M)$, $t \in [0,1]$, of $G_0=G(df)$ such that $\Pi \circ G_t=f$.
\end{definition}

\begin{example}
In the previous section we introduced the double fold as an example of a singularity locus. Observe that any double fold is homotopically trivial in the following sense. If $f$ has a double fold on the annulus $A \subset L$, then we can always construct a tangential rotation $G_t$ of $f$ supported in a neighborhood of $A$ such that at time $t=1$ we have $G_1 \pitchfork \cF$ in that same neighborhood. In other words, there is no formal obstruction to removing a double fold. 
\end{example}
%




The formal analogue of the condition $\Sigma^k(f ; \cF)=\varnothing$ is the following. 
\begin{definition}  A map $G:L \to \Lambda_n(M)$ is called $\Sigma^k$-nonsingular with respect to the foliation $\cF$ if $\dim( G(q) \cap T_{g(q)} \cF )<k$ for all $q \in L$. When $k=1$ we simply say that $G$ is nonsingular, or transverse to $\cF$, and write $G \pitchfork \cF$.  

\end{definition}

Accordingly, we say that a Lagrangian or Legendrian embedding $f$ is $\Sigma^k$-nonsingular when $G(df)$ is $\Sigma^k$-nonsingular. It is easy to see that a necessary condition for $f$ to be Hamiltonian isotopic to a $\Sigma^k$-nonsingular embedding is the existence of a tangential rotation $G_t$ such that $G_1$ is $\Sigma^k$-nonsingular. Indeed, if we denote the Hamiltonian isotopy by $\varphi_t$ and we choose a family of symplectic bundle isomorphisms $\Phi_t : TM|_{f(L)} \to TM|_{ \varphi_t \circ f(L)}$ such that $\Phi_0=id$ and such that $\Phi_t\big(T \cF|_{f(L)}\big) =T\cF|_{ \varphi_t \circ f(L)}$, then we can set $G_t= \Phi_t^{-1} \cdot G\big( d ( \varphi_t \circ f) \big)$. The family $\Phi_t$ exists by the homotopy lifting property of a Serre fibration. Note that in the contact case we must replace the symplectic bundle $(TM, \omega)$ by the symplectic bundle $(\xi, d \alpha)$, but the argument is the same.

The results we state in the next section assert that this necessary condition is also sufficient when $k=2$ and is almost sufficient when $k=1$. The `almost' part comes from the necessity of double folds and will be discussed below.

\subsection{Main results}\label{Main results}
We are now ready to state the $h$-principle. Recall that $M$ is a symplectic or contact manifold equipped with a foliation $\cF$ by Lagrangian or Legendrian leaves. By the singularities of a Lagrangian or Legendrian embedding we mean its singularities of tangency with respect to $\cF$.

\newpage 

\begin{theorem}\label{main result} Suppose that there exists a tangential rotation $G_t:L \to \Lambda_n(M)$ of a Lagrangian or Legendrian embedding $f:L \to M$ such that $G_1 \pitchfork \cF$. Then there exists a compactly supported Hamiltonian isotopy $\varphi_t:M \to M$ such that the singularities of $\varphi_1 \circ f$ consist of a union of nested double folds. 
\end{theorem}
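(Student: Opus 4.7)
The plan is to follow the wrinkled embeddings strategy of Eliashberg--Mishachev \cite{EM09}, adapted to the Lagrangian/Legendrian setting by replacing the ordinary holonomic approximation lemma with the $\perp$-holonomic version proved by the author in \cite{AG15}. The tangential rotation $G_t$ plays the role of the formal solution to be realized geometrically, while the explicit local model for wrinkling of Lagrangian/Legendrian submanifolds will supply the geometric defects in the shape of double folds.

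First, I would fix a sufficiently fine triangulation of the compactly supported region where $G_t$ differs from $G_0$ and proceed skeleton by skeleton. On a neighborhood of the $k$-skeleton, the $\perp$-holonomic approximation lemma produces a compactly supported Hamiltonian isotopy of $M$ whose time-one Gauss map is $C^0$-close to $G_1$ over a slightly wiggled copy of that skeleton. The $\perp$ refinement is crucial here: it guarantees that the approximating section is genuinely Lagrangian or Legendrian, so that the resulting isotopy really is Hamiltonian rather than merely a formal deformation covered by a Gauss map homotopy.

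Second, the induction step from the $k$-skeleton to the $(k+1)$-skeleton requires patching the approximation across the boundaries of $(k+1)$-cells. The mismatch between the holonomic extension and the formal data on the interior of each $(k+1)$-cell is concentrated near a codimension-one submanifold, and is resolved by inserting a Lagrangian/Legendrian wrinkle. Under the explicit local wrinkling model, each such insertion materializes geometrically as a double fold: a pair of $(n-1)$-spheres in $\Sigma^{10}$ of opposite Maslov co-orientation cobounding an embedded annulus in $L$. The required nesting property then comes for free from the skeletal filtration, since each wrinkle introduced at stage $k+1$ encloses the defect region produced at stage $k$, and this containment translates directly into the nested annulus condition of the definition.

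The main obstacle is to carry out these local insertions in a way that simultaneously (a) assembles into a genuine compactly supported Hamiltonian isotopy of $M$ and (b) respects the Maslov co-orientation so that the two spheres of each wrinkle carry opposite signs. For (a), one lifts the pointwise Lagrangian or Legendrian deformations to a Hamiltonian by integrating a time-dependent generating function and extending it to the ambient manifold via a compactly supported cutoff, using that a Lagrangian or Legendrian isotopy of an embedded submanifold is induced by an ambient Hamiltonian or contact isotopy. For (b), the sign is forced by the reflection symmetry built into the local wrinkling model, in which the two halves of a wrinkle are related by an involution that reverses the Maslov co-orientation along $\Sigma^{10}$. Combining these ingredients with the $C^0$-smallness furnished by the holonomic approximation yields the desired Hamiltonian isotopy.
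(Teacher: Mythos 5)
Your high-level plan is in the right spirit, but it contains one genuine logical gap and a couple of misattributions that would cause the argument, as written, to fail.

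The key gap: inserting a Lagrangian or Legendrian wrinkle and regularizing it does \emph{not} directly produce a double fold. The regularized wrinkle is a single embedded $(n-1)$-sphere $S \subset L$ decomposed as two hemispheres of $\Sigma^{10}$ folds joined along an equator $E$ of $\Sigma^{110}$ pleats. The two hemispheres do carry opposite Maslov co-orientations, as you observe, but they are \emph{not} two disjoint spheres bounding an annulus — they meet along $E$, and $E$ is not a fold. To pass from regularized wrinkles to double folds you must get rid of the equatorial pleats, and the paper does this with Entov's surgery of singularities (\cite{E97}): one pushes one hemisphere off the other relative to the equator, which resolves the $\Sigma^{110}$ locus into two parallel spheres of $\Sigma^{10}$ folds with opposite Maslov co-orientations bounding an annulus. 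Without this surgery step your conclusion is the weaker statement about "nested regularized wrinkles" (the paper's Theorem 5.4), not the stated Theorem 1.14 about nested double folds. Your appeal to the involution of the local wrinkling model explains the opposite co-orientations but does not separate the two sheets.

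There is also a misattribution of the role of the $\perp$-holonomic refinement. You write that the $\perp$ version is "crucial" because it "guarantees that the approximating section is genuinely Lagrangian or Legendrian." That is not its function — the $l$-holonomic lemma from \cite{AG15} already produces a genuine Lagrangian/Legendrian wiggle near any positive-codimension polyhedron. The $\perp$ refinement is needed for a subtler reason: the local wrinkling lemma only applies to tangential rotations that are \emph{simple} with respect to a fixed hyperplane field $H$, so one first decomposes $G_t$ into piecewise simple pieces and processes them one at a time. When wiggling near the $(n-1)$-skeleton via holonomic approximation at one stage, an uncontrolled wiggle can destroy simplicity of the next piece with respect to the deformed embedding, blocking the induction. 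The $\perp$ version controls the wiggle so that the rotation stays simple with respect to the same $H$, allowing the step-by-step procedure to close. Your proposal never mentions the reduction to simple rotations, which is a necessary preliminary because the explicit wrinkling model handles only one degree of rotational freedom at a time. Relatedly, the nesting of the double folds does not "come for free from the skeletal filtration": it arises because one repeatedly wrinkles the already-wrinkled embedding across the successive simple pieces, so later wrinkles can sit inside earlier ones. Adding the Entov surgery step and correcting the role of the $\perp$-holonomic lemma would bring your outline into agreement with the paper's proof.
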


\begin{remark}
In particular, $\varphi_1\circ f$ is $\Sigma^2$-nonsingular. Indeed all of its singularities are of the simplest possible type, namely the $\Sigma^{10}$ fold.
\end{remark}

Theorem \ref{main result} is a full $h$-principle in the sense of \cite{EM02}. More precisely, the following $C^0$-close, relative and parametric versions of the statement hold.

\begin{itemize}
\item[($C^0$-close)] We can choose the Hamiltonian isotopy $\varphi_t$ to be arbitrarily $C^0$-close to the identity. Moreover, we can arrange it so that $\varphi_t=id_M$ outside of an arbitrarily small neighborhood of $f(L)$ in $M$.
\item[(relative)] Suppose that $G_t=G(df)$ on $Op(A) \subset L$ for some closed subset $A \subset L$, where here and below we use Gromov's notation $Op(A)$ for an arbitrarily small but unspecified neighborhood of $A$. Then we can arrange it so that $\varphi_t=id_M$ on $Op\big(f(A)\big)\subset M$. 
\item[(parametric)] An analogous result holds for families of Lagrangian or Legendrian embeddings parametrized by a compact manifold of any dimension. The statement also holds relative to a closed subset of the parameter space. For example, it holds for the pair $(D^n,S^{n-1})$ formed by the unit disk and its boundary sphere. For details see Section \ref{applications to the simplification of singularities}. 

\end{itemize}

For singularities of type $\Sigma^2$ we have the following $h$-principle, in which we don't have to worry about the presence of double folds since they are singularities of type $\Sigma^1$.

\begin{theorem}\label{main result 2} Suppose that there exists a tangential rotation $G_t:L \to \Lambda_n(M)$ of a Lagrangian or Legendrian embedding $f:L \to M$ such that $G_1$ is $\Sigma^2$-nonsingular with respect to the foliation $\cF$. Then there exists a compactly supported Hamiltonian isotopy $\varphi_t:M \to M$ such that $\varphi_1 \circ f$ is $\Sigma^2$-nonsingular.
\end{theorem}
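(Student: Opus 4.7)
The plan is to follow the overall strategy of Theorem \ref{main result}, but with a more permissive class of local models. The two main ingredients remain the same: an explicit local wrinkling construction for Lagrangian and Legendrian submanifolds, together with the parametric, relative $\perp$-holonomic approximation lemma of \cite{AG15}.

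First, apply the parametric and relative $\perp$-holonomic approximation lemma to the tangential rotation $G_t$. This produces a compactly supported Hamiltonian isotopy $\varphi_t:M \to M$ such that the Gauss map $G(d(\varphi_1 \circ f))$ is $C^0$-close to $G_1$ outside of a finite collection of small, combinatorially controlled neighborhoods $U_\alpha \subset L$. Inside each $U_\alpha$, the Gauss map and the ambient embedding are prescribed by an explicit local model. Arranging everything parametrically and relative to the locus where $G_t$ is already constant uses the corresponding features of the $\perp$-holonomic approximation lemma, exactly as in the proof of Theorem \ref{main result}.

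Second, choose the local models so that they are themselves $\Sigma^2$-nonsingular. In contrast with Theorem \ref{main result}, where the local model must consist only of $\Sigma^{10}$ folds packaged into nested double folds, here we may permit pleats and more general $\Sigma^{1^k 0}$ strata inside the model. This flexibility allows us to use a Lagrangian or Legendrian analogue of the classical wrinkle of \cite{EM09}, which realizes arbitrary local homotopies of Lagrangian tangent planes within a small neighborhood while keeping the total space $\Sigma^2$-nonsingular. Combining this with the first step: away from the $U_\alpha$ the Gauss map is $C^0$-close to the $\Sigma^2$-nonsingular map $G_1$ and hence $\Sigma^2$-nonsingular itself by openness of this condition; inside each $U_\alpha$ the local model ensures $\Sigma^2$-nonsingularity by construction. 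Hence $\varphi_1 \circ f$ is globally $\Sigma^2$-nonsingular.

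The main obstacle is constructing the Lagrangian or Legendrian wrinkle model whose total space is $\Sigma^2$-nonsingular and which realizes a prescribed local homotopy of tangent Lagrangian planes over the wrinkle neighborhood. This is strictly easier than producing the double-fold model needed for Theorem \ref{main result}, since singularities of type $\Sigma^{1^k 0}$ for $k \geq 2$ are now permitted inside the wrinkle, but the explicit construction and verification of its singularity structure is still the technical heart of the argument. Once this model is in place, all the other steps, including the $C^0$-closeness, relative, and parametric refinements, follow from the corresponding properties of the $\perp$-holonomic approximation lemma, just as for Theorem \ref{main result}.
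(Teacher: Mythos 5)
Your plan mirrors the paper's strategy for Theorem~\ref{main result} but treats Theorem~\ref{main result 2} as a routine variant of it, obtained by swapping in a more permissive local wrinkle model that stays $\Sigma^2$-nonsingular. The paper does not do this, and there is a concrete reason why your route is harder than it looks. The Lagrangian/Legendrian wrinkle of Section~\ref{wrinkled lagrangian and legendrian embeddings} is a rank-$1$ degeneration of the embedding map along the wrinkling sphere, and after regularization its Gauss map sweeps through an interval of directions near the wrinkle; the reason the regularized embedding stays $\Sigma^2$-nonsingular in the proof of Theorem~\ref{main result} is precisely that $G_1\pitchfork\cF$, so that the whole sweep happens inside the nonsingular part of $\Lambda_n(M)$ except for isolated $\Sigma^{10}/\Sigma^{110}$ events. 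When $G_1$ is only $\Sigma^2$-nonsingular, it already has a nontrivial $\Sigma^1$-locus, and a wrinkle placed on or near that locus superposes its own rank-$1$ degeneration on top of the rank-$1$ tangency of $G_1$. Nothing in the wrinkle model a priori prevents this interaction from producing $\Sigma^2$-tangencies after regularization. Your ``openness'' argument only controls the region away from the wrinkles, exactly where there was never a problem; the dangerous region is inside the wrinkles, and that is where your unconstructed local model would have to do genuine work.

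The paper avoids this entirely by not directly wrinkling into the $\Sigma^2$-nonsingular regime. Theorem~\ref{main result 2} is noted to be an immediate consequence of Theorem~\ref{Entov theorem redux2}, which is the stronger prescription-of-singularities $h$-principle. Its proof in Section~\ref{prescription of singularities} proceeds by (i) applying the $1$-holonomic approximation lemma to make the embedding transverse to $\cF$ near a parallel copy $\Sigma_{1/2}$ of the prescribed singularity locus; (ii) using Entov's surgery (Theorem~\ref{Entov theorem}) to insert by hand a cancelling pair of $\Sigma^1$-type singularity loci $\Sigma_1\cup\Sigma_2$ near $\Sigma_{1/2}$, with $\Sigma_2$ equivalent to the desired locus; (iii) producing a tangential rotation fixed on $Op(\Sigma_2)$ and transverse to $\cF$ elsewhere (Claim~\ref{homotopic claim}); and (iv) applying the relative version of the \emph{transverse} $h$-principle Theorem~\ref{main result again} on the complement of $Op(\Sigma_2)$, where the problem has been reduced to the case $G_1\pitchfork\cF$. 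All the actual wrinkling happens in the transverse regime, so the interaction you would need to control never arises. To rescue your approach you would need to either construct and analyze a $\Sigma^2$-nonsingular wrinkle model that can sit on the $\Sigma^1$-locus of $G_1$, or find a way to keep the wrinkles away from that locus while still approximating $G_1$; the second option essentially reinvents the structure of the paper's argument via cancelling pairs.
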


In fact, we prove a much stronger version of Theorem \ref{main result 2} which allows for the prescription of any homotopically allowable $\Sigma^1$-type singularity locus. The precise statement is given in Theorem \ref{Entov theorem redux2} below, after we discuss Entov's results on the surgery of Lagrangian and Legendrian singularities.

\subsection{The homotopical obstruction}
Consider the subset $\Sigma(M,\cF) \subset \Lambda_n(M)$ which over each point $x \in M$ consists of all planes $P_x \in \Lambda_n(M)_x$ such that $P_x \cap T_x \cF \neq 0 $. We have a stratification $\Sigma(M, \cF) = \bigcup_k \Sigma^k(M, \cF)$, where $\Sigma^k(M, \cF) = \{ P_x : \,\, \dim(P_x \cap T_x \cF) = k \}$. The formal obstruction to $\Sigma^k$-nonsingularity can be understood as follows: \emph{is it possible to smoothly homotope the map $G(df):L \to \Lambda_n(M)$ through maps $G_t$ covering $f$ so that its image becomes completely disjoint from the subset $\Sigma^k(M,\cF) \subset \Lambda_n(M)$? }  This is a purely topological question.
 
The most obvious cohomological obstruction is given by the higher Maslov classes. To define them, observe that $\Sigma^k(M,\cF)=\{ P_x \in \Lambda_n(M)_x : \, \, \dim(P_x \cap T_x \cF )= k\}$ is a stratified subset of codimension $k(k+1)/2$ inside the Grassmannian $\Lambda_n(M)$, whose boundary $\partial \Sigma^k(M, \cF) = \bigcup_{l>k} \Sigma^l (M, \cF)$ has dimension strictly less than $\dim\big(\Sigma^k(M, \cF)\big) -1$. We can therefore define $\mu_k = G(df)^*m_k \in H^{k(k+1)/2}(L  ; \bZ/2)$, where $m_k \in H^{k(k+1)/2}\big(\Lambda_n(M) ; \bZ/2 \big)$ is Poincar\'e dual to the cycle $\big[\Sigma^k(\cF) \big]$. The class $\mu_k$ is an obstruction to removing the singularity $\Sigma^k$. By an argument which the author learnt from Givental \cite{G17}, the classes $\mu_k$ are defined over $\bZ$ when $k$ is odd, whereas for $k$ even the cycle $\Sigma^k(M, \cF)$ is not co-orientable and therefore $\mu_k$ is only defined over $\bZ/2$. For example, when $k=1$ the integral lift of $\mu_1$ is the familiar Maslov class.
 
More generally, to each multi-index $I = (i_1 \geq i_2 \geq \cdots \geq i_k)$ there exists a cohomology class $\mu_I$ which obstructs the removal of $\Sigma^I$ and which is the pullback of a universal class in the appropriate jet space. In addition to these cohomological obstructions there exist subtler homotopical obstructions to the simplification of singularities.

In certain situations the obstruction to the simplification of singularities can be straightforwardly seen to vanish. In Section \ref{applications to the simplification of singularities} we explore a couple of such cases and are thus able to deduce concrete applications of our $h$-principle. However, in general this homotopical problem can be nontrivial. For instance, consider the setup of the nearby Lagrangian conjecture, so that $f:L \to T^*B$ is an exact Lagrangian embedding of a connected closed manifold $L$ into the cotangent bundle of a connected closed manifold $B$.  Abouzaid and Kragh showed in \cite{K11} that the first Maslov class $\mu_1$ always vanishes. However, to the extent of the author's knowledge it is not known whether the higher Maslov classes $\mu_k$ must also vanish. 

\subsection{Idea of the proof}
The proof of our main result Theorem \ref{main result} is an adaptation to the symplectic and contact setting of the strategy employed in Eliashberg and Mishachev's wrinkled embeddings paper \cite{EM09}. Wrinkled embeddings are topological embeddings of smooth manifolds which are smooth embeddings away from a finite union of spheres of codimension $1$, called wrinkles, where the mapping has cusps (together with their birth/deaths on the equator of each sphere). The rank of the differential falls by one on the wrinkling locus, hence the map fails to be a smooth embedding near the wrinkles. However, there is a well-defined tangent plane at every point of the image and so wrinkled embeddings have Gauss maps just like smooth embeddings. In this paper we define  wrinkled Lagrangian and Legendrian embeddings to be wrinkled embeddings $f$ into a symplectic or contact manifold $M$ whose Gauss map $G(df)$ lands in the Lagrangian Grassmannian.

The advatange of working with wrinkled Lagrangian and Legendrian embeddings over regular Lagrangian and Legendrian embeddings is the following. Any tangential rotation $G_t$ of a regular Lagrangian or Legendrian embedding $f$ can be $C^0$-approximated by the Gauss maps $G(df_t)$ of a homotopy $f_t$ of wrinkled Lagrangian or Legendrian embeddings. Such a statement is false if we demand that the homotopy $f_t$ consist only of regular Lagrangian or Legendrian embeddings. This additional flexibility provided by the wrinkles implies in particular the following result: if there exists a tangential rotation $G_t$ of a regular Lagrangian or Legendrian embedding $f$ such that $G_1$ is transverse to an ambient foliation $\cF$, then there exists a homotopy of wrinkled Lagrangian or Legendrian embeddings $f_t$ such that $f_1$ is transverse to $\cF$. We can then regularize the wrinkles of $f_t$ and obtain a homotopy of regular Lagrangian or Legendrian embeddings $\widetilde{f}_t$. The embedding $\widetilde{f}_1$ is no longer transverse to $\cF$, we must of course pay a price when we pass from $f_1$ to $\widetilde{f}_1$. The regularization process causes $\Sigma^{10}$ folds to appear where the embedding used to be wrinkled, with $\Sigma^{110}$ pleats on the equator of each wrinkle. We can then use the surgery of singularities to get rid of the $\Sigma^{110}$ pleats. The result of the surgery is a union of double folds, as in the conclusion of our $h$-principle. See Figure \ref{ideaofproof} for an illustration of the strategy.

\begin{figure}[h]
\includegraphics[scale=0.6]{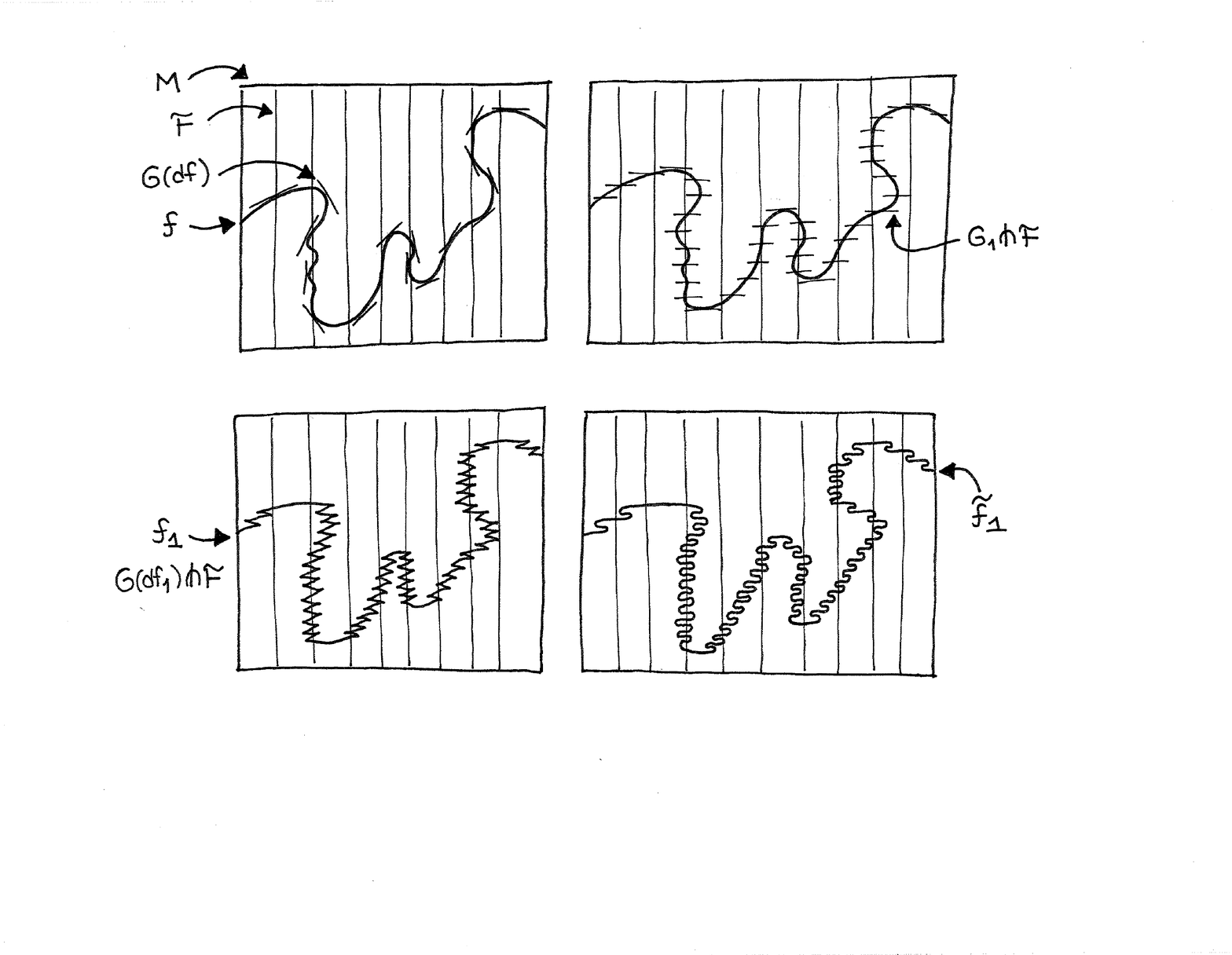}
\caption{The strategy of the proof.}
\label{ideaofproof}
\end{figure}

To prove the $C^0$-approximation result for wrinkled Lagrangian and Legendrian embeddings described in the previous paragraph, two main ingredients are necessary. The first is a technical refinement of the holonomic approximation lemma \cite{EM01} of Eliashberg and Mishachev. We established this refinement in our paper \cite{AG15}. The second ingredient is the observation that Lagrangian and Legendrian embeddings can be locally wrinkled. This observation occupies the bulk of the present paper.

\subsection{Surgery of singularities}\label{Surgery of singularities}
In his thesis \cite{E72}, Eliashberg developed a technique to modify the singularity locus of a $\Sigma^2$-nonsingular map between smooth manifolds by means of a surgery construction, see Figure \ref{surgeryexample} for an example. This technique yields an $h$-principle for the simplification of singularities of $\Sigma^2$-nonsingular smooth maps. Almost thirty years later, Entov adapted this surgery technique to the setting of Lagrangian and Legendrian fronts, also in his thesis \cite{E97}. The main point in Entov's construction is to write down the generating functions that produce Eliashberg's surgery. As a consequence of Entov's results, one obtains an $h$-principle for the simplification of singularities of $\Sigma^2$-nonsingular Lagrangian or Legendrian fronts, which we now briefly discuss.

\begin{figure}[h]
\includegraphics[scale=0.5]{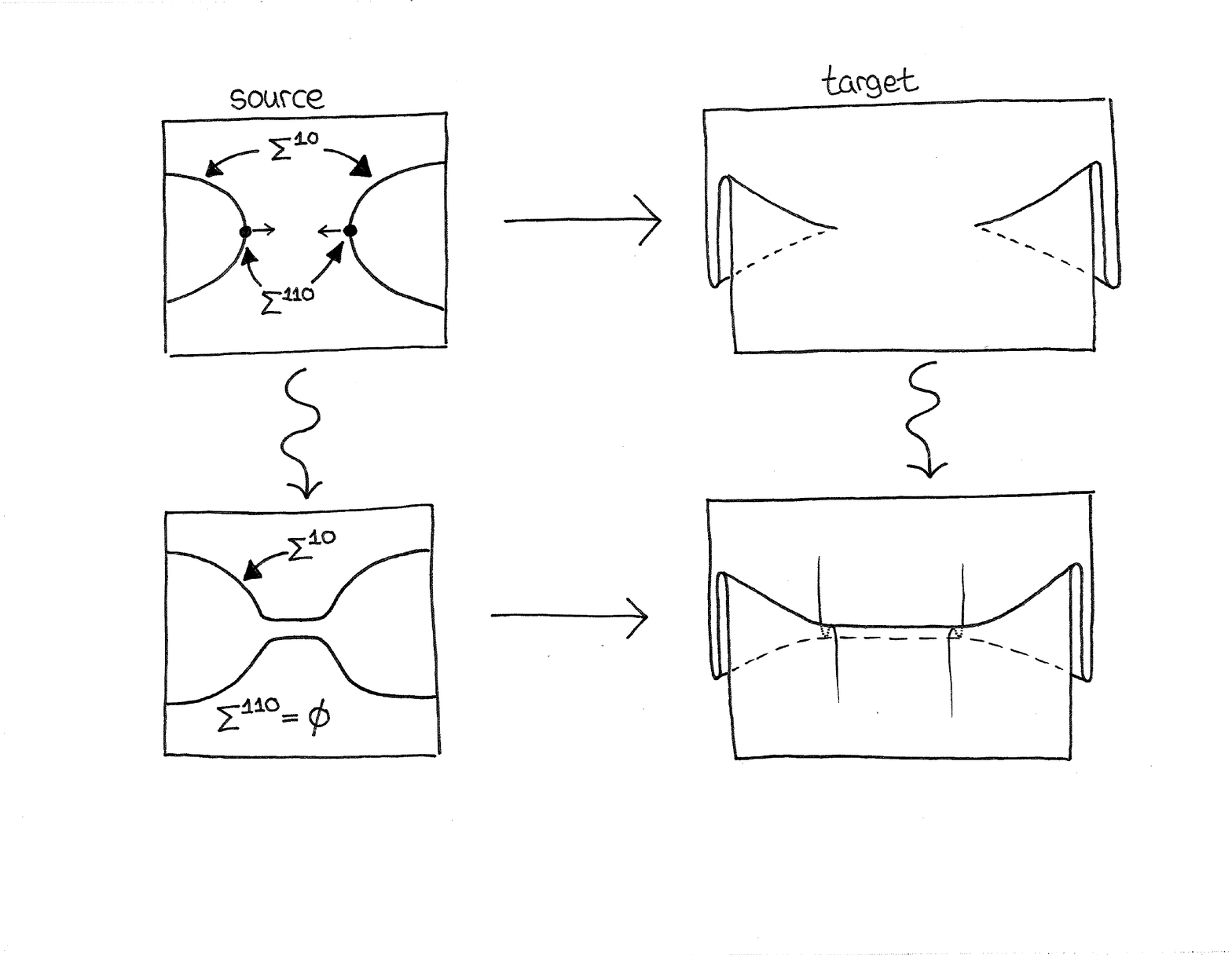}
\caption{An example of the surgery of singularities.}
\label{surgeryexample}
\end{figure}

Suppose that $f:L \to M$ is a $\Sigma^2$-nonsingular Lagrangian or Legendrian embedding into a symplectic or contact manifold $M$ equipped with a foliation $\cF$ by Lagrangian or Legendrian leaves. We recall that $\Sigma^2$-nonsingularity means that $\dim(df(T_qL) \cap T_{f(q)} \cF) <2$ for all $q\in L$, hence $\Sigma^2= \varnothing$. The Thom-Boardman stratification of the singularity locus $\Sigma= \Sigma^1$ therefore consists of a flag of submanifolds $\Sigma^1 \supset \Sigma^{11} \supset \cdots \supset \Sigma^{1^n}$, where $\text{dim}(\Sigma^{1^k})=n-k$. This flag, together with certain co-orientation data which we won't be precise about right now, is called the chain of singularities associated to the embedding $f$ and the foliation $\cF$. More generally, given any Lagrangian distribution $D$ defined along $f(L)$ (not necessarily tangent to an ambient foliation), we say that $D$ is $\Sigma^2$-nonsingular if $\dim(df(T_qL) \cap D_{f(q)} ) <2$ for all $q \in L$. For such Lagrangian distributions $D$ we can similarly define an associated chain of singularities consisting of a flag $\Sigma^1 \supset \Sigma^{11} \supset \cdots \supset \Sigma^{1^n}$ together with certain co-orientation data.

We say that two chains of singularities are equivalent if the flags of submanifolds are isotopic in $L$, with the corresponding co-orientation data also matching up under the isotopy. Entov's main result can be phrased as follows.

\begin{theorem}[Entov]\label{Entov theorem}Let $f:L \to M$ be a $\Sigma^2$-nonsingular Lagrangian or Legendrian embedding into a symplectic or contact manifold $M$ equipped with a foliation $\cF$ by Lagrangian or Legendrian leaves. Let $D_t$ be a homotopy of $\Sigma^2$-nonsingular Lagrangian distributions defined along $f(L)$, fixed outside of a compact subset and such that $D_0=T\cF|_{f(L)}$. We moreover assume that $f \pitchfork \cF$ outside of that compact subset. Then there exists a $C^0$-small compactly supported Hamiltonian isotopy $\varphi_t:M \to M$ such that the chain of singularities of $\varphi_1 \circ f$ with respect to $\cF$ is equivalent to the chain of singularities of $f$ with respect to $D_1$, together with a union of nested double folds.
\end{theorem}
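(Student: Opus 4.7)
The approach I would take is the one the introduction outlines: adapt Eliashberg's surgery of singularities \cite{E72} to the front setting by realizing each elementary surgery through an explicit family of generating functions, following Entov \cite{E97}.

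First I would discretize the homotopy $D_t$ in a way adapted to the bifurcation theory of $\Sigma^1$-type singularity chains. Since each $D_t$ is $\Sigma^2$-nonsingular, the Thom-Boardman chain $\Sigma^1(f;D_t) \supset \Sigma^{11}(f;D_t) \supset \cdots \supset \Sigma^{1^n}(f;D_t)$ varies through a generic one-parameter family, and standard jet-transversality arguments ensure that after a small perturbation of $D_t$ the chain changes by a finite sequence of elementary bifurcations (birth/death of a $\Sigma^{1^k}$-sphere, crossings between strata of complementary dimension, isotopies of strata, changes of co-orientation), each supported in a small embedded disk in $L$. Between bifurcations the chain is merely isotopic, and this part of the movie can be realized by a compactly supported Hamiltonian isotopy by a standard Moser-type argument. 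Hence the problem reduces to constructing a local Hamiltonian surgery for each elementary bifurcation.

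Second, for each elementary bifurcation I would produce the local surgery via generating families. In a Weinstein-Darboux chart around the relevant portion of $f(L)$ one can arrange that $\cF$ is the standard cotangent (resp.\ front) projection and that $f$ is represented as the wave-front of a compactly supported generating family $F_0(x,\eta)$; the chain of singularities of $f$ with respect to $\cF$ is then the bifurcation diagram of the fibrewise critical locus of $F_0$. Eliashberg's surgery construction produces, for each elementary modification of the chain, a compactly supported $C^0$-small one-parameter deformation $F_s$ of $F_0$ whose fibrewise critical loci realize the desired move. Read as a time-dependent generating family, $F_s$ lifts to a compactly supported Hamiltonian isotopy of the ambient $M$, whose time-one map is the required surgery. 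Entov's contribution is precisely to write down these $F_s$ for each elementary move in Eliashberg's list.

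Third, I would account for the nested double folds. The obstruction to joining two $\Sigma^2$-nonsingular generating families with different combinatorial critical-point data through $\Sigma^2$-nonsingular ones forces $F_s$ to momentarily develop an auxiliary birth/death pair of critical points; once the surgery is completed this pair persists as two $\Sigma^{10}$-spheres bounding an annulus and carrying opposite Maslov co-orientations, that is, as a double fold. By choosing the supports of successive elementary surgeries to be pairwise disjoint and to lie inside the interiors of the annuli introduced by previous surgeries (or disjoint from them), the accumulated double folds are automatically nested. Concatenating the resulting local Hamiltonian isotopies yields the desired $C^0$-small $\varphi_t$. The main obstacle, and the bulk of Entov's work, is the explicit construction of these families $F_s$: one must write down models for every elementary bifurcation, verify that each is $C^0$-small and compactly supported, and check that the one-parameter family really produces a single controlled double fold rather than several interlinked ones.
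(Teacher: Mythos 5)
This theorem is not proved in the paper at hand: it is attributed to Entov and cited from \cite{E97}, with only an informal indication of strategy given in Section~\ref{Surgery of singularities} (``The main point in Entov's construction is to write down the generating functions that produce Eliashberg's surgery''). Your proposal correctly reconstructs exactly this outline---discretize the homotopy $D_t$ into a finite sequence of elementary moves on the Thom--Boardman chain, realize each such move by an explicit compactly supported $C^0$-small family of generating functions following Eliashberg's surgery-of-singularities models, and lift to a Hamiltonian isotopy of $M$---so in terms of what the present paper actually records, you have identified the right approach.

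One cautionary note: the step where you derive the double folds deserves more care than your sketch suggests. You present them as arising from the impossibility of joining two $\Sigma^2$-nonsingular generating families with ``different combinatorial critical-point data'' through $\Sigma^2$-nonsingular ones, so that an auxiliary birth/death pair is momentarily created. The phenomenon is subtler. The homotopy $D_t$ already gives a continuous family of $\Sigma^2$-nonsingular data connecting the two endpoints; the obstruction is not to finding such a formal homotopy but to realizing it geometrically by a Hamiltonian isotopy of $f$. In Eliashberg's original smooth setting the ``extra'' double folds are an honest byproduct of the surgery construction which can afterwards be absorbed into any already nonempty fold locus; in the Lagrangian/Legendrian setting the paper points out (citing \cite{E98}, \cite{FP98}, \cite{FP06}) that this absorption step fails because of the rigidity of Lagrangian/Legendrian folds, and that is the true reason the double folds must remain in the statement. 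Your argument as written does not distinguish these two issues, and a reader could be led to think the double folds are removable once the chain is trivial, which is exactly the kind of flexibility that the rigidity results rule out. You should also be explicit that the supports of the successive elementary surgeries must be arranged so that the accumulated double folds end up pairwise nested (or disjoint), since this is part of the conclusion; ``automatically nested'' is too optimistic without a careful bookkeeping of where each surgery annulus lands relative to the previous ones.
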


Suppose that $G(df) \pitchfork D_1$. Then the chain of singularities associated to $f$ and $D_1$ is empty and the conclusion of Entov's theorem is the same as the one in our $h$-principle Theorem \ref{main result}. It is no coincidence that both Entov's result and Theorem \ref{main result} only work up to a union of double folds. Although homotopically trivial, one cannot hope to get rid of these double folds in general. The rigidity of Lagrangian and Legendrian folds was first explored by Entov in \cite{E98} and by Ferrand and Pushkar in \cite{FP98} and \cite{FP06}. We note that for singularities of smooth maps as considered by Eliashberg in \cite{E70} and \cite{E72} the situation is slightly better: one can always absorb these double folds into an already existing fold locus with the only condition that this locus is nonempty.

The main limitation of the surgery technique is that it requires $\Sigma^2$-nonsingularity of the initial embedding to even get started. A generic Lagrangian or Legendrian embedding is $\Sigma^2$-nonsingular only when the Lagrangian or Legendrian has dimension $ \leq 2$. This restricts significantly the possible applications of the surgery $h$-principle beyond the case of Lagrangian or Legendrian surfaces. Even in the $2$-dimensional case, $\Sigma^2$-type singularities will generically arise in $1$-parametric families, preventing a satisfactory parametric result from being formulated. 

This limitation is not serious in the smooth version of the problem because one can easily get rid of $\Sigma^2$-type singularities by using a different technique, for example one can use Gromov's convex integration \cite{G86}. Unfortunately, these techniques seem to be inadequate to get rid of the $\Sigma^2$-type singularities of Lagrangian and Legendrian fronts. This paper bypasses this issue by using a different strategy, namely the wrinkling philosophy. Indeed, we will prove in Section \ref{prescription of singularities} the following version of Entov's Theorem \ref{Entov theorem} in which we drop the condition of $\Sigma^2$-nonsingularity.

\begin{theorem}\label{Entov theorem redux2} Let $f:L \to M$ be a Lagrangian or Legendrian embedding into a symplectic or contact manifold $M$ equipped with a foliation $\cF$ by Lagrangian or Legendrian leaves. Let $D_t$ be a homotopy of Lagrangian distributions defined along $f(L)$, fixed outside of a compact subset, such that $D_0=T\cF|_{f(L)}$ and such that $D_1$ is $\Sigma^2$-nonsingular. We moreover assume that $f \pitchfork \cF$ outside of that compact subset. Then there exists a $C^0$-small compactly supported Hamiltonian isotopy $\varphi_t:M \to M$ such that $\varphi_1 \circ f$ is $\Sigma^2$-nonsingular with respect to $\cF$ and moreover such that the chain of singularities of $\varphi_1 \circ f$ with respect to $\cF$ is equivalent to the chain of singularities of $f$ with respect to $D_1$, together with a union of nested double folds.
\end{theorem}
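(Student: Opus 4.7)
The plan is to extend the strategy used for Theorem \ref{main result} by combining the wrinkling philosophy with Entov's surgery (Theorem \ref{Entov theorem}), in such a way as to allow the prescription of an arbitrary formally allowable $\Sigma^1$-type chain rather than only nested double folds.

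First, I would convert the homotopy $D_t$ of Lagrangian distributions along $f(L)$ into a tangential rotation of $f$. Since $T\cF|_{f(L)}$ and $D_1$ are both Lagrangian subbundles of the symplectic bundle $TM|_{f(L)}$ (resp.\ $\xi|_{f(L)}$ in the contact case) and $D_t$ is a compactly supported homotopy between them, the homotopy lifting property of the bundle of symplectic frames yields a compactly supported family $\Phi_t$ of symplectic bundle automorphisms of $TM|_{f(L)}$ covering the identity, with $\Phi_0 = \mathrm{id}$ and $\Phi_t(T\cF|_{f(L)}) = D_t$. Setting $G_t := \Phi_t^{-1} \circ G(df)$ produces a tangential rotation of $f$ satisfying $\dim(G_t(q) \cap T_{f(q)}\cF) = \dim(df(T_qL) \cap D_t(q))$; in particular $G_1$ is $\Sigma^2$-nonsingular with respect to $\cF$ and its chain of singularities is equivalent to that of $f$ with respect to $D_1$.

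Next, I would apply the wrinkled Lagrangian/Legendrian approximation scheme underlying Theorem \ref{main result} to $G_t$, producing a homotopy of wrinkled embeddings whose Gauss maps $C^0$-approximate $G_t$, via the explicit local Lagrangian/Legendrian wrinkling model combined with the $\perp$-holonomic approximation lemma of \cite{AG15}. Because $G_1$ itself is $\Sigma^2$-nonsingular, no wrinkles need to be introduced on the target $\Sigma^1$-locus: wrinkles are only forced near intermediate times where $G_t$ develops $\Sigma^{\geq 2}$ singularities, and they can be localized in $L$ to neighborhoods disjoint from that $\Sigma^1$-locus. Assembling the wrinkling with the standard wrinkle regularization, this step produces a compactly supported $C^0$-small Hamiltonian isotopy $\psi_t$ of $M$ such that $f' = \psi_1 \circ f$ is a genuine Lagrangian/Legendrian embedding, is $\Sigma^2$-nonsingular, and has chain of singularities with respect to $\cF$ consisting of (i) a chain equivalent to that of $f$ with respect to $D_1$, together with (ii) one $\Sigma^{10}$ fold sphere with $\Sigma^{110}$ pleats on its equator for each wrinkle introduced.

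Finally, I would apply Entov's Theorem \ref{Entov theorem} to $f'$ with a homotopy $D_t'$ of $\Sigma^2$-nonsingular Lagrangian distributions along $f'(L)$ supported in a neighborhood of the regularized wrinkle regions. The distribution $D_t'$ is designed to equal $T\cF|_{f'(L)}$ outside these regions and, at $t=1$, to be transverse to $f'$ inside each of them, so that the chain of $f'$ with respect to $D_1'$ coincides with the chain (i). Since each wrinkle region has the standard local model of an Eliashberg--Entov surgery, the required $\Sigma^2$-nonsingular homotopy exists there explicitly. Entov's theorem then yields a compactly supported Hamiltonian isotopy $\eta_t$ such that the chain of $\eta_1 \circ f'$ with respect to $\cF$ is equivalent to the chain of $f'$ with respect to $D_1'$, namely (i), together with a union of nested double folds. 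Setting $\varphi_t = \eta_t \circ \psi_t$ (suitably reparametrized) produces the Hamiltonian isotopy asserted by the theorem. The main obstacle is Step 2: organizing the wrinkled approximation so that it is compatible with the prescribed $\Sigma^1$-chain of $G_1$ and so that the wrinkles are localized in regions where the subsequent regularization and Entov surgery can be performed without disturbing that chain, which requires a careful marriage of the local Lagrangian/Legendrian wrinkling model — the technical heart of the paper — with the $\perp$-holonomic approximation lemma, exploiting the freedom to shrink the support of wrinkling to small neighborhoods of the $\Sigma^{\geq 2}$-locus of $G_t$ disjoint from the target $\Sigma^1$-locus of $G_1$.
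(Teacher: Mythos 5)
There is a genuine gap in your Step 2, and it is not cosmetic. You assert that wrinkles "are only forced near intermediate times where $G_t$ develops $\Sigma^{\geq 2}$ singularities, and they can be localized in $L$ to neighborhoods disjoint from" the formal $\Sigma^1$-locus of $G_1$. This is not how the wrinkling theorem works. Wrinkles are forced by the global $C^0$-approximation requirement on a closed $L$: near the $(n-1)$-skeleton of a triangulation of $L$ one wiggles, and in the interior of every top-dimensional simplex one must wrinkle, regardless of where or whether $G_t$ develops $\Sigma^{\geq 2}$ singularities. There is no mechanism to confine the wrinkles to the $\Sigma^{\geq 2}$-locus of $G_t$, and in fact that locus is typically of positive codimension while the wrinkling locus is codimension $1$ and comes from a global cell decomposition.

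The second, and deeper, problem is that $C^0$-approximation of $G_1$ by $G(df_1)$ does not control the singularity locus $\Sigma(f_1,\cF)$ at all. The condition $G(df_1)(q)\cap T_{f_1(q)}\cF\neq 0$ is a closed, non-stable condition, and a $C^0$-small perturbation of $G_1$ can create or destroy arbitrary amounts of singular locus near the formal $\Sigma^1$-set of $G_1$. Realizing the prescribed chain requires control of the $1$-jet of $G(df_1)$ transverse to the flag $\Sigma^1\supset\Sigma^{11}\supset\cdots$ and of the co-orientation fields $v_k$; this is finer information than $C^0$-closeness of Gauss maps gives, so your claim that the regularized $f'$ has chain "(i) equivalent to that of $f$ w.r.t.\ $D_1$, together with (ii) regularized wrinkles" is unjustified. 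The paper avoids this issue entirely: rather than trying to make the wrinkling respect the formal chain, it first uses holonomic approximation (Theorem \ref{1-holonomic}) to make $f$ transverse to $\cF$ near a parallel copy $\Sigma_{1/2}$ of the formal singularity locus $\Sigma$, then uses Entov's Theorem \ref{Entov theorem} on that transverse piece to manufacture by hand a cancelling pair $\Sigma_1\cup\Sigma_2$ of genuine $\Sigma^1$-chains with $\Sigma_2$ equivalent to the prescribed chain, and finally applies the relative version of Theorem \ref{main result again} with a rotation $R_t$ fixed on $Op(\Sigma_2)$ to cancel the original $\Sigma$ against $\Sigma_1$ and fill the rest with double folds. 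Entov surgery is thus used to \emph{create} the prescribed chain, not to clean up after the wrinkling as in your proposal; this is the structural move you are missing.
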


\begin{remark}
Theorem \ref{main result 2}, the $h$-principle for $\Sigma^2$-nonsingular embeddings, is an immediate consequence of Theorem \ref{Entov theorem redux2}.
\end{remark}

\subsection{The wrinkling philosophy}
Many $h$-principles can be proved by interpolating between local Taylor approximations. To achieve this interpolation near a subset of positive codimension, one can use the extra dimension to wiggle the subset in and out, creating extra room. This room ensures that no big derivatives arise when interpolating from one Taylor polynomial to another. This idea has been present throughout the history of the $h$-principle starting with the immersion theory of Smale-Hirsch-Phillips \cite{H59}, \cite{P67}, \cite{S59} and Gromov's method of flexible sheaves \cite{G69}, \cite{G86}. The wiggling strategy was further reformulated into a simple but general statement by Eliashberg and Mishachev in \cite{EM01}, \cite{EM02} with their holonomic approximation lemma. 

 In many cases, however, one wishes to prove a global $h$-principle on the whole manifold (which might be closed) and there is no extra dimension available for wiggling. The wrinkling philosophy provides a strategy for proving $h$-principles in such cases. The idea is to wrinkle the manifold back and forth upon itself. One can then interpolate between local Taylor approximations along the wrinkles. The wrinkling process creates the extra room needed so that this interpolation does not create big derivatives. One pays an unavoidable price, namely the singularities caused by the wrinkles. However, these are very simple singularities which can be explicitly understood.
 
\begin{figure}[h]
\includegraphics[scale=0.6]{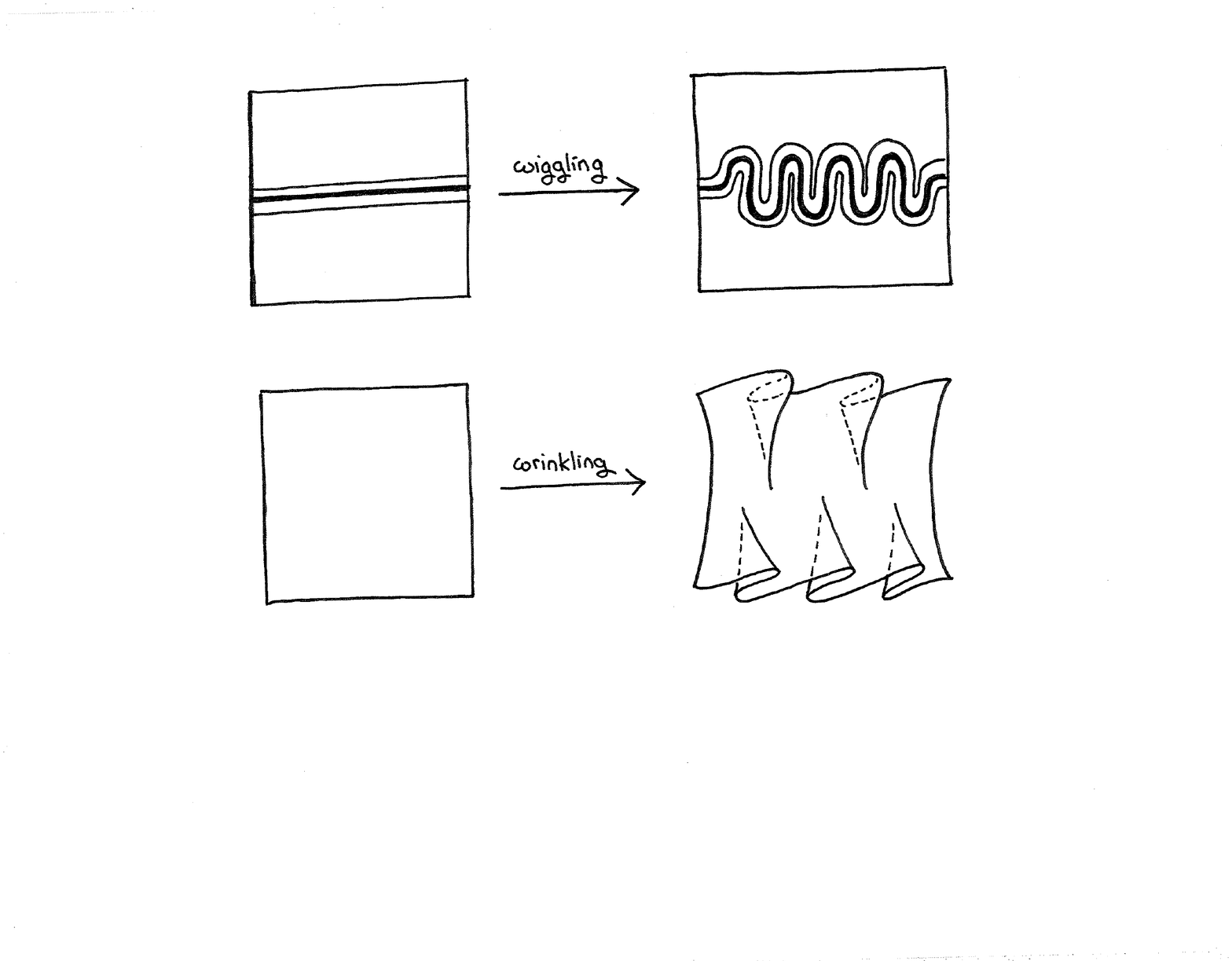}
\caption{The difference between wiggling and wrinkling.}
\label{ideaofproof}
\end{figure}
 
In their papers \cite{EM97}, \cite{EM98}, \cite{EM00}, \cite{EM09}, \cite{EM12}, \cite{EM12(2)}, Eliashberg and Mishachev exploit this wrinkling strategy to prove a number of results in flexible geometric topology. Together with Galatius, they give a further application in \cite{EGM11}. The theorem on wrinkled embeddings from \cite{EM09}, which is particularly relevant for our purposes, has gained greater significance after it was used by Murphy in \cite{M12} to establish the existence of loose Legendrians in high-dimensional contact manifolds. Our paper provides a different application of the wrinkled embeddings theorem to flexible symplectic and contact topology. 

\begin{warning} At this point we should alert the reader that Murphy's wrinkled Legendrians have nothing in common with our wrinkled Lagrangian and Legendrian embeddings. The two notions should not be confused, despite the terribly similar terminology for which the author can only apologize and excuse himself in the desire to be consistent with the existing literature \cite{EM09}.

To be clear: in Murphy's wrinkled Legendrians, the wrinkles occur in the Legendrian front. In the wrinkled Legendrian embeddings under consideration in this paper, the wrinkles occur in the Legendrian submanifold itself.
\end{warning}

\subsection{Outline of the paper}
The plan of the paper is the following. In Section \ref{wrinkled lagrangian and legendrian embeddings} we introduce Lagrangian and Legendrian wrinkles together with all related definitions. In Section \ref{Rotations of Lagrangian and Legendrian planes} we show that we can restrict our attention to a particularly simple class of tangential rotations. In Section \ref{Holonomic approximation with controlled cutoff} we wiggle Lagrangian and Legendrian embeddings using our holonomic approximation lemma for $\perp$-holonomic sections. In Section \ref{Wrinkling embeddings} we wrinkle Lagrangian and Legendrian embeddings using a concrete local model. Finally, in Section \ref{applications to the simplification of singularities} we prove the $h$-principle for the simplification of singularities and give applications.

\subsection{Acknowledgements}
I am very grateful to my advisor Yasha Eliashberg for insightful guidance throughout this project. I would also like to thank Laura Starkston for reading carefully the first draft of this paper and offering numerous remarks and corrections which have greatly improved the exposition. I am indebted to the ANR Microlocal group who held a workshop in January 2017 to dissect an early version of the paper and in particular to Sylvain Courte and Alexandre V\'erine who spotted several mistakes in the proof of the local wrinkling lemma and made useful suggestions for fixing them.  Finally, I would like to express my gratitude to Roger Cassals, Sander Kupers, Emmy Murphy, Oleg Lazarev and Kyler Siegel for many helpful discussions surrounding the general notion of flexibility. 

\section{Lagrangian and Legendrian wrinkles}\label{wrinkled lagrangian and legendrian embeddings}

\subsection{Wrinkled embeddings}\label{Wrinkled embeddings} 
We start by recalling the definition of wrinkled embeddings, from \cite{EM09}. Throughout we denote a point $q \in \bR^n$ by $q=(\hat{q}, q_n)$, where $\hat{q}=(q_1, \ldots , q_{n-1})$.

\begin{definition}
A wrinkled embedding is a topological embedding ${f:L^n\rightarrow X^{n+r}}$ which is a smooth embedding away from a disjoint union of finitely many topologically trivial embedded $(n-1)$-spheres $S \subset L$, with $f$ equivalent (up to diffeomorphism) on $Op(S)$ to the local model $\cW_{n,r}: Op(S^{n-1}) \subset \bR^n \rightarrow \bR^{n+r}$ given by
\[ (q_1, \ldots , q_n) \mapsto \big( q_1, \ldots , q_{n-1} ,\eta, 0, \ldots , 0, h \big),\] 
\[ \text{where}\, \, \, \,   \eta(q)= q_n^3 + 3(|| \hat{q} ||^2-1)q_n  \quad \text{and} \quad h(q)= \int_0^{q_n}(|| \hat{q} ||^2+ u^2 -1)^2 du.\]
\end{definition}

\begin{figure}[h]
\includegraphics[scale=0.6]{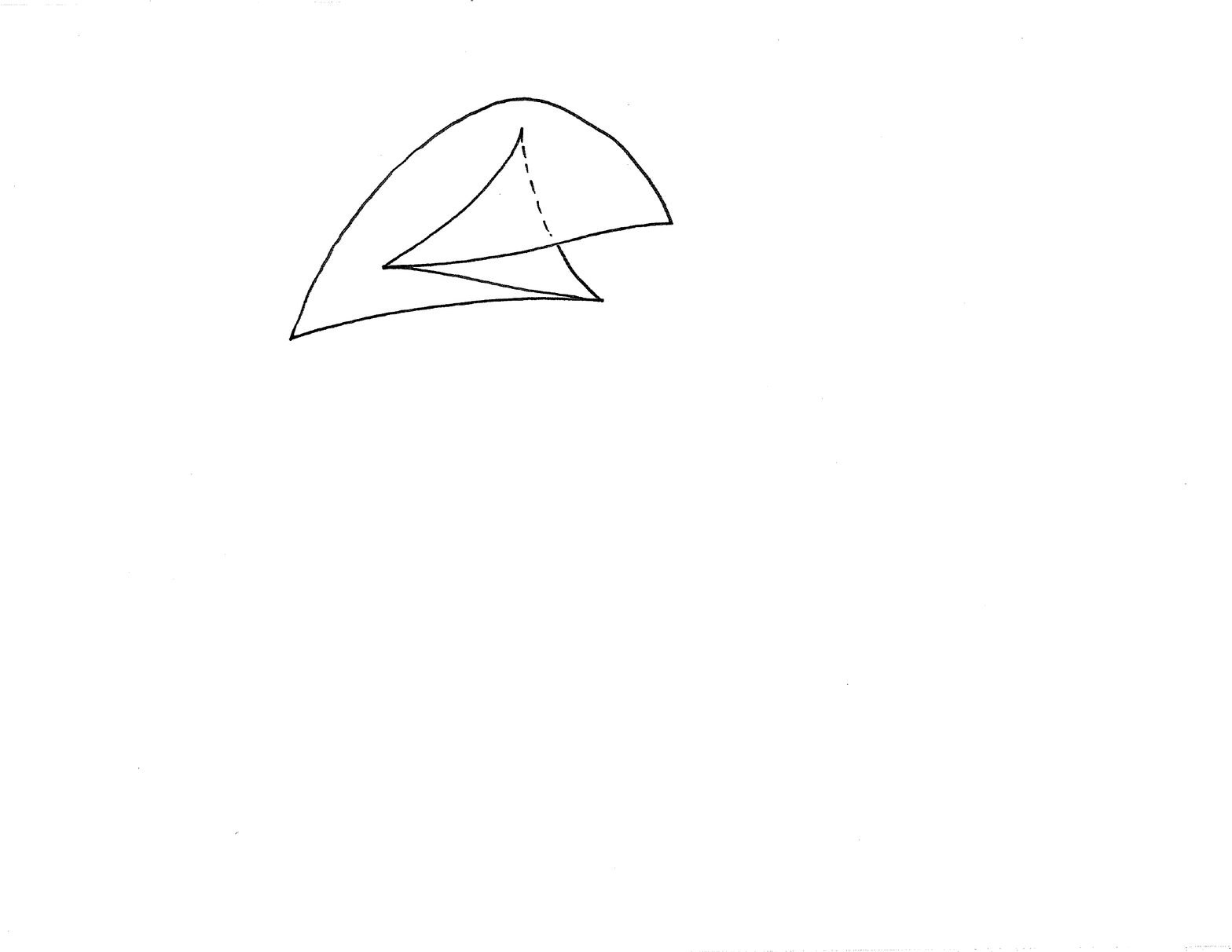}
\caption{One half of a standard wrinkle}
\label{wrinkelagain}
\end{figure}

The mapping $\cW_{n, r}$ has singularities along $S^{n-1}$. On the upper and lower hemispheres $S^{n-1} \cap \{ q_n >0\}$ and $S^{n-1} \cap \{ q_n < 0 \} $, the singularities are semi-cubical cusps. More precisely, near each point of $S^{n-1} \setminus S^{n-2}$,  the model $\cW_{n,r}$ is locally equivalent to the following map near the origin, see Figure \ref{cuspmodel}.
$$ (q_1 , \ldots , q_n ) \mapsto \big(q_1, \ldots , q_{n-1} , q_n^2 , 0 ,\ldots , 0, q_n^3\big). $$

On the equator $S^{n-2}=S^{n-1} \cap \{ q_n=0 \}$, the singularities are the birth/death of semi-cubical zig-zags. More precisely, near each point of $S^{n-2}$, the model $\cW_{n,r}$ is locally equivalent to the following map near the origin, see Figure \ref{birthmodel}.
$$ (q_1 , \ldots , q_n) \mapsto \Big(q_1 , \ldots , q_{n-1} , q_n^3 - 3q_1q_n, 0 , \ldots , 0 , \int_0^{q_n} (u^2-q_1)^2 du \Big). $$

\begin{figure}[h]
\includegraphics[scale=0.6]{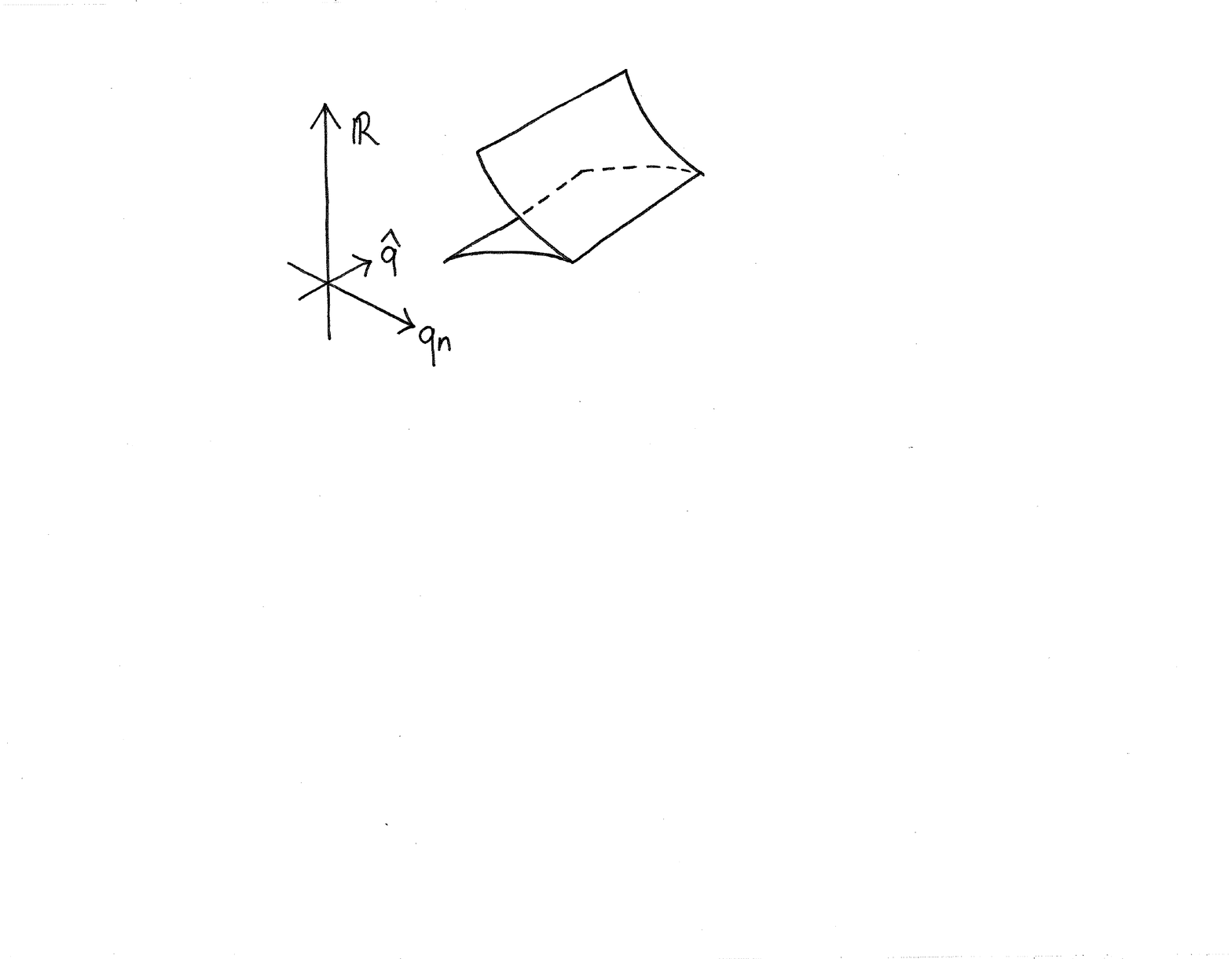}
\caption{A wrinkled embedding has cusps on the complement of the equator of each wrinkle.}
\label{cuspmodel}
\end{figure}

\begin{figure}[h]
\includegraphics[scale=0.6]{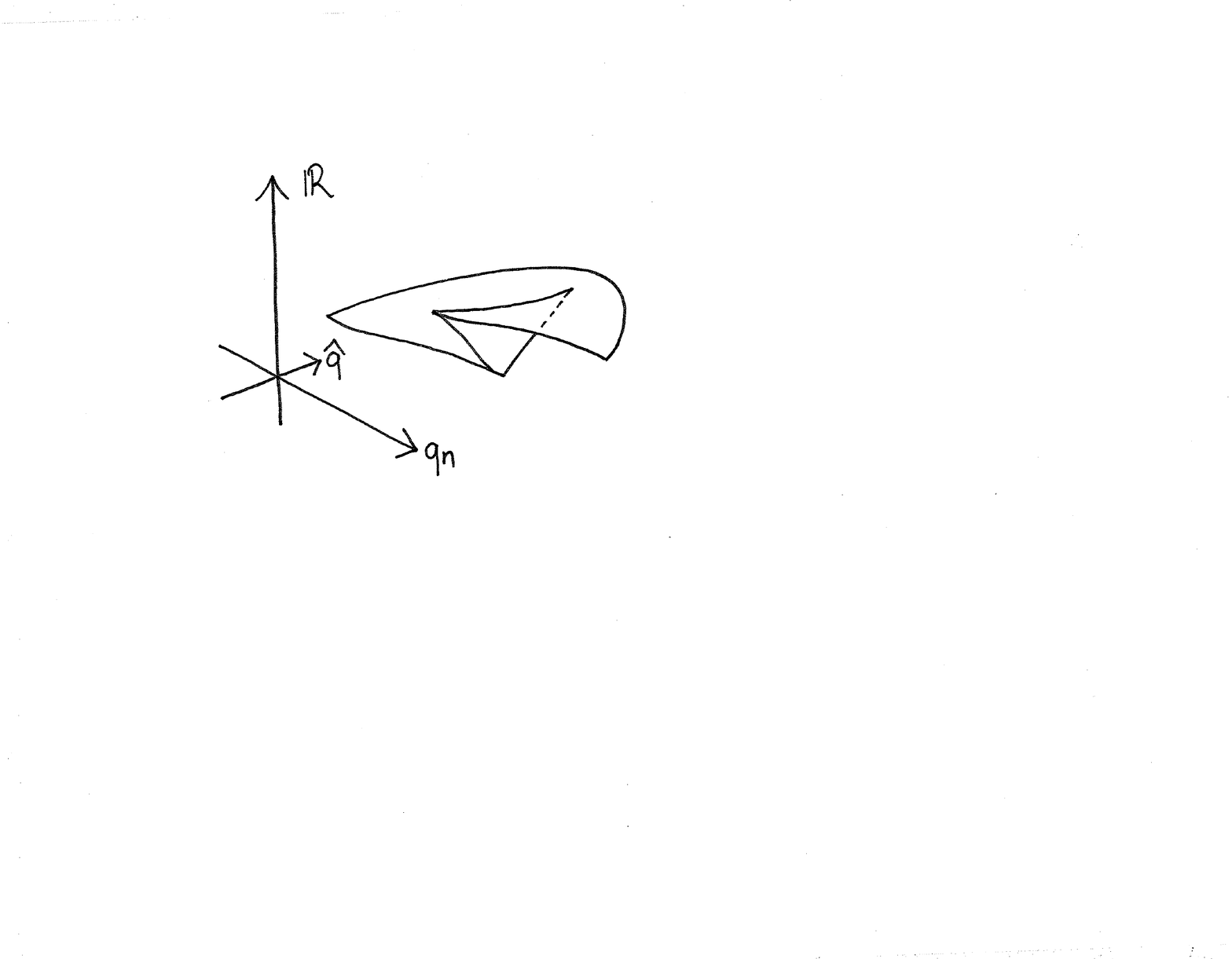}
\caption{A wrinkled embedding has birth/deaths of zig-zags on the equator of each wrinkle.}
\label{birthmodel}
\end{figure}

\begin{warning}
Observe that a wrinkled embedding has singularities along the wrinkles, but these are not singularities of tangency with respect to any foliation. These are (non-generic) singularities of the smooth map, in other words, points in the source where the rank of the differential is strictly less than the possible maximum. Throughout the paper we will be talking about both types of singularities but it should always be clear from the context which type we are referring to in each case. \end{warning}

A wrinkled embedding has a well defined Gauss map $G(df):L \to Gr_n(X)$, where $Gr_n(X)$ is the Grasmannian of $n$-planes in $TX$. For each $q \in L$ there is a unique $n$-dimensional subspace $G(df)(q) \subset T_{f(q)}X$ tangent to $f(L)$ at $f(q)$. At regular points $q \in L$ we have of course $G(df)(q)=df(T_qL)$, but $G(df)(q)$ is defined even at singular points, see Figure \ref{gaussmap}.

\begin{figure}[h]
\includegraphics[scale=0.6]{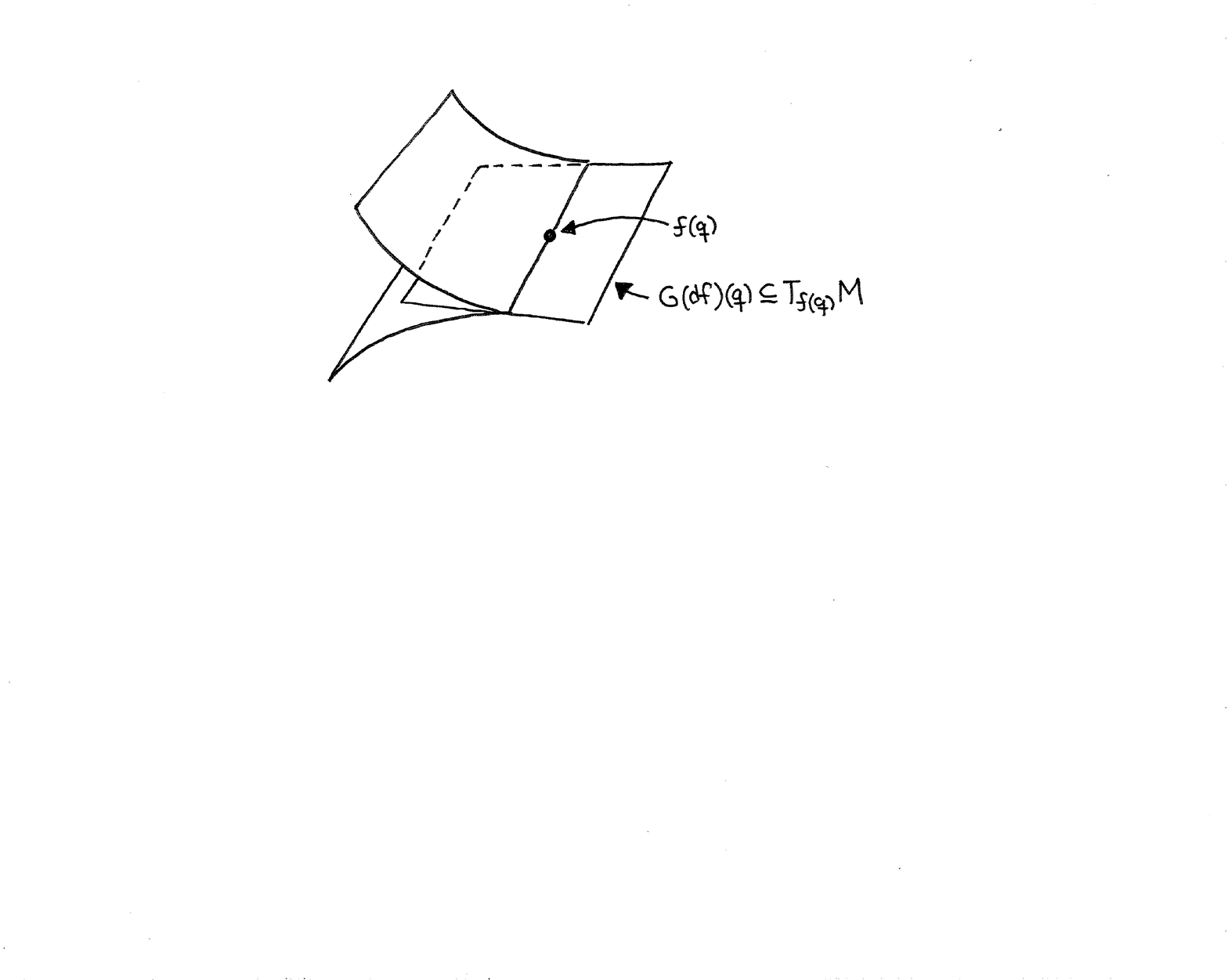}
\caption{A wrinkled embedding has a well-defined Gauss map everywhere, including points in the wrinkling locus.}
\label{gaussmap}
\end{figure}

\subsection{Wrinkled Lagrangian and Legendrian embeddings}\label{Lagrangian and Legendrian wrinkles} 
Let $(M, \omega)$ be a symplectic manifold.
\begin{definition}
A wrinkled Lagrangian embedding is a topological embedding $f:L^n \rightarrow (M^{2n},\omega)$ which is a smooth Lagrangian embedding away from a disjoint union of finitely many topologically trivial embedded $(n-1)$-spheres $S \subset L$, with $f$ equivalent (up to symplectomorphism) on $Op(S)\subset L$ to the local model $\cL_n: Op(S^{n-1}) \subset \bR^n \rightarrow (T^*\bR^n,dp \wedge dq)$ given by
\[ (q_1, \ldots , q_n) \mapsto \Big(q_1\, , \,  \ldots  \, , \,  q_{n-1}\, ,\,  \eta \, , \,  \frac{\partial H}{\partial q_{1}} - h\frac{\partial \eta }{ \partial q_{1}}  \, ,\,  \ldots  \, ,\,  \frac{\partial H}{\partial q_{n-1}} - h\frac{\partial \eta}{ \partial q_{n-1}} \, ,\,  h \Big)\] 
\[\text{where}\, \, \,  \, \eta(q)= q_n^3 + 3(|| \hat{q} ||^2-1)q_n , \quad h(q)= \int_0^{q_n}(|| \hat{q} ||^2+ u^2 -1)^2 du \]
\[ \text{and} \, \,  \, \, H(q)=\int_{0}^{q_n} h(\hat{q} , u) \frac{\partial \eta}{\partial q_n} (\hat{q} , u) du.  \]
\end{definition}
The wrinkled Lagrangian embedding $\cL_n$ is obtained from the wrinkled embedding $\cW_{n,n}$ in the following way. Let $(q,p)$ be the standard coordinates on $T^*\bR^n=\bR^n(q_1, \ldots, q_n) \times \bR^n(p_1, \ldots , p_n)$. Keeping $p_n \circ \cW_{n,n}=h$ fixed, for $j<n$ we replace the zero functions $p_j \circ \cW_{n,n}=0$ with the only possible functions (up to initial conditions) which will make the embedding Lagrangian. Informally, integrate $h$ in the direction $\partial / \partial q_n$ and differentiate the resulting function in the directions $\partial / \partial q_j$, $j<n$. The corresponding definition for Legendrians is entirely analogous. Let $(M,\xi)$ be a contact manifold. 
\begin{definition}
A wrinkled Legendrian embedding is a topological embedding $f:L^n \rightarrow (M^{2n+1},\xi)$ which is a smooth Legendrian embedding away from a disjoint union of finitely many topologically trivial embedded $(n-1)$-spheres $S \subset L$, with $f$ equivalent (up to contactomorphism) on $Op(S)\subset L$ to the local model $\widehat{\cL}_n: Op(S^{n-1}) \subset \bR^n \rightarrow \big(J^1(\bR^n,\bR),\xi_{std}\big)$ given by
\[ (q_1, \ldots , q_n) \mapsto \Big(q_1\, , \,  \ldots  \, , \,  q_{n-1}\, ,\,  \eta \, , \,  \frac{\partial H}{\partial q_{1}} - h\frac{\partial \eta }{ \partial q_{1}}  \, ,\,  \ldots  \, ,\,  \frac{\partial H}{\partial q_{n-1}} - h\frac{\partial \eta }{ \partial q_{n-1}} \, ,\,  h \,, \, H \Big)\] 
\[\text{where}\, \, \,  \, \eta(q)= q_n^3 + 3(|| \hat{q} ||^2-1)q_n , \quad h(q)= \int_0^{q_n}(|| \hat{q} ||^2+ u^2 -1)^2 du, \]
\[ \text{and} \, \,  \, \, H(q)=\int_{0}^{q_n} h(\hat{q} , u) \frac{\partial \eta}{\partial q_n} (\hat{q} , u) du.  \]
\end{definition}
We recall that $J^1(\bR^n,\bR)=T^*\bR^n(q,p) \times \bR(z)$ with the standard contact structure $\xi_{std}=\ker(dz-pdq)$. The Legendrian model $\widehat{\cL}_n$ is the Legendrian lift of the Lagrangian model $\cL_n$ under the Lagrangian projection $J^1(\bR^n,\bR) \to T^*\bR^n$, $(q,p,z) \mapsto (q,p)$. Consider also the front projection $J^1(\bR^n,\bR) \to J^0(\bR^n,\bR)=\bR^n \times \bR$, $(q,p,z) \mapsto (q,z)$. It is conceptually useful to understand the Legendrian front of the model $\widehat{\cL}_n$, which is the map $Op(S^{n-1}) \subset \bR^n \to \bR^n \times \bR$ given by $q \mapsto \big( (\hat{q}, \eta), H\big)$. On each of the hemispheres in $S^{n-1} \setminus S^{n-2}$, the front has semi-quintic cusps. On the equator $S^{n-2} \subset S^{n-1}$, the front has semi-quintic swallowtail singularities. See Figure \ref{quinticswallowtail} for an illustration.

\begin{figure}[h]
\includegraphics[scale=0.6]{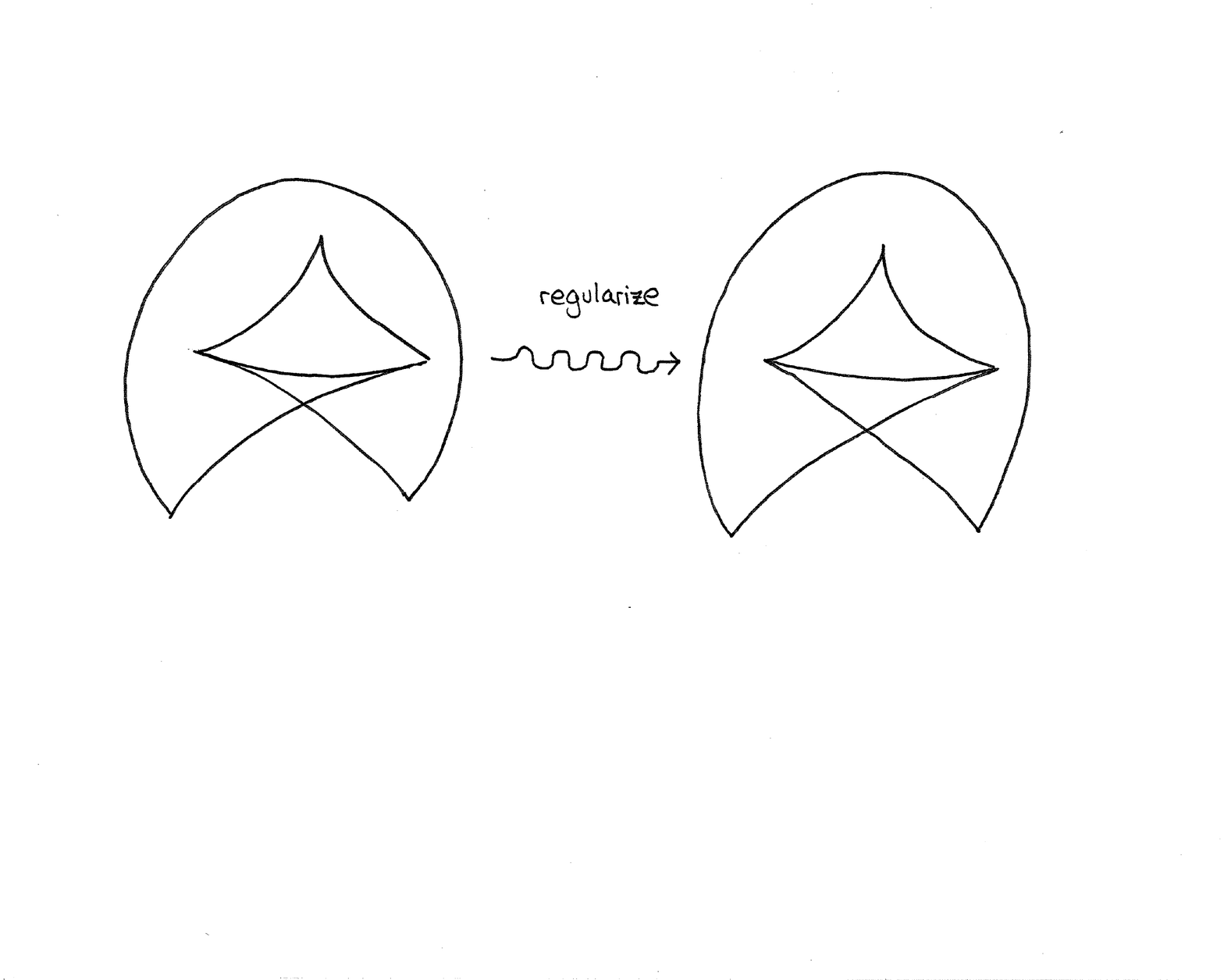}
\caption{The Legendrian front which generates one half of a Legendrian wrinkle. The cusps and swallowtail have a higher order of tangency than the standard cusps or swallowtails which one finds in the front projection of a regular Legendrian. To be more precise, the cusps which appear in the front projection of a Legendrian wrinkle are locally equivalent to $y^2=x^5$, whereas the standard cusps are locally quivalent to $y^2=x^3$.}
\label{quinticswallowtail}
\end{figure}

When we need to specify that a Lagrangian or Legendrian embedding $f:L \to M$ is not wrinkled, we will call $f$ regular. Observe that the Gauss map $G(df)$ of a wrinkled Lagrangian or Legendrian embedding $f:L \to M$ lands in the Lagrangian Grassmannian $\Lambda_n(M)$, just like a regular Lagrangian or Legendrian embedding.

\begin{warning}
A wrinkled Legendrian embedding has zig-zags but is not loose nor is it a wrinkled Legendrian in the sense of Murphy \cite{M12}. Indeed, the zig-zags of Murphy's wrinkled Legendrians occur in the font projection, not in the Legendrian submanifold itself. 
\end{warning}

\subsection{Parametric families of wrinkles}\label{Parametric families of wrinkles}
We will also consider families $f^z$ parametrized by a smooth compact manifold $Z$, possibly with boundary. A family of regular Lagrangian or Legendrian embeddings $f^z:L \to M$ parametrized by $Z$ is simply a smooth map $Z \times L \to M$, $(z,q) \mapsto f^z(q)$, such that for each $z \in Z$ the map $f^z$ is a regular Lagrangian or Legendrian embedding. If we allow the embeddings $f^z$ to be wrinkled, then we must allow the wrinkles to appear and disappear as the parameter $z$ varies. Indeed, in the smooth case considered in \cite{EM09}, Eliashberg and Mishachev allow  wrinkled embeddings to have the following local model $\cE_{n,r}:Op(0) \subset \mathbb{R}^n \rightarrow \mathbb{R}^{n+r}$ near finitely many points. These are embryos of wrinkles, instances of birth/death.
$$ (q_1 , \ldots , q_n) \mapsto \Big(q_1, \ldots, q_{n-1},\mu, 0, \ldots , 0,e \Big), $$
\[ \text{where} \quad \mu(q)= q_n^3 + 3||\hat{q}||^2q_n \quad \text{and} \quad   e=  \int_{0}^{q_n}(||\hat{q}||^2 + u^2)^2 du. \]

In the symplectic or contact case, we can deduce corresponding local forms for Lagrangian or Legendrian embryos by integrating the function $e$ in the direction $\partial / \partial q_n$ and then differentiating in the directions $\partial / \partial q_j$, $j<n$, just like we did in the definition of Lagrangian and Legendrian wrinkles. However, we wish to be slightly more precise in the way in which we allow wrinkles to be born or die and so we give the following definition of a family of wrinkled Lagrangian or Legendrian embeddings. We use the fibered terminology, which is a convenient language and is largely self-explanatory (the reader who wishes to see further details may consult for example \cite{EM09}).
 
 \begin{definition} A fibered wrinkled Lagrangian embedding $f^z:L^n \to (M^{2n}, \omega)$ parametrized by an $m$-dimensional manifold $ Z$ is a topological embedding $f:Z \times L \to Z \times M$, $(z,q) \mapsto (z,f^z(q))$ such that $f$ is a fibered smooth Lagrangian embedding away from a disjoint union of finitely many topologically trivial embedded $(m+n-1)$-spheres $S \subset Z \times L$, with $f$ equivalent (up to fibered symplectomorphism) on $Op(S) \subset Z \times L$ to the local fibered model $\cL^{\cF}_{n,m}:Op(S^{m+n-1})  \subset \bR^m \times \bR^n \to \bR^m \times (T^*\bR^n, dp \wedge dq)$ given by 
\[ ( z_1, \ldots , z_m , q_1 , \ldots , q_n )  \mapsto \Big(z_1, \ldots , z_m,\, q_1, \ldots , q_{n-1} , \, \eta \,  ,\,  \frac{\partial H}{\partial q_1} - h\frac{\partial \eta}{\partial q_1} \, , \, \ldots  \, , \,  \frac{\partial H}{\partial q_{n-1}} - h\frac{ \partial \eta }{\partial q_{n-1}} \,,\,  h\Big), \]
\[ \text{ where } \quad \eta(z,q)= q_n^2 + 3( ||z||^2 + ||\hat{q}||^2 -1)q_n, \quad  h(z,q)=\int_0^{q_n}( ||z||^2 + || \hat{q} ||^2 + u^2 -1)^2 du \]
\[ \text{ and} \qquad H(z,q)= \int_{0}^{q_n} h(z, \hat{q}, u) \frac{ \partial \eta}{\partial q_n}(z, \hat{q}, u) du . \]
 \end{definition}
 If we restrict $\cL^{\cF}_{n,m}$ to the half space $\{ z_1 \geq 0 \}$ we get the local model for the fibered half-wrinkles near the boundary $\partial Z$ of the parameter space. We can define fibered wrinkled Legendrian embeddings in the exact same way, with the local model $\widehat{\cL}^{\cF}_{n,m}=(\cL^{\cF}_{n,m},H): Op(S^{m+n-1}) \subset \bR^{m} \times \bR^n \to \bR^m \times \big(J^1(\bR^n, \bR), \xi_{std}\big)$. When we talk about families of wrinkled Lagrangian or Legendrian embeddings parametrized by a compact manifold, we will always assume that the family is fibered in the sense just described.

 \subsection{Exact homotopies}\label{Exact homotopies} Taking $Z=[0,1]$ in the definition of fibered wrinkled Lagrangian or Legendrian embeddings, we obtain the notion of a homotopy of wrinkled Lagrangian or Legendrian embeddings $f_t:L \to M$, $t \in [0,1]$, in which wrinkles are allowed to be born and to die as time goes by. The notion of exactness for homotopies of regular Lagrangian embeddings can be extended to the wrinkled case in a straightforward way.

\begin{definition}\label{exactwrinkled} Let $f_t:L \rightarrow M$ be a homotopy of (possibly wrinkled) Lagrangian embeddings. We say that $f_t$ is exact if the following condition holds. For the mapping $F:L \times [0,1] \rightarrow M$ defined by $(q,t) \mapsto f_t(q)$, consider the closed form $i_{\partial / \partial t} F^* \omega$ on $L \times [0,1]$. We demand that this form is exact when pulled back to $L$ by each of the inclusions $L \hookrightarrow L \times [0,1]$, $q \mapsto (q,t)$.\end{definition}

\begin{remark} Recall that if $f_t:L \to M$ is a homotopy of regular Lagrangian embeddings, then for small time $t>0$ one can interpret $f_t$ as a closed $1$-form  $\alpha_t$ on $L$ by identifying a neighborhood of the zero section in $T^*L$ with a Weinstein neighborhood of $f_0(L)$ in $M$. In this case exactness of $f_t$ amounts to asking that $\alpha_t$ is exact for every $t \in [0,1]$. 
\end{remark}

The importance of this definition stems from the following fact. If $f_t:L \to M$ is a compactly supported exact homotopy of regular Lagrangian embeddings, then there exists a (compactly supported) ambient Hamiltonian isotopy $\varphi_t:M \to M$ such that $f_t= \varphi_t \circ f_0$. We will always want to ensure that all homotopies of Lagrangian embeddings, regular or wrinkled, are exact. In the contact case, exactness is automatic. Indeed, every homotopy of regular Legendrian embeddings is induced by an ambient Hamiltonian isotopy. For convenience, we shall therefore refer to all homotopies of Legendrian embeddings, regular or wrinkled, as exact.

When a homotopy $f_t$ is fixed on a closed subset $A\subset L$ (usually $A=L \setminus U$ is the complement of some open set $U$ where we are performing some geometric manipulation), the notions of exactness will be understood relative to $Op(A)$. In this way, the ambient Hamiltonian isotopy inducing $f_t$ can be taken to be the identity on $Op\big(f(A) \big) \subset M$.

\begin{figure}[h]
\includegraphics[scale=0.7]{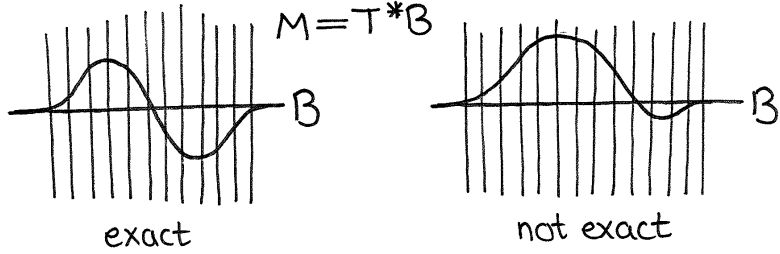}
\caption{The difference between an exact and a non-exact deformation of the zero section $B \hookrightarrow T^*B$. On the left, the areas cancel out, whereas on the right they do not. Exactness can be thought of as an area condition.}
\label{exactness}
\end{figure}

\subsection{Regularization of wrinkles}\label{Regularization of wrinkles}
Wrinkles can be regularized as follows. Consider the local model $\cW_{n,n}(q)=(\hat{q},\eta,0, \ldots, 0,h)$ introduced in Section \ref{Wrinkled embeddings}. Let $\phi: \bR^n \to \bR$ be a $C^\infty$-small function such that $\partial \phi / \partial q_n > 0$ on $S^{n-1}\subset \bR^n$ and such that $\text{supp}({\phi}) \subset Op(S^{n-1})$. Let $\widetilde{h}=h+ \phi$ and observe that $\widetilde{\cW}_{n,n}(q)=(\hat{q},\eta,0,\ldots, 0,\widetilde{h})$ is a smooth regular embedding such that $\widetilde{\cW}_{n,n}= {\cW}_{n,n}$ outside of $Op(S^{n-1})$, see Figure \ref{regularize}. 

 \begin{figure}[h]
\includegraphics[scale=0.6]{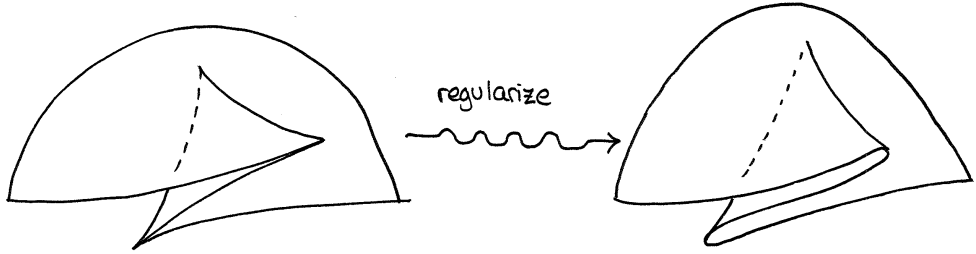}
\caption{Regularization of the standard wrinkle.}
\label{regularize}
\end{figure}

Next, require further that
\[ \int_{0}^{q_n} \phi(\hat{q}, u) \frac{\partial \eta}{\partial q_n}(\hat{q},u)=0 \]
whenever $q=(\hat{q},q_n) \notin \text{supp}({\phi})$, and consider the modified integral
\[ \widetilde{H}(q)= \int_{0}^{q_n} \widetilde{h}(\hat{q},u)\frac{\partial \eta}{\partial q_n}(\hat{q},u)du . \]

We obtain a regular Lagrangian embedding $\widetilde{\cL}_n:Op(S^{n-1}) \to (T^*\bR^n,dp \wedge dq)$ such that $\widetilde{\cL}_n=\cL_n$ outside of $Op(S^{n-1})$ by the formula
\[ (q_1, \ldots, q_n) \mapsto \Big(q_1, \ldots , q_{n-1}, \, \eta \, , \,  \frac{\partial \widetilde{H}}{ \partial q_1 }- \widetilde{h}\frac{\partial \eta }{ \partial q_1} \, , \, \ldots \, , \, \frac{\partial \widetilde{H}}{\partial q_{n-1} }- \widetilde{h}\frac{\partial \eta }{ \partial q_{n-1}} \, , \, \widetilde{h} \Big).\]

The Legendrian counterpart of the regularization is the local model $(\widetilde{\cL}_n,\widetilde{H})$. See Figure \ref{frontregularize} for an illustration of the regularization process in the front projection. Given a wrinkled Lagrangian or Legendrian embedding $f:L \to M$, we can apply this local procedure to every wrinkle and obtain a regular Lagrangian or Legendrian embedding $\widetilde{f}$. Similarly, a fibered wrinkled Lagrangian or Legendrian embedding $f^z$ can be regularized to a fibered regular Lagrangian or Legendrian embedding $\widetilde{f}^z$. If $f_t:L \to M$ is an exact homotopy of wrinkled Lagrangian embeddings, then $\widetilde{f}_t:L \to M$ is an exact homotopy of regular Lagrangian embeddings.

\begin{figure}[h]
\includegraphics[scale=0.5]{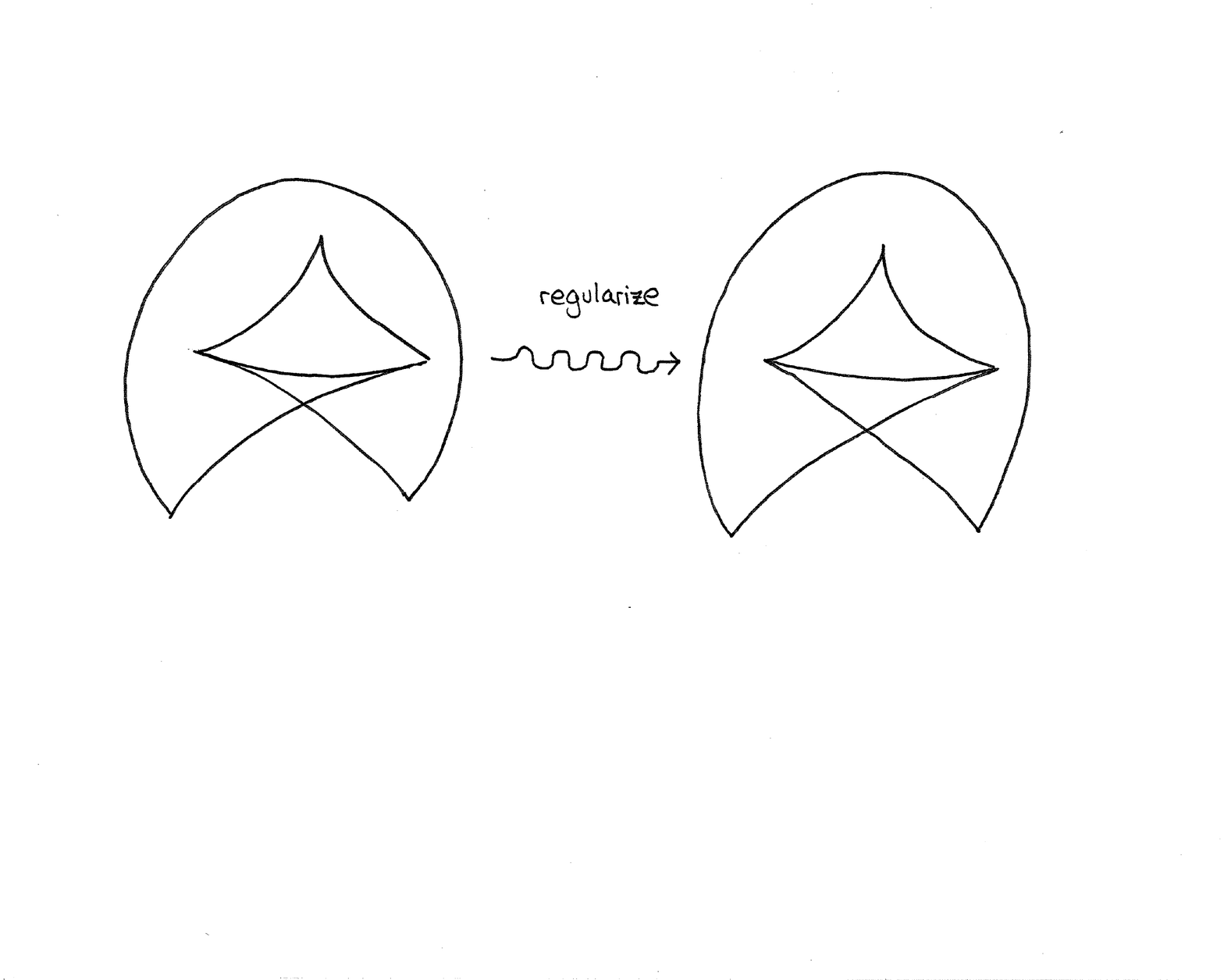}
\caption{The regularization can be also understood in terms of the front projection. The effect is to replace the semi-quintic cusps and swallowtails with semi-cubic cusps and swallowtails.}
\label{frontregularize}
\end{figure}

The change in the order of tangency as well as the geometric meaning of the condition  $\int_{0}^{q_n} \phi(\hat{q}, u) \frac{\partial \eta}{\partial q_n}(\hat{q},u)du=0$  can be better appreciated if we focus on the complement of the equator. See Figure \ref{modelforregularization} for an illustration of the regularization process near a cusp point.

\begin{figure}[h]
\includegraphics[scale=0.6]{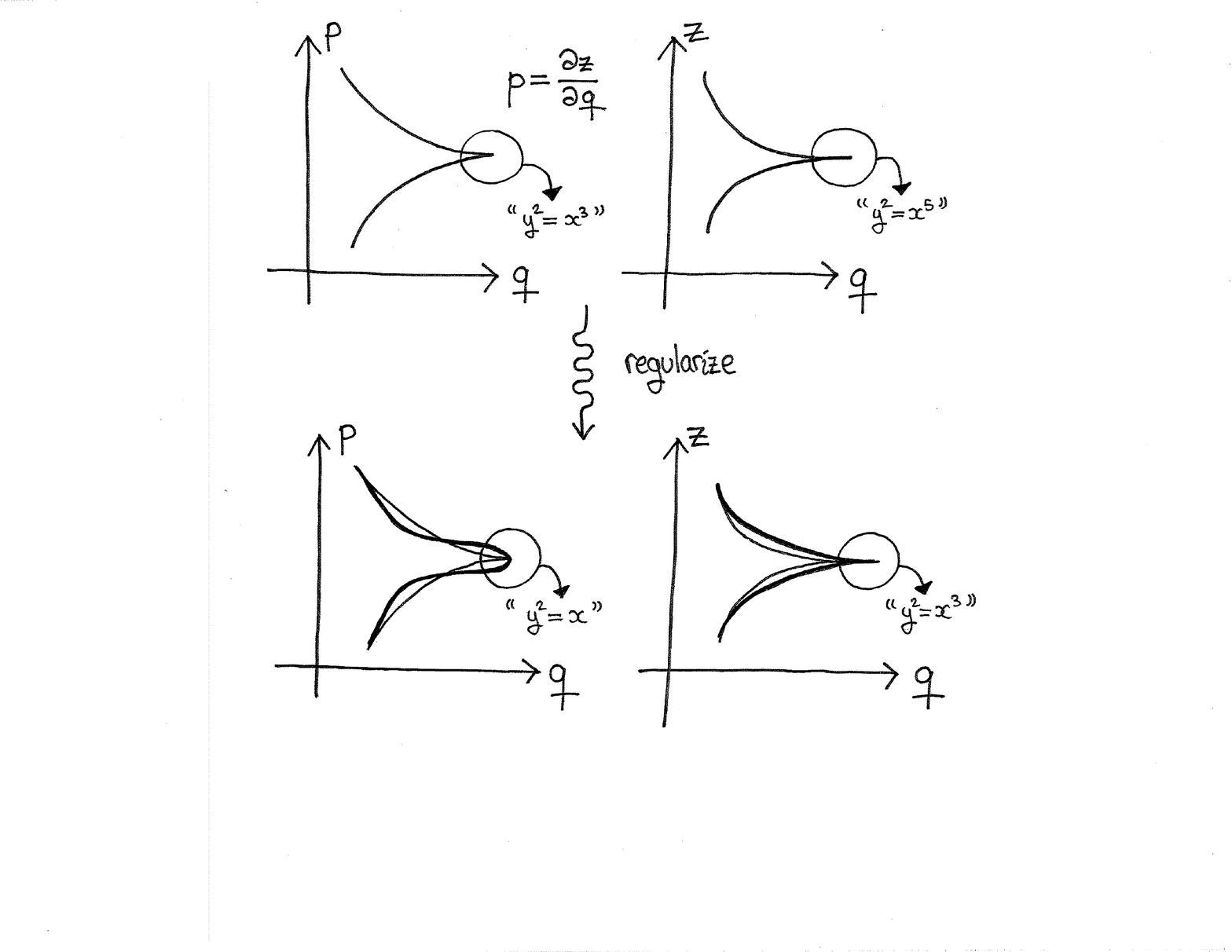}
\caption{Effect of the regularization process away from the equator in both the Lagrangian and front projections. The equation $\int_{0}^{q_n} \phi(\hat{q}, u) \frac{\partial \eta}{\partial q_n}(\hat{q},u)du=0$ manifests itself as an area condition in the bottom left.}
\label{modelforregularization}
\end{figure}
\begin{remark} Observe that the regularization process $f \mapsto \widetilde{f}$ depends on the choice of $\phi$. However, the space of possible $\phi$ is convex and therefore $\widetilde{f}$ is well defined up to a contractible choice. Different choices alter $\widetilde{f}$ by an ambient Hamiltonian isotopy supported on a neighborhood of the image of the wrinkling locus.
\end{remark}

\begin{remark}\label{sigma type singularities}
In the Lagrangian case, let $T^*\bR^n$ be foliated by the fibres of the standard projection $\pi:T^*\bR^n \to \bR^n$ and in the contact case, let $J^1(\bR^n, \bR) = T^*\bR^n \times \bR$ be foliated by the fibres of the front projection $\pi \times id:T^*\bR^n \times \bR \to \bR^n \times \bR$. Observe that the standard Lagrangian and Legendrian wrinkles are transverse to these foliations. Moreover, when we regularize the Lagrangian or Legendrian wrinkle we obtain a regular Lagrangian or Legendrian embedding whose singularities of tangency with respect to the corresponding foliation consist of $\Sigma^{10}$ folds away from the equator and of $\Sigma^{110}$ pleats on the equator.
\end{remark}

\subsection{Sharpening the wrinkles}\label{sharpening the wrinkles} 

Let $D^{\pm}=\{ q \in S^{n-1} | \pm q_n \geq 0 \}$ be the north and south hemispheres of the unit sphere $S^{n-1} \subset \bR^n$ and let $D^{n-1}$ be the closed unit disk in $\bR^{n-1}$, which we think of as sitting in $\bR^n$ via the inclusion $\bR^{n-1} =\bR^{n-1} \times 0 \subset \bR^n$. The standard Lagrangian wrinkle $\cL_n:Op(S^{n-1}) \subset \bR^n \to T^*\bR^n$ is equivalent on $Op(D^\pm) \setminus Op(\partial D^\pm)$ to the following local model $\cC_n: \bR^n \to T^*\bR^n$ on $Op(D^{n-1}) \setminus Op(\partial D^{n-1})$.
\[ \cC_n(q_1, \ldots, q_n) =\left( q_1, \ldots , q_{n-1} , q_n^2, 0, \ldots , q_n^3\right) .\]

Note that $\cC_n$ is the product of $\cC_1:\bR \to T^*\bR$ and the zero section $\bR^{n-1} \hookrightarrow T^*\bR^{n-1}$. By scaling the model $\cC_n$ by any small number $\varepsilon>0$ in the direction of the cotangent fibres we get a sharpened Lagrangian cusp $\varepsilon \, \cC_n:\bR^n \to T^*\bR^n$. Explicitly, we set
\[ \varepsilon \,  \cC_n(q_1, \ldots , q_n) = \left( q_1 , \ldots , q_{n-1} , q_n^2, 0, \ldots , \varepsilon q_n^3 \right). \]

It will be useful for us later on to sharpen the cusps of a Lagrangian wrinkle. This sharpening can be achieved by interpolating between the two models $\cC_n$ and $\varepsilon \, \cC_n$. The key property of the sharpening construction is that the interpolation can be achieved by a $C^1$-small perturbation. The precise result that we will need is the following, where we recall the notation $q=(\hat{q},q_n)$, $\hat{q}=(q_1,\ldots, q_{n-1})$.
\begin{lemma}\label{sharpencusp}
For any $\delta, \varepsilon>0$ there exists an exact homotopy $\cC_{n,t}:\bR^n \to T^*\bR^n$ such that the following properties hold.
\begin{itemize}
\item $\cC_{n,0}=\cC_n$.
\item $\cC_{n,t}=\cC_n$ when $|q_n|>2\delta$ or $||\hat{q}||>1-\delta$.
\item $\cC_{n,1}= \varepsilon \, \cC_n$ when $|q_n|<\delta$ and $|| \hat{q} ||<1-2\delta$.
\item $\text{dist}_{C^1}(\cC_n, \cC_{n,t} ) \leq A \delta$ for some constant $A>0$ independent of $\delta$ and $\varepsilon$.
\end{itemize}
\end{lemma}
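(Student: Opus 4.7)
The plan is to build $\cC_{n,t}$ via its Legendrian lift in $J^1(\bR^n, \bR)$, using the $z$-coordinate as a single scalar generating function. Note first that the Legendrian lift of the standard cusp $\cC_n$ is the map $q \mapsto (q_1, \ldots, q_{n-1}, q_n^2, 0, \ldots, 0, q_n^3, \tfrac{2}{5} q_n^5)$, since $d(\tfrac{2}{5} q_n^5) = q_n^3 \cdot d(q_n^2)$; correspondingly the lift of $\varepsilon\cC_n$ has $z$-coordinate $\tfrac{2\varepsilon}{5} q_n^5$. So the task reduces to interpolating this scalar function on the inner region while leaving it unchanged on the outer region. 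The advantage of working with $z$ alone is that both the Lagrangian condition and the exactness of the resulting homotopy become automatic consequences of the ansatz.

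I would choose a smooth cutoff $\gamma : \bR^n \times [0,1] \to [\varepsilon, 1]$ satisfying $\gamma(\cdot, 0) \equiv 1$; $\gamma \equiv 1$ on $\{|q_n|>2\delta\}\cup\{\|\hat q\|>1-\delta\}$; $\gamma(\cdot, 1) \equiv \varepsilon$ on $\{|q_n|<\delta\}\cap\{\|\hat q\|<1-2\delta\}$; and $|D^k_q \gamma| \leq C_k \delta^{-k}$ for all $k \geq 0$; for instance $\gamma(q,t) = 1 - (1-\varepsilon)\, t\, \lambda_1(\|\hat q\|^2)\, \lambda_2(q_n)$ for two standard bumps $\lambda_1, \lambda_2$. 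Setting $z(q,t) = \tfrac{2}{5}\gamma(q,t) q_n^5$, I would define
$$ \cC_{n,t}(q) = \big( q_1, \ldots, q_{n-1}, q_n^2,\, p_1, \ldots, p_n \big),$$
where $p_i = \partial z/\partial q_i = \tfrac{2}{5} q_n^5\,\partial\gamma/\partial q_i$ for $i<n$ and $p_n = (2q_n)^{-1}\partial z/\partial q_n = \gamma q_n^3 + \tfrac{1}{5} q_n^4\, \partial\gamma/\partial q_n$; the latter is smooth because $\partial z/\partial q_n = 2\gamma q_n^4 + \tfrac{2}{5} q_n^5\, \partial\gamma/\partial q_n$ vanishes to order four at $q_n=0$. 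By construction the pullback of the Liouville form $p\, dQ$ equals $dz$, which immediately gives $\cC_{n,t}^*\omega = 0$ together with $i_{\partial/\partial t} F^*\omega = d(\partial z/\partial t)$ on each $t$-slice, so the homotopy is exact in the sense of Definition \ref{exactwrinkled}. The three pointwise boundary conditions of the lemma follow at once because in each prescribed region $\gamma$ is constant in $q$, killing all $D_q\gamma$ terms.

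The main point to verify, and the main obstacle, is the $C^1$-estimate. Every entry of $\cC_{n,t} - \cC_n$ and of $D\cC_{n,t} - D\cC_n$ is a sum of terms of the shape $q_n^a \cdot D^b_q\gamma \cdot (\text{bounded})$ with exponent gap $a-b \geq 2$; for example,
$$ \partial p_n/\partial q_n - 3 q_n^2 = 3(\gamma-1) q_n^2 + \tfrac{9}{5} q_n^3\, \partial\gamma/\partial q_n + \tfrac{1}{5} q_n^4\, \partial^2\gamma/\partial q_n^2. $$
Since $|q_n| \leq 2\delta$ on the support of the perturbation and $|D^b_q \gamma| \leq C_b \delta^{-b}$, each such term is $O(\delta^{a-b}) = O(\delta^2)$, and summing the finitely many entries gives $\mathrm{dist}_{C^1}(\cC_n, \cC_{n,t}) \leq A\delta^2 \leq A\delta$ for $\delta \leq 1$ and some constant $A$ independent of $\varepsilon$. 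The quantitative heart of the argument is the power-counting $a - b \geq 2$: the five powers of $q_n$ supplied by the generating function $z = \tfrac{2}{5} \gamma q_n^5$ must beat the inverse powers of $\delta$ produced by differentiating $\gamma$, and they do so with room to spare; the fact that this accounting works uniformly in $\varepsilon$ is why the sharpening can be achieved by an arbitrarily $C^1$-small perturbation regardless of how drastic the sharpening is.
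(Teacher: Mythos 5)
Your proof is correct and takes essentially the same route as the paper: both define the sharpening at the level of the front by scaling the generating function $z=\tfrac{2}{5}q_n^5$ with a cutoff (your $\gamma(q,t)$ plays the role of the paper's $\psi_t(q_n,\|\hat q\|)$), which makes the Lagrangian condition and exactness automatic, and both derive the $C^1$-estimate from the fact that the five powers of $q_n$ in the generating function dominate the at-most-two inverse powers of $\delta$ coming from differentiating the cutoff. Your power-counting is somewhat more explicit than the paper's (and even yields the slightly sharper bound $O(\delta^2)$), but the underlying mechanism is identical.
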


The same Lemma also holds for the Legendrian cusp $\widehat{\cC}_n=(\cC_n,C):\bR^n \to T^*\bR^n \times \bR = J^1(\bR^n,\bR)$, where  $C(q)=\frac{2}{5} q_n^5$. We prove the Lagrangian and Legendrian versions simultaneously.

\begin{proof}
 Let $\psi:\bR \times \bR \to [0,1]$ be a function satisfying the following properties.
\begin{itemize}
\item $\psi(x,y) = \varepsilon$ for $(x,y)  \in \left[-\delta,\delta \right] \times [-1+2\delta,1-2\delta]$, 	
\item $\varepsilon \leq \psi(x,y) \leq 1$ for $(x,y) \in  [-2\delta,2\delta] \times [-1+\delta,1-\delta] \setminus  \left[-\delta,\delta \right] \times [-1+2\delta,1-2\delta]$,
\item $\psi(x,y)=1$ for $(x,y) \notin [-2\delta,2\delta] \times [-1+\delta,1-\delta]$.
\end{itemize}
We also demand, as we may, that the following bounds hold for some constant $A>0$ independent of $\delta$ and $\varepsilon$. 
\begin{itemize}
\item $| \partial \psi/\partial x |, \, | \partial \psi / \partial y | \leq A/\delta$.
\item $  |\partial ^2 \psi / \partial x^2 | , \, | \partial^2 \psi / \partial x \partial y |, \, | \partial^2 \psi / \partial y^2 | \leq A / \delta^2$.
\end{itemize}
Finally, we may also choose $\psi$ such that $\partial \psi /\partial y=0$ when $|y|<1-2 \delta$. 

Set $\psi_t=(1-t) + t \psi$ and $C_t(q)=\frac{2}{5} \psi_t(q_n, ||\hat{q}||)q_n^5$. The front $q \mapsto \big( (\hat{q},q_n^2),C_t\big) \in \bR^n \times \bR$ generates the Lagrangian and Legendrian cusps $\cC_{n,t}$ and $\widehat{\cC}_{n,t}=(\cC_{n,t},C_t)$ respectively. To be explicit, we have
\[ \cC_{n,t}(q)=\Big(\hat{q}, q_n^2, \frac{2}{5}\frac{\partial \psi_t}{\partial y}(q_n,||\hat{q}||)\frac{q_1q_n^5}{||\hat{q}||}, \ldots, \frac{2}{5}\frac{\partial \psi_t}{\partial y}(q_n,||\hat{q}||)\frac{q_{n-1}q_n^5}{||\hat{q}||},\frac{1}{5} \frac{\partial \psi_t}{\partial x}(q_n,||\hat{q}||)q_n^4 + \psi_t(q_n, || \hat{q} ||))q_n^3\Big). \]
The first three properties stated in the Lemma are clearly satisfied. The fourth property follows from the uniform bounds on the first and second partial derivatives of $\psi$.
\end{proof}

\begin{figure}[h]
\includegraphics[scale=0.6]{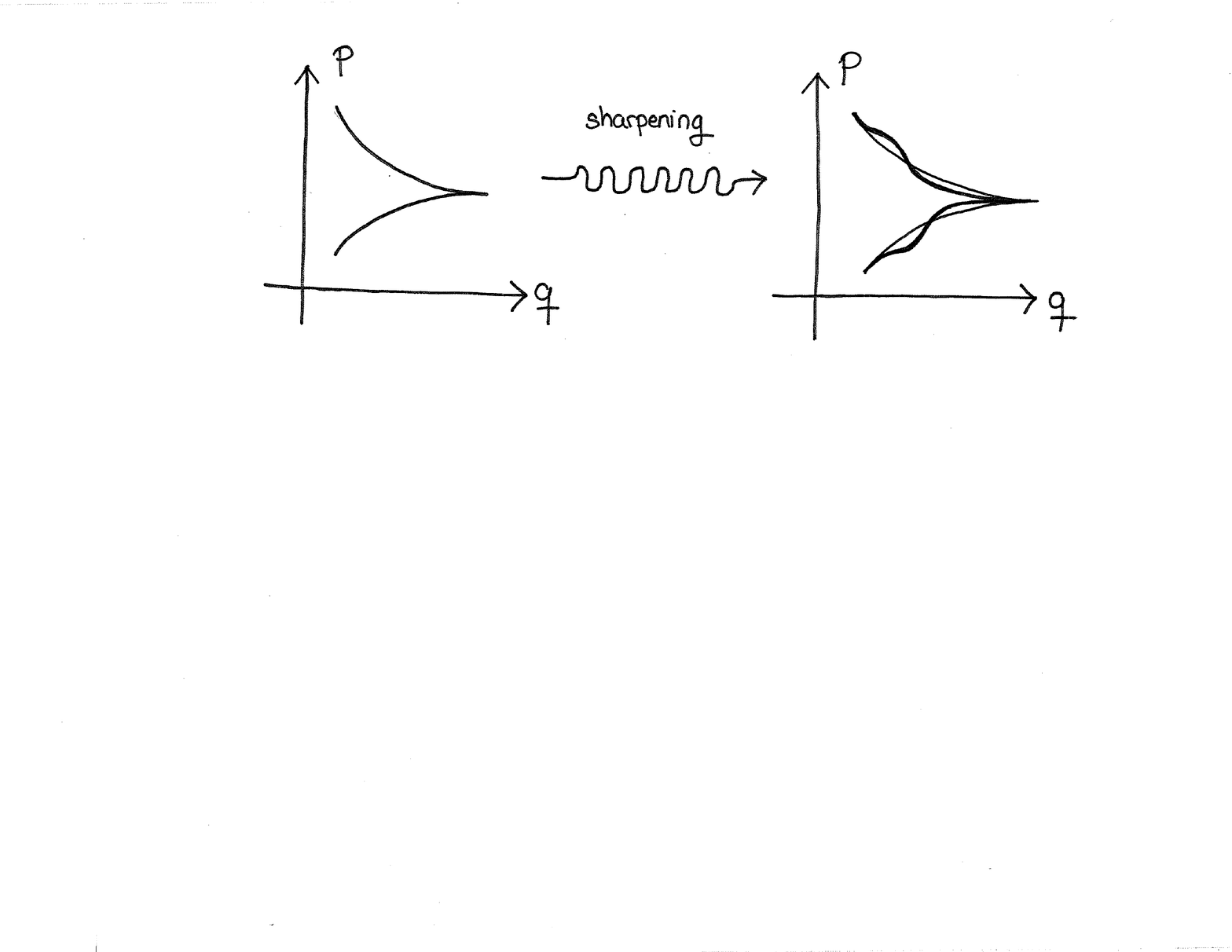}
\caption{Sharpening the Lagrangian cusp. Since we define the sharpening at the level of the front, the area condition which is necessary for exactness is automatically satisfied, as shown on the picture.}
\label{sharpeningcusp}
\end{figure}

Next we explain how to sharpen the birth/deaths of zig-zags on the equator of each wrinkle. The standard Lagrangian wrinkle $\cL_n:Op(S^{n-1}) \subset \bR^n \to T^*\bR^n$ is equivalent on $Op(S^{n-2}) \subset \bR^n$ to the following local model $\cG_n: S^{n-2} \times \bR^2 \to T^*(S^{n-2} \times \bR^2)$ on $Op(S^{n-2} \times 0) \subset S^{n-2} \times \bR^2$. 
\[ \cG_n(\widetilde{q},q_{n-1},q_n) = \Big(\widetilde{q}, q_{n-1} , \tau ,0, \frac{\partial G}{\partial q_{n-1}} - g \frac{\partial \tau}{\partial q_{n-1}} , g\Big) , \qquad q=(\widetilde{q},q_{n-1},q_n) \in S^{n-2} \times \bR \times \bR, \]
\[ \text{where}  \quad \tau(q_{n-1},q_n)= q_n^3-3q_{n-1}q_n , \qquad g(q_{n-1},q_n)= \int_0^{q_n} (u^2-q_{n-1})^2 du \]
\[ \text{and} \qquad G(q_{n-1},q_n)=\int_0^{q_n} g(q_{n-1}, u) \frac{\partial \tau}{\partial q_n}(q_{n-1} , u) \, du. \]

We remark that $\cG_n$ is the product of $\cG_2:\bR^2 \to T^*\bR^2$ with the zero section $S^{n-2} \hookrightarrow T^*S^{n-2}$. For any $\varepsilon>0$, the sharpened model $\varepsilon \, \cG_n: S^{n-2} \times \bR^2 \to T^*(S^{n-2} \times \bR^2)$ is given by
\[ \varepsilon \, \cG_n(\widetilde{q},q_{n-1},q_n) = \left( \widetilde{q},q_{n-1},\tau,0, \varepsilon\left(\frac{\partial G}{\partial q_{n-1}}-g \frac{\partial \tau}{\partial q_{n-1}} \right), \varepsilon  g \, \right).\]

The following result allows us to interpolate between $\cG_n$ and $\varepsilon \, \cG_n$ while maintaining $C^1-$control throughout the perturbation.
\begin{lemma}\label{sharpenbirth}
For any $\delta, \varepsilon>0$ there exists an exact homotopy $\cG_{n,t}:S^{n-2} \times \bR^2 \to T^*(S^{n-2} \times \bR^2)$ such that the following properties hold.
\begin{itemize}
\item $\cG_{n,0}=\cG_n$.
\item $\cG_{n,t}=\cG_n$ when $|q_{n-1}|>2\delta$ or $|q_n|>2\delta$.
\item $\cG_{n,1}= \varepsilon \, \cG_n$ when $|q_{n-1}|<\delta$ and $|q_n|<\delta$.
\item $\text{dist}_{C^1}(\cG_n, \cG_{n,t} ) \leq A \delta$ for some constant $A>0$ independent of $\delta$ and $\varepsilon$.
\end{itemize}
\end{lemma}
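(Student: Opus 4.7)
The plan is to follow the strategy of Lemma \ref{sharpencusp}, adapted to the birth model. We sharpen at the level of the Legendrian generating function, so that exactness is automatic. Recall that the Legendrian lift $\widehat{\cG}_n$ in $J^1(S^{n-2} \times \bR^2, \bR)$ has front $(\tilde{q}, q_{n-1}, \tau, G)$, and $\cG_n$ is its Lagrangian projection; sharpening $\cG_n$ by the factor $\varepsilon$ corresponds to rescaling $g$ (and hence $G$) by $\varepsilon$ in a neighborhood of the origin in the $(q_{n-1},q_n)$-plane.

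I would construct a cutoff $\psi : \bR^2 \to [\varepsilon, 1]$ satisfying the same rectangular cutoff conditions as in Lemma \ref{sharpencusp} (now with $(x,y) = (q_{n-1},q_n)$): $\psi \equiv \varepsilon$ on $[-\delta,\delta]^2$, $\psi \equiv 1$ outside $[-2\delta,2\delta]^2$, and $|\partial^\alpha \psi| \leq A/\delta^{|\alpha|}$ for $|\alpha| \leq 2$. Set $\psi_t = (1-t) + t\psi$, $g_t(q_{n-1},q_n) = \psi_t(q_{n-1},q_n)\, g(q_{n-1},q_n)$ and
\[
G_t(q_{n-1}, q_n) = \int_0^{q_n} g_t(q_{n-1}, u)\, \frac{\partial \tau}{\partial q_n}(q_{n-1},u)\,du.
\]
Define $\cG_{n,t}$ by the same formula as $\cG_n$, with $g$ and $G$ replaced by $g_t$ and $G_t$. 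The identity $\partial G_t/\partial q_n = g_t \cdot \partial \tau/\partial q_n$ holds by construction, which ensures that $\cG_{n,t}$ is a smooth Lagrangian family whose Legendrian lift is $\widehat{\cG}_{n,t} = (\cG_{n,t}, G_t)$; in particular, the homotopy is automatically exact since it comes from a family of Legendrian fronts.

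Properties (1) and (3) are immediate from the pointwise action of $\psi_t$: at $t = 0$, $\psi_0 \equiv 1$, so $\cG_{n,0} = \cG_n$; and at $t = 1$ on $\{|q_{n-1}|<\delta,\ |q_n|<\delta\}$, the integration path in $u$ remains inside $\{\psi_1 = \varepsilon\}$, so $g_1 = \varepsilon g$ and $G_1 = \varepsilon G$, giving $\cG_{n,1} = \varepsilon\,\cG_n$. Property (2) reduces, given $g_t = g$ when $\psi_t = 1$, to checking that $\partial G_t/\partial q_{n-1} = \partial G/\partial q_{n-1}$ in the region $|q_{n-1}|\leq 2\delta$, $|q_n|>2\delta$. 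The difference equals the $q_{n-1}$-derivative of $\int_0^{\pm 2\delta}(\psi_t - 1)\, g\, \partial\tau/\partial q_n \,du$, so to make it vanish I would impose, in analogy with the auxiliary condition $\partial \psi/\partial y = 0$ used in Lemma \ref{sharpencusp}, that this integral is independent of $q_{n-1}$ (equivalently, zero). This is a linear constraint on the admissible profiles that can be satisfied by a slight modification of $\psi$, exploiting the oddness of the integrand $g(q_{n-1},u)\,\partial\tau/\partial q_n(q_{n-1},u)$ in $u$.

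For property (4), one estimates $|d\cG_{n,t} - d\cG_n|$ coordinate by coordinate on the cube $[-2\delta,2\delta]^2$ (outside which the difference vanishes by property (2)). Each coordinate is a sum of products of at most two $\psi$-derivatives (of size $A/\delta^k$, $k \leq 2$) with polynomial expressions in $q_{n-1}, q_n$ built from $g$, $G$, and $\tau$ and their derivatives, for which we have the orders $g = O(\delta^5)$, $G = O(\delta^8)$, $\partial\tau/\partial q_i = O(\delta)$, $\partial^2\tau = O(1)$. The net scaling gives a uniform bound of order $O(\delta)$, independent of $t$ and $\varepsilon$, yielding the $A\delta$ estimate. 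The main obstacle I anticipate is packaging together the rectangular cutoff conditions, the derivative bounds, and the auxiliary integral identity of the previous paragraph into a single explicit profile $\psi$; this is a constrained but feasible construction, directly parallel to but slightly more involved than the one carried out for the cusp in Lemma \ref{sharpencusp}.
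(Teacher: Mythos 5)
Your approach differs from the paper's in a way that creates a real obstacle, and the fix you propose does not work. The paper multiplies the generating function $G$ by the cutoff, setting $G_t=\phi_t G$; this is a \emph{local} operation, so $\phi_t\equiv 1$ outside the box immediately gives $G_t=G$ and $\partial G_t/\partial q_{n-1}=\partial G/\partial q_{n-1}$ there, making property (2) automatic. You instead multiply the momentum $g$ by the cutoff and re-integrate to recover $G_t$; this is non-local, and as you correctly identify, property (2) then requires the integral
\[
I(q_{n-1}) \;=\; \int_0^{\pm 2\delta}\bigl(\psi_t(q_{n-1},u)-1\bigr)\,g(q_{n-1},u)\,\frac{\partial\tau}{\partial q_n}(q_{n-1},u)\,du
\]
to be independent of $q_{n-1}$, hence zero (by continuity with the region $|q_{n-1}|>2\delta$, where $\psi_t\equiv 1$ forces $I=0$).

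This constraint cannot be satisfied under the stated conditions on $\psi$. For $q_{n-1}\leq 0$ and $0<u<2\delta$ one has $g(q_{n-1},u)=\int_0^u(w^2-q_{n-1})^2\,dw>0$ and $\partial\tau/\partial q_n=3(u^2-q_{n-1})>0$, while $\psi_t-1\leq 0$ with strict inequality on a set of positive measure (since $\psi=\varepsilon$ on the inner box). The integrand therefore has a definite sign on $(0,2\delta)$, so $I(q_{n-1})<0$ strictly. The oddness of $g\,\partial\tau/\partial q_n$ in $u$ only gives you $\int_{-2\delta}^{2\delta}=0$, not $\int_0^{2\delta}=0$, and it is the latter one-sided integral that governs $G_t-G$ at $q_n>2\delta$. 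Salvaging your route would require allowing $\psi>1$ somewhere in the transition region to create a cancelling positive contribution; that is not a "slight modification" of the cusp construction and would need a separate existence-and-estimates argument. The paper sidesteps all of this by rescaling $G$ directly; the only subtlety it incurs is that the resulting $p_n$-coordinate contains the term $\frac{\partial\phi_t/\partial q_n\cdot G}{\partial\tau/\partial q_n}$, which forces one to impose that $\partial\phi/\partial q_n$ vanishes on a neighborhood of the zero locus $\{q_{n-1}=q_n^2\}$ of $\partial\tau/\partial q_n$ inside the box — a local, pointwise, and trivially satisfiable condition, unlike your integral constraint.
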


As before, the same Lemma also holds for the Legendrian counterpart $\widehat{\cG}_n=(\cG_n,G):S^{n-2} \times \bR^2 \to T^*(S^{n-2} \times \bR^2) \times \bR=J^1(S^{n-2} \times \bR^2,\bR)$ and we prove both versions simultaneously.

\begin{proof}
Let $\phi:\bR^2 \to [0,1]$ be a function satisfying the following properties.
\begin{itemize}
\item $\phi(x,y) = \varepsilon$ for $(x,y) \in \left[-\delta,\delta \right]^2 $, 
\item $\varepsilon \leq \phi(x,y) \leq 1$ for $(x,y) \in \left[-2\delta, 2\delta \right]^2  \setminus \left[-\delta, \delta\right]^2$,
\item $\phi(x,y)=1$ for $(x,y) \notin [-2\delta, 2\delta]^2 $.
\end{itemize}

We again demand that the following bounds hold for a constant $A>0$ independent of $\delta$ and $\varepsilon$. 
\begin{itemize}
\item $| \partial \phi/\partial x |, \, | \partial \phi / \partial y | \leq A/\delta$.
\item $  |\partial ^2 \phi / \partial x^2 | , \, | \partial^2 \phi / \partial x \partial y |, \, | \partial^2 \phi / \partial y^2 | \leq A / \delta^2$.
\end{itemize}
Moreover, we may also choose $\phi$ such that $\partial \phi /\partial y=0$ if $|y|<\delta$.

Set $\phi_t=(1-t) + t \phi_t$ and $G_t(q)= \phi_t(q_{n-1},q_n)G(q)$. The front $q \mapsto \big( (\widetilde{q},q_{n-1},\tau),G_t\big)$ generates the Lagrangian and Legendrian birth/deaths of zig-zags $\cG_{n,t}$ and $\widehat{\cG}_{n,t}=( \cG_{n,t},G_t)$ respectively. To be explicit, we have
\[ \cG_{n,t}(\widetilde{q},q_{n-1},q_n)= \left(\widetilde{q}, q_{n-1} ,  \, \tau \, , 0 \, ,\,  \frac{\partial G_t }{\partial q_{n-1}} - \big(\frac{\partial \phi_t}{\partial y} \frac{G}{(\frac{\partial \tau }{ \partial q_n})} + \phi_t \,  g \big) \frac{\partial \tau}{\partial q_{n-1}} \,   , \,\frac{\partial \phi_t}{\partial y} \frac{G}{(\frac{\partial \tau }{ \partial q_n})} + \phi_t\,  g \right). \]
The first three properties stated in the Lemma are clearly satisfied. The fourth property follows from the uniform bounds on the first and second partial derivatives of $\phi$.
\end{proof}

\begin{figure}[h]
\includegraphics[scale=0.6]{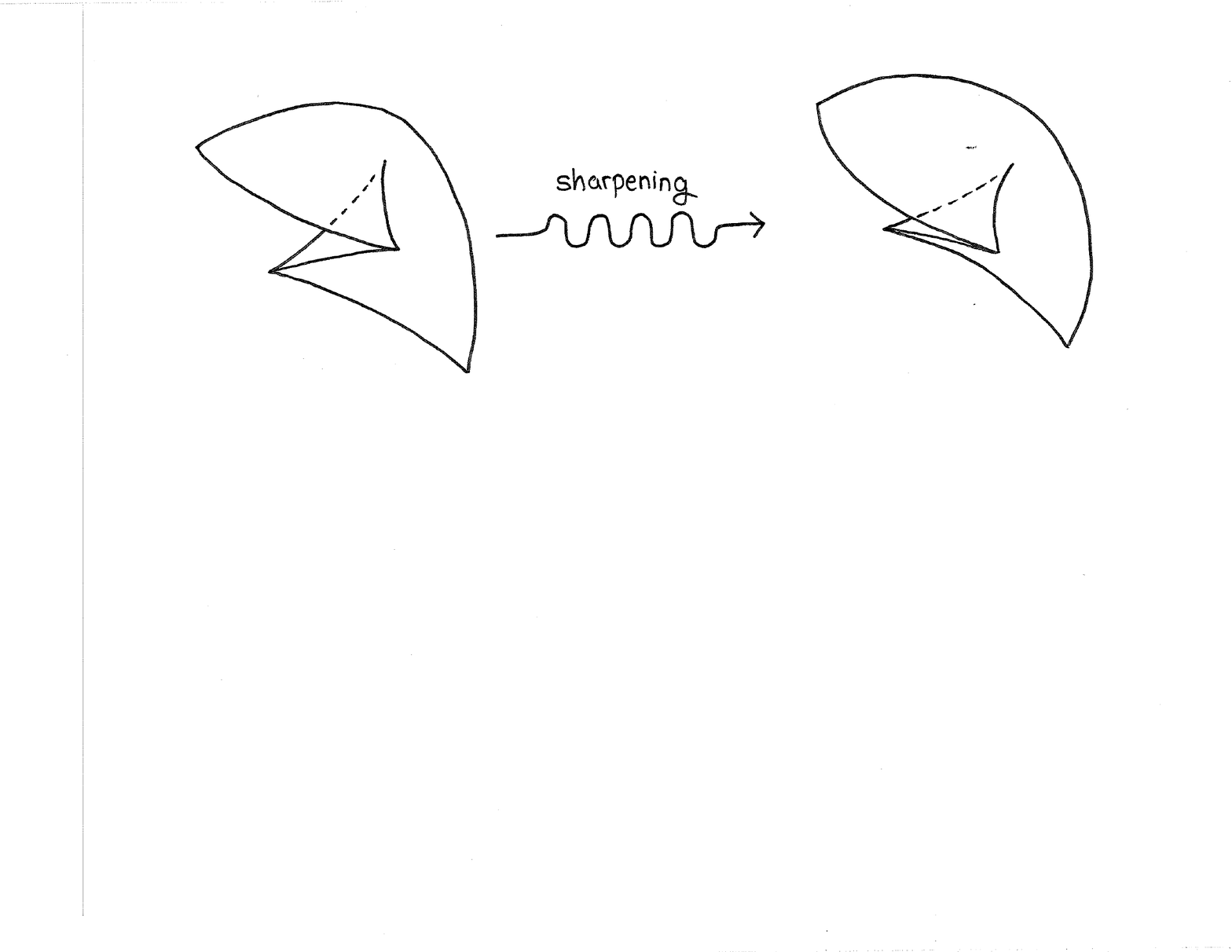}
\caption{Sharpening the Lagrangian birth/death of zig-zags..}
\label{sharpeningbirth}
\end{figure}

\begin{remark} The sharpening construction can also be applied to a family of wrinkled Lagrangian or Legendrian embeddings. To do this, one needs to work instead with the local model for the fibered wrinkle and repeat the above construction in the fibered setting. The proofs only differ in notation.
\end{remark}

\section{Lagrangian and Legendrian rotations}\label{Rotations of Lagrangian and Legendrian planes}

\subsection{Tangential rotations}
In Section \ref{Tangential homotopies} we introduced the notion of a tangential rotation, which decouples a Gauss map $G(df):L \to \Lambda_n(M)$ from its underlying Lagrangian or Legendrian embedding $f:L \to M$. We repeat the definition below for convenience. Recall that $\Pi:\Lambda_n(M) \to M$ denotes the Lagrangian Grassmannian of a symplectic or contact manifold $M$. 

\begin{definition}
A tangential rotation of a regular Lagrangian or Legendrian embeddings $f:L \to M$ is a compactly supported deformation $G_t:L \to \Lambda_n(M)$, $t \in [0,1]$, of $G_0=G(df)$ such that $\Pi \circ G_t = f$. 
\end{definition}

%

We will also need to consider tangential rotations of wrinkled Lagrangian and Legendrian embeddings. As in the unwrinkled case, a tangential rotation of a wrinkled Lagrangian or Legendrian embedding $f:L \to M$ is a compactly supported deformation $G_t:L \to \Lambda_n(M)$, $t \in [0,1]$, of $G_0=G(df)$ such that $\Pi  \circ G_t=f$. 

\subsection{Simple tangential rotations}

Let $G_t:L \to \Lambda_n(M)$ be a tangential rotation of a possibly wrinkled Lagrangian or Legendrian embedding $f:L \to M$. A priori, the one-parameter family of Lagrangian planes $G_t(q)$ could rotate around wildly inside $T_{f(q)}M$. It will be useful for us to restrict these rotations to be of a particularly simple type. See Figure \ref{simplerotation} for an illustration of the desired simplicity.

\begin{definition} A tangential rotation  $G_t:L \to \Lambda_n(M)$ of a possibly wrinkled Lagrangian or Legendrian embedding $f:L \to M$ is simple if there exists a field of $(n-1)$-dimensional planes $H^{n-1} \subset TM$ defined along some open subset $\cO \subset M$ such that 
\begin{itemize}
\item on $f^{-1}(\cO)$ we have $H \subset \text{im}(G_t)$ for all $t \in [0,1]$. 
\item on $L \setminus f^{-1}(\cO)$ the rotation $G_t$ is constant. 
\end{itemize}
We say that $G_t$ is simple with respect to $H$. \end{definition}

\begin{figure}[h]
\includegraphics[scale=0.65]{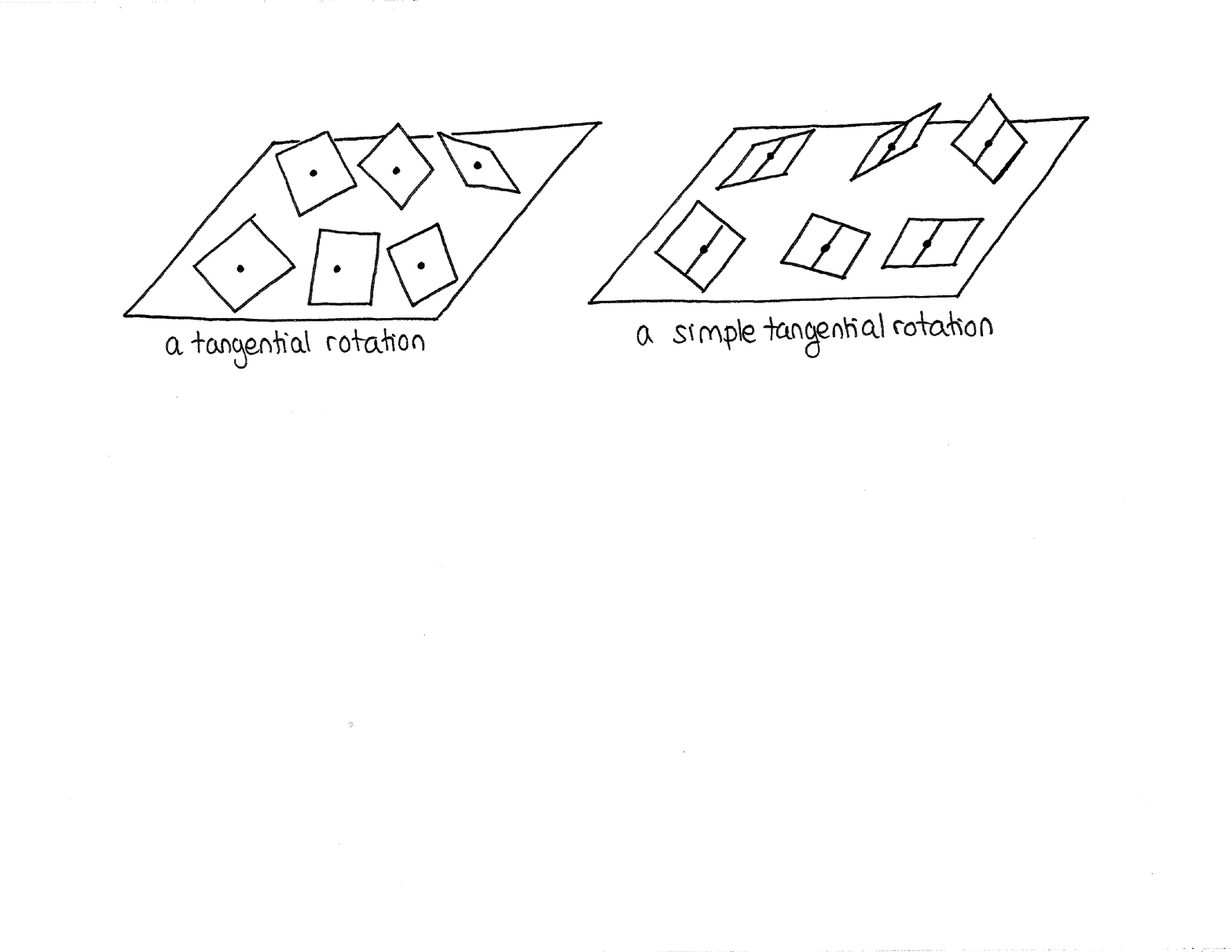}
\caption{The difference between a non-simple tangential rotation and a simple tangential rotation. Observe that in the simple case, the rotating planes $G_t$ are constrained so that the $(n-1)$ directions contained in $H$ are kept fixed, leaving only one degree of freedom. }
\label{simplerotation}
\end{figure}

If $f$ is regular, then we can think of $H \subset df(TL)$ as a hyperplane field in $TL$. When $f$ is wrinkled we need to be a little bit careful near the wrinkling locus so it will be best to think of $H$ as an ambient $(n-1)$-plane field in $TM$.

\begin{remark} Our definition of simple tangential rotations is slightly more restrictive than the definition given by Eliashberg and Mishachev in \cite{EM09} for the smooth analogue of this notion. This is the case because the Lagrangian or Legendrian wrinkling model that we are able to construct below is less general than the model used in their proof. \end{remark}  

We will also need the notion of piecewise simplicity. A tangential rotation $G_t$ of a regular Lagrangian or Legendrian embedding $f$ is piecewise simple if we can subdivide the time interval ${0=t_0< \cdots <t_k=1}$ so that the following property holds. We demand that there exist $(n-1)$-plane fields $H^j \subset \text{im}(G_{t_j})$ defined along open subsets $\cO_j \subset M$ such that $G_t=G_{t_j}$ outside of $f^{-1}(\cO_j)$ and $H^j \subset \text{im}(G_t)$ on $f^{-1}(\cO_j)$ for all $t \in  [t_j, t_{j+1} ]$. We will prove below that any tangential rotation of a regular Lagrangian or Legendrian embedding can be $C^0$-approximated as accurately as desired by a piecewise simple tangential rotation. In order to do this we first translate the notion of a tangential rotation into the language of jet spaces.

\subsection{Rotations of $2$-jets}\label{Rotations of $2$-jets} 
Let $f:L \to M$ be a regular Lagrangian embedding. Fix once and for all a Riemannian metric on $L$. For $\delta>0$ small enough, the Weinstein theorem guarantees the existence of a symplectomorphism $\Phi$ between a neighborhood $\cN$ of $f(L)$ in $(M,\omega)$ and $(T^*_\delta L, dp \wedge dq)$, where $T^*_{\delta}L=\{ (q,p) \in T_q^*L : \, \, || p ||<\delta \}$. We call $\Phi$ the Weinstein parametrization. The zero section $L \hookrightarrow T^*_\delta L$ corresponds under $\Phi$ to the embedding $f:L \to M$. More generally, for any open subset $U \subset L$ and any function $h:U  \to \bR$ such that $|| dh ||< \delta$, the section $dh: U\to T^*_\delta L$ corresponds under $\Phi$ to a regular Lagrangian embedding $f_h: U \to M$ which is graphical over $f|_U$.  

\begin{figure}[h]
\includegraphics[scale=0.65]{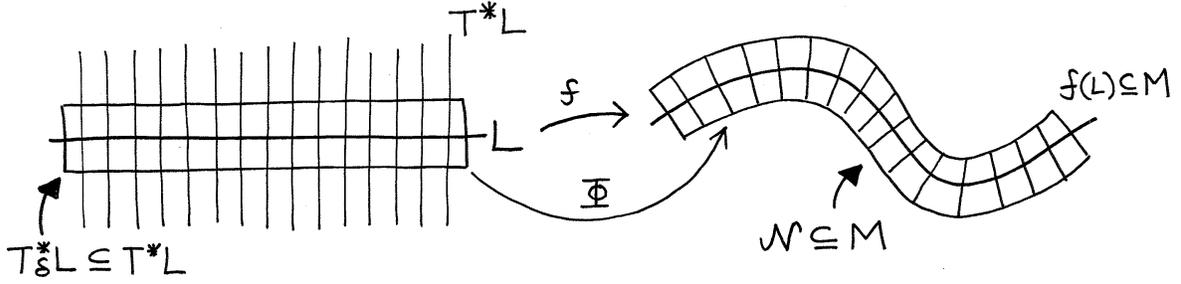}
\caption{A Weinstein neighborhood $\cN$ of $f(L)$ in $M$.}
\label{weinstein}
\end{figure}

Similarly, if $f:L \to M$ is a regular Legendrian embedding, then for some $\delta>0$ small enough there exists a contactomorphism $\Phi$ between a neighborhood $\cN$ of $f(L)$ in $(M, \xi)$ and $J^1_{\delta}(L, \bR)=T^*_{\delta}L \times (-\delta, \delta)$, which is equipped with the standard contact structure. We still call $\Phi$ the Weinstein parametrization. For any open subset $U \subset L$ and any function $h:U \to \bR$ such that $|h|<\delta$ and $|| dh || < \delta$, we obtain a regular Legendrian embedding $f_h:U \to M$ which is graphical over $f|_U$. The embedding $f_h$ corresponds under $\Phi$ to the section $j^1(h):U \to J^1_{\delta}(L,\bR)$. 

In order to capture the tangential information contained in $1$-jets we must  consider $2$-jets. The Riemannian metric fixed on $L$ induces the following trivialization of the $2$-jet space $J^2(L, \bR)$.
\[ J^2(L, \bR) = \{(q,z,p,Q), \quad q \in L, \, \, \,  z\in \bR,\, \, \, p:T_qL \to \bR,\, \, \, Q: T_qL \to \bR\}, \]
where $p$ is a linear form and $Q$ is a quadratic form. Explicitly, given a germ of a function $h:Op(q) \subset L \to \bR$, we set $j^2(h)(q)=\big(q,h(q),dh_q,\text{Hess}(h)_q\big) \in J^2(L, \bR)$. We obtain a vector bundle $J^2(L, \bR) \to L$, where the linear structure is induced by the above trivialization.

\begin{example}
When $L=\bR^n$ with the standard Euclidean metric and standard coordinates $q=(q_1, \ldots, q_n)$, we have a canonical identification $T_q\bR^n \simeq \bR^n$ for each $q \in \bR^n$. Under this identification, $dh(v)=\sum_{i=1}^n (\partial h / \partial q_i)v_i$ and $\text{Hess}(h)(v)= \sum_{i,j=1}^n ( \partial^2 h / \partial q_i \partial q_j) v_i v_j$ for all $v=(v_1, \ldots , v_n) \in \bR^n$. 
\end{example}
\begin{definition} A $2$-jet rotation of $L$ is a compactly supported deformation $s_t : L \to J^2(L, \bR)$, $ t \in [0,1]$, of the zero section $s_0=0$ which is of the form $s_t(q)=\big(q,0,0,Q_t(q) \big)$ for some family of quadratic forms $Q_t:TL \to \bR$.
\end{definition}
In other words, a $2$-jet rotation is a deformation of the zero section whose $1$-jet component is  zero at all times. The corresponding notion of simplicity for $2$-jet rotations is the following.
\begin{definition} A $2$-jet rotation $s_t:L \to J^2(L, \bR)$  is simple if there exists a hyperplane field $H \subset TL$ defined along an open subset $U \subset L$ containing $\text{supp}(s_t)$ such that $H \subset \text{ker}(Q_t)$ for all $t \in [0,1]$. We say that $s_t$ is simple with respect to $H$.
\end{definition}

\begin{remark}
Observe in particular that $Q_t$ has rank $\leq 1$. However, the condition of simplicity is stronger, we demand that the kernel always contains a fixed $(n-1)$-dimensional distribution. See Figure \ref{simple} for an illustration of $2$-jet simplicity.
\end{remark}

\begin{figure}[h]
\includegraphics[scale=0.65]{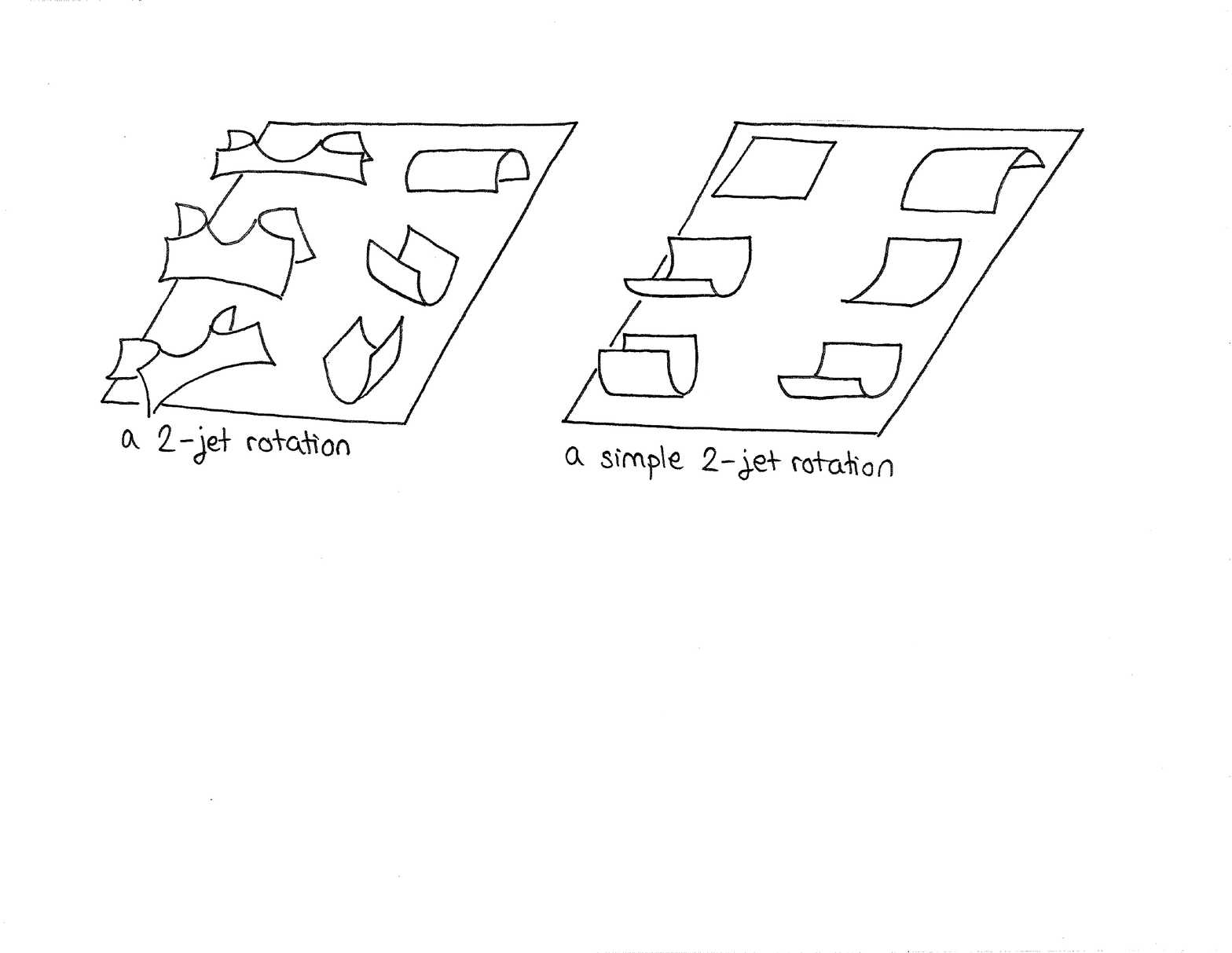}
\caption{The difference between a non-simple $2$-jet rotation and a simple $2$-jet rotation.}
\label{simple}
\end{figure}

In the same vein, we say that a $2$-jet rotation $s_t:L \to J^2(L, \bR)$ is piecewise simple if there exists a subdivision $0=t_0<\cdots<t_k=1$ of the time interval $[0,1]$ such that on each subinterval  $[t_j,t_{j+1}]$ we have $s_t=s_{t_j}+r^j_t$ for some simple $2$-jet rotation $r^j_t: L \to J^2(L, \bR)$. 
\begin{remark}
The proper language for this discussion would naturally extend our definitions  to include the concepts of $l$- and $\perp$-holonomic sections of the $r$-jet bundle associated to any fibre bundle. These ideas were introduced by Gromov in \cite{G86} in the context of convex integration. We explore these notions further in the context of holonomic approximation in our paper \cite{AG15}, the results of which will be crucially used below.
\end{remark}

Given a regular Lagrangian or Legendrian embedding $f:L \to M$, a Weinstein parametrization $\Phi$ of a neighborhood $\cN$ of $f(L)$ in $M$ and a $2$-jet rotation $s_t:L \to J^2(L, \bR)$, we can define a tangential rotation $G(\Phi,s_t) :L \to \Lambda_n(M)$ of $f$ associated to $\Phi$ and $s_t$. Explicitly, we set $G(\Phi,s_t)(q)=G(df_{h_t})(q)$ at each point $q \in L$, where $h_t:Op(q) \subset L \to \bR$ is any function germ such that $j^2(h_t)(q)=s_t(q)$ and $f_{h_t}:Op(q) \subset L \to M$ is the Lagrangian or Legendrian embedding corresponding to $h_t$ under $\Phi$. Observe that if $s_t$ is simple, then $G(\Phi,s_t)$ is also simple. 

Conversely, given a regular Lagrangian or Legendrian embedding $f:L \to M$, a Weinstein parametrization $\Phi$ and a tangential rotation $G_t: L \to \Lambda_n(M)$ of $f$, there exists a unique $2$-jet rotation $s_t:L \to J^2(L, \bR)$ such that $G(\Phi,s_t)=G_t$. To be more precise, $s_t$ might only be defined in a small time interval $[0, \varepsilon] \subset [0,1]$, since the Lagrangian planes $G_t(q)$ could at some point stop being graphical over $df(T_qL)$ with respect to $\Phi$, see Figure \ref{graphical}.
\begin{definition}\label{simple definition} When $s_t$ is defined for all $t \in [0,1]$, we say that $G_t$ is graphical.
\end{definition}
 The Weinstein parametrization $\Phi$ is implicit in the definition. Observe again that if $G_t$ is simple, then $s_t$ is also simple. The notions of piecewise simplicity also coincide under this correspondence.

\begin{figure}[h]
\includegraphics[scale=0.65]{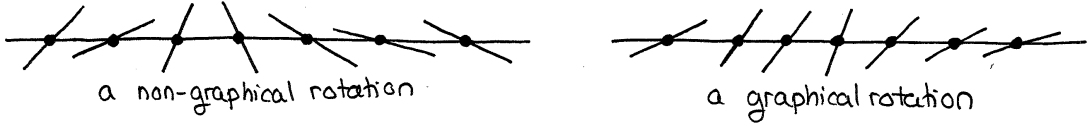}
\caption{The difference between a graphical and a non-graphical tangential rotation.}
\label{graphical}
\end{figure}

\subsection{Approximation by simple rotations}
Let $I^n=[-1,1]^n$ denote the unit $n$-dimensional cube. The following lemma will allow us to replace any tangential rotation of a Lagrangian or Legendrian embedding by a piecewise simple tangential rotation.

\begin{lemma}\label{approximation lemma} Let $s_t:I^n \to J^2(\bR^n,\bR)$ be a $2$-jet rotation such that $s_t=0$ on $Op(\partial I^n)$. Then there exists a piecewise simple $2$-jet rotation $r_t:I^n \to J^2(\bR^n,\bR)$ which is $C^0$-close to $s_t$ and such that $r_t=0$ on $Op(\partial I^n)$.
\end{lemma}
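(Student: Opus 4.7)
The plan is as follows. Since $s_t(q)=(q,0,0,Q_t(q))$, the rotation is entirely encoded by the continuous family of symmetric quadratic forms $Q_t:I^n\to\mathrm{Sym}^2(\bR^n)^*$, and a simple $2$-jet rotation corresponds to a family of the form $Q_t(q)=c_t(q)\,\ell(q)^{\otimes 2}$ supported on a fixed hyperplane field $H=\ker(\ell)$. The problem therefore reduces to decomposing the continuous path $t\mapsto Q_t$ of quadratic-form-valued functions as a concatenation of rank-one increments with fixed kernel directions, up to $C^0$-error.

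First I fix a universal decomposition. Choose covectors $\ell_1,\ldots,\ell_K\in(\bR^n)^*$ whose squares $\ell_k^{\otimes 2}$ span $\mathrm{Sym}^2(\bR^n)^*$; an explicit choice is the $n$ coordinate differentials $dq_i$ together with the $\binom{n}{2}$ sums $dq_i+dq_j$, using the polarization identity $2\,dq_i\,dq_j=(dq_i+dq_j)^{\otimes 2}-dq_i^{\otimes 2}-dq_j^{\otimes 2}$. Any $Q\in\mathrm{Sym}^2(\bR^n)^*$ then admits a linear decomposition $Q=\sum_{k=1}^K\lambda_k(Q)\,\ell_k^{\otimes 2}$ in which the coefficients depend linearly on $Q$, so there is a constant $C>0$ with $\sum_k|\lambda_k(Q)|\le C\|Q\|$. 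By uniform continuity of $(t,q)\mapsto Q_t(q)$ on the compact set $[0,1]\times I^n$, for any prescribed $\varepsilon>0$ I choose a subdivision $0=t_0<t_1<\cdots<t_N=1$ so fine that $\|Q_{t_{j+1}}-Q_{t_j}\|_{C^0}<\varepsilon/(2C)$ and $\|Q_t-Q_{t_j}\|_{C^0}<\varepsilon/2$ for all $t\in[t_j,t_{j+1}]$.

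On each subinterval I write $\Delta Q^j(q):=Q_{t_{j+1}}(q)-Q_{t_j}(q)=\sum_{k=1}^K c_k^j(q)\,\ell_k^{\otimes 2}$ with $c_k^j(q):=\lambda_k(\Delta Q^j(q))$, then subdivide $[t_j,t_{j+1}]$ into $K$ equal pieces and on the $k$-th piece ramp the rank-one contribution $c_k^j(q)\,\ell_k^{\otimes 2}$ linearly from $0$ to its full value. This step is a simple rotation with respect to the constant hyperplane field $H_k=\ker(\ell_k)$, and since $s_t$ vanishes on $Op(\partial I^n)$ so does each $c_k^j$, so the support condition is preserved. The concatenation of all these simple steps yields a piecewise simple $2$-jet rotation $r_t$ with $r_{t_j}=s_{t_j}$ for every $j$; at intermediate times the excursion is bounded by $\sum_k|c_k^j(q)|\,\|\ell_k\|^2\le C\|\Delta Q^j\|_{C^0}<\varepsilon/2$, so that $\|r_t-s_t\|_{C^0}<\varepsilon$ by the triangle inequality. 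The only real obstacle—more bookkeeping than difficulty—is arranging a decomposition continuous in $q$ and linear in $Q$, which is precisely what fixing the covectors $\ell_k$ once and for all accomplishes.
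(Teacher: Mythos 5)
Your proof is correct and takes essentially the same approach as the paper: both rely on the polarization identity $X_iX_j=\tfrac12\bigl((X_i+X_j)^2-X_i^2-X_j^2\bigr)$ to decompose the quadratic-form-valued family into rank-one pieces with constant kernel hyperplanes $\ker(dq_i)$ and $\ker(dq_i+dq_j)$, then ramp those pieces one at a time over a sufficiently fine time subdivision. The paper only sketches this (deferring the full argument to \cite{AG15}), whereas you carry the $C^0$-error estimate through explicitly; otherwise the two arguments coincide.
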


Lemma \ref{approximation lemma} is an immediate consequence of a more general approximation result which we prove in \cite{AG15}. For completeness we present below the outline of the argument in our concrete setting. The idea goes back to Gromov's iterated convex hull extensions in \cite{G86}, which used similar decompositions into so-called principal subspaces. Indeed, in convex integration one is also forced to work one pure partial derivative at a time. These decompositions are studied carefully in Spring's book \cite{S98}. 

For our purposes, we only need to remark that any homogeneous degree $2$ polynomial can be written as a sum of squares of linear polynomials. Explicitly, we have the polynomial identity ${X_iX_j=\frac{1}{2}\big((X_i+X_j)^2-X_i^2-X_j^2\big)}$. We can think of a $2$-jet rotation as a parametric family of Taylor polynomials which are homogeneous of degree $2$. By applying the above identity we obtain a decomposition $s_t=\sum r^{i,j}_t$, where the $2$-jet rotation $r^{i,j}_t$ is simple with respect to the hyperplane field $\tau_{i,j}=\ker(dq_i+dq_j)$ and the sum is taken over all $1 \leq i \leq j \leq n$. Moreover, it follows that if $s_t=0$ on $Op(\partial I^n)$, then $r^{i,j}_t=0$ on $Op(\partial I^n)$ for all $i,j$.  

\begin{figure}[h]
\includegraphics[scale=0.5]{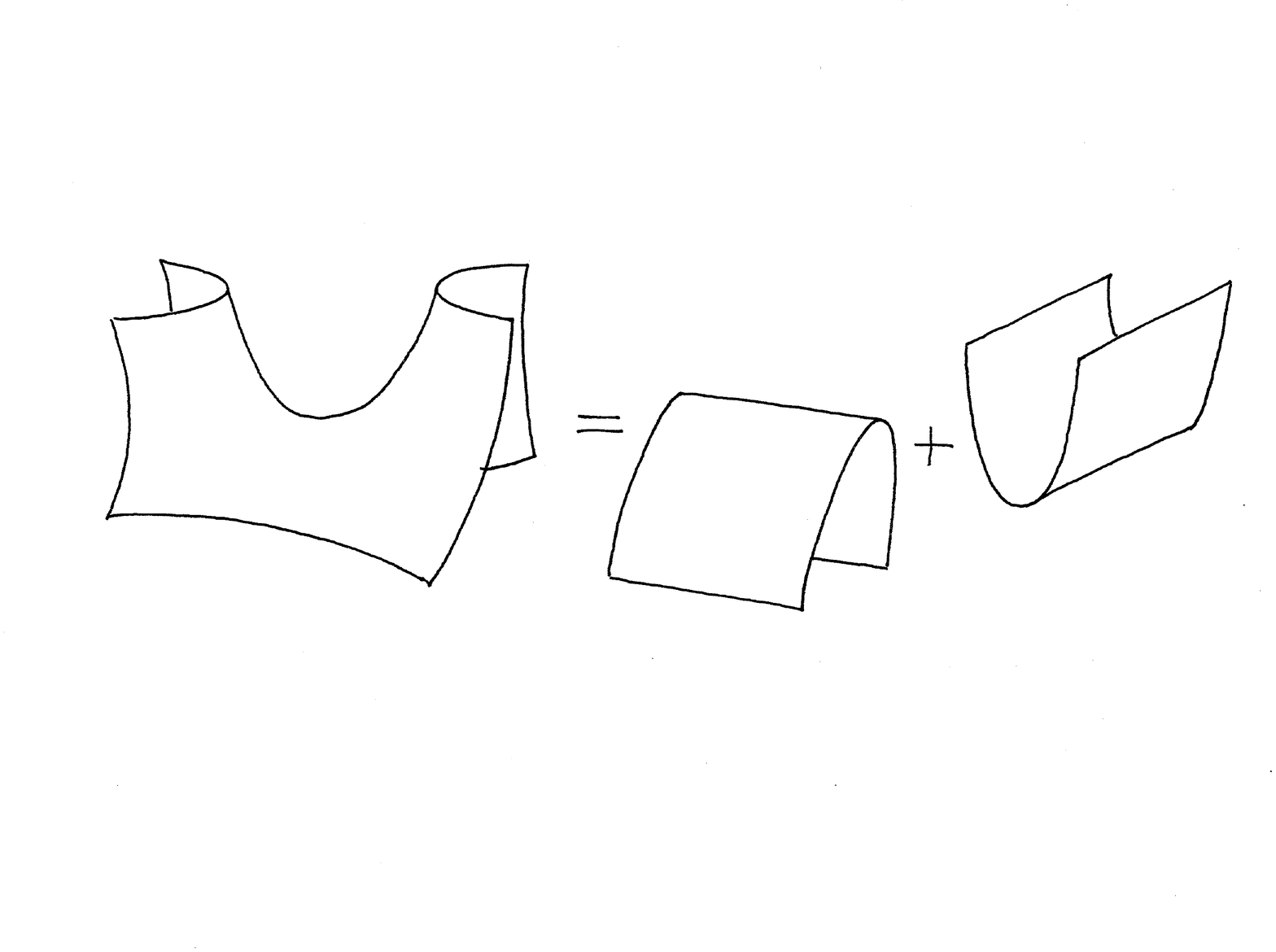}
\caption{Decomposing a homogeneous degree $2$ polynomial into a sum of squares of linear polynomials.}
\label{decomposition}
\end{figure}

Once we have this decomposition, we can subdivide the interval $[0,1]$ very finely and add a fraction of each $r^{i,j}_t$ at a time to obtain the desired piecewise simple approximation of $s_t$. The parametric version is proved in the exact same way. The statement reads as follows. 
\begin{lemma}\label{parametric approximation lemma} Let $s^z_t:I^n \to J^2(\bR^n,\bR)$ be a family of $2$-jet rotations parametrized by a compact manifold $Z$ such that $s^z_t=0$ on $Op(\partial I^n)$ and such that $s^z_t=0$ for $z \in Op(\partial Z)$. Then there exists a family of piecewise simple $2$-jet rotations $r^z_t:I^n \to J^2(\bR^n,\bR)$ which is $C^0$-close to $s^z_t$, such that $r^z_t=0$ on $Op(\partial I^n)$ and such that $r^z_t=0$ for $z \in Op(\partial Z)$.
\end{lemma}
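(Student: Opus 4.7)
The plan is to mimic the argument for the non-parametric Lemma \ref{approximation lemma} verbatim, observing at each step that everything depends smoothly on the parameter $z$ without introducing any new difficulties. Concretely, write the quadratic component of the $2$-jet rotation in coordinates as $s^z_t(q)=(q,0,0,Q^z_t(q))$ with $Q^z_t(q)(v)=\sum_{i,j}a^z_{t,ij}(q)\,v_iv_j$, where the coefficients $a^z_{t,ij}$ are smooth in $(z,t,q)$ and inherit the vanishing conditions $a^z_{t,ij}=0$ on $Op(\partial I^n)$ and $a^z_{t,ij}=0$ for $z\in Op(\partial Z)$.

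Applying the polynomial identity $X_iX_j=\tfrac{1}{2}\bigl((X_i+X_j)^2-X_i^2-X_j^2\bigr)$ coefficient-wise yields a finite decomposition $s^z_t=\sum_{\alpha=1}^K r^{z,\alpha}_t$, where each family $r^{z,\alpha}_t$ is simple with respect to one of finitely many fixed hyperplane fields $\tau_\alpha\subset T\bR^n$, namely $\tau_\alpha=\ker(dq_i)$ or $\ker(dq_i+dq_j)$. Crucially, the hyperplane fields $\tau_\alpha$ depend neither on $z$ nor on $t$, so the simplicity witness for each $r^{z,\alpha}_t$ is uniform across the parameter space. The vanishing conditions on $Op(\partial I^n)$ and $Op(\partial Z)$ pass automatically to each $r^{z,\alpha}_t$ since the decomposition is pointwise in $(z,q)$.

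With this decomposition in hand, perform the same telescoping construction as in the non-parametric case. Fix a large integer $N$ and a partition $0=t_0<t_1<\cdots<t_N=1$, and refine it into $NK$ subintervals. On the subinterval indexed by $(k,\alpha)$, linearly interpolate only the $r^{z,\alpha}$-component from its value at the end of the previous step to $r^{z,\alpha}_{t_{k+1}}$, leaving all other components frozen. The resulting family $r^z_t$ is piecewise simple (with respect to the $\tau_\alpha$, taken in cyclic order) for every $z$ simultaneously, and $r^z_t=0$ outside the prescribed support. A standard compactness argument using that $Z$ is compact together with the smoothness of $(z,t,q)\mapsto a^z_{t,ij}(q)$ gives a uniform $C^0$-estimate $\|r^z_t-s^z_t\|_{C^0}=O(1/N)$, so choosing $N$ large enough produces the desired approximation.

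The parametric version therefore presents essentially no new obstacle beyond the non-parametric one; the mild bookkeeping item is simply verifying that a single choice of $N$ works uniformly in $z$, which follows from compactness of $Z$. (This is exactly the reason why the argument is cleanly phrased via the more general $\perp$-holonomic approximation framework developed in \cite{AG15}, which handles the parametric and relative cases in one stroke.)
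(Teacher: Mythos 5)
Your proposal is correct and follows the same route as the paper: apply the identity $X_iX_j=\tfrac{1}{2}\bigl((X_i+X_j)^2-X_i^2-X_j^2\bigr)$ to decompose $s^z_t$ into summands each simple with respect to a fixed, $z$- and $t$-independent hyperplane field $\tau_{i,j}=\ker(dq_i+dq_j)$, then subdivide the time interval finely and add one summand per subinterval, using compactness of $Z$ to get a uniform choice of subdivision. The paper simply notes that "the parametric version is proved in the exact same way," which is exactly your observation.
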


To be more precise, for the piecewise simple family we demand that there exists a single subdivision $0=t_0< \cdots < t_k=1$ of the time interval $[0,1]$ such that  every $r^z_t$ is simple on each piece $[t_j,t_{j+1}]$. We can translate Lemmas \ref{approximation lemma} and \ref{parametric approximation lemma} from the world of jet spaces back into the world of symplectic and contact topology. The precise consequence that we wish to extract is the following.

\begin{proposition}\label{decomposition proposition} Let $G_t:L \to \Lambda_n(M)$ be a  tangential rotation of a regular Lagrangian or Legendrian embedding $f:L \to M$. Then we can $C^0$-approximate $G_t$ as much as desired by a piecewise simple tangential rotation $R_t:L \to \Lambda_n(M)$.
\end{proposition}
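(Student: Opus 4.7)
The strategy is to translate the statement into the language of $2$-jet rotations via Weinstein parametrizations and then apply Lemma \ref{approximation lemma} combined with a covering argument on $L$. Two technical issues must be addressed: the tangential rotation $G_t$ need not be globally graphical in the sense of Definition \ref{simple definition}, and $L$ is not globally a cube.

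To handle non-graphicality, I would first subdivide $[0,1]$ into $0 = t_0 < t_1 < \cdots < t_N = 1$ finely enough that on each sub-interval $[t_j, t_{j+1}]$ the planes $G_t(q)$ remain in a small neighborhood of $G_{t_j}(q)$ inside the fibre of $\Lambda_n(M) \to M$, uniformly in $q \in L$. For each $j$, construct a Weinstein-type parametrization $\Phi_j$ of a neighborhood of $f(L)$ in $M$ modeled on $G_{t_j}$ instead of $df(TL)$: extend $G_{t_j}$ to a Lagrangian distribution in a tubular neighborhood of $f(L)$, pair it with a complementary Lagrangian distribution, and apply the standard Weinstein argument (with the analogous $J^1$-type construction in the contact case). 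With respect to $\Phi_j$, the restriction $G_t|_{[t_j, t_{j+1}]}$ becomes graphical and corresponds uniquely to a $2$-jet rotation $s_t^{(j)}: L \to J^2(L,\bR)$ with $s_{t_j}^{(j)} = 0$, via the correspondence $G_t = G(\Phi_j, s_t^{(j)})$ from Section \ref{Rotations of $2$-jets}.

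To handle globality, cover the support of each $s_t^{(j)}$ by finitely many coordinate cubes $\phi_\alpha: I^n \hookrightarrow L$ and iterate the local lemma. Proceed inductively on $\alpha$: supposing the rotation has been made piecewise simple on $U_1 \cup \cdots \cup U_{\alpha-1}$ up to a controlled $C^0$-error, use a bump function to cut off the remaining non-simple portion to a slightly shrunken subcube of $U_\alpha$ and apply Lemma \ref{approximation lemma} inside this subcube to produce a piecewise simple approximation vanishing near the boundary. Because the modification is supported in the interior of $U_\alpha$, it preserves the piecewise simple structure already achieved on earlier charts. Concatenating across $j$ and translating back through each $\Phi_j$ yields the desired piecewise simple tangential rotation $R_t: L \to \Lambda_n(M)$ approximating $G_t$, with the global piecewise structure given by the common refinement of all time subdivisions introduced in the process.

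The main obstacle I expect is the bookkeeping required to control the accumulated $C^0$-error: each application of the local lemma introduces a small additional perturbation, and one must budget these so that the total stays within the prescribed tolerance; this is resolved by allotting each individual modification a size of the order $\varepsilon/K$ where $K$ is the total number of cube-by-cube applications of Lemma \ref{approximation lemma}. A secondary point that needs checking is that the cutoff step preceding each application of the lemma does not disrupt the simple structure already in place, which is automatic since the cutoff only tapers the rotation toward zero and hence respects any kernel conditions already satisfied on the complement.
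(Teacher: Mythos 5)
Your strategy follows the paper's outline — pass to $2$-jet rotations and apply Lemma \ref{approximation lemma} — but the mechanism you propose for handling non-graphicality has a genuine gap. You claim a "Weinstein-type parametrization $\Phi_j$ modeled on $G_{t_j}$" will make $G_t|_{[t_j,t_{j+1}]}$ correspond to a $2$-jet rotation $s_t^{(j)}$ with $s_{t_j}^{(j)}=0$ via $G_t = G(\Phi_j, s_t^{(j)})$. This cannot work: for \emph{any} Weinstein parametrization $\Phi$ we have $G(\Phi,0)=G(df)$ by construction, since the zero $2$-jet corresponds to the function germ $h=0$, whose associated graphical Lagrangian or Legendrian is $f$ itself. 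A Weinstein chart always maps the zero section to $f(L)$, so its tangent planes are forced to be $df(T_qL)$; no choice of ambient Lagrangian complement or tubular neighborhood will identify them with $G_{t_j}(q)$, because $G_{t_j}$ is merely a field of Lagrangian planes along $f(L)$ and is not tangent to any Lagrangian submanifold. Consequently the $2$-jet rotation you obtain does not start at zero, and Lemma \ref{approximation lemma} does not apply. The paper's fix is instead a symplectic (or $d\alpha$-symplectic, in the contact case) vector-bundle automorphism $\Phi_j$ of $TM|_{f(B_j)}$ (resp.\ $\xi|_{f(B_j)}$) covering the identity and satisfying $\Phi_j\cdot G(df)=\widetilde{G}_{t_j}$; then $S_t^j=\Phi_j^{-1}\cdot\widetilde{G}_t$ \emph{does} start at $G(df)$, so it becomes graphical with respect to the original Weinstein chart after a further time subdivision. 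Since $\Phi_j$ is fibrewise linear, it also carries simple rotations to simple rotations relative to the pushed-forward hyperplane field, which is needed to translate the conclusion back.

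A secondary divergence: you subdivide time first (for graphicality) and space second (covering by cubes), forcing the cube-by-cube induction with error budget. The paper avoids this by using the partition of unity to localize in both space \emph{and} time at once, so that on each time subinterval $[t_j,t_{j+1}]$ the rotation $\widetilde G_t$ is already constant outside a single ball $B_j$; Lemma \ref{approximation lemma} is then applied exactly once per subinterval and no induction over charts is needed. Your iteration is probably salvageable, but be aware that a sum of piecewise simple $2$-jet rotations from different cubes is not itself piecewise simple (the hyperplane fields differ from piece to piece); one must re-subdivide time and add a small fraction of each simple summand at a time, which is the same trick already inside Lemma \ref{approximation lemma} and is cleaner to set up once rather than layered over an induction on cubes.
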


\begin{proof} By using a partition of unity and a fine enough subdivision $0=t_0<\cdots < t_k=1$ of the interval $[0,1]$, we can localize in space and time to obtain a tangential rotation $\widetilde{G}_t:L \to \Lambda_n(M)$ which is  $C^0$-close to $G_t$ and such that on each subinterval $[t_j,t_{j+1}]$ the rotation $\widetilde{G}_t$ is constant outside of some ball $B_j \subset L$. In the Lagrangian case, let $\Phi_j$ be a symplectic isomorphism of the symplectic vector bundle $(TM|_{f(B_j)} ,\omega) \to B_j$ such that $\Phi_j \cdot G(df)   = \widetilde{G}_{t_j}$. In the Legendrian case, we ask that $\Phi_j$ satisfies the same property but is instead a symplectic isomorphism of the symplectic vector bundle $(\xi|_{f(B_j)}, d \alpha) \to B_j$, where $ \xi = \ker(\alpha)$ on the ball $B_j$. 

Consider the tangential rotation $S^j_t= (\Phi_j)^{-1} \cdot \widetilde{G}_t$, $t \in [t_j, t_{j+1}]$. Observe that $S^j_t=G(df)$ on $Op(\partial B_j)$. By further subdividing the time interval if necessary, we may assume that $S^j_t$ is graphical. In other words, $S^j_t$ corresponds to a $2$-jet rotation $s^j_t:B_j \to J^2(B_j, \bR)$ such that $s^j_t=0$ on $Op(\partial B_j)$. Lemma \ref{approximation lemma} asserts the existence of a piecewise simple $2$-jet rotation $r^j_t:B_j \to J^2(B_j, \bR)$ which is $C^0$-close to $s^j_t$ and such that $r^j_t=0$ on $Op(\partial B_j)$.  We obtain a corresponding piecewise simple tangential rotation $R^j_t:B_j \to \Lambda_n(M)$ which is $C^0$-close to $S^j_t$ and such that $R^j_t=G(df)$ on $Op(\partial B_j)$. Set $R_t=\Phi_j \cdot R^j_t $, $t \in [t_j, t_{j+1}]$ on $B_j$. Outside of $B_j$ we extend by setting $R_t=\widetilde{G}_t$, which is constant for $t \in [t_j,t_{j+1}]$. This piecewise definition yields a tangential rotation $R_t:L \to \Lambda_n(M)$, $t \in [0,1]$, where each piece $R_t|_{[t_j,t_{j+1}]}$ is itself a piecewise simple tangential rotation. Hence $R_t$ is also a piecewise simple tangential rotation. Moreover, $R_t$ is  everywhere $C^0$-close to $\widetilde{G}_t$, hence also to $G_t$. \end{proof}

\begin{remark}
From the proof we can also deduce the relative version of Proposition \ref{decomposition proposition}. If $G_t=G(df)$ on $Op(A)$ for some closed subset $A \subset L$, then we can arrange it so that $R_t=G(df)$ on $Op(A)$.

\end{remark}

The parametric version is proved in the same way. The corresponding relative version also holds. As in the case of $2$-jet rotations, by a family of piecewise simple tangential rotations we mean a family of tangential rotations such that for some subdivision $0=t_0<\cdots < t_k=1$ of the time interval $[0,1]$, every tangential rotation of the family is simple on each subinterval $[t_j,t_{j+1}]$. The precise statement that we will need reads as follows.

\begin{proposition}\label{parametric decomposition proposition} Let $G^z_t:L \to \Lambda_n(M)$ be a family of tangential rotations of regular Lagrangian or Legendrian embeddings $f^z:L \to M$ parametrized by a compact manifold $Z$ such that $G^z_t=G(df^z)$ for $z \in Op(\partial Z)$. Then we can $C^0$-approximate the family $G^z_t$ as much as desired by a family of piecewise simple tangential rotations $R^z_t:L \to \Lambda_n(M)$ such that $R^z_t=G(df^z)$ for $z \in Op(\partial Z)$.
\end{proposition}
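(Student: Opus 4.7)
The plan is to parallel the argument used in the proof of Proposition \ref{decomposition proposition}, tracking the parameter $z \in Z$ throughout. First, by compactness of $L$ and $Z$ and using a partition of unity subordinate to a suitable cover of $L \times Z$ together with a sufficiently fine subdivision $0 = t_0 < \cdots < t_k = 1$ of the time interval, I would produce a tangential rotation family $\widetilde{G}^z_t$ which $C^0$-approximates $G^z_t$ and which, on each subinterval $[t_j, t_{j+1}]$, is constant outside some ball $B_j \subset L$ (possibly a disjoint union, one component per element of the cover). By choosing the partition of unity to vanish on $Op(\partial Z)$, the relative condition $\widetilde{G}^z_t = G(df^z)$ for $z \in Op(\partial Z)$ is automatically preserved.

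Next, on each ball $B_j$ and for each subinterval I would choose a parametric family of symplectic bundle isomorphisms $\Phi^z_j$ of $(TM|_{f^z(B_j)}, \omega)$ in the Lagrangian case, or of $(\xi|_{f^z(B_j)}, d\alpha)$ in the Legendrian case, such that $\Phi^z_j \cdot G(df^z) = \widetilde{G}^z_{t_j}$ on $B_j$. Such a $z$-family exists by the homotopy lifting property of the Serre fibration $\Lambda_n(M) \to M$, using that the deformation vanishes over $Op(\partial Z)$ so the fibered lift extends trivially there. After a possible further refinement of the time subdivision to ensure graphicality, the pulled-back rotations $(\Phi^z_j)^{-1} \cdot \widetilde{G}^z_t$ correspond to a parametric family of $2$-jet rotations $s^{j,z}_t : B_j \to J^2(B_j, \bR)$ vanishing on $Op(\partial B_j)$ and for $z \in Op(\partial Z)$. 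Applying Lemma \ref{parametric approximation lemma} then produces a $C^0$-close family of piecewise simple $2$-jet rotations $r^{j,z}_t$ with the same vanishing conditions on both $\partial B_j$ and $\partial Z$. Pushing forward by $\Phi^z_j$ and gluing over the cover and the time subdivision yields the desired family $R^z_t$ of piecewise simple tangential rotations, with $R^z_t = G(df^z)$ for $z \in Op(\partial Z)$.

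The main obstacle I anticipate is ensuring that the final family $R^z_t$ is piecewise simple with respect to a \emph{single} common time subdivision valid for every $z \in Z$, as required by the parametric notion of piecewise simplicity used in the paper. This is precisely the content built into the statement of Lemma \ref{parametric approximation lemma}: a single fine sub-subdivision of each interval $[t_j, t_{j+1}]$ on which every $r^{j,z}_t$ is simple, uniformly in $z$. Concatenating these sub-subdivisions produces a single global subdivision of $[0,1]$ working for all $z$, so the gluing across local pieces can be done with matching time partitions, producing a bona fide parametric family of piecewise simple tangential rotations. The remaining bookkeeping, ensuring the local $C^0$-approximations compose into a global $C^0$-approximation of $G^z_t$ and that the symplectic or contact structures are compatible across the cover, is routine once the partition of unity and trivializations have been set up.
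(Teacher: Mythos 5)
Your proposal is correct and follows exactly the approach the paper takes: the paper literally dispatches this with ``The parametric version is proved in the same way,'' referring to the proof of Proposition~\ref{decomposition proposition}, and you have carried the parameter $z$ through that argument faithfully. You also correctly flag the one genuine subtlety the paper leaves implicit --- that the time subdivision on which piecewise simplicity holds must be uniform in $z$, which is precisely what the parametric form of Lemma~\ref{parametric approximation lemma} and the compactness of $Z$ provide.
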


\section{Wiggling embeddings}\label{Holonomic approximation with controlled cutoff}

\subsection{Regular approximation near the $(n-1)$-skeleton} Let $G_t:L \to \Lambda_n(M)$ be a tangential rotation of a regular Lagrangian or Legendrian embedding $f:L \to M$. It is in general impossible to globally $C^0$-approximate $G_t$ by the Gauss maps $G(df_t)$ of an exact homotopy of regular Lagrangian or Legendrian embeddings $f_t:L \to M$, $f_0=f$. However, it is always possible to achieve this approximation in a wiggled neighborhood of any reasonable subset of $L$ which has positive codimension, see Figure \ref{approximationnearK}. For simplicity, we will restrict ourselves to the following class of stratified subsets.
\begin{definition} A closed subset $K \subset L$ is called a polyhedron if it is a subcomplex of some smooth triangulation of $L$. 
 \end{definition}
In \cite{AG15} we prove several refinements of the holonomic approximation lemma. The following result is a straightforward application of our holonomic approximation lemma for $l$-holonomic sections. 

\begin{theorem}\label{1-holonomic} Let $K \subset L$ be a polyhedron of positive codimension and let $G_t: L \to \Lambda_n(M)$ be a tangential rotation of a regular Lagrangian or Legendrian embedding $f:L \to M$. Then there exists an exact homotopy of regular Lagrangian or Legendrian embeddings $f_t:L \to M$, $f_0=f$, such that $G( df_t )$ is $C^0$-close to $G_t$ on $Op(K) \subset L$.

\end{theorem}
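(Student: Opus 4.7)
The plan is to translate the statement into a question about $2$-jet rotations and then invoke the refined holonomic approximation lemma for $l$-holonomic sections from \cite{AG15}. First I would fix a Weinstein parametrization $\Phi:\cN \to T^*_\delta L$ (or $\cN \to J^1_\delta(L,\bR)$ in the Legendrian case) of a tubular neighborhood of $f(L)$ in $M$, as in Section \ref{Rotations of $2$-jets}. After subdividing $[0,1]$ finely enough and working one piece at a time, I may assume that $G_t$ remains graphical over $df$ with respect to $\Phi$, so that it corresponds to a $2$-jet rotation $s_t: L \to J^2(L,\bR)$ of the form $s_t(q) = (q,0,0,Q_t(q))$. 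The central observation is that $s_t$ is $1$-holonomic: its projection to $J^1(L,\bR)$ is identically the zero section, which is the $1$-jet extension of the zero function $h_0 \equiv 0$.

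Next I would apply the holonomic approximation lemma for $l$-holonomic sections from \cite{AG15} with $r=2$ and $l=1$. Since $K$ is a polyhedron of positive codimension, this yields a $C^0$-small diffeotopy of $L$, after which $s_t$ can be approximated in $C^0$ on a neighborhood of $K$ by $j^2(h_t)$ for a smooth family of compactly supported functions $h_t:L \to \bR$ with $h_0 \equiv 0$. The virtue of using the $l$-holonomic refinement rather than the plain holonomic approximation lemma is precisely that it keeps the $1$-jet component of the approximation close to the (already holonomic) $1$-jet of the base function $h_0 \equiv 0$; that is, $h_t$ and $dh_t$ are automatically $C^0$-small, while $\mathrm{Hess}(h_t)$ is $C^0$-close to $Q_t$. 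This is exactly what is needed to approximate the full $2$-jet, not merely the $1$-jet, of the rotation.

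To produce the desired global homotopy, I would multiply $h_t$ by a smooth cutoff function supported in a slightly larger neighborhood of $K$ and equal to $1$ on the neighborhood of $K$ where the approximation is sharp, and then interpret the resulting function via $\Phi$ as a regular Lagrangian (resp. Legendrian) embedding $f_t:L \to M$. Concretely, $f_t$ is the graph of $dh_t$ (resp. of $j^1(h_t)$) in the Weinstein model, with $f_0=f$. In the Lagrangian case each $f_t$ is the graph of an exact $1$-form, so the homotopy is automatically exact in the sense of Definition \ref{exactwrinkled}; in the Legendrian case exactness is automatic. The $C^0$-closeness of $j^2(h_t)$ to $s_t$ on $Op(K)$ then translates directly, via the explicit formulas for $G(\Phi,s_t)$ of Section \ref{Rotations of $2$-jets}, into $C^0$-closeness of $G(df_t)$ to $G_t$ on $Op(K)$.

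The main delicate point will be controlling the cutoff in tandem with the approximation: since the $l$-holonomic approximation is sharp only on a small neighborhood of $K$, I must choose the cutoff region and the accuracy of the approximation simultaneously so that (i) the resulting $h_t$ remains small enough in $C^1$ for its graph to be a regular embedding $C^0$-close to $f$, and (ii) after cutoff the approximation $G(df_t) \approx G_t$ still holds on a neighborhood of $K$. Both points follow from the quantitative control built into the refined holonomic approximation lemma; once this book-keeping is in place, concatenating the constructions over the time-subintervals and pasting them together using the parametric relative form of the lemma yields the desired exact homotopy $f_t$.
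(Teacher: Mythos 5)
Your proposal follows the route the paper intends: fix a Weinstein parametrization, translate the tangential rotation $G_t$ into a $2$-jet rotation $s_t(q) = (q,0,0,Q_t(q))$, observe that $s_t$ is $1$-holonomic because its $1$-jet part $(q,0,0)$ equals $j^1(0)$, and invoke the holonomic approximation lemma for $l$-holonomic sections from \cite{AG15} with $r=2$ and $l=1$. The point you isolate correctly --- that the $l$-holonomic refinement keeps the $1$-jet part of the approximant close to $j^1(0)$, which is precisely what makes the cut-off safe --- is what the paper alludes to when it says the theorem "does not follow formally from Eliashberg and Mishachev's holonomic approximation lemma" because of "the pervasive danger of cutoffs in symplectic topology." Your plan also mirrors the explicit proof of Theorem \ref{perp-holonomic}, where the $\perp$-holonomic version of the same lemma is used in exactly this way; the one cosmetic difference is that the paper reparametrizes by the diffeotopy $F_t$ ($f_t = dh_t \circ F_t$) rather than cutting $h_t$ off, but both produce the same exact Lagrangian homotopy.

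The one genuine gap is your claim that "after subdividing $[0,1]$ finely enough and working one piece at a time, I may assume that $G_t$ remains graphical over $df$ with respect to $\Phi$." Subdividing the time interval does nothing to the planes $G_t(q)$ themselves, so if at some time $t^*$ the plane $G_{t^*}(q)$ is close to the cotangent fiber direction, no subdivision will make it graphical over the \emph{original} $f$ with respect to a \emph{fixed} $\Phi$: the corresponding $Q_{t^*}(q)$ simply does not exist. What you need instead is to re-base at each step: after constructing $f_{t_j}$ in step $j$, choose a new Weinstein parametrization adapted to $f_{t_j}(L)$, and replace the rotation on $[t_j, t_{j+1}]$ by a tangential rotation of $f_{t_j}$ which starts at $G(df_{t_j})$ and agrees with $G_t$ up to a small error near $K$. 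This requires a short interpolation in the source (because after the cut-off, $G(df_{t_j})$ coincides with $G_{t_j}$ only near $K$), but once that interpolation is in place the rotation becomes graphical over $f_{t_j}$ provided the subdivision is fine, exactly as in the "by further subdividing the time interval" step in the proof of Proposition \ref{decomposition proposition}, which you could mimic. Note that Theorem \ref{perp-holonomic} sidesteps this by hypothesizing graphicality from the start; Theorem \ref{1-holonomic} does not, so the subdivide-and-re-base bookkeeping is essential here even though the paper's one-line attribution to \cite{AG15} elides it.
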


\begin{figure}[h]
\includegraphics[scale=0.7]{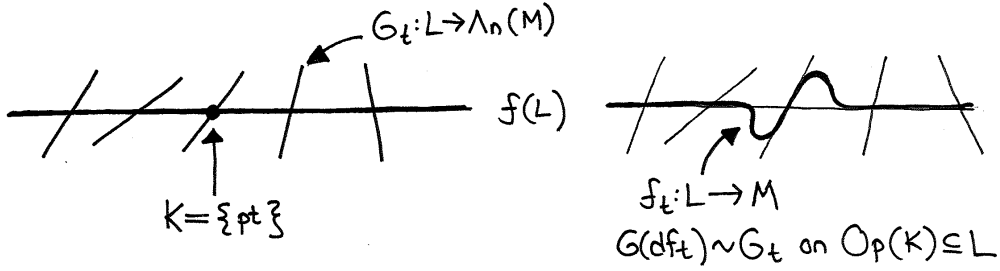}
\caption{We can always approximate $G_t$ by Gauss maps $G(df_t)$ in a neighborhood of any reasonable subset $K \subset L$ of positive codimension.}
\label{approximationnearK}
\end{figure}

 \begin{remark} We can arrange it so that $f_t$ is $C^0$-close to $f$ on all of $L$ and so that $f_t=f$ outside of a slightly bigger neighborhood of $K$ in $L$. Moreover, the result also holds in relative and parametric forms. 
 \end{remark}
\begin{remark} As far as the author can tell, Theorem \ref{1-holonomic} does not follow formally from Eliashberg and Mishachev's holonomic approximation lemma \cite{EM02} or from any of the other standard $h$-principle techniques. However it does follow immediately from the holonomic approximation lemma for $l$-holonomic sections which we established in \cite{AG15}. The added difficulty stems from the pervasive danger of cutoffs in symplectic topology.  \end{remark}
 
In Section \ref{Wrinkling embeddings} we will prove that \emph{any tangential rotation $G_t$ can be globally $C^0$-approximated by the Gauss maps $G(df_t)$ of an exact homotopy of wrinkled Lagrangian or Legendrian embeddings $f_t$}. This is the main technical ingredient in the proof of the $h$-principle for the simplification of singularities in Section \ref{applications to the simplification of singularities} below. In the course of the proof of this global $C^0$-approximation theorem we will need to use a result of the same flavour as Theorem \ref{1-holonomic}, taking $K$ to be the $(n-1)$-skeleton of a smooth triangulation of $L$. The idea is to construct the homotopy $f_t$ by first wiggling $f$ near the $(n-1)$-skeleton. Then one can apply a wrinkling construction on each of the top dimensional simplices to complete the approximation. 

However, on the nose Theorem \ref{1-holonomic} is not quite sufficient for our purposes. The issue is that the local wrinkling model which we construct in Section \ref{Wrinkling embeddings} can only be applied if the tangential rotation is simple. Initially this is not a problem because we can use Proposition \ref{decomposition proposition} to first approximate any given rotation by a piecewise simple rotation. We can then attempt to deal with each simple piece in the decomposition separately, working step by step. Unfortunately the following additional and subtler difficulty arises. When at each step we apply Theorem \ref{1-holonomic} near the $(n-1)$-skeleton of $L$ we might find that our fixed decomposition is  no longer piecewise simple with respect to the wiggled embedding $f_1$. If this is the case, then we cannot continue on to the next step. To fix this issue we need a stronger version of Theorem \ref{1-holonomic} which allows us to control the wiggles with respect to a given simple tangential rotation. We phrase and prove this stronger version in the next section, stated as Theorem \ref{perp-holonomic}.

\subsection{Keeping things simple} The precise result that we need is the following application of our holonomic approximation lemma for $\perp$-holonomic sections from \cite{AG15}. The choice of a Riemannian metric on $L$ and a Weinstein parametrization of a neighborhood of $f(L) $ in $M$ is implicit throughout. We use the language of $2$-jet rotations introduced in Section \ref{Rotations of $2$-jets}.

\begin{theorem}\label{perp-holonomic} Let $K \subset L$ be a polyhedron of positive codimension and let $G_t: L \to \Lambda_n(M)$ be a graphical simple tangential rotation of a regular Lagrangian or Legendrian embedding $f:L \to M$. Then there exists a graphical simple tangential rotation $R_t:L \to \Lambda_n(M)$ of $f$  and an exact homotopy of regular Lagrangian or Legendrian embeddings $f_t:L \to M$, $f_0=f$, such that the following properties hold.
\begin{itemize} 
\item $G(df_t)$ is $C^0$-close to $G_t$ on $Op(K) \subset L$
\item $G(df_t)$ is $C^0$-close to $R_t$ on all of $L$.
\item $R_t$ is simple with respect to the same hyperplane field as $G_t$.
\item $f_t=f$ and $R_t=G(df)$ outside of a slightly bigger neighborhood of $K$ in $L$.
\end{itemize}

\end{theorem}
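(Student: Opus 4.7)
The plan is to translate the problem into the language of 2-jet rotations (Section \ref{Rotations of $2$-jets}) and then invoke the holonomic approximation lemma for $\perp$-holonomic sections from \cite{AG15}. Via the Weinstein parametrization $\Phi$, the graphical assumption turns $G_t$ into a simple 2-jet rotation $s_t(q) = (q, 0, 0, Q_t(q))$ with $\ker(Q_t) \supset H$. Picking local coordinates $(\hat{q}, n)$ on $L$ in which $H = \ker(dn)$, we can write $Q_t = c_t\, dn^2$ for some function $c_t$, and the curve-by-curve primitive $h_t^{\mathrm{loc}}(\hat q, n) = \int_0^n \int_0^u c_t(\hat q, v)\, dv\, du$ satisfies $j^2(h_t^{\mathrm{loc}})(\hat q, 0) = s_t(\hat q, 0)$. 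This is the precise sense in which $s_t$ is $\perp$-holonomic with respect to the foliation integrating $H$: along each leaf of the transverse $1$-dimensional foliation $H^\perp$, the section $s_t$ is the 2-jet of a function on that leaf.

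The core step is to apply the $\perp$-holonomic approximation lemma from \cite{AG15} to $s_t$, using the polyhedron $K$. This strengthens Theorem \ref{1-holonomic} by producing, after an arbitrarily small isotopy of a neighborhood of $K$, not only a genuine function $h_t : L \to \bR$ but also a simple 2-jet section $r_t : L \to J^2(L, \bR)$ such that $j^2(h_t)$ is $C^0$-close to $s_t$ on $Op(K)$, is $C^0$-close to $r_t$ on all of $L$, with $r_t$ still simple with respect to $H$ and equal to the zero section outside a slightly larger neighborhood of $K$. We then translate everything back via $\Phi$: the function $h_t$ defines the exact homotopy $f_t = f_{h_t}$, and $r_t$ defines the tangential rotation $R_t = G(\Phi, r_t)$. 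Exactness is automatic — in the Lagrangian case because $f_t$ is the graph of the exact $1$-form $dh_t$, and in the Legendrian case by convention. The four bulleted conclusions translate from the corresponding properties at the 2-jet level via the correspondence $s \mapsto G(\Phi, s)$.

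The main obstacle is verifying that the approximation produced by the lemma of \cite{AG15} is not merely $\perp$-holonomic but genuinely simple with respect to the same hyperplane field $H$ — that is, of the form $r_t = (q, z_t, p_t, Q_t')$ with $\ker(Q_t') \supset H$, rather than just close to this form. This amounts to arranging that the wiggling of $Op(K)$ be performed in the direction transverse to $H$ and that no cross-derivatives $\partial_{q_i}\partial_n h_t$ parallel to $H$ are inserted into the approximation scheme. A secondary subtlety, already flagged in the remark following Theorem \ref{1-holonomic}, is that standard cutoffs do not a priori preserve exactness in the symplectic world; this is precisely why we carry out the cutoff at the level of the generating function $h_t$ and only then translate back to $f_t$, rather than cutting off the Lagrangian embeddings directly.
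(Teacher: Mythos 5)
Your proposal follows the paper's argument: translate $G_t$ into a simple $2$-jet rotation $s_t$ via the Weinstein parametrization, invoke the $\perp$-holonomic approximation lemma of \cite{AG15}, then translate back. Two small points of precision are worth noting where your account is slightly looser than what is actually needed. First, the lemma of \cite{AG15} does not by itself hand you a simple $2$-jet section $r_t$; it produces $h_t$ and a $C^0$-small isotopy $F_t$ with $j^2(h_t)$ close to $s_t$ on $Op(F_t(K))$, $j^1(h_t)$ small, and crucially $\text{Hess}(h_t)|_H$ small. One must then \emph{define} $r_t(q)=(q,0,0,\text{Hess}(h_t)\circ p)$ by composing with the orthogonal projection $p:TL\to TL$ with kernel $H$, so that $r_t$ is simple by construction and the bound $\text{Hess}(h_t)|_H$ small is exactly what makes $j^2(h_t)$ (hence $G(df_t)$) $C^0$-close to $r_t$ (hence $R_t$) on all of $L$; this resolves the ``main obstacle'' you flag without needing the lemma to do it for you. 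Second, the exact homotopy is not $dh_t$ but $dh_t\circ F_t$ (resp.\ $j^1(h_t)\circ F_t$): the reparametrization by the wiggling isotopy is what converts an approximation on $Op(F_t(K))$ into an approximation on $Op(K)$, as required by the first bullet. With those two fixes, your sketch is essentially the proof in the paper.
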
  
\begin{remark} Observe that the second property implies that $f_t$ is everywhere $C^0$-close to $f$. 
\end{remark}
\begin{remark}\label{relative perp-holonomic} The relative form of Theorem \ref{perp-holonomic} also holds. If $G_t=G(df)$ on $Op(A)$ for some closed subset $A \subset L$, then we can arrange it so that $f_t=f$ and $R_t=G(df)$ on $Op(A) \subset L$. 
\end{remark}

\begin{proof}
By definition of graphicality, we can think of $G_t$ as a $2$-jet rotation $s_t:L \to J^2(L, \bR)$ which is simple with respect to some hyperplane field $H \subset TL$. We can therefore apply the ($1$-parametric) holonomic approximation lemma for $\perp$-holonomic sections from \cite{AG15} to $s_t$. The output is a family of functions $h_t:L \to \bR$, $h_0=0$ and an isotopy $F_t:L \to L$ such that the following properties hold.
\begin{itemize}
\item $j^2(h_t)$ is $C^0$-close to $s_t$ on $Op\big(F_t(K) \big) \subset L$.
\item $j^1(h_t)$ is $C^0$-small on all of $L$.
\item $\text{Hess}(h_t)|_H$ is $C^0$-small on all of $L$.
\item $F_t$ is $C^0$-small.
\item $F_t^*H$ is $C^0$-close to $H$.
\item $h_t=0$ and $F_t=id_L$ outside of a slightly bigger neighborhood of $K$ in $L$.\end{itemize}

 The $C^1$-smallness of $h_t$ allows us to think of $dh_t \circ F_t:L \to T^*L$ (in the Lagrangian case) or of $j^1(h_t) \circ F_t:L \to J^1(L, \bR)$ (in the Legendrian case) as an exact homotopy of regular Lagrangian or Legendrian embeddings $f_t:L \to M$. We define the simple tangential rotation $R_t:L \to \Lambda_n(M)$ by specifying its corresponding simple $2$-jet rotation $r_t:L \to J^2(L, \bR)$ as follows. Write $r_t(q)=\big(q,0,0,Q_t(q) \big) \in J^2(L, \bR)$ for $Q_t:TL \to \bR$ a family of quadratic forms and set $Q_t=\text{Hess}(h_t)\circ p:TL \to \bR$ to obtain the desired $R_t$, where $p:TL \to TL$ is the orthogonal projection with kernel $H$. All the properties listed in Theorem \ref{perp-holonomic} hold.
\end{proof}

The condition that $F_t^*H$ is $C^0$-close to $H$ was not used in this proof but will be used below so we include it for future reference. The above argument also works for families. In the parametric case, we note that the polyhedron $K$ may also vary with the parameter. To be more precise, we have the following definition.
\begin{definition} A closed subset $K \subset Z \times L$ is called a fibered polyhedron if it is a subcomplex of a smooth triangulation of $Z \times L$ which is in general position with respect to the fibres $z \times L$, $z \in Z$.
\end{definition}

A consequence of this definition is that for every $z \in Z$ the subset $K^z \subset L$ given by $K \cap \left(z \times L\right)= z  \times K^z$ is a polyhedron in $L$. If $K$ has positive codimension in $Z \times L$, then $K^z$ has positive codimension in $L$ for all $z \in Z$. The parametric version of Theorem \ref{perp-holonomic} is proved in the same way, by adding a parameter in the notation everywhere and invoking our parametric holonomic approximation lemma for $\perp$-holonomic sections from \cite{AG15}. The statement reads as follows. We note that the relative version also holds, as in Remark \ref{relative perp-holonomic}.

\begin{theorem}\label{parametric perp-holonomic} Let $K \subset Z \times L$ be a fibered polyhedron of positive codimension and let $G^z_t: L \to \Lambda_n(M)$ be a family of graphical simple tangential rotations of regular Lagrangian or Legendrian embeddings $f^z:L \to M$ parametrized by a compact manifold $Z$ such that $G^z_t=G(df^z)$ for $z \in Op(\partial Z)$. Then there exists a family of graphical simple tangential rotations $R^z_t:L \to \Lambda_n(M)$ of $f^z$ and a family of exact homotopies of regular Lagrangian or Legendrian embeddings $f^z_t:L \to M$, $f^z_0=f^z$, such that the following properties hold.
\begin{itemize} 
\item $G(df^z_t)$ is $C^0$-close to $G^z_t$ on $Op(K^z) \subset L$.
\item $G(df^z_t)$ is $C^0$-close to $R^z_t$ on all of $L$.
\item $R^z_t$ is simple with respect to the same hyperplane field as $G^z_t$.
\item $f^z_t=f^z$ and $R^z_t=G(df^z)$ outside of a slightly bigger neighborhood of $K^z$ in $L$.
\item $f^z_t=f^z$ and $R^z_t=G(df^z)$ for $z \in Op(\partial Z)$.
\end{itemize}
\end{theorem}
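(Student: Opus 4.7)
The plan is to repeat the argument used to establish Theorem \ref{perp-holonomic} verbatim in the fibered setting, with the single substantive change being the invocation of the parametric (rather than the one-parametric) version of the holonomic approximation lemma for $\perp$-holonomic sections from \cite{AG15}. Concretely, I would start by choosing a family of Weinstein parametrizations $\Phi^z$ depending smoothly on $z\in Z$; since the space of Weinstein parametrizations is contractible and each $f^z$ is a regular Lagrangian or Legendrian embedding, such a family can always be arranged to exist and to agree with a previously fixed choice on $Op(\partial Z)$. Using the graphicality assumption, the family $G^z_t$ then corresponds to a family of $2$-jet rotations $s^z_t:L\to J^2(L,\bR)$, and by hypothesis each $s^z_t$ is simple with respect to some hyperplane field $H^z \subset TL$ (which we also take to depend smoothly on $z$, constant for $z\in Op(\partial Z)$).

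Next I would feed the fibered polyhedron $K$ and the family $s^z_t$ into the parametric $\perp$-holonomic approximation lemma. The output is a family of functions $h^z_t:L\to \bR$ with $h^z_0=0$ and a family of isotopies $F^z_t:L\to L$ satisfying, for each $z\in Z$, the same six bullet points listed in the proof of Theorem \ref{perp-holonomic}: $j^2(h^z_t)$ is $C^0$-close to $s^z_t$ on $Op(F^z_t(K^z))$; $j^1(h^z_t)$ is $C^0$-small on all of $L$; $\mathrm{Hess}(h^z_t)|_{H^z}$ is $C^0$-small; $F^z_t$ is $C^0$-small; $(F^z_t)^* H^z$ is $C^0$-close to $H^z$; and $h^z_t=0$, $F^z_t=\mathrm{id}_L$ outside a slightly thickened neighborhood of $K^z$. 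The relative part of the parametric lemma additionally guarantees that $h^z_t=0$ and $F^z_t=\mathrm{id}_L$ for $z\in Op(\partial Z)$, which will furnish the fifth bullet point in the theorem statement.

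From this data I would construct the desired objects exactly as in the non-parametric case. The smallness of $j^1(h^z_t)$ allows $dh^z_t \circ F^z_t$ (Lagrangian case), respectively $j^1(h^z_t)\circ F^z_t$ (Legendrian case), to be interpreted via $\Phi^z$ as an exact homotopy of regular embeddings $f^z_t:L\to M$ with $f^z_0=f^z$. To define the simple tangential rotation $R^z_t$, let $p^z:TL\to TL$ denote orthogonal projection with kernel $H^z$ and set the simple $2$-jet rotation $r^z_t$ to be $(q,0,0,\mathrm{Hess}(h^z_t)\circ p^z)$; the corresponding tangential rotation $R^z_t$ is, by construction, simple with respect to $H^z$. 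The five asserted properties then follow directly: the first two from the $C^0$-approximation bullet points (using that $F^z_t$ is $C^0$-small and $(F^z_t)^*H^z$ is $C^0$-close to $H^z$, so Hessians pull back nearly correctly), the third by definition, and the last two from the support properties of $h^z_t$, $F^z_t$.

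The principal technical point — and the only place where the parametric case genuinely differs from the one-parametric case — is the simultaneous control of the approximation in both the $Z$-direction and the $L$-direction while keeping the construction trivial on $Op(\partial Z)$ and outside a neighborhood of the fibered polyhedron $K$. This is precisely the content that the parametric $\perp$-holonomic approximation lemma from \cite{AG15} is designed to deliver, and once it is invoked no new idea is required. The general-position condition built into the definition of a fibered polyhedron ensures that $K$ has the right codimension to support the wiggling construction uniformly in $z$, so no pathology arises at parameter values where $K^z$ might otherwise degenerate.
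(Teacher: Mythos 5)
Your proposal is correct and follows the same route the paper takes: the paper simply remarks that the non-parametric proof of Theorem \ref{perp-holonomic} goes through verbatim after adding a $Z$-parameter everywhere and substituting the parametric version of the $\perp$-holonomic approximation lemma from \cite{AG15}, which is exactly what you do. Your additional observations (smooth family of Weinstein parametrizations, the role of the general-position condition in the definition of a fibered polyhedron) are correct fillings-in of details the paper leaves implicit.
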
 
\subsection{Wiggling the wrinkles}\label{wiggling the wrinkles} In this section we extend Theorems \ref{perp-holonomic} and \ref{parametric perp-holonomic}, which were stated for regular Lagrangian or Legendrian embeddings, to the case of wrinkled Lagrangian or Legendrian embeddings. In the wrinkled case, we cannot invoke our holonomic approximation lemma for $\perp$-holonomic sections from \cite{AG15} directly because a wrinkled Lagrangian or Legendrian embedding is not regular near the wrinkles. The sharpening construction described in section \ref{sharpening the wrinkles} will allow us to resolve this issue, since the sharper the wrinkles, the better they can be approximated locally by a regular Lagrangian or Legendrian submanifold..

Given a wrinkled Lagrangian or Legendrian embedding $f:L \to M$, recall that the subset on which $f$ is wrinkled consists of a disjoint union $W=\bigcup_j S_j$ of finitely many $(n-1)$-dimensional embedded spheres $S_j \subset L$. Each sphere $S_j$ has an $(n-2)$-dimensional equator $E_j \subset S_j$ on which $f$ has birth/deaths of zig-zags. The complement $S_j \setminus E_j$ consists of two hemispheres on which $f$ has cusps. 

We say that a polyhedron $K \subset L$ is compatible with the wrinkles of $f$ if the following condition holds. We demand that the wrinkling locus $W=\bigcup_j S_j$ is contained in the $(n-1)$-skeleton of $K$ and that the union of the equators $\bigcup_j E_j$ is contained in the $(n-2)$-skeleton of $K$. In the same way we can define what it means for a fibered polyhedron $K \subset Z \times L$ to be compatible with the fibered wrinkles of a family $f^z:L \to M$ of wrinkled Lagrangian or Legendrian embeddings parametrized by a compact manifold $Z$. 

We now prove the analogue of Theorem \ref{perp-holonomic} for wrinkled Lagrangian and Legendrian embeddings. The precise statement is the following. 

\begin{theorem}\label{perp-holonomic for wrinkles} Let $K \subset L$ be a polyhedron of positive codimension which is compatible with the wrinkles of a wrinkled Lagrangian or Legendrian embedding $f:L \to M$. Let $G_t:L \to \Lambda_n(M)$ be a graphical simple tangential rotation of $f$. Then there exists a graphical simple tangential rotation $R_t:L \to \Lambda_n(M)$ of $f$ and a family of exact homotopies $f_t:L \to M$, $f_0=f$, of wrinkled Lagrangian or Legendrian embeddings such that all of the properties listed in Theorem \ref{perp-holonomic} hold. 
\end{theorem}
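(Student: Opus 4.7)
The strategy is to reduce to Theorem \ref{perp-holonomic} by first sharpening the wrinkles of $f$ and then treating the sharpened embedding as a controlled $C^0$-perturbation of a regular Lagrangian or Legendrian embedding, exactly as suggested by the introductory remark at the start of this subsection. The argument will proceed in three steps whose concatenation produces the desired homotopy $f_t$ and rotation $R_t$.

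First, I would apply Lemmas \ref{sharpencusp} and \ref{sharpenbirth} simultaneously at every cusp and swallowtail of $f$, with sharpening parameter $\varepsilon>0$ and auxiliary scale $\delta$ both chosen sufficiently small. This gives an exact homotopy supported in an arbitrarily small neighborhood of $W$ connecting $f$ to a wrinkled embedding $\tilde f$ whose wrinkles all have depth of order $\varepsilon$, while remaining $C^1$-close to $f$. Then I would interpolate the generating function of the local wrinkle model to zero across the wrinkling locus in a small collar, thereby producing a regular Lagrangian or Legendrian embedding $\bar f$ which agrees with $\tilde f$ outside a thin neighborhood of $W$ and is $C^0$-close to $\tilde f$ everywhere. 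For $\varepsilon$ small enough, the image $\tilde f(L)$ lies inside a Weinstein neighborhood $\cN$ of $\bar f(L)$, so that tangential rotations of $\tilde f$ correspond to $2$-jet rotations as in Section \ref{Rotations of $2$-jets}.

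Next, I would apply Theorem \ref{perp-holonomic} to $\bar f$ with the polyhedron $K$ and the simple tangential rotation $G_t$, which via the Weinstein identification of $\cN$ corresponds to a graphical simple $2$-jet rotation on $L$. This yields functions $h_t:L\to\bR$ with $j^1(h_t)$ globally $C^0$-small, an isotopy $F_t$ of $L$, and a regular wiggling of $\bar f$ whose Gauss map approximates $G_t$ near $K$. To re-insert the wrinkle on the wiggled regular embedding, I would add $h_t\circ F_t$ to the generating function of the sharpened wrinkle model, producing an exact homotopy of wrinkled embeddings ending at the desired $f_1$; the simple tangential rotation $R_t$ is then defined by the Hessian of $h_t$ projected onto the simplicity hyperplane field, exactly as in the proof of Theorem \ref{perp-holonomic}. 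Reparametrizing the concatenation of the sharpening homotopy with the re-insertion homotopy yields the required family $f_t$.

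The main obstacle is the re-insertion step: one must verify that the $C^1$-small correction $h_t\circ F_t$ added on top of the sharpened wrinkle model still defines a genuine exact wrinkled Lagrangian or Legendrian embedding in the sense of Section \ref{Lagrangian and Legendrian wrinkles}, and that the resulting rotation $R_t$ is simple with respect to the same hyperplane field as $G_t$. Here the compatibility of $K$ with the wrinkles (so that $W\subset K^{(n-1)}$ and the equators lie in $K^{(n-2)}$) is essential: it ensures that $F_t$ moves the wrinkling locus coherently along with the rest of $L$, so that no new wrinkle singularities are created and the existing ones are simply displaced. Finally, the relative form of Theorem \ref{perp-holonomic} together with the locality of the sharpening homotopy guarantees the condition that $f_t=f$ and $R_t=G(df)$ outside a slightly enlarged neighborhood of $K$, thereby delivering every bullet point in the conclusion of Theorem \ref{perp-holonomic}.
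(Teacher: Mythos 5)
Your high‑level strategy — sharpen the wrinkles, pass to a nearby regular embedding, apply Theorem \ref{perp-holonomic}, and then transfer the resulting wiggle back to the wrinkled submanifold — is indeed the one the paper uses, but two of your steps do not work as written, and the paper's proof is organized precisely to avoid both pitfalls.

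The first problem is the construction of a global regular $\bar f$ by ``interpolating the generating function of the local wrinkle model to zero.'' Interpolating only the $z$-coordinate (or the $p_n$-coordinate) of the wrinkle model does not preserve the Lagrangian/Legendrian condition, and if one instead interpolates the full parametrization, the degenerate $q_n\mapsto q_n^2$ factor in $\cC_n$ persists for all but the endpoint of the interpolation, so the intermediate maps are still singular. There is no obvious exact regular $\bar f$ agreeing with the sharpened $\tilde f$ away from $W$ and equal to the zero section near $W$. The paper sidesteps this entirely: it never constructs a global regularization of $\tilde f$, but instead applies Theorem \ref{perp-holonomic} separately in each local model $\cG_n$ (resp.\ $\cC_n$), where the nearby regular embedding is just the local zero section $\cZ$, and on the regular locus of $f$, using the relative form to patch.

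The second, more serious problem is the re-insertion step. ``Adding $h_t\circ F_t$ to the generating function of the sharpened wrinkle model'' is not a well-defined operation on Lagrangian or Legendrian submanifolds — the $h_t$ produced by Theorem \ref{perp-holonomic} is a function on the source $L$, the sharpened wrinkle is not a graph over $\bar f$ (it still has cusps), and modifying only one coordinate breaks the contact/symplectic constraint. The correct operation, which is what the paper actually does, is to extract from Theorem \ref{perp-holonomic} the ambient Hamiltonian isotopy $\varphi_t$ with $\cZ_t = \varphi_t\circ\cZ$, and then form $\varphi_t\circ\cG_{n,1}$ (resp.\ $\varphi_t\circ\cC_{n,1}$). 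Composing with an ambient symplecto/contactomorphism is always defined and automatically preserves the wrinkle structure. The crux of the proof, which your plan mentions but does not use, is the quantitative role of the sharpening: as $\varepsilon\to 0$ the Gauss plane $G(d\cG_{n,1})(q)$ converges to the horizontal plane over $\pi(q)$, so $G\bigl(d(\varphi_t\circ\cG_{n,1})\bigr)(q)$ converges to $G(d\cZ_t)\bigl(\pi(q)\bigr)$, which is $C^0$-close to $G_t$. Without this limit argument there is no reason the Gauss map of the transplanted wrinkle should approximate $G_t$ — the Hessian $\partial^2 h_t/\partial q_n^2$ is allowed to be large and could, if applied naively as you suggest, destroy the local wrinkle normal form. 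Relatedly, your appeal to ``tangential rotations of $\tilde f$ correspond to $2$-jet rotations as in Section \ref{Rotations of $2$-jets}'' is not available: that correspondence is established only for regular embeddings, and $\tilde f$ is still wrinkled.

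In summary: the strategy is right, and the sharpening lemmas are the correct tool, but the two load-bearing steps (building $\bar f$ and transferring the wiggle back to the wrinkle) need to be replaced by the local-model argument of the paper: apply Theorem \ref{perp-holonomic} to the local zero section $\cZ$, obtain $\varphi_t$, conjugate the sharpened wrinkle by $\varphi_t$, and use the $\varepsilon\to 0$ convergence of the Gauss map to control the error.
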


\begin{proof} Consider first a single wrinkle $S$ in the wrinkling locus $W \subset L$ of $f$. The wiggling on $S$ is performed in two steps. First we will wiggle $f$ near the equator $E \subset S$ and then we will wiggle $f$ near the remaining part of $S$. In both cases this wiggling  is achieved by replacing the singular Lagrangian or Legendrian submanifold $f(L)$ with a regular approximation to which holonomic approximation can be applied. We then use the resulting ambient Hamiltonian isotopy to induce a wiggling of $f$. We will restrict our attention to the Lagrangian case for the sake of concreteness, but the Legendrian case is no different.

In Section \ref{sharpening the wrinkles} we introduced the Lagrangian local model $\cG_n$ for the birth/death of zig-zags. Recall that $\cG_n: S^{n-2} \times \bR^2 \to T^*(S^{n-2} \times \bR^2)$ is given by
\[ \cG_n(\widetilde{q},q_{n-1} , q_n) = \Big(\widetilde{q}, q_{n-1} , \tau ,0, \frac{\partial G}{\partial q_{n-1}} - g \frac{\partial \tau}{\partial q_{n-1}} , g\Big) , \quad q=(\widetilde{q},q_{n-1},q_{n}) \in S^{n-2} \times \bR \times \bR.\]
\[ \text{where} \quad \tau(q_{n-1},q_n)= q_n^3-3q_{n-1}q_n ,\qquad  \qquad g(q_{n-1},q_n)= \int_0^{q_n} (u^2-q_{n-1})^2 du \]
\[ \text{and} \qquad G(q_{n-1},q_n)=\int_0^{q_n} g(q_{n-1}, u) \frac{\partial \tau}{\partial q_n}(q_{n-1}, u) \, du. \]

Near the equator $E \subset S$, our wrinkled Lagrangian embedding $f:L \to M$ is locally equivalent to $\cG_n$ near $S^{n-2} \times 0 \subset S^{n-2} \times \bR^2$. Working in this local model, we can think of $G_t$ as a tangential rotation of $\cG_n$ which is simple with respect to an $(n-1)$-plane field $H \subset T\big(T^*(S^{n-2}\times \bR^2)\big)$. Consider the zero section $\cZ:S^{n-2} \times \bR^2 \to T^*(S^{n-2} \times \bR^2)$, which is a Lagrangian cylinder. Observe that $\cZ|_{S^{n-2} \times 0}=\cG_n|_{S^{n-2} \times 0}$, and moreover that $G(d\cZ)|_{S^{n-2} \times 0}=G(d\cG_n )|_{S^{n-2} \times 0 }$. Extend $G_t|_{S^{n-2} \times 0}$ to $S_t:S^{n-2} \times \bR^2 \to \Lambda_n\big(T^*(S^{n-2} \times \bR^2) \big)$, a tangential rotation of $\cZ$ which is simple with respect to $H$. 

Let $\delta>0$ and set $\cN=S^{n-2} \times (-\delta,\delta)^2 \subset S^{n-2} \times \bR^2$. Apply Theorem \ref{perp-holonomic} to the regular Legendrian embedding $\cZ$, the simple tangential rotation $S_t$ and the stratified subset $S^{n-2} \times 0 \subset S^{n-2} \times \bR^2$. We obtain an exact homotopy of regular Legendrian embeddings $\cZ_t:S^{n-2} \times \bR^2 \to T^*(S^{n-2} \times \bR^2)$ which we may assume is constant outside of $\cN$. Recall that $G(d\cZ_t)$ is $C^0$-close to $G_t$ near $S^{n-2} \times 0$. Recall also that $G(d\cZ_t)$ is everywhere $C^0$-close to a tangential rotation $R_t$ which is also simple with respect to $H$ and which is supported on $\cN$.

Write $\cZ_t=\varphi_t \circ \cZ$ for an ambient Hamiltonian isotopy $\varphi_t$ which we may assume constant outside of $Op(\cN) \subset T^*(S^{n-2} \times \bR^2)$. Let $\varepsilon>0$ and consider the sharpening $\cG_{n,t}$ of $\cG_n$ described in Section \ref{sharpening the wrinkles} with respect to the parameters $\delta$ and $\varepsilon$. Recall that $\text{dist}_{C^1}(\cG_n,\cG_{n,t}) \leq A \delta$ for some constant $A>0$ independent of $\delta$ and $\varepsilon$, so by taking $\delta>0$ small enough we may replace $\cG_n$ with $\cG_{n,1}$ from the onset up to an error which is proportional to $\delta$. Recall also that sharpening is supported on $S^{n-2} \times (-2\delta, 2\delta)^2$ and is $\varepsilon$-sharp on $\cN= S^{n-2} \times (\delta, \delta)$. For details see Section \ref{sharpening the wrinkles}.
\begin{figure}[h]
\includegraphics[scale=0.6]{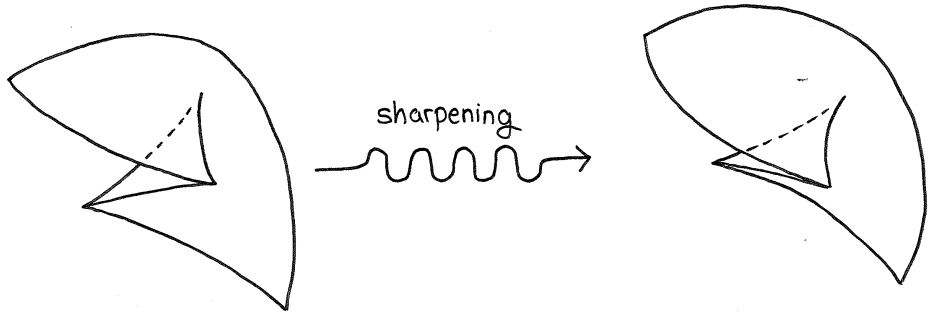}
\caption{The sharpening construction applied to the equator.}
\label{sharpeningbirthagain}
\end{figure}

Consider now $\varphi_t \circ \cG_{n,1}$. Note that on $Op( S^{n-2} \times 0)$ the Gauss map of this composition is $C^0$-close to $G_t$. Indeed, $\cG_{n,1}$ and $\cZ$ are tangent along $S^{n-2} \times 0$ and when we invoke Theorem \ref{perp-holonomic} to construct $\cZ_t$ we can demand as much accuracy in the approximation as we want. Next, observe that $\cZ_t$ is supported on $\cN$ and on that neighborhood $G(d\cZ_t)$ is $C^0$-close to a tangential rotation which is simple with respect to $H$. Let $\pi:T^*(S^{n-2} \times \bR^2) \to S^{n-2} \times \bR^2$ denote the standard projection. As $\varepsilon \to 0$ in the sharpening $\cG_{n,1}$, for each $q \in \cN$ the tangent plane $G(d\cG_{n,1})(q)$ converges to the horizontal plane tangent to the zero section at the point $\pi(q)$, and hence $G\big(d( \varphi_t \circ \cG_{n,1}) \big)(q)$ converges to $G( d \cZ_t)\big( \pi(q) \big)$. It follows that by taking $\varepsilon>0$ as small as is necessary, we can use $R_t$ to exhibit a tangential rotation of $\cG_{n,1}$ which is simple with respect to $H$ and which is arbitrarily $C^0$-close to $G\big( d ( \varphi_t \circ \cG_{n,1}) \big)$ on all of $L$. We have therefore achieved the required global approximation up to an error which is proportional to $\delta$. Since we can take $\delta>0$ to be arbitrarily small, this completes the wiggling near the equator. 

Once we have wiggled $f$ near the equator $E$ we proceed to wiggle $f$ on the two hemispheres  $D^{\pm}$ of the complement $S\setminus E$. Near the interior of each of the two disks $D^{+}$ and $D^{-}$ the map $f$ is equivalent to the local model $\cC_n: \bR^n \to T^*\bR^n$ on $Op(D^{n-1}) \setminus Op(\partial D^{n-1})$, where we recall from Section \ref{sharpening the wrinkles} that

\[   \cC_n(q_1, \ldots , q_n) = \left( q_1 , \ldots , q_{n-1} , q_n^2, 0, \ldots , q_n^3 \right). \]

Our input this time is a simple tangential rotation $G_t$ of the local model $\cC_n|_{D^{n-1}}$ which we assume to be constant on $Op(\partial D^{n-1})$. The strategy is the same as before. Consider the zero section $\cZ:\bR^n \to T^*\bR^n$ and extend $G_t|_{ D^{n-1}}$ to a simple tangential rotation $S_t$ of $\cZ$. Then apply (the relative version of) Theorem \ref{perp-holonomic} to $\cZ$, $S_t$ and $D^{n-1}$ to obtain an exact homotopy of regular Lagrangian embeddings $\cZ_t$ which is induced by an ambient Hamiltonian isotopy $\varphi_t$ fixing the boundary. For a suitable choice of parameters $\delta$ and $\varepsilon$, the concatenation of the sharpening homotopy $\cC_{n,t}$ described in Section \ref{sharpening the wrinkles} followed by the isotopy $\varphi_t \circ \cC_{n,1}$ gives the required wiggling on $S \setminus E.$

This process can now be repeated on all wrinkles $S$ until we have achieved the desired wiggling on the locus $W$ where $f$ fails to be a regular Lagrangian embedding. The proof of Theorem \ref{perp-holonomic for wrinkles} is completed by applying (the relative version) of Theorem \ref{perp-holonomic} on the regular locus. \end{proof}

The analogue of the parametric Theorem \ref{parametric perp-holonomic} for families of wrinkled Lagrangian or Legendrian embeddings also holds, where we demand that the fibered polyhedron $K \subset Z \times L$ is compatible with the wrinkles. The proof only differs in notation and the precise statement reads as follows.

\begin{theorem}\label{parametric perp-holonomic for wrinkles} Let $K \subset Z \times L$ be a fibered polyhedron of positive codimension which is compatible with the wrinkles of a family of wrinkled Lagrangian or Legendrian embeddings $f^z:L \to M$ parametrized by a compact manifold $Z$. Let $G^z_t:L \to \Lambda_n(M)$ be a family of graphical simple tangential rotations of $f^z$ such that $G^z_t=G(df^z)$ for $z \in Op(\partial Z)$. Then there exists a family of graphical simple tangential rotations $R^z_t:L \to \Lambda_n(M)$ of $f^z$ and an exact homotopy of wrinkled Lagrangian or Legendrian embeddings $f^z_t:L \to M$, $f^z_0=f^z$, such that all of the properties listed in Theorem \ref{parametric perp-holonomic} hold. 
\end{theorem}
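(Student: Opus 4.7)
The strategy is to mimic the proof of Theorem \ref{perp-holonomic for wrinkles} with parameters added throughout, exactly as in the passage from Theorem \ref{perp-holonomic} to Theorem \ref{parametric perp-holonomic}. By compatibility, the fibered wrinkling locus $W \subset Z \times L$ of the family $f^z$, together with the equators of its wrinkles and the embryo points where wrinkles are born or die, is contained in the $(n-1)$-skeleton (respectively the $(n-2)$-skeleton) of the fibered polyhedron $K$. Away from $W$, the family $f^z$ is a family of regular Lagrangian or Legendrian embeddings and one simply invokes the parametric perp-holonomic Theorem \ref{parametric perp-holonomic} on a neighborhood of $K \setminus Op(W)$, relative to $Op(\partial Z)$.

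Near the fibered wrinkling locus $W$, one proceeds wrinkle-family by wrinkle-family, splitting each into the neighborhood of the fibered equator and the neighborhoods of the two hemisphere families $D^{\pm}$. Working in a fibered Weinstein chart, a neighborhood of the fibered equator is modelled on the fibered local form $\cL^{\cF}_{n,m}$ (or its Legendrian counterpart $\widehat{\cL}^{\cF}_{n,m}$), and near the hemispheres the local model is a family $\cC^{z}_n$ of standard Lagrangian/Legendrian cusps. On each of these local patches one compares the wrinkled family to the corresponding family of zero sections $\cZ^z$, which agrees with $f^z$ along the wrinkling stratum and with the same Gauss map there. One extends $G^z_t$ restricted to the wrinkling stratum to a family of simple tangential rotations $S^z_t$ of $\cZ^z$ (relative to $Op(\partial Z)$), applies Theorem \ref{parametric perp-holonomic} to $\cZ^z$, $S^z_t$, and the skeleton part of $K$ intersected with this chart, and writes the outcome as $\varphi^z_t \circ \cZ^z$ for a family of compactly supported ambient Hamiltonian isotopies $\varphi^z_t$.

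To pull this back onto the original wrinkled family one uses the parametric sharpening construction: for parameters $\delta, \varepsilon > 0$, the fibered analogue of Lemmas \ref{sharpencusp} and \ref{sharpenbirth} provides an exact homotopy $\cL^{\cF}_{n,m,t}$ (respectively $\cG^{z}_{n,t}$ and $\cC^{z}_{n,t}$) which is $\varepsilon$-sharp on a shrunken neighborhood of the fibered stratum and $C^1$-close to the original model up to $A\delta$, uniformly in $z$. Composing $\varphi^z_t$ with the sharpened family and letting $\varepsilon \to 0$, the Gauss map $G\big(d(\varphi^z_t \circ \cL^{\cF}_{n,m,1})\big)(q)$ converges to $G(d\cZ^z_t)\big(\pi(q)\big)$ at every point of the small neighborhood $\cN$, so the simple tangential rotation $R^z_t$ produced by Theorem \ref{parametric perp-holonomic} on $\cZ^z$ transports to a simple tangential rotation of $\cL^{\cF}_{n,m,1}$ which $C^0$-approximates the Gauss map of the new wrinkled family on all of $\cN$. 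Choosing $\delta$ small enough absorbs the $C^1$-error from sharpening, and the construction is automatically relative to $Op(\partial Z)$ because $G^z_t = G(df^z)$ there forces all of the intermediate data to be trivial.

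The main technical obstacle, as in the unparametrized case, is the equator: the wrinkled family has non-generic singularities there and one must carry out the comparison with the zero section in a way that respects both the family direction and the simplicity of the tangential rotation. This is handled by (i) ensuring that the fibered polyhedron $K$ contains the embryo locus inside its $(n-2)$-skeleton, so that the fibered equator is a legitimate stratum for the application of the parametric perp-holonomic approximation; (ii) extending $G^z_t$ to a family $S^z_t$ of simple tangential rotations of the auxiliary zero sections with respect to the same hyperplane field, which is possible because simplicity is a linear constraint in $J^2$; and (iii) controlling the sharpening uniformly in $z$, which the explicit formulas from Section \ref{sharpening the wrinkles} manifestly provide. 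Apart from this, the verification of the five bulleted conclusions of Theorem \ref{parametric perp-holonomic for wrinkles} is a bookkeeping exercise that reads off directly from Theorem \ref{parametric perp-holonomic} and the parametric sharpening, just as in the proof of Theorem \ref{perp-holonomic for wrinkles}.
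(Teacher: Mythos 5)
Your proposal is correct and is essentially the paper's intended proof: the paper dismisses this theorem with ``the proof only differs in notation'' from Theorem~\ref{perp-holonomic for wrinkles}, and you have spelled out exactly that parametrized repetition (fibered equator/hemisphere decomposition, comparison to the family of zero sections, parametric sharpening, parametric $\perp$-holonomic approximation). The only nit is a small imprecision in the opening of your second paragraph---near the fibered equator the relevant local model is the fibered analogue of $\cG_n$ rather than the full fibered wrinkle $\cL^{\cF}_{n,m}$, as you in fact acknowledge a few lines later when invoking $\cG^z_{n,t}$ and $\cC^z_{n,t}$---and the skeleton dimension for the embryo/equator locus should be $(m+n-2)$ rather than $(n-2)$, but neither affects the substance of the argument.
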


\begin{remark}\label{isotopyvhomotopy} Observe that no wrinkles appear or disappear in the homotopies of wrinkled Lagrangian or Legendrian embeddings produced by Theorems \ref{perp-holonomic for wrinkles} and \ref{parametric perp-holonomic for wrinkles}. In \cite{EM09}, Eliashberg and Mishachev refer to such a homotopy as an isotopy of wrinkled embeddings. We like to think of this process as `wiggling'.
\end{remark}

\section{Wrinkling embeddings}\label{Wrinkling embeddings}
\subsection{Wrinkled approximation on the whole manifold}\label{Wrinkled approximation on the whole manifold}
As we already mentioned, we cannot in general hope to globally $C^0$-approximate a tangential rotation $G_t:L\to \Lambda_n(M)$ of a regular Lagrangian or Legendrian embedding $f:L \to M$ by the Gauss maps $G(df_t)$ of a homotopy $f_t$ of regular Lagrangian or Legendrian embeddings. In the previous section, we showed that the approximation can nevertheless be achieved by such a regular homotopy in a small neighborhood of any polyhedron $K \subset L$ of positive codimension. In this section we show that the approximation can be globally achieved on the whole manifold $L$ \emph{if we allow the homotopy $f_t$ to be wrinkled}. See Figure \ref{wrinklingtheorem} for an illustration. More, precisely we have the following theorem, which is the main result of this section.
\begin{theorem}\label{wrinkling lagrangians}
Let $G_t:L \to \Lambda_n(M)$ be a tangential rotation of a regular Lagrangian or Legendrian embedding $f:L \to M$. Then there exists a compactly supported exact homotopy of wrinkled Lagrangian or Legendrian embeddings $f_t:L \to M$, $f_0=f$ such that $G(df_t)$ is $C^0$-close to $G_t$. 
\end{theorem}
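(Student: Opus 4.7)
The plan is a three-step reduction: first simplify the rotation, then wiggle along a codimension-one skeleton, and finally realize the remaining rotation on each top-dimensional cell by inserting wrinkles. By Proposition \ref{decomposition proposition} I may $C^0$-approximate $G_t$ by a piecewise simple tangential rotation. Subdividing the time interval further and iterating the construction to be described, it suffices to produce the desired wrinkled approximation when $G_t$ is simple with respect to a single $(n-1)$-plane field $H$ and, after one more time-refinement, graphical throughout.

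Next, fix a sufficiently fine smooth triangulation $\cT$ of a neighborhood of the support of $G_t$, so that each top-dimensional simplex is contained in a Darboux chart adapted to the local models $\cL_n$ and $\widehat{\cL}_n$. Let $K$ be its $(n-1)$-skeleton. Theorem \ref{perp-holonomic for wrinkles} applied to $f$ and $G_t$ with respect to $K$ produces an exact homotopy of regular Lagrangian (or Legendrian) embeddings $\widetilde{f}_t$ such that $G(d\widetilde{f}_t)$ is $C^0$-close to $G_t$ on $Op(K)$ and everywhere $C^0$-close to a simple tangential rotation $R_t$ with respect to the same $H$, with $R_t = G(df)$ outside a slightly bigger neighborhood of $K$. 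On each top cell $\sigma$ the task becomes local: produce a wrinkled exact homotopy of $\widetilde{f}_t|_\sigma$ whose Gauss map $C^0$-approximates $G_t|_\sigma$ and which agrees with $G(d\widetilde{f}_t)|_\sigma$ near $\partial\sigma$.

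The heart of the argument is thus a local wrinkling lemma: a compactly supported simple tangential rotation of the zero section $\bR^n \subset T^*\bR^n$ (respectively $J^1(\bR^n,\bR)$) with respect to a hyperplane $H$ can be $C^0$-approximated by the Gauss map of a compactly supported exact homotopy of wrinkled Lagrangian (respectively Legendrian) embeddings. The construction inserts finitely many small, appropriately scaled and sheared copies of $\cL_n$ (respectively $\widehat{\cL}_n$) inside $\sigma$, arranged so that the one-parameter family of tangent planes they sweep out tracks the prescribed rotation. Simplicity is essential: the standard wrinkle $\cL_n$ rotates about a fixed hyperplane, so its Gauss map naturally traces a one-parameter family of Lagrangian planes sharing a common $(n-1)$-dimensional subspace, precisely matching the single effective degree of freedom allowed by simplicity. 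Exactness is automatic in both the Lagrangian and Legendrian cases because the model is generated by the front $\bigl((\hat{q},\eta),H\bigr)$, with primitive $H$ compactly supported.

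I expect the local wrinkling lemma to be the main obstacle: the explicit formulas defining $\cL_n$ and $\widehat{\cL}_n$ must be patched and scaled to realize a prescribed simple rotation uniformly, while keeping the support inside $\sigma$ and the $C^0$-error small. The sharpening construction of Section \ref{sharpening the wrinkles} provides the $C^1$-control needed to do this without disturbing the wiggling performed near $K$. Once the local lemma is established, the theorem follows by composing the wiggling step with one local application per top cell, and the parametric and relative refinements follow by the fibered versions of the same ingredients: Proposition \ref{parametric decomposition proposition} for the decomposition, Theorem \ref{parametric perp-holonomic for wrinkles} for the wiggling, and a fibered local wrinkling lemma (using the fibered model $\cL^{\cF}_{n,m}$) on each fibered top cell.
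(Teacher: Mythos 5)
Your three-step skeleton (decomposition into simple pieces via Proposition~\ref{decomposition proposition}, wiggling near the $(n-1)$-skeleton via Theorem~\ref{perp-holonomic for wrinkles}, then a local wrinkling lemma on top cells) reproduces the paper's architecture exactly. You also correctly identify the local wrinkling lemma as the technical heart. But that lemma is where the proposal stops being a proof, and your sketch of it is both vague and, as stated, mechanistically off in a way that matters.

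You describe the local step as inserting ``small, appropriately scaled and sheared copies of $\cL_n$'' so that ``the one-parameter family of tangent planes they sweep out tracks the prescribed rotation.'' That is not how the construction can work, and it hides the key idea. As the scaling parameters degenerate, the Gauss map of a Lagrangian wrinkle does \emph{not} sweep out the prescribed family $G_t$; it converges to the single \emph{fixed} plane $V=\operatorname{span}(\partial/\partial q_1,\ldots,\partial/\partial q_{n-1},\partial/\partial p_n)$, i.e.\ the full $\pi/2$ rotation of $\partial/\partial q_n$ into the fiber direction. A simple rotation $G_t$ is determined by an angle $\lambda_t$ taking arbitrary values in $(-\pi,\pi)$, so no amount of rescaling or nesting of wrinkles alone can make the Gauss map land on $G_t$. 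The paper's actual construction first builds an oscillating Lagrangian model $w_t$ (from the function $\xi$, itself built out of the family $\zeta_s$) whose Gauss map converges to $V$ inside the wrinkles, and then post-composes with a Hamiltonian isotopy $\varphi_t$ generated by $F_t(q,p)=\tfrac12\cot(\lambda_t(q))p_n^2$, so that $d\varphi_t(V)=G_t$ along the zero section. It is $\varphi_t$, not the wrinkle shape, that does the ``tracking,'' and the wrinkled embedding in the conclusion is $f_t=\varphi_t\circ w_t$. Without the $\varphi_t$-step (or an equivalent shear mechanism made explicit) your local lemma has no mechanism for matching an arbitrary angle $\lambda_t$.

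Two further inaccuracies. First, the rotation you must wrinkle-realize on a top cell is only \emph{quasi}-graphical, not graphical: after the wiggling step, $G(dg_t)$ on $Op(\partial D)$ already approximates $G_t$, which may be nearly vertical, so the correcting rotation $S_t$ can have angle of magnitude up to $\pi$; a further time-refinement cannot make it graphical because it is pinned at both ends. The paper's local lemma is deliberately stated for quasi-graphical simple rotations, and its proof cuts off the wrinkling on $\{|\lambda_t|\le\tau\}$ where it is ``dangerous and unnecessary.'' Second, the sharpening construction of Section~\ref{sharpening the wrinkles} is not what controls the local wrinkling; it is used inside Theorem~\ref{perp-holonomic for wrinkles} to wiggle a wrinkled $f$ near its pre-existing singular locus (which matters because Theorem~\ref{wrinkling lagrangians} is proved by iterating over the simple pieces, so after the first pass the input is already wrinkled). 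Attributing the $C^1$-control of the local lemma to sharpening misplaces the role of that tool.

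In short: the reduction to the local wrinkling lemma is right, but the lemma itself is the theorem, and your account of it omits the Hamiltonian shear $\varphi_t$, the quasi-graphical accommodation, and the cut-off near $\{\lambda_t\approx 0\}$ --- all three are load-bearing, and the delicate bookkeeping of the parameters $\tau,\delta,\sigma,\alpha,\gamma,N$ needed to control the embryo regions and prove convergence of the Gauss map is where the actual work lives.
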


\begin{figure}[h]
\includegraphics[scale=0.7]{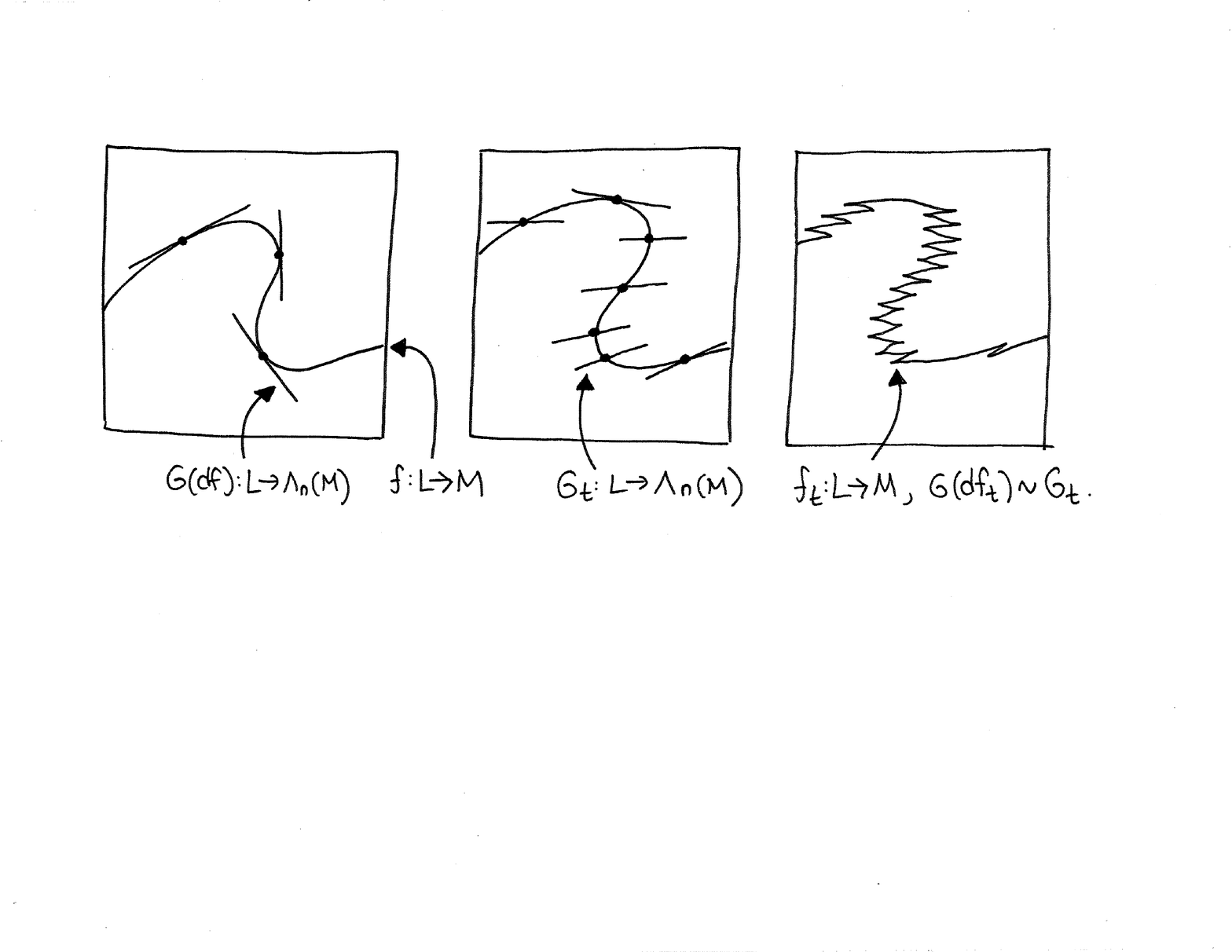}
\caption{The wrinkling theorem in action.}
\label{wrinklingtheorem}
\end{figure}

By Proposition \ref{decomposition proposition} we can reduce Theorem \ref{wrinkling lagrangians} to the following statement.
\begin{theorem}\label{simple wrinkling lagrangians}
Let $G_t:L \to \Lambda_n(M)$ be a graphical simple rotation of a wrinkled Lagrangian or Legendrian embedding $f:L \to M$. Then there exists a compactly supported exact homotopy of wrinkled Lagrangian or Legendrian embeddings $f_t:L \to M$, $f_0=f$ such that $G(df_t)$ is $C^0$-close to $G_t$.
\end{theorem}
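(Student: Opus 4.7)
The plan is to reduce the theorem to a purely local wrinkling statement on top-dimensional simplices, then combine it with the wiggling result of Theorem \ref{perp-holonomic for wrinkles}. First I would choose a sufficiently fine smooth triangulation of $L$ that is compatible with the wrinkles of $f$ (so that the wrinkling locus sits in the $(n-1)$-skeleton and equators sit in the $(n-2)$-skeleton), and let $K$ denote its $(n-1)$-skeleton. Since $K$ has positive codimension, Theorem \ref{perp-holonomic for wrinkles} applied to $(f,G_t,K)$ produces a graphical simple tangential rotation $R_t$ of $f$ and an exact homotopy of wrinkled embeddings $f'_t$ with $f'_0=f$ such that $G(df'_t)$ is $C^0$-close to $G_t$ on $\mathrm{Op}(K)$ and $C^0$-close to $R_t$ everywhere, with $R_t$ simple for the same hyperplane field $H$ as $G_t$ and equal to $G(df)$ off a slightly larger neighborhood of $K$. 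Consequently, outside $\mathrm{Op}(K)$, the remaining rotation needed to connect $G(df'_t)$ to $G_t$ is supported in the interiors of the top-dimensional simplices, is simple with respect to $H$, and is trivial near the boundary of each simplex.

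The heart of the argument is then a local wrinkling lemma: inside a closed $n$-disk $\Delta$, any graphical simple tangential rotation of a regular Lagrangian (resp. Legendrian) embedding which is trivial on $\mathrm{Op}(\partial\Delta)$ can be $C^0$-approximated by the Gauss maps of an exact homotopy of wrinkled Lagrangian (resp. Legendrian) embeddings fixed near $\partial\Delta$. The mechanism is the standard wrinkle $\cL_n$ (resp. $\widehat{\cL}_n$) from Section \ref{Lagrangian and Legendrian wrinkles}: reading off the formulas, the function $\eta(q)=q_n^3+3(\|\hat q\|^2-1)q_n$ shows that passing through a single wrinkle tilts the tangent plane in the $\partial/\partial p_n$ direction, and the auxiliary formulas for $h$ and $H$ are precisely what is needed to preserve the Lagrangian (resp. Legendrian) condition while keeping everything exact. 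Arranging parallel copies of such wrinkles along level sets of $dq_n$, with Weinstein coordinates chosen so that $H$ corresponds to $\ker(dq_n)$, one can realize any simple one-parameter family of $2$-jet rotations with kernel $H$, because the amplitude of the tilt is determined by a single scalar function which can be freely prescribed up to $C^0$-approximation. The fibered model $\cL^{\cF}_{n,m}$ handles the parametric birth/death of wrinkles so that the process assembles into an exact homotopy of wrinkled embeddings.

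Having the local wrinkling lemma, the global proof finishes by applying it separately on each top-dimensional simplex $\Delta$ to the residual simple rotation that interpolates from $G(df'_1)|_\Delta$ to $G_t|_\Delta$, and then concatenating (in time) the wiggling homotopy $f'_t$ with the wrinkling homotopies on the simplices. Simplicity of $G_t$ (with the same $H$ as $R_t$) and triviality of the residual rotation near $\partial \Delta$ guarantee that the local wrinkles can be inserted compactly in the interior of each simplex without interfering across the skeleton, so exactness and the $C^0$-estimates add up to a globally defined exact homotopy $f_t$ with $G(df_t)$ $C^0$-close to $G_t$.

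The main obstacle is the local wrinkling lemma itself. The author explicitly notes that an earlier draft of this step contained mistakes spotted during the Microlocal workshop, and the delicacy is clear from the explicit formulas: one must check simultaneously that (i) the wrinkled model really is Lagrangian (resp. Legendrian), which is why $H$ appears as a primitive of $h\,d\eta$; (ii) the Gauss map of $\cL_n$ genuinely realizes the desired simple tilt, not just some $\Sigma^{10}$-fold in an uncontrolled direction; and (iii) when several wrinkles are inserted to approximate a large rotation, the combined embedding remains exact and $C^0$-small, which forces the sharpening lemmas (Lemmas \ref{sharpencusp} and \ref{sharpenbirth}) to be used to maintain $C^1$-control of the intermediate fronts. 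Once these three points are verified for $\cL_n$ and $\widehat{\cL}_n$, the rest of the proof is an assembly of results already established in the paper.
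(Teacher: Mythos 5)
Your proposal follows the same two-step decomposition as the paper: pick a triangulation compatible with the wrinkles, wiggle near its $(n-1)$-skeleton using Theorem~\ref{perp-holonomic for wrinkles}, and then wrinkle on the interiors of the top simplices using a local wrinkling lemma. This is exactly how the paper reduces Theorem~\ref{simple wrinkling lagrangians} to Lemmas~\ref{local wrinkling Lagrangians} and~\ref{local wrinkling Legendrians}, so the architecture is right.

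Two details in the way you formulate the local step do not quite match what the argument needs. First, the local lemma must be stated for \emph{quasi-graphical} simple rotations (rotation angle in $(-\pi,\pi)$), not merely graphical ones. After the wiggling step the relevant rotation $S_t$ is $G_t$ viewed from the moving embedding $g_t$, and since $G(dg_t)$ is only guaranteed to be close to some simple $R_t$ that itself may be tilted by up to $\pi/2$ from $G(df)$, the angle between $G(dg_t)$ and $G_t$ can exceed $\pi/2$; this is precisely why Lemmas~\ref{local wrinkling Lagrangians} and~\ref{local wrinkling Legendrians} are stated for quasi-graphical rotations and why the Hamiltonian $F_t=\tfrac12\cot(\lambda_t)p_n^2$ appearing in their proof degenerates only at $|\lambda_t|=\pi$. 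Second, Lemmas~\ref{sharpencusp} and~\ref{sharpenbirth} are not used to control the local wrinkling lemma: they enter one step earlier, inside the proof of Theorem~\ref{perp-holonomic for wrinkles}, to make the \emph{pre-existing} wrinkles of $f$ sharp enough that holonomic approximation can be carried out on a nearby regular section. The $C^1$-control in the local wrinkling lemma instead comes from the oscillating model $\xi$ and the careful interdependence of the parameters $\tau,\delta,\sigma,\alpha,\gamma,N$ together with the tilting symplectomorphism $\varphi_t$. Finally, note that the paper applies the local lemma to $S_t$ \emph{over the moving} $g_t$, producing $G(d\widetilde g_t)$ close to $G_t$ for all $t$; a straight time-concatenation of the wiggling homotopy followed by the wrinkling homotopy only gives closeness at the endpoint, which is weaker than the stated conclusion and would need patching before being fed into the piecewise-simple reduction used to derive Theorem~\ref{wrinkling lagrangians}. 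None of these change the overall plan, but they are exactly the places where the published proof does extra work that your sketch glides past.
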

The parametric version of Theorem \ref{wrinkling lagrangians} reads as follows.
\begin{theorem}\label{parametric wrinkling lagrangians}
Let $G^z_t:L \to \Lambda_n(M)$ be a family of tangential rotations of regular Lagrangian or Legendrian embeddings $f^z:L \to M$ parametrized by a compact manifold $Z$ such that $G^z_t=G(df^z)$ for $z \in Op(\partial Z)$. Then there exists a family of compactly supported exact homotopies of wrinkled Lagrangian or Legendrian embeddings $f^z_t:L \to M$, $f^z_0=f^z$ such that $G(df^z_t)$ is $C^0$-close to $G^z_t$ and such that $f^z_t=f^z$ for $z \in Op(\partial Z)$.
\end{theorem}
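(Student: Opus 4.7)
The plan is to reduce the parametric theorem to a family of instances of the simple version Theorem \ref{simple wrinkling lagrangians} and then to carry out, parametrically, the wiggle-then-wrinkle strategy laid out in the previous sections. First I would apply Proposition \ref{parametric decomposition proposition} to $C^0$-approximate the given family $G^z_t$ by a family of piecewise simple tangential rotations $\widetilde{G}^z_t$, still fixed for $z \in Op(\partial Z)$, with a common time subdivision $0 = t_0 < t_1 < \cdots < t_k = 1$. Since the conclusion is stable under arbitrarily small further $C^0$-perturbations of the Gauss map, it suffices to build $f^z_t$ approximating $\widetilde{G}^z_t$. Proceeding inductively over $[t_j, t_{j+1}]$, starting from the (possibly already wrinkled) family obtained from the previous step, I reduce to the case of a single family of graphical simple tangential rotations; a further subdivision in time if needed ensures graphicality throughout.

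One is then in the following setup: a family $G^z_t$ of graphical simple tangential rotations, simple with respect to a family $H^z$ of hyperplane fields, of a family $f^z$ of wrinkled Lagrangian or Legendrian embeddings, with $G^z_t = G(df^z)$ for $z \in Op(\partial Z)$. I would choose a sufficiently fine fibered smooth triangulation $T$ of $Z \times L$ whose $(n-1)$-skeleton $K \subset Z \times L$ is a fibered polyhedron compatible with the fibered wrinkles of $f^z$ and in general position with respect to $H^z$. Applying Theorem \ref{parametric perp-holonomic for wrinkles} to $K$, $f^z$ and $G^z_t$ produces an exact homotopy of wrinkled embeddings $\widehat{f}^z_t$ together with a new family of graphical simple tangential rotations $R^z_t$ (still simple with respect to $H^z$) such that $G(d\widehat{f}^z_t)$ is $C^0$-close to $G^z_t$ on $Op(K^z)$, everywhere $C^0$-close to $R^z_t$, and such that $\widehat{f}^z_t = f^z$, $R^z_t = G(df^z)$ both outside a slightly bigger neighborhood of $K$ and for $z \in Op(\partial Z)$.

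The remaining task is to modify $\widehat{f}^z_t$ on each top-dimensional fibered simplex of $T$ (a ball of dimension $m+n$ with $m = \dim Z$) so that its Gauss map also $C^0$-approximates $R^z_t$ on the interior. Because $R^z_t$ is simple with respect to $H^z$, the discrepancy between $G(d\widehat{f}^z_t)$ and $R^z_t$ on each simplex $\sigma$ is a tangential rotation purely in the direction transverse to $H^z$, while on $\partial \sigma$ the two agree up to arbitrarily small error. This is exactly the input for which the fibered wrinkled model $\cL^{\cF}_{n,m}$ (respectively $\widehat{\cL}^{\cF}_{n,m}$) is designed: inside $\sigma$ one inserts finitely many new fibered wrinkles (or fibered half-wrinkles, near $\partial Z$) given by this model, whose Gauss maps realize the prescribed one-parameter rotation transverse to $H^z$ and leave the $(n-1)$ directions in $H^z$ fixed. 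Since the wrinkle model is built from a generating function, the resulting homotopy is exact in the Lagrangian case and can be matched to $\widehat{f}^z_t$ on $Op(\partial \sigma)$ without monodromy; exactness in the Legendrian case is automatic. Iterating over simplices, then over the pieces of the time subdivision, and finally reinterpreting as an ambient Hamiltonian isotopy by the exactness discussion of Section \ref{Exact homotopies}, produces the required family $f^z_t$, fixed for $z \in Op(\partial Z)$.

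The main obstacle is the local step of the previous paragraph, i.e.\ the parametric local wrinkling lemma: one must verify that, for any preassigned tolerance, the fibered model $\cL^{\cF}_{n,m}$ (suitably scaled in amplitude, wrinkle size, and number of parallel wrinkles) really does interpolate, with the Gauss map within the required $C^0$-tolerance, between the boundary data supplied by $\widehat{f}^z_t$ on $\partial \sigma$ and the prescribed simple rotation $R^z_t$ in the interior, while remaining embedded, exact, and well-behaved near $\partial Z$ through the fibered half-wrinkle version. This is the concrete local computation whose construction forms the technical heart of Section \ref{wrinkled lagrangian and legendrian embeddings}; once it is in hand, the reductive and inductive steps above are formal, and the parametric case is no harder than the absolute case since all of the preparatory ingredients (Proposition \ref{parametric decomposition proposition}, Theorem \ref{parametric perp-holonomic for wrinkles}, and the fibered wrinkle model itself) have already been set up in families.
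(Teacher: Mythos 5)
Your overall strategy is the one used in the paper: reduce to simple rotations by Proposition~\ref{parametric decomposition proposition}, wiggle near a codimension-one fibered polyhedron of $Z\times L$ via Theorem~\ref{parametric perp-holonomic for wrinkles}, then fill in the top-dimensional fibered simplices with the parametric local wrinkling model, and assemble the exact homotopy. Two slips in the write-up should, however, be fixed, since as stated they break the argument. First, the polyhedron $K$ must be the $(n+m-1)$-skeleton of the fibered triangulation of $Z\times L$ (with $m=\dim Z$), not the $(n-1)$-skeleton. It is the $(n+m-1)$-skeleton that has codimension one in $Z\times L$ and whose complement consists precisely of the open top-dimensional $(n+m)$-simplices that you then wrinkle; with the $(n-1)$-skeleton the intermediate cells of dimension $n,\dots,n+m-1$ are not covered by either step, and the fibered wrinkling spheres, which have dimension $n+m-1$, cannot even be contained in the $(n-1)$-skeleton, so compatibility fails. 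Second, the target of the local wrinkling step on each top simplex is $G^z_t$, not $R^z_t$: after wiggling, $G(d\widehat{f}^z_t)$ is already everywhere $C^0$-close to $R^z_t$, so there is nothing left to do if $R^z_t$ is the target. The role of $R^z_t$, and of its simplicity with respect to $H^z$, is instead to certify that the remaining rotation taking $G(d\widehat{f}^z_t)$ to $G^z_t$ on the simplex is quasi-graphical and simple with respect to $H^z$, which is exactly the hypothesis of Lemmas~\ref{parametric local wrinkling Lagrangians} and~\ref{parametric local wrinkling Legendrians}. Finally, the existence of a triangulation of $Z\times L$ compatible with the wrinkles and in general position with respect to the fibres $z\times L$ is not automatic and should be supported by a reference; the paper cites Thurston~\cite{T74} for this, whereas general position with respect to $H^z$ is not the relevant condition and is handled by refining the triangulation until $H^z$ is nearly constant on each simplex.
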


As in the non-parametric case, by Proposition \ref{parametric decomposition proposition} we can reduce Theorem \ref{parametric wrinkling lagrangians} to the following statement.
\begin{theorem}\label{parametric simple wrinkling lagrangians}
Let $G^z_t:L \to \Lambda_n(M)$ be a family of graphical simple rotations of wrinkled Lagrangian or Legendrian embeddings $f^z:L \to M$ parametrized by a compact manifold $Z$ such that $G^z_t=G(df^z)$ for $z \in Op(\partial Z)$. Then there exists a family of compactly supported exact homotopies of wrinkled Lagrangian or Legendrian embeddings $f^z_t:L \to M$, $f^z_0=f^z$ such that $G(df^z_t)$ is $C^0$-close to $G^z_t$ and such that $f^z_t=f^z$ for $z \in Op(\partial Z)$.
\end{theorem}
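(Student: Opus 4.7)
The proof follows the wrinkling strategy of Eliashberg--Mishachev \cite{EM09}, adapted to the symplectic/contact setting through the fibered Lagrangian and Legendrian wrinkle models $\cL^{\cF}_{n,m}$ and $\widehat{\cL}^{\cF}_{n,m}$ of Section~\ref{wrinkled lagrangian and legendrian embeddings}. The plan is: (1) triangulate $Z\times L$ finely, (2) wiggle $f^z$ near the codimension-one skeleton by invoking the parametric holonomic approximation result Theorem~\ref{parametric perp-holonomic for wrinkles}, and (3) correct the remaining discrepancy on each top-dimensional simplex by inserting a new family of wrinkles modeled on $\cL^{\cF}_{n,m}$.

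First, I would fix a sufficiently fine fibered smooth triangulation $\mathcal{T}$ of $Z\times L$ whose $(m+n-1)$-skeleton $K$ is a fibered polyhedron of positive codimension compatible with the existing wrinkles of $f^z$ (with $m=\dim Z$). Fineness is chosen so that on every open top-dimensional simplex $\Delta$, a Weinstein chart identifies $f^z|_\Delta$ with (a small piece of) the zero section in a Darboux ball, the hyperplane field $H\subset TM$ witnessing simplicity of $G^z_t$ is essentially constant there, and in these coordinates $G^z_t$ corresponds, via the dictionary of Section~\ref{Rotations of $2$-jets}, to a family of $2$-jet rotations of $\bR^n$ simple with respect to a constant hyperplane $H_0\subset \bR^n$. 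This reduces the global problem to a local problem on each Darboux ball.

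Next I would apply Theorem~\ref{parametric perp-holonomic for wrinkles} to $K$ and $G^z_t$. This produces an auxiliary family of graphical simple tangential rotations $R^z_t$, simple with respect to the same $H$, and a family of wiggled exact homotopies $\tilde f^z_t$ of wrinkled embeddings, such that $G(d\tilde f^z_t)$ is $C^0$-close to $G^z_t$ on $Op(K^z)$ and $C^0$-close to $R^z_t$ everywhere, while outside a slightly larger neighborhood of $K^z$ one has $\tilde f^z_t=f^z$ and $R^z_t=G(df^z)$. In particular, on a smaller concentric cell $\Delta'\subset \Delta$ inside each top simplex the wiggling is trivial, and the only remaining task is, on $\Delta'$, to produce a homotopy of wrinkled embeddings starting at $f^z$, matching $\tilde f^z_t$ along $\partial\Delta'$, and whose Gauss map $C^0$-approximates the prescribed simple rotation $G^z_t$. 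The content of this final step is the following local wrinkling lemma: for any parametric simple $2$-jet rotation $s^z_t$ on a Darboux ball supported away from its boundary, there exists a fibered exact homotopy of wrinkled Lagrangian (resp.\ Legendrian) embeddings of the zero section, built out of finitely many rescaled and translated copies of the model $\cL^{\cF}_{n,m}$ (resp.\ $\widehat{\cL}^{\cF}_{n,m}$), whose Gauss map is $C^0$-close to the rotation determined by $s^z_t$. Exactness is automatic in the Legendrian case and is built into $\cL^{\cF}_{n,m}$ in the Lagrangian case.

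The main obstacle is precisely this local wrinkling lemma. One has to verify that the rigid models $\cL^{\cF}_{n,m}$ and $\widehat{\cL}^{\cF}_{n,m}$, through rescaling in the $H_0$-normal direction, reparametrization of time, and superposition of many parallel small-amplitude wrinkles, can sweep out a Gauss-map trajectory rich enough to $C^0$-approximate an arbitrary simple rotation. The key qualitative observation is that a simple rotation takes values in a one-parameter family of Lagrangian planes (rotating in the unique direction transverse to $H_0$), and this is exactly the one-dimensional family of Gauss planes traced by a single wrinkle as one crosses it. Telescoping finitely many such wrinkles therefore produces an arbitrarily good uniform approximation. Once this lemma is in hand, patching over all top cells of $\mathcal{T}$ with the wiggling from Theorem~\ref{parametric perp-holonomic for wrinkles}, and using the $Op(\partial Z)$-relative forms of all the ingredient theorems, yields the desired family $f^z_t$ with $f^z_t=f^z$ for $z\in Op(\partial Z)$.
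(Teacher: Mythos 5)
Your macro-level strategy --- choose a fibered triangulation of $Z\times L$ compatible with the wrinkles of $f^z$, wiggle near the $(m+n-1)$-skeleton via Theorem~\ref{parametric perp-holonomic for wrinkles}, then insert new wrinkles on the top simplices --- is exactly the paper's strategy, and you have correctly identified that the whole technical weight rests on a parametric local wrinkling lemma (the paper's Lemmas~\ref{parametric local wrinkling Lagrangians} and~\ref{parametric local wrinkling Legendrians}).

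However, your heuristic for why that local wrinkling lemma should hold is wrong, and the gap is one of mechanism rather than of detail. You claim that because a single wrinkle traces out a one-parameter family of Gauss planes, ``telescoping finitely many such wrinkles therefore produces an arbitrarily good uniform approximation'' to an arbitrary simple rotation. This cannot work: a wrinkle always sweeps its tangent plane through the \emph{entire} arc from horizontal to vertical and back, independently of the prescribed angle $\lambda_t(q)$, so any superposition of translated and rescaled copies of $\cL^{\cF}_{n,m}$ produces a Gauss map that oscillates between $\partial/\partial q_n$ and $\partial/\partial p_n$ rather than tracking $G^z_t$. The paper's actual construction is $f_t = \varphi_t\circ w_t$: first the oscillating model $w_t$ is engineered, with coupled degenerations $N\to\infty$, $\gamma,\alpha,\sigma\to 0$, $N\gamma\to\infty$, $N\gamma\alpha\to 0$, so that its Gauss map converges to the \emph{vertical} distribution $V=\text{span}(\partial/\partial q_1,\ldots,\partial/\partial q_{n-1},\partial/\partial p_n)$ on the wrinkled region and stays in the correctly co-oriented sector elsewhere; second, one post-composes with the explicit Hamiltonian isotopy $\varphi_t$ generated by $F_t=\tfrac12\cot(\lambda_t)p_n^2$, whose linearization along the zero section shears $\partial/\partial p_n\mapsto \cot(\lambda_t)\,\partial/\partial q_n + \partial/\partial p_n$ and hence carries $V$ onto $G_t$. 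That Hamiltonian correction is what makes the wrinkles ``aware'' of the target angle $\lambda_t$, and it is entirely absent from your argument --- ``rescaling in the $H_0$-normal direction'' and ``reparametrization of time'' do not substitute for it. Two smaller omissions: the local wrinkling lemmas are stated for \emph{quasi-graphical} simple rotations, and after the wiggling step the residual rotation on each top cell is only quasi-graphical, so you need to check this is the correct hypothesis; and the existence of a fibered triangulation of $Z\times L$ with every simplex in general position with respect to the fibres $z\times L$ is not free --- the paper invokes Thurston's theorem for it.
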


The proof of Theorems \ref{simple wrinkling lagrangians} and \ref{parametric simple wrinkling lagrangians} consists of two steps. The first step is the construction of a local wrinkling model, which we carry out in Section \ref{Local wrinkling lemma}. The second step is to combine this local wrinkling model with the wiggling results established in Section \ref{Holonomic approximation with controlled cutoff} to obtain the desired global approximation. We carry out this second step in Section \ref{Wrinkling the wiggles}.

\subsection{Local wrinkling model}\label{Local wrinkling lemma}
We begin by describing the local model for the oscillating function that will generate the wrinkles. This is essentially the same local model used by Eliashberg and Mishachev in \cite{EM09}. In fact, our local wrinkling model for Lagrangians and Legendrians is obtained from theirs by simply integrating and differentiating the formulae, just like we did in Section \ref{wrinkled lagrangian and legendrian embeddings} with the definition of wrinkled Lagrangian and Legendrian embeddings. 

The basic geometric idea behind the construction is quite straightforward. One wishes to wrinkle the Lagrangian or Legendrian submanifold back and forth so that the wrinkles are parallel to the rotating planes $G_t(q)$. Since we model the wrinkles on a highly oscillating function, the Gauss map of the resulting wrinkled embedding gives an arbitrarily good approximation of $G_t$. There is a delicate part of the construction regarding the embryos of the zig-zags because the oscillating function is forced to have a derivative with the `wrong sign' in some neighboring region. However, we will impose bounds on the size of this bad derivative to ensure that its effect is not significant. 

\begin{construction}[The oscillating function] First, we fix some notation. We will localize our problem from a general $n$-dimensional manifold $L$ to the unit cube $I^n=[-1,1]^n \subset \bR^n$. A point $q=(q_1, \ldots , q_n) \in I^n$ will be written as $q=(\hat{q}, q_n)$,  where $\hat{q}=(q_1, \ldots , q_{n-1})$. We will consider rotations which are simple with respect to the (constant) hyperplane field $H^{n-1} \subset TI^n$ spanned by the vectors $\partial / \partial q_1 , \ldots , \partial / \partial   q_{n-1}$. Hence the last coordinate $q_n$ will play a special role in our discussion. We will also need a time parameter, which will be denoted by $t$. Sometimes it will be convenient to consider time as another spatial parameter, in which case we will think of the domain of our local model as  $[0,1] \times I^n$.  \\

Consider the family of curves $Z_s \subset \bR^2$, $s\in \bR$, given by parametric equations
\[ x_s(u)=\frac{15}{8} \int_0^u (w^2-s)^2 dw, \qquad y_s(u)= \frac{1}{2}(u^3-3su). \]

The curve $Z_s$ is a graph of a continuous function $z_s: \bR \rightarrow \bR$ which is smooth for $s<0$ and smooth on $\bR \setminus \{- s^{5/2} , s^{5/2}  \}$ for $s\geq0$, where we note that $x_s(\pm\sqrt{s})=\pm s^{5/2}$. See Figure \ref{Z} for an illustration. We note that the constants $15/8$ and $1/2$ are chosen for convenience in the calculation but are otherwise immaterial. 
\begin{remark}\label{composition} Observe that the composition $y_s(u)= z_s\big(x_s(u)\big)$ is smooth for all $s \in \bR$. 
\end{remark}

\begin{figure}[h]
\includegraphics[scale=0.6]{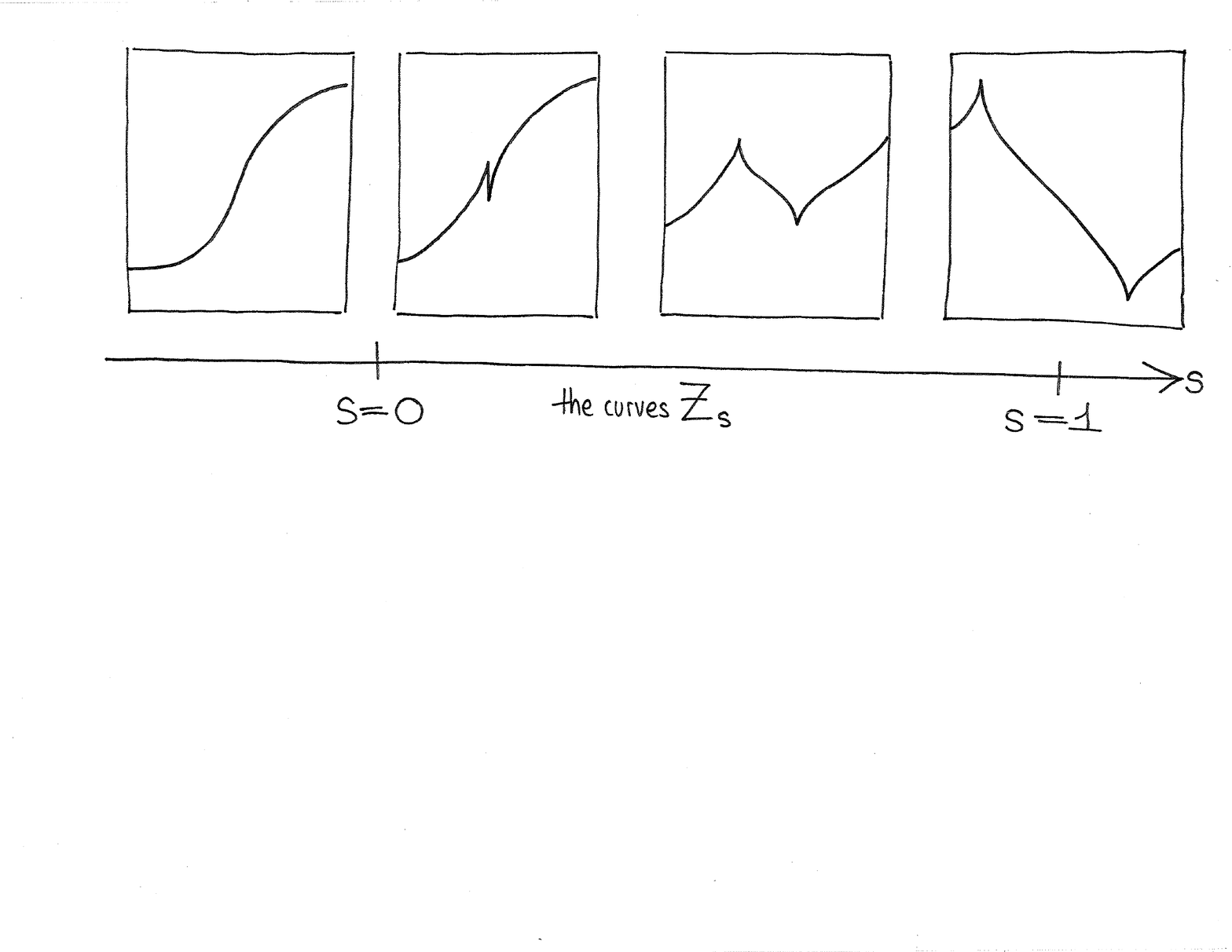}
\caption{The family of curves $Z_s$ gives the local model for the birth/death of semi-cubical zig-zags.}
\label{Z}
\end{figure}

 Let $\sigma,\alpha>0$ be small and choose an odd $1$-periodic family of functions $\zeta_{s} : \bR \rightarrow \bR$, $s \in [-1,1]$, illustrated in Figure \ref{zeta}, which satisfies the following properties. 
 
 \begin{figure}[h]
\includegraphics[scale=0.62]{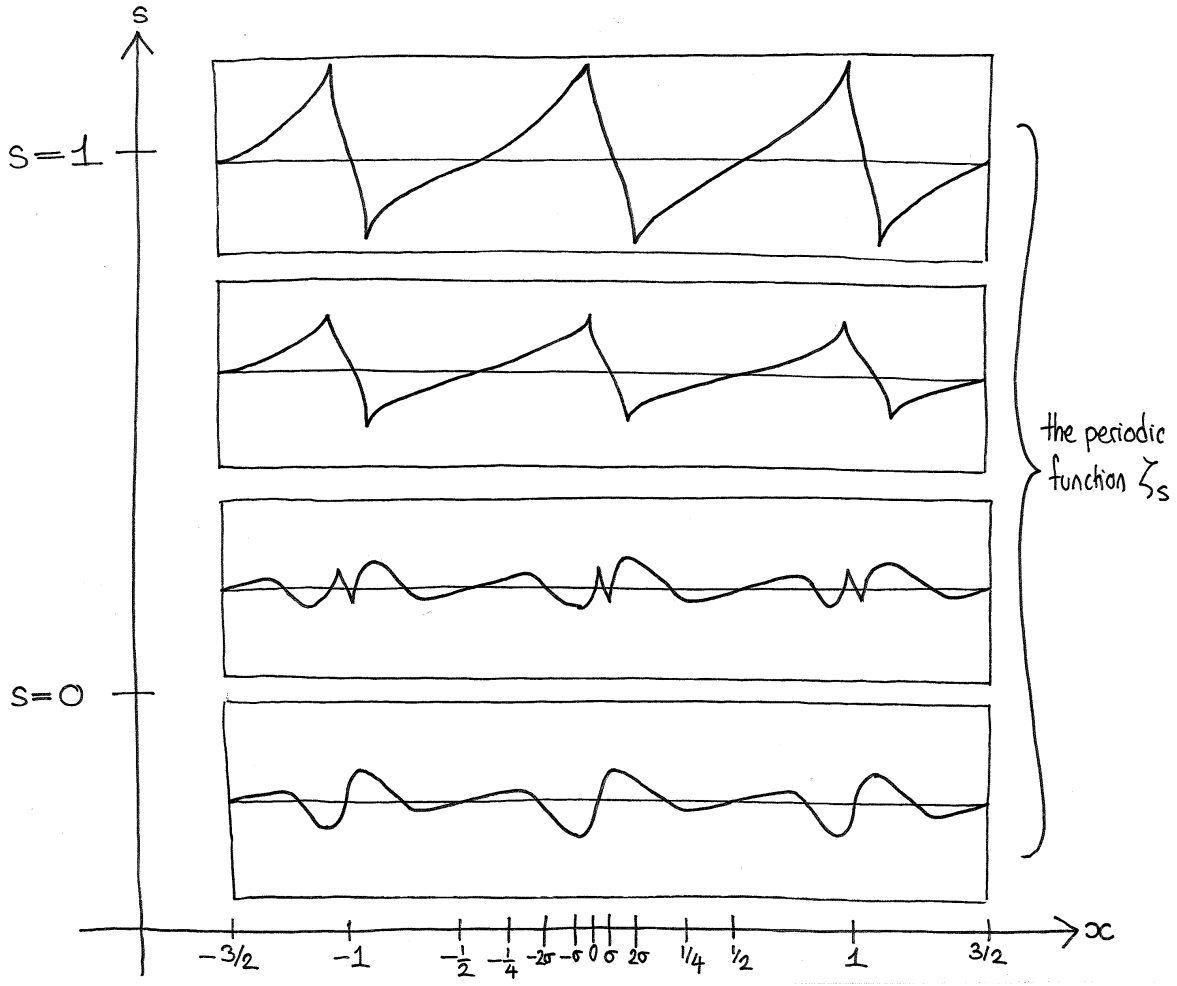}
\caption{The family $\zeta_s$. Observe that for $s=0$ the derivative $d \zeta_0 / dx$ blows up near zero but is everywhere bounded below by $-\alpha$, where the parameter $\alpha$ can be taken to be arbitrarily small. This lower bound also holds everywhere for $s<0$ and outside of $[-\sigma s^{5/2}, \sigma s^{5/2} ]$ for $s>0$.}
\label{zeta}
\end{figure} 
 
\begin{displaymath}  
\zeta_{s}(x) \left\{ \begin{array}{ll} = z_s(\frac{x }{ \sigma})  & \textrm{for $x \in Op(\big[-\sigma s^{5/2}, \sigma s^{5/2} \big])$, $ s  \in [0,1] ,$}\\ 
=z_s(\frac{x}{\sigma})  & \textrm{for $x \in Op(0)$, $ s  \in [-1,0] ,$}\\ 

  \geq 0 & \textrm{for $x \in [-\frac{1}{2},-\frac{1}{4}], \, s \in [-1,1],$} \\
  \leq 0 & \textrm{for $x \in [\frac{1}{4},\frac{1}{2}], \, s \in [-1,1].$} 

  \end{array}\right.
 \end{displaymath} \\
 \begin{displaymath}  
\frac{d\zeta_{1 }}{dx}(x) \left\{ \begin{array}{ll}  
\leq -\frac{4}{\sigma}  & \textrm{for $x \in(-\sigma, \sigma), $}\\ 
\geq 1 & \textrm{for $x \in[-2\sigma, -\sigma) \cup (\sigma , 2\sigma], $}\\ 
 \in [1,2] & \textrm{for $x \in[-\frac{1}{2},-2\sigma] \cup[  2\sigma,\frac{1}{2}]. $ }\end{array}\right.
 \end{displaymath} \\
 \begin{displaymath}  
\frac{d\zeta_{s}}{dx}(x) \left\{ \begin{array}{ll}  
\leq -\frac{4}{\sigma}  & \textrm{for $x \in(-\sigma s^{5/2},\sigma s^{5/2})$,  $s \in (0,1], $}\\ 
\geq -\alpha & \textrm{for $x \in [-2\sigma,-\sigma s^{5/2}) \cup (\sigma s^{5/2}, 2\sigma ]$, $s \in (0,1], $}\\ 
\geq -\alpha& \textrm{for $x \in [-2\sigma, 2\sigma],$ $s \in [-1,0] ,$}\\ 
\in [ -\alpha, 2] & \textrm{for $x \in [-\frac{1}{2}, -2\sigma] \cup [ 2\sigma , \frac{1}{2} ]$, $s \in [-1,1] .$}\\ 

\end{array}\right.
 \end{displaymath}  \\

Let $D^n =\{ x \in \bR^n : \,\, ||x|| \leq 1\}$ denote the closed unit $n-$dimensional disk. We now use the family $\zeta_{s}$ to define a model $\xi=\xi_{\sigma, \alpha, \gamma, \delta, N}: D^n(t,\hat{q})\times [-1,1](q_n) \rightarrow \bR$ which like $\zeta_s$ depends on $\sigma,\alpha>0$ but also depends on three more parameters $\gamma, \delta >0$ and $N \in \bN$. 

Fix a non-increasing function $\eta:[0,1] \rightarrow \bR$ such that 
\begin{itemize}
\item $\eta(x)=1$ for $x \in [0,1-2\delta] $,
\item $\eta(x)= -\delta$ for $x  \in [1-\delta,1] $.
\end{itemize}

Fix a non-increasing cutoff function $\rho: [0,1] \to \bR$ such that
\begin{itemize}
\item $\rho(x)=1 $ for $ x \in [0, 1-\delta] $
\item $\rho(x)=0$ for $x$ near $1$.
\end{itemize}

Fix also another non-increasing cutoff function $\psi: [0,1] \rightarrow [0,1]$ such that 
\begin{itemize}
\item $\psi(x)=1 $ for $ x \in \big[0,1-\frac{1}{4N+2}\big], $
\item $\psi(x)=0$ for $x$ near $1$.
\end{itemize}

%

We define our oscillating model $\xi$ by the following formula, see Figure \ref{xi} for an illustration.

\[ \xi (t,q) = \gamma \, \, \rho \big( ||(t,  \hat{q} )|| \big)  \, \, \psi\big(|q_n|\big) \, \, \zeta_{\eta \left( ||(t, \hat{q})|| \right)} \Big( \frac{2N+1}{ 2}q_n\Big), \qquad (t, \hat{q}) \in D^n , \, \, \, q_n \in [-1,1]. \]

\begin{figure}[h]
\includegraphics[scale=0.7]{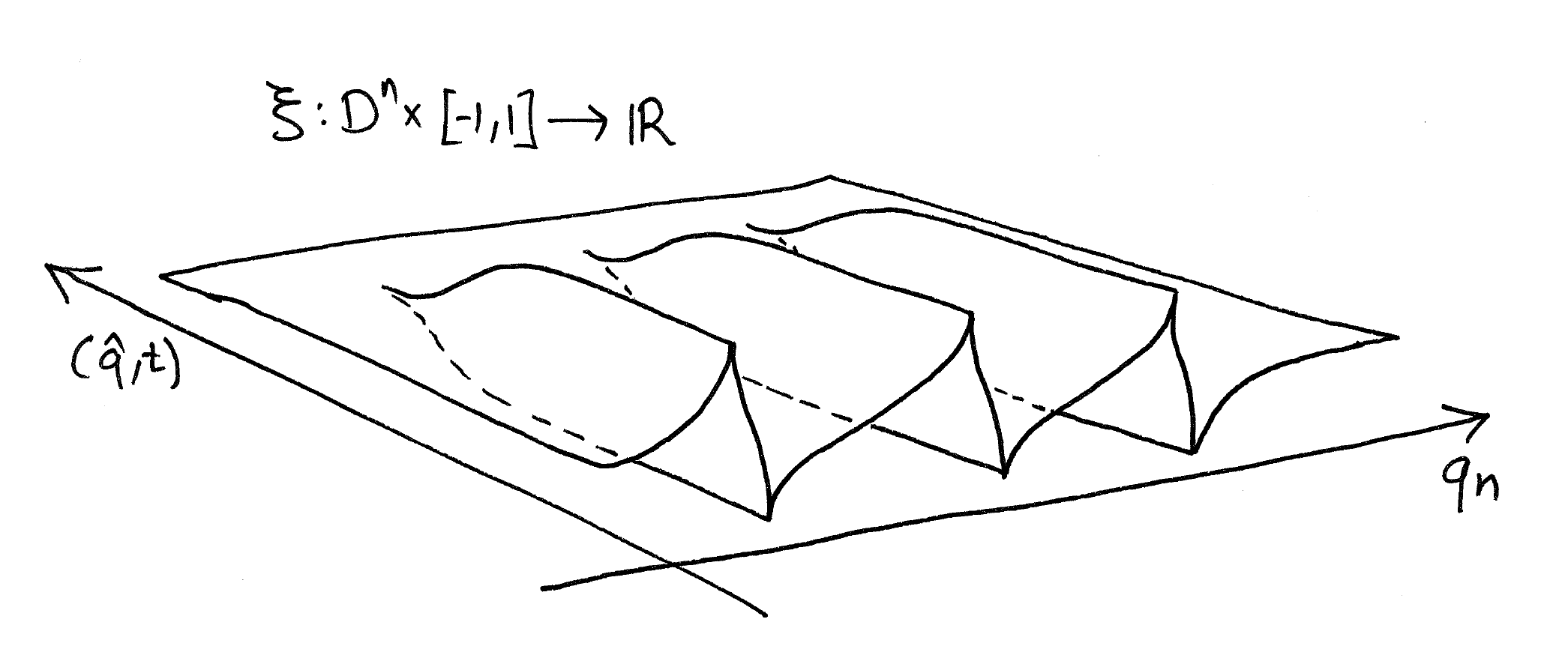}
\caption{One-half of the oscillating function $\xi$.}
\label{xi}
\end{figure}
  
Given $t \in [0,1]$, $q \in I^n$ and $b,c>0$, let $C=C(t,q,b,c)$ denote the box
\[ C= (t,q) + \big( b D^{n} \big) \times [-c,c]  \subset \bR^{n+1}( t, \hat{q}, q_n)\]
which is a copy of $D^n \times [-1,1]$ centered at $(t,q)$ and scaled by $b$ and $c$ in the $(t,\hat{q})$ and $q_n$ directions respectively.
Let $\psi: C \to D^n \times [-1,1] $ be the obvious diffeomorphism obtained by translating and rescaling. Define $\xi_C= \xi \circ \psi :C \to \bR$. The oscillating function $\xi_C$ also depends on the parameters $\sigma, \alpha, \gamma$, $\delta$ and $N$. We will call $\psi^{-1}\big( D^n \times 1 \big)$ and $\psi^{-1}\big( D^n \times -1 \big)$ the top and bottom of the box $C$ respectively. We will also need to consider the slightly smaller boxes 
\[ \widehat{C}=C\left(t,q,(1-\delta)b,\Big(1-\frac{1}{4N+2}\Big)c\right) \subset C
\] \[\text{and} \quad \widetilde{C}=C\left( t,q ,  (1-2\delta)b, \Big(1-\frac{1}{4N+2}\Big)c \right) \subset \widehat{C}.\]

 Observe that $\xi_C$ has wild oscillations on $\widetilde{C}$ which die out on $\widehat{C} \setminus \widetilde{C}$, so that $\xi_C$ is smooth on $C \setminus \widehat{C}$ and $\xi_C=0$ on $Op(\partial C)$. 
 
Finally, we modify our local model $\xi$ to make it Lagrangian. We do this by integrating and differentiating as in the definition of wrinkled Lagrangian embeddings. Define $\ell: D^n(t,\hat{q}) \times [-1,1](q_n) \to   T^*\bR^n(q,p)$ by the formula
 
 \[ \ell(t,q) = \Big( q_1 , \ldots , q_n ,\, \frac{\partial K }{\partial q_1}  \, \,, \ldots , \frac{\partial K}{\partial q_{n-1}}\,\,,  \xi \Big), \quad \text{where} \, \, \, K(t,q)= \int_{-1}^{q_n} \xi(t, \hat{q}, u)du \]
 
Observe that $\ell$ is defined in terms of $\xi$, hence also depends on the parameters $\sigma, \alpha, \gamma , \delta$ and $N$. Observe also that $\xi$ is odd in the $q_n$ variable, hence $K=0$ on $Op\big(\partial ( D^n \times [-1,1] ) \big)$. It follows that $\ell$ has a Legendrian lift $(\ell, K)$ which agrees with the zero section on $Op \big( \partial ( D^n \times [-1,1] ) \big)$. 

Given any box $C$ we can similarly define a translated and scaled version $\ell_C$ of $\ell$ which has support in $C$. This completes the construction of our local wrinkling model.

\begin{remark}\label{not smooth}
The function $\xi$ is not smooth and hence $\ell$ is also not smooth. However, $\xi$ can be smoothly reparametrized and therefore so can $\ell$. We will revisit this nuance later on but it will not cause us any trouble.
 \end{remark}
 
 \end{construction}

We are now ready to state and prove the local wrinkling lemma. Note that a tangential rotation $G_t:I^n \to \Lambda_n(T^*I^n)$ of the inclusion of the zero section $i:I^n \hookrightarrow T^*I^n$ is simple with respect to the hyperplane field  $H=\text{span}( \partial / \partial q_1 , \ldots , \partial / \partial q_{n-1}) \subset TI^n$  if it can be written as 
\[ G_t = \text{span}\big( \frac{\partial }{ \partial q_1} , \,  \ldots \,  , \, \frac{ \partial }{\partial q_{n-1}  }, \, \, \cos  ( \lambda_t )\frac{ \partial }{ \partial q_n} +  \sin( \lambda_t )\frac{ \partial }{ \partial p_n} \,  \big) \]
for some angle function $\lambda_t:I^n \to \bR$.  According to our previous definition we say that $G_t$ is graphical when $\text{im}(\lambda_t) \subset (-\pi/2, \pi/2)$. We will say that $G_t$ is quasi-graphical when $\text{im}(\lambda_t) \subset (-\pi, \pi)$. 

\begin{figure}[h]
\includegraphics[scale=0.7]{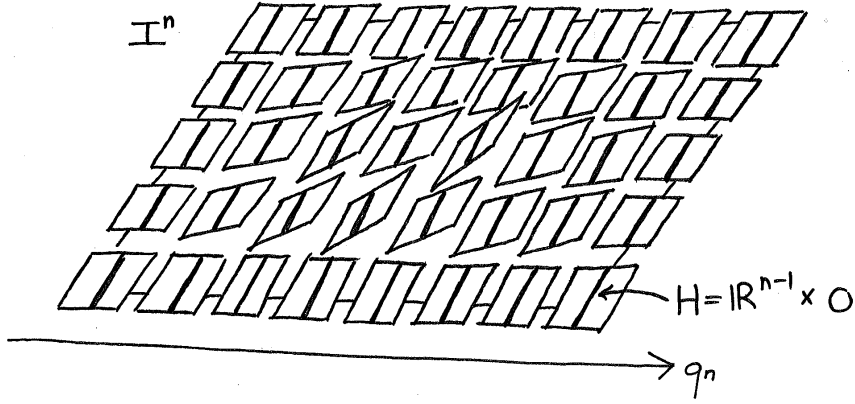}
\caption{A tangential rotation which is quasi-graphical and simple with respect to $H$.}
\label{Hsimplerotation}
\end{figure}

\begin{lemma}[Local wrinkling for Lagrangians]\label{local wrinkling Lagrangians} Let $G_t:I^n \to \Lambda_n(T^*I^n)$ be a tangential rotation of the zero section $i:I^n \hookrightarrow T^*I^n$ which is quasi-graphical and simple with respect to $H$ and such that $G_t=G(di)$ on $Op(\partial I^n)$. Then there exists an exact homotopy of wrinkled Lagrangian embeddings $f_t:I^n \to T^*I^n$, $f_0=i$, such that the following properties hold.
\begin{itemize}
\item $G(df_t)$ is $C^0$-close to $G_t$.
\item $f_t=i$ on $Op(\partial I^n)$. 
\end{itemize}
\end{lemma}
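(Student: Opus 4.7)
The plan is to construct $f_t$ explicitly from the model $\ell$ (and scaled copies $\ell_C$) by parameter chasing. Writing $G_t$ in the form
\[
G_t = \text{span}\Big(\frac{\partial}{\partial q_1},\ldots,\frac{\partial}{\partial q_{n-1}},\cos(\lambda_t)\frac{\partial}{\partial q_n}+\sin(\lambda_t)\frac{\partial}{\partial p_n}\Big)
\]
for some angle function $\lambda_t:[0,1]\times I^n \to (-\pi,\pi)$ with $\lambda_0 \equiv 0$ and $\lambda_t \equiv 0$ on $Op(\partial I^n)$, I seek a time-dependent generating function $K_t:I^n \to \bR$ producing $f_t(q) = (q,\partial K_t/\partial q)$ whose only significant second derivative is $\partial^2 K_t/\partial q_n^2$, and so that this derivative approximates $\tan(\lambda_t)$ in the $C^0$ sense appropriate to wrinkled Gauss maps. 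Exactness of $f_t$ is automatic since $f_t$ is generated by a compactly supported time-dependent function.

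First I would cover the support of $\lambda_t$ in $[0,1]\times I^n$ by two regions: a ``regular'' region $U_{\mathrm{reg}} = \{(t,q) : |\lambda_t(q)| < \pi/2 - \ve\}$ and a ``wrinkling'' region $U_{\mathrm{wr}}$ where $|\lambda_t|$ is allowed to cross $\pi/2$, the latter covered by a locally finite union of boxes $C_\alpha = C(t_\alpha, q_\alpha, b_\alpha, c_\alpha)$ of the type used in the construction of $\xi$. On $U_{\mathrm{reg}}$ the approximation is classical: one solves $\partial^2 K/\partial q_n^2 = \tan(\lambda_t)$ by twice integrating in $q_n$, smoothly cut off, and verifies that both $K$ and $dK$ remain uniformly small because $\tan \lambda_t$ is bounded. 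On each wrinkling box $C_\alpha$ I paste in a translated/scaled copy of $\ell_{C_\alpha}$ with its generating primitive $K_{C_\alpha}$; the birth/death parameter $s = \eta(\|(t,\hat q)\|)$ is designed precisely so that as $(t,\hat q)$ moves from the interior of $C_\alpha$ toward its boundary, $s$ decreases past zero, realizing the birth/death of a pair of semi-cubical zig-zags that corresponds to $\lambda_t$ crossing $\pm\pi/2$.

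The $C^0$-control on the Gauss map comes directly from the sign/magnitude bounds on $d\zeta_s/dx$. On the rapidly oscillating segments of width $\sim\sigma s^{5/2}$ one has $\partial\xi/\partial q_n \leq -4/\sigma$, so the smooth part of the wrinkled tangent plane points in a direction matching the ``flipped'' branch of $G_t$ after $\lambda_t$ has rotated past vertical; on the complementary segments the slope lies in $[-\alpha,2]$, so the tangent plane is nearly horizontal, matching the ``unflipped'' branch of $G_t$. The $\alpha$-sized wrong-sign derivative near embryos $s \to 0^+$ is the unavoidable price paid at the birth/death, but taking $\alpha$ sufficiently small makes its contribution to the $C^0$ error negligible. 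The cutoffs $\rho$ and $\psi$ ensure $\ell_{C_\alpha}$ agrees with the zero section on $Op(\partial C_\alpha)$, so the pieces glue continuously and the final homotopy agrees with $i$ on $Op(\partial I^n)$.

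The main obstacle will be coordinating the five parameters $\sigma,\alpha,\gamma,\delta,N$ so that (i) the tangent plane of each $\ell_{C_\alpha}$ is pointwise $C^0$-close to $G_t$, (ii) the mixed derivatives $\partial\xi/\partial q_j$ for $j<n$, which are nonzero because $\xi$ depends on $\hat q$ through the cutoff $\rho$ and the birth parameter $\eta$, remain uniformly small at the chosen box scale, and (iii) the gluing across boxes and between $U_{\mathrm{reg}}$ and $U_{\mathrm{wr}}$ does not reintroduce uncontrolled tangent rotations. The parameters must be selected in a specific order: first $\ve$; then box-scales $b_\alpha,c_\alpha$ so $\lambda_t$ is nearly constant on each; then $\delta$ small; then $N$ large so wrinkle spacing $\sim 1/N$ is finer than the required $C^0$ accuracy; then $\sigma$ small so the very negative slopes dominate; finally $\alpha$ small and $\gamma$ matched to the amplitude of $\lambda_t$. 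With these choices fixed in that order, the verification reduces to the estimates built into the definition of $\zeta_s$, and Remark \ref{not smooth} permits the harmless smooth reparametrization of $\ell$ without disturbing the Gauss map bounds.
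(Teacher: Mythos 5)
There is a genuine gap, and it is fundamental: the treatment of what you call the \emph{regular region} $U_{\mathrm{reg}}=\{|\lambda_t|<\pi/2-\ve\}$ does not work, and the omission there propagates through the whole argument.

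You claim that on $U_{\mathrm{reg}}$ one can simply twice-integrate $\tan(\lambda_t)$ in $q_n$, smoothly cut off, and get a generating function $K_t$ with $dK_t$ ``uniformly small'' and $\partial^2 K_t/\partial q_n^2\approx\tan(\lambda_t)$. This is false, and it is precisely the obstruction that wrinkling exists to overcome. Consider $\lambda_t\equiv t\pi/4$ on a central cube of $I^n$, decaying to $0$ near $\partial I^n$. We need $\partial K_t/\partial q_n$ to vanish near $q_n=\pm1$, hence $\int_{-1}^1\partial^2 K_t/\partial q_n^2\,dq_n=0$; but if $\partial^2 K_t/\partial q_n^2\approx\tan(\lambda_t)\approx1$ on the central region, this integral is bounded away from zero. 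One of the two requirements must fail: either the integral is nonzero (so the cutoff is impossible), or some cutoff is introduced which destroys the Gauss-map approximation. The entire point of Section~\ref{Wrinkled approximation on the whole manifold} is that one \emph{cannot} globally $C^0$-approximate a tangential rotation by Gauss maps of a homotopy through regular Lagrangian embeddings, even when the rotation stays graphical; the wrinkling is needed everywhere $\lambda_t$ is nonzero, not just near $\pm\pi/2$. Correspondingly, in the paper's proof the boxes $C_j$ cover $\Omega_\tau=\{|\lambda_t|>\tau\}$ for a small cutoff angle $\tau$, and each box contains on the order of $N$ wrinkles; they are not in bijection with crossings of $\pm\pi/2$ as your description of $U_{\mathrm{wr}}$ suggests.

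The second, related gap is that you never compose with the ambient Hamiltonian isotopy $\varphi_t$ generated by $F_t(q,p)=\tfrac12\cot\big(\lambda_t(q)\big)p_n^2$. This is the crucial device in the paper's proof. The oscillating model $w_t=\sum_j\mathrm{sign}(j)\,\ell_{C_j}$ does \emph{not} have Gauss map close to $G_t$; by design, in the limit $\gamma\to0$, $N\to\infty$ with $N\gamma\to\infty$ and $N\gamma\alpha\to0$, its Gauss map converges to the fixed distribution $V=\mathrm{span}(\partial/\partial q_1,\ldots,\partial/\partial q_{n-1},\partial/\partial p_n)$ on $\Omega_{2\tau}\cup Op(\widetilde E)$ and stays in a controlled sector elsewhere. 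Only after composing with $\varphi_t$, which satisfies $d\varphi_t(V)=G_t$ along the zero section, does the Gauss map $G(df_t)=G\big(d(\varphi_t\circ w_t)\big)$ approximate $G_t$. By trying to make $\partial^2 K_t/\partial q_n^2$ directly match $\tan(\lambda_t)$, you conflate these two stages; neither the pasted models $\ell_{C_\alpha}$ alone nor the claimed integral solution on $U_{\mathrm{reg}}$ produce the required approximation. Your parameter-ordering discussion in the last paragraph is in the right spirit, and you correctly identify that the bounds on $d\zeta_s/dx$ are what control the Gauss map; but without the $\varphi_t$ step the estimates cannot close, and the gluing between $U_{\mathrm{reg}}$ and $U_{\mathrm{wr}}$ remains an unproved assertion rather than a consequence of the construction.
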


\begin{proof} Let $\tau>0$ be small. We will be precise about exactly how small we need $\tau$ to be later on. Wrinkling is dangerous and unnecessary where $\lambda_t$ is close to zero, so we will first use our oscillating model $\ell$ to define a similar model which does not oscillate on the subset of $[0,1] \times I^n$ in which $|\lambda_t|<\tau$. 

\begin{remark} Although we want to think of time as a spatial parameter, observe that $\lambda_t \neq 0$ on the boundary face $1 \times I^n \subset \partial ( [0,1] \times I^n)$, so we are not quite in the relative setting. To remedy this, we extend the time interval from $[0,1]$ to $[0,2]$ by setting $\lambda_t = \lambda_{2-t}$ for $t \in [1,2]$. We can then work with the box $[0,2] \times I^n$ as our local model, which has the advantage that $\lambda_t=0$ on $Op\big(\partial ([0,2] \times I^n)\big)$. We can later restrict back to only considering times  $t \in [0,1]$ and forget about the rest.
\end{remark}

Let $\Omega_{\tau}=\{ (t,q) \in [0,2] \times I^n : \, \, \, | \lambda_t(q)| > \tau \}$. We call a box $C=C(t,q,b,c) \subset [0,2] \times I^n$ special if $|\lambda_t(q)|<2\tau$ for $(t,q)$ near the top and bottom of $C$. Choose special boxes $C_1, \ldots , C_m \subset [0,2] \times I^n$ which are contained in $\Omega_{\tau}$ and such that the smaller boxes $\widetilde{C}_1, \ldots , \widetilde{C}_m$ are still special and cover $ \Omega_{2 \tau}$. This can be achieved if $\delta$ is sufficiently small and $N$ is sufficiently big. Write $\psi_j$ for the parametrizing diffeomorphisms $\psi_j : C_j \to D^n \times [-1,1]$ as above. We can assume that the sets $\psi_j^{-1} \big( D^n \times ([-1,1] \cap \bQ) \big) \subset [0,2] \times I^n$ are disjoint. Therefore for each integer $N$ there exists a number $\sigma(N)>0$ such that for all $\sigma<\sigma(N)$ the subsets
\[ \psi_j^{-1} \Big( D^n \times \Big[ \frac{2k}{2N+1} -\widetilde{ \sigma}, \frac{2k}{2N+1} + \widetilde{\sigma} \Big] \Big), \qquad \widetilde{\sigma} = \frac{4\sigma }{ 2N+1}, \quad -N \leq k \leq N, \]
are also disjoint. When we let $N \to \infty$ below, we will let $\sigma \to 0$ accordingly so that we always have $\sigma<\sigma(N)$. 

For each box $C_j \subset \Omega_\tau$ we have an oscillating Lagrangian model $\ell_{C_j}$. Let $\text{sign}(j)=\text{sign}(\lambda_t|_{C_j}) \in \{ \pm1\}$. Define the Lagrangian oscillating model $w_t$ adapted to $G_t$ by setting $w_t(q)= \sum_j  \text{sign}(j) \ell_{C_j}(t,q)$. More precisely, we set
\[ w_t(q) = \Big(q_1, \ldots , q_n,\, \frac{\partial H_t}{\partial q_1} \, \, , \ldots , \, \frac{\partial H_t}{\partial q_{n-1} } \, \, , \, \sum_j   \text{sign}(j) \xi_{C_j} \Big), \quad \text{where} \, \, H_t = \sum_j \int_{-1}^{q_n}  \text{sign}(j)  \xi_{C_j}(t,\hat{q}, u) du \]
Observe that $w_t=0$ and $H_t=0$ outside of $\Omega_\tau$. At this point we can restrict back to the time interval $[0,1] \subset [0,2]$, which is all that we really cared about. 

Consider the Hamiltonian function $F_t: T^*\Omega_\tau \to \bR$ given by $F_t(q,p) =\frac{1}{2} \cot\big( \lambda_t(q) \big) p_n^2$. For each $t \in [0,1]$ we get a Hamiltonian isotopy $\varphi^s_t:T^*\Omega_\tau \to T^*\bR^n$ such that the vector field $X_t = \partial_s \varphi^s_t(q)$ is the symplectic dual of $dF_t(q)=\cot\big(\lambda_t(q) \big)p_ndp_n+\frac{1}{2}\text{cosec}^2\big( \lambda_t(q) \big) p_n^2 d\lambda_t(q) $. Hence we have
\[ X_t(q,p)=  \cot\big( \lambda_t(q) \big) p_n \frac{\partial}{\partial q_n} +  \frac{1}{2}\text{cosec}^2\big( \lambda_t(q) \big) p_n^2 \sum_{j=1}^n \frac{\partial \lambda_t}{\partial q_j} \frac{\partial}{\partial p_j}.\]
It follows by explicit computation that
\[ \varphi^s_t(q,p) = \Big(\hat{q}, \, q_n + \cot\big( \lambda_t(q) \big) p_n s \, ,  \, \, p_1 +\frac{1}{2} \text{cosec}^2\big( \lambda_t(q) \big)p_n^2 \frac{\partial \lambda_t}{\partial q_1} s \, \,, \, \, \ldots \, \, ,\, p_n +\frac{1}{2} \text{cosec}^2\big( \lambda_t(q) \big)p_n^2 \frac{\partial \lambda_t}{\partial q_n} s\Big). \]
We set $\varphi_t= \varphi_t^1$. Note that $\varphi_t=id$ on $\Omega_\tau \subset T^*\Omega_\tau$ since $p=0$. Note moreover that on $\Omega_\tau$ we have
\[\frac{\partial \varphi_t}{\partial q_j}= \frac{\partial}{\partial q_j} \quad \text{for} \, j=1,\ldots , n, \quad   \frac{\partial \varphi_t}{\partial p_j} = \frac{\partial }{\partial p_j} \quad \text{for }j<n  \]
\[ \text{and} \quad \frac{\partial \varphi_t}{\partial p_n} = \cot(\lambda_t) \frac{\partial}{\partial q_n} + \frac{\partial }{\partial p_n} .\]

\begin{figure}[h]
\includegraphics[scale=0.7]{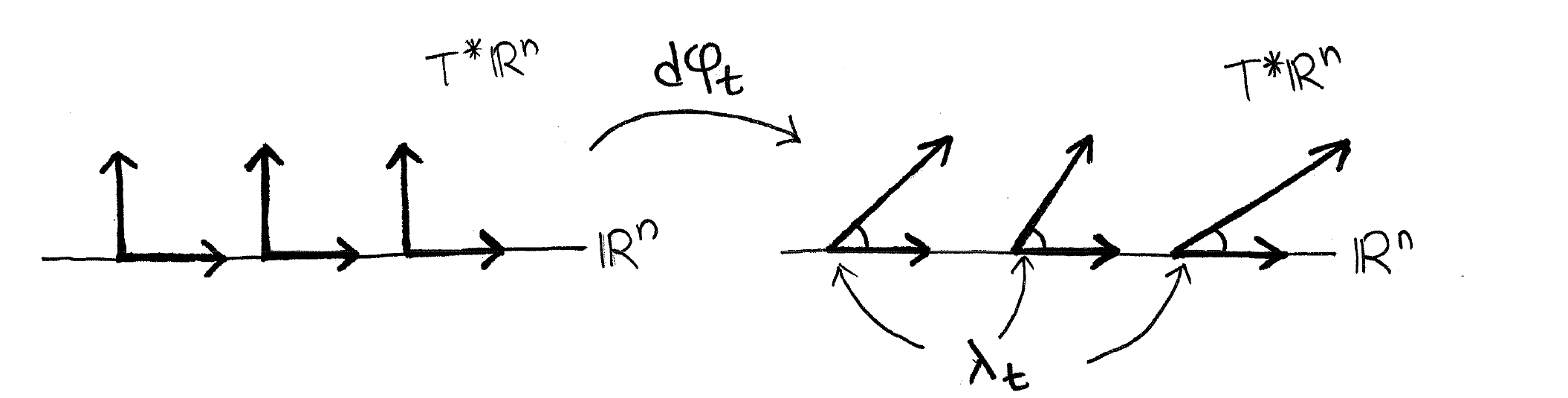}
\caption{Along the zero section $\bR^n \subset T^*\bR^n$ we have $d\varphi_t( \partial /\partial q_n) = \partial / \partial q_n$ and $d\varphi_t( \partial / \partial p_n) = \cot(\lambda_t) \partial / \partial q_n  + \partial / \partial p_n$.}
\label{effectofphi}
\end{figure}

Hence in particular on $\Omega_{\tau}$ we have
\[ d\varphi_t \Big( \text{span} \big( \frac{\partial}{\partial q_1} , \ldots , \, \frac{\partial}{\partial q_{n-1} } , \frac{\partial}{\partial p_n} \big) \Big) = G_t. \]

Set $f_t=\varphi_t \circ w_t$. We recall from Remark \ref{not smooth} that each $\ell_{C_j}$ is not smooth, hence $w_t$ is not smooth, hence the same is true for $f_t$. However, we can precompose $w_t$ with a reparametrization of the domain so that $w_t$ and hence also $f_t$ is smooth. Note moreover that this reparametrization does not change the image of $f_t$ and therefore it also doesn't change the image of the Gauss map $G(df_t)$, which is what we actually care about. By abusing notation we will also use $f_t$ to denote the reparametrized smooth map whenever this is convenient. See Figures \ref{wrinklingpcoordinates} and \ref{wrinklingzcoordinates} for an illustration of $f_t$.

\begin{figure}[h]
\includegraphics[scale=0.7]{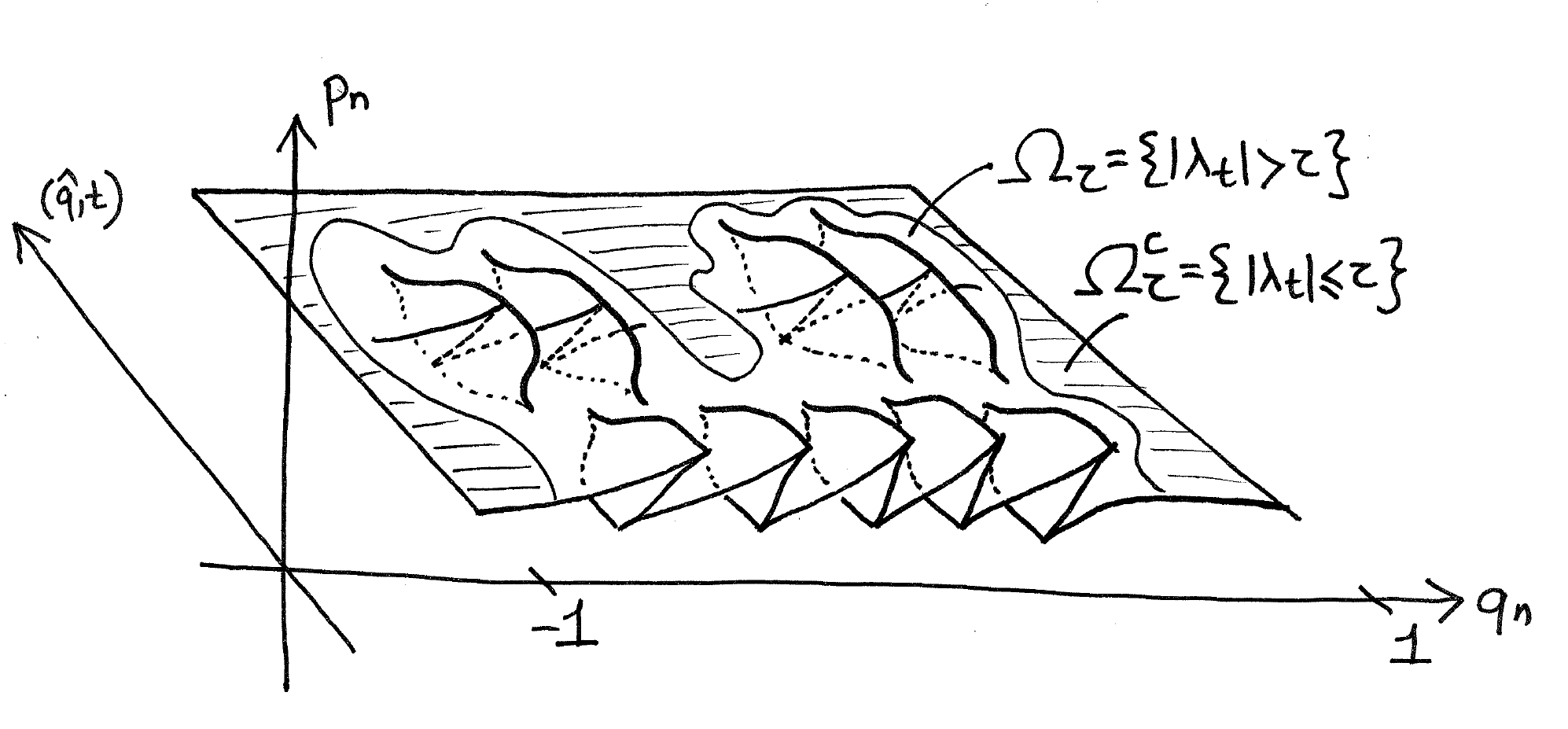}
\caption{The $p_n$-coordinate of the map $f_t$. The cusps are semi-cubic.}
\label{wrinklingpcoordinates}
\end{figure}

\begin{figure}[h]
\includegraphics[scale=0.7]{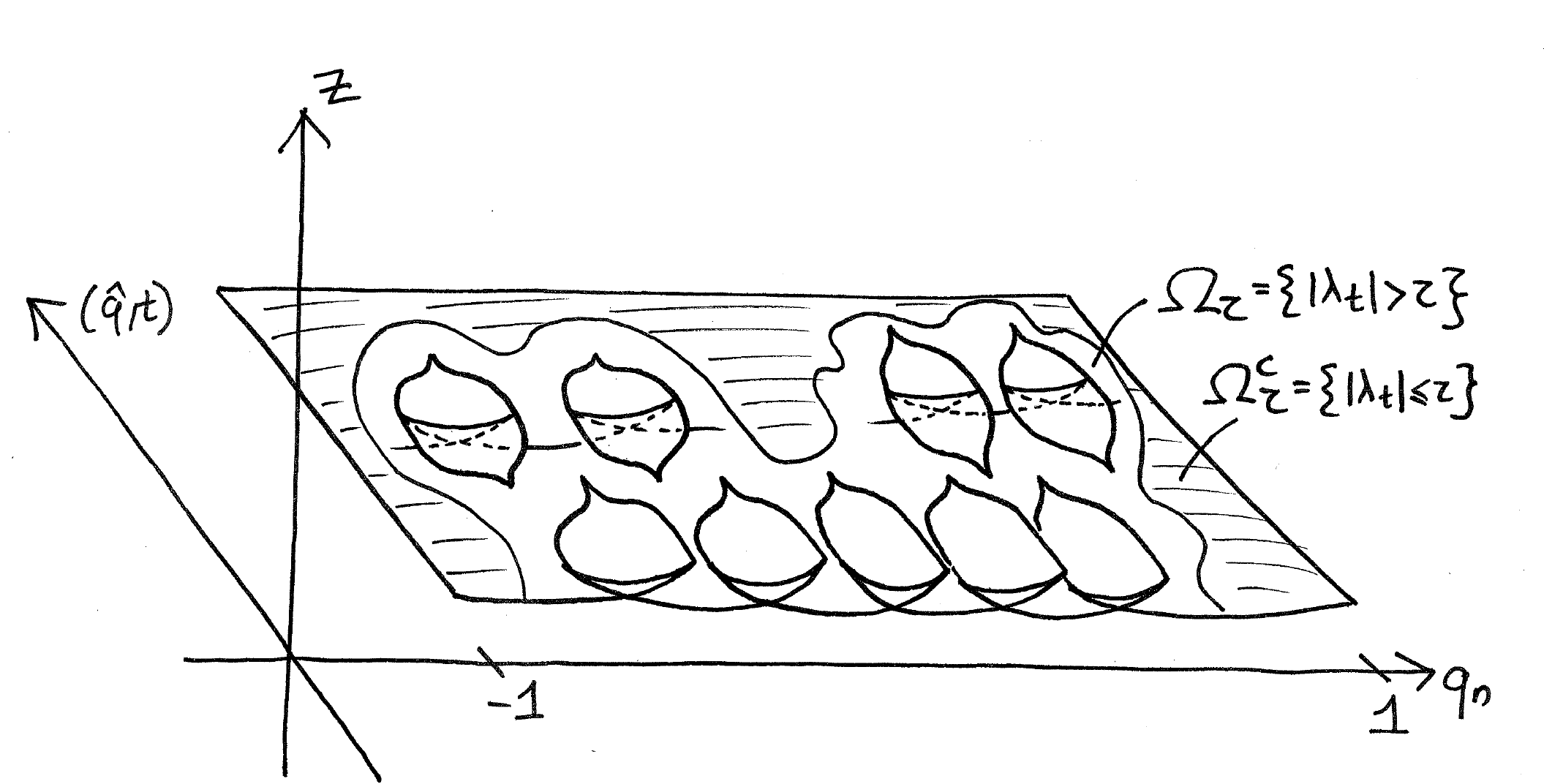}
\caption{The $z$-coordinate of the Legendrian lift of $f_t$. In other words, this is the Legendrian front of $f_t$. The cusps are semi-quintic.}
\label{wrinklingzcoordinates}
\end{figure}

\begin{claim}\label{parameters} For any $\varepsilon>0$ we can choose parameters $\tau, \delta, \sigma, \alpha,  \gamma $ and $N$ so that $\text{dist}_{C^0}(G(df_t),G_t)<\varepsilon$. 
\end{claim}
We recall the parameters at play. The game is all about controlling the different rates at which the parameters tend to zero or infinity, so it will be important to be precise in the interdependence of the parameters and in the order of quantifiers. 
\begin{itemize}
\item $\tau$ is the cutoff angle of $G_t$ under which we will perform no wrinkling.
\item $\delta$ is proportional to the the width of the shell between a box $C$ and the smaller box $\widetilde{C}$.
\item $\sigma$ is the order of magnitude of $\zeta_s'$ on the regions where it is large and negative (inside the wrinkles).
\item $\alpha$ controls the magnitude of the `bad' negative derivative $\zeta_s'$ when the wrinkles die out.
\item $\gamma$ is the height of the oscillating model $\xi$.
\item $N$ is proportional to the number of wrinkles in $\xi$.
\end{itemize}

We begin by fixing $\varepsilon>0$ arbitrarily small. To choose $\tau$, observe that 
\[ d \varphi_t \Big( \frac{\partial}{\partial q_n} + \beta \frac{\partial }{\partial p_n} \Big) = \frac{\partial}{\partial q_n} + \beta \big( \cot(\lambda_t) \frac{\partial}{\partial q_n} + \frac{\partial}{\partial p_n} \big) , \qquad \beta \in \bR.\]
and hence if $\text{sign}(\beta) = \text{sign}(\lambda_t)$, then the scalar product of $\partial / \partial q_n$ and $d\varphi_t( \partial / \partial q_n + \beta \partial / \partial p_n)$ is positive and moreover we have
\[ \measuredangle \Big( \frac{\partial}{\partial q_n}, \, \,  d \varphi_t \big( \frac{\partial}{\partial q_n} + \beta \frac{\partial}{\partial p_n} \big) \Big) \, \, < \,   |  \lambda_t | . \]
Recall that on the subset $\Omega_{\tau} \setminus \Omega_{2 \tau}$ we have $\tau<|\lambda_t| \leq 2\tau$.  Suppose that $\tau < \varepsilon/4$. It follows that if $\text{sign}(\beta)=\text{sign}(\lambda_t)$, then we have 
\[ \measuredangle\Big( \frac{\partial }{ \partial q_n} ,\, \, d\varphi_t\big(\frac{ \partial }{ \partial q_n} + \beta \frac{\partial }{ \partial p_n }\big) \Big) <  2 \tau < \frac{\varepsilon}{2}  \quad \text{on}  \, \,\Omega_{\tau} \setminus \Omega_{2\tau}  .\]

Once $\tau < \varepsilon/4$ is fixed, we choose $\delta$ small enough so that the construction of $w_t= \sum_j  \text{sign}(j) \ell_{C_j}$ (which depends implicitly on $\tau$) is possible. The other parameters must be chosen somewhat more judiciously. Our first task is to understand the geometry of the initial local model $(s,u) \mapsto \big(x_s(u), y_s(u) \big)$ in order to control the error produced when we modify the model to make it Lagrangian. 

Consider the Lagrangian version in $T^*\bR^2=\bR^4(q_1,q_2,p_1,p_2)$ given by the formula
\[ m(s,u)=\Big(s,x_s(u),r_s(u) , y_s(u) \Big) \in T^*\bR^2, \quad r_s(u) = \int_0^u \partial_s\big(y_s(u) \big)\partial_u\big( x_s(u) \big) - \partial_u \big( y_s(u) \big) \partial_s \big( x_s(u) \big) du, \]

where we recall that
\[ x_s(u)=\frac{15}{8} \int_0^u (w^2-s)^2 dw, \qquad y_s(u)= \frac{1}{2}(u^3-3su). \]

We also have the corresponding scaled version

\[ m_{\gamma, N}(s,u)=\Big(s,\frac{1}{N}x_s(u),\frac{\gamma}{N}r_s(u) , \gamma y_s(u) \Big) \in T^*\bR^2.\]

If $\gamma \to 0$ and $N \to \infty$ in such a way that $N \gamma \to \infty$, then the Gauss map $G(dm_{\gamma,N})$ converges (on compact subsets of the $(s,u)$ plane) to the distribution spanned by the vectors $\partial/\partial q_1=(1,0,0,0)$ and $\partial/\partial p_2=(0,0,0,1)$. The proof is the following explicit computation.

It will we convenient to carry out our calculations in terms of the function $F(s,u)=\frac{1}{3}(u^3-3su)$ and its derivative $F_u(s,u)=u^2-s$. Note that the zero set $\{F_u=0\}$ is precisely the wrinkling locus of $m$. We compute:
\[\partial_u\big( x_s(u) \big ) = \frac{15}{8}F_u^2 , \qquad \partial_s\big( x_s(u) \big ) = -\frac{15}{4}F\]
\[ \partial_u\big( y_s(u) \big ) =  \frac{3}{2}F_u , \qquad \partial_s\big( y_s(u) \big ) = -\frac{3}{2}u \]
\[ \partial_u\big(r_s(u) \big ) = \frac{15}{8}F_u \big( -\frac{3}{2}uF_u + 3F\big) =  -\frac{15}{16}F_u(u^3 + 3su).\] 

\[ \frac{\partial m_{\gamma, N} }{\partial s}=\Big(1, \, \, \frac{1}{N} \partial_s\big(x_s(u)\big)  , \,  \frac{\gamma}{N} \partial_s\big( r_s(u) \big), \, \gamma \partial_s \big( y_s(u) \big) \Big) \rightarrow \big(1,0,0,0\big) \, \,\,  \text{as} \, \, \,\gamma \to 0, \, \, N \to \infty,\]

\[ \frac{\partial m_{\gamma,N}}{\partial u} = \Big( 0, \,\frac{1}{N} \frac{15}{8}F_u^2,-\frac{\gamma}{N} \frac{15}{16}(u^3+3su)F_u, \, \gamma\frac{3}{2}F_u \Big) =\,  -\gamma F_u \Big( 0, \, \frac{1}{N \gamma} F_u , \, -\frac{1}{N} \frac{15}{16}(u^3+3su), \,     \frac{3}{2}  \Big) \]

and hence provided that $N \gamma \to \infty$ we have

\[ \text{span} \Big( \frac{\partial m_{\gamma, N} }{\partial s}, \,  \frac{\partial m_{\gamma,N}}{\partial u} \Big) \longrightarrow \text{span}\Big( \frac{\partial}{\partial q_1} , \frac{\partial}{\partial p_2}\Big). \]

With some minor modifications we can extend our computations to the scaled $n-$dimensional model for the Lagrangian wrinkle as it appears in $\ell$.

\[(t , q) \mapsto \Big(q_1, \ldots , q_{n-1}, \frac{\sigma x_{\eta} (q_n)}{2N+1} , \, \,  \frac{\sigma \gamma  r_{\eta}(q_n)}{2N+1}\frac{\partial ||(t, \hat{q}) ||}{\partial q_1}  \eta' ,  \,  \ldots  \, ,  \frac{\sigma \gamma  r_{\eta}(q_n)}{2N+1}\ \frac{\partial ||(t, \hat{q}) ||}{\partial q_{n-1}} \eta',  \, \gamma y_{\eta}(q_n) \Big)\]
where $\eta=\eta \big( ||(t,\hat{q}) || \big)$. Indeed, the only difference comes from the terms $\partial_j \eta = \eta' \partial_j ||( t,\hat{q}) || $ for $j<n$ and their partial derivatives, which give an error that tends to zero as $\gamma \to 0$ and $N \to \infty$. The conclusion is that provided we have $N \gamma \to \infty$, the Gauss map converges to the distribution
\[V= \text{span} \Big( \frac{\partial}{\partial q_1}, \ldots , \frac{\partial}{\partial q_{n-1} }, \frac{\partial }{\partial p_n} \Big).\]

Recall that we must ensure $\sigma < \sigma(N)$ so that the singularity loci $\Sigma( \ell_{C_j}) \subset [0,1] \times I^n$ are disjoint. Hence if we let $N \to \infty$, then we must also allow for $\sigma \to 0$. But this only helps us in the above computation so there is no issue.

Consider next the oscillating model $\ell$ defined above. Let $\Sigma \subset [-1,1] \times I^n$ be the locus on which $\ell$ is not smooth. The set $\Sigma$ consists of a disjoint union of spheres with cuspidal equators. Let $E$ be the compact region bounded by $\Sigma$. If $\gamma \to 0$ and $N \to \infty $ so that $N \gamma \to \infty$, then the above computations show that on $Op(E)$ the Gauss map of $\ell$ converges to the distribution $V$. In the complement of  $Op(E)$, the model $\ell$ is smooth and for $j<n$ we have $\partial \ell / \partial q_j \to \partial / \partial q_j$ as $\gamma \to 0$. On the subset $B=[-1+2 \delta , 1-2\delta]^n\times [-1 + \frac{1}{4N+2} ,1-\frac{1}{4N+2}] $ the Gauss map of $\ell$ converges to $V$, indeed on the remaining part $B \setminus Op(E)$ the derivative $dp_n( \partial \ell / \partial q_n) = \partial \xi / \partial q_n$ is strictly positive and scales by $N \gamma$ while $dp_j(\partial \ell / \partial q_n)$ scales by $\gamma$ for $j<n$. On $I^n \setminus B$ we cannot control $\partial  \ell / \partial q_n$ so precisely but we assert that outside of $Op(E)$ there still holds the following lower bound:
\[dp_n(\partial \ell / \partial q_n ) =  \partial \xi / \partial q_n \geq  -  (N+1) \gamma \alpha,.\]

To confirm this assertion, we compute

\[ \frac{\partial \xi}{\partial q_n} =\gamma \rho(t, \hat{q})  \Bigg( \text{sign}(q_n) \psi'( |q_n|) \zeta_{\eta(t, \hat{q})}\Big(\frac{2N+1}{2}q_n\Big) +\frac{2N+1}{2} \psi(|q_n|) \zeta'_{\eta(t,\hat{q})}\Big(\frac{2N+1}{2}q_n\Big)\Bigg). \]
Since $\psi'\leq 0$ and $\text{sign}( \zeta_{\eta(t,\hat{q})}(\frac{2N+1}{2}q_n)\big)=-\text{sign}(q_n)$ in the region where $\psi' \neq 0$, the first term is always non-negative. For the second term we use our assumption that $\zeta_s' \geq -\alpha$ and the desired inequality follows. 

We deduce from this inequality that if we let $\gamma , \alpha \to 0$ and $N \to \infty $ so that $N \gamma \to \infty$ and $N \gamma \alpha \to 0$, then on the complement of $Op(E)$ we have $\liminf dp_n(\partial \ell / \partial q_n) \geq 0$. Of course we also still have $dq_j(\partial \ell / \partial q_n)=0$ for $j<n$, $dq_n(\partial \ell / \partial q_n)=1$ and  $dp_j( \partial \ell / \partial q_n) \to 0$ as $\gamma \to 0$. 

Next we proceed to study the model $w_t = \sum_j  \text{sign}(j) \ell_{C_j}$ which is adapted to our rotation $G_t$. Assume first for simplicity that $\lambda_t \geq 0$, so that $\text{sign}(j)=1$ for all $j$. Let $\widetilde{\Sigma} \subset [0,1] \times I^n$ be the non-smooth locus of $w_t$. The set $\widetilde{\Sigma}$ is again a disjoint union of spheres which have cuspidal equators. Let $\widetilde{E}$ be the compact subset bounded by $\widetilde{\Sigma}$. Note that $\widetilde{E} \subset \Omega_{\tau}$. On $\Omega_{2 \tau} \setminus \widetilde{E}$ all the derivatives $\partial \xi_{C_j} / \partial q_n$ are bounded below by a positive constant times $-N \gamma \alpha$ and at each point there is at least one of them which is bounded below by a constant times $N \gamma$. This last assertion holds because the boxes $\widetilde{C}_j \subset C_j$ cover $\Omega_{2\tau}$. Inside $\widetilde{E}$ all the derivatives $\partial \xi_{C_j}/ \partial q_n$ are bounded above by a positive constant times $N \gamma$ and at each point there is exactly one derivative $\partial \xi_{C_j} / \partial q_n$ for which is bounded above by a constant times $-N \gamma/\sigma$. This last derivative corresponds to the $\ell_{C_j}$ whose non-smooth locus bounds the given component of $\widetilde{E}$ containing the point we're looking at. We recall that we are letting $\sigma \to 0$ with the only requirement that $\sigma<\sigma(N)$. Hence if $N \to \infty$ and $\gamma, \alpha, \sigma \to 0$ in such a way that this condition holds and if additionally we have $N \gamma \to \infty$ and $N \gamma \alpha \to 0$, then on the region $\Omega_{2 \tau} \cup Op(\widetilde{E})$ the Gauss map of $w_t$ converges to the distribution $V$ and on
and on $\Omega_\tau \setminus \big(\Omega_{2\tau} \cup Op(\widetilde{E})\big)$ we know that $\partial w_t/ \partial q_j \to \partial / \partial q_j$ for $j<n$ and that $\partial w_t / \partial q_n $ gets arbitrarily close to the sector
\[ \cC=    \text{span}\{\frac{\partial}{\partial q_n} + \beta \frac{\partial}{\partial p_n}  : \, \, \,  \beta \geq 0 \} \subset T(T^*\bR^n)|_{\bR^n}. \]
Consider next the general case where we don't assume that $\text{sign}(j)=1$ for all $j$. Since $\Omega_\tau = \{ \lambda_t>\tau \} \cup \{ \lambda_t<-\tau\}$ is a disjoint union, we can repeat the above reasoning on each component and reach the same conclusion, provided that we modify that definition of the subset $\cC$ as follows
\[ \cC=    \text{span}\{\frac{\partial}{\partial q_n} + \beta \frac{\partial}{\partial p_n}  : \, \, \,  \text{sign}(\beta)=\text{sign}(\lambda_t) \} \subset T(T^*\bR^n)|_{\Omega_\tau}. \]

We now return to the wrinkled Lagrangian embedding $f_t = \varphi_t \circ w_t$. Recall that along the zero section the linear symplectic isomorphism $d\varphi_t$ is the map which sends 
\[ \frac{\partial }{\partial q_j} \mapsto \frac{\partial }{\partial q_j} , \, \,  j=1,\ldots , n , \qquad \frac{\partial}{\partial p_j} \mapsto \frac{\partial}{\partial p_j} , \, \, j<n\]
\[ \text{and} \quad \frac{\partial}{\partial p_n} \mapsto \cot(\lambda_t) \frac{\partial}{\partial q_n} + \frac{\partial}{\partial p_n}, \]
so that $d\varphi_t(V)=G_t$ along the zero section. Recall also that we chose $\tau=\tau(\varepsilon)$ so that on $\Omega_\tau \setminus \Omega_{2\tau}$ we have $\measuredangle\big(  d\varphi_t(v), \partial/ \partial q_n \big) < \varepsilon/2$ for all $v \in \cC$. Under the above convergence assumptions it follows that we have $
\limsup \measuredangle(  \partial  f_t / \partial q_n, \partial / \partial q_n  )  \leq  \varepsilon/2$ on $\Omega_{\tau} \setminus \big(\Omega_{2\tau}\cup Op(\widetilde{E})\big)$ and hence also $\limsup \text{dist}( G(df_t) , T\bR^n) \leq \varepsilon/2$. Therefore $\limsup \text{dist}\big( G(df_t) , G_t \big) \leq \text{dist}\big( G(df_t) , T\bR^n \big) + \text{dist}\big(T\bR^n, G_t\big) < \varepsilon/2 + 2\tau < \varepsilon$ on $\Omega_{\tau}/\big(\Omega_{2\tau}\cup Op(\widetilde{E})\big)$. Outside of $\Omega_{\tau}$ we have $\text{dist}\big(G(df_t) , G_t) = \text{dist}\big( T\bR^n, G_t ) < \tau < \varepsilon$.  If we assume that on $\Omega_{2\tau}\cup Op(\widetilde{E})$ the Gauss map of $w_t$ converges to the distribution $V$, then for $f_t$ we have
\[ G(df_t) \to d\varphi_t(V) = G_t \qquad \text{on} \, \, \Omega_{2\tau}\cup Op(\widetilde{E}). \]

Therefore to conclude the proof of Claim \ref{parameters}, and hence also of Lemma \ref{local wrinkling Lagrangians}, it suffices to show that we can arrange it so that $\gamma , \alpha \to 0$ and $N \to \infty$ in such a way that $N \gamma \to \infty$ and $N \gamma \alpha \to 0$. This is clearly possible, for instance we can let $\gamma = N^{-1/2}$ and $\alpha = N^{-2/3}$. \end{proof}

The analogous result for Legendrians is stated and proved in the same way. Observe as in the Lagrangian case that a tangential rotation $G_t:I^n \to \Lambda_n\big( J^1(I^n, \bR) \big)$ of the inclusion of the zero section $i:I^n \hookrightarrow J^1(I^n, \bR)$ is simple with respect to the hyperplane field  $H=\text{span}( \partial / \partial q_1 , \ldots , \partial / \partial q_{n-1}) \subset TI^n$  if it can be written as 
\[ G_t = \text{span}\big( \partial / \partial q_1 , \ldots , \partial / \partial q_{n-1} , \cos  ( \lambda_t ) \partial / \partial q_n + \sin( \lambda_t ) \partial / \partial p_n \big) \]
for some function $\lambda_t:I^n \to \bR$.  According to our previous definition we say that $G_t$ is graphical when $\text{im}(\lambda_t) \subset (-\pi/2, \pi/2)$. We will say that $G_t$ is quasi-graphical when $\text{im}(\lambda_t) \subset (-\pi, \pi)$.

\begin{lemma}[Local wrinkling for Legendrians]\label{local wrinkling Legendrians} Let $G_t:I^n \to \Lambda_n\big(J^1(I^n, \bR) \big)$ be a tangential rotation of the zero section $i:I^n \hookrightarrow J^1(I^n, \bR)$ which is quasi-graphical and simple with respect to $H$ and such that $G_t=G(di)$ on $Op(\partial I^n)$. Then there exists an exact homotopy of wrinkled Legendrian embeddings $f_t:I^n \to J^1(I^n, \bR)$, $f_0=i$, such that the following properties hold.
\begin{itemize}
\item $G(df_t)$ is $C^0$-close to $G_t$.
\item $f_t=i$ on $Op(\partial I^n)$. 
\end{itemize}
\end{lemma}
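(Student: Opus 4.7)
The strategy is to mirror the proof of Lemma \ref{local wrinkling Lagrangians} working throughout with Legendrian lifts. The crucial observation is that the Lagrangian oscillating model $w_t$ constructed in that proof is the graph of $dH_t$, with $H_t = \sum_j \text{sign}(j) \int_{-1}^{q_n} \xi_{C_j}(t,\hat{q},u)\,du$ the primitive already constructed. Hence $w_t$ admits the canonical Legendrian lift $\widehat{w}_t := j^1(H_t) : I^n \to J^1(I^n, \bR)$. Since the oddness of each $\xi_{C_j}$ in its $q_n$-variable forces $H_t$ to vanish on the complement of the union of the supporting boxes, $\widehat{w}_t$ is a wrinkled Legendrian embedding (whose wrinkling locus lifts the wrinkling locus of $w_t$) agreeing with the zero section on $Op(\partial I^n)$.

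Next, I would replace the Hamiltonian isotopy $\varphi_t$ of $T^*\Omega_\tau$ with the contact isotopy $\widehat{\varphi}_t$ of $J^1(\Omega_\tau, \bR)$ generated by the contact Hamiltonian $\widehat{F}_t(q,p,z) = \frac{1}{2} \cot(\lambda_t(q)) p_n^2$, extended by the identity outside $J^1(\Omega_\tau, \bR)$. Since $\widehat{F}_t$ is independent of $z$, a direct computation of the associated contact vector field with respect to $\alpha = dz - p\,dq$ shows that the Lagrangian projection $\pi_L : J^1(I^n, \bR) \to T^*I^n$, $(q,p,z) \mapsto (q,p)$, intertwines $\widehat{\varphi}_t$ with $\varphi_t$. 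In particular, along the zero section $\widehat{\varphi}_t$ fixes the base, and $d\widehat{\varphi}_t$ sends the horizontal distribution $\text{span}(\partial/\partial q_1, \ldots, \partial/\partial q_{n-1}, \partial/\partial p_n)$ inside the contact distribution to $G_t$, exactly as in the Lagrangian case. Setting $\widehat{f}_t := \widehat{\varphi}_t \circ \widehat{w}_t$ yields a homotopy of wrinkled Legendrian embeddings (automatically exact) with $\widehat{f}_0 = i$.

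Finally, since $d\pi_L$ restricts to a symplectic isomorphism from the contact distribution at $(q, 0, 0)$ onto $T_{(q, 0)}(T^*I^n)$, it induces an identification of the corresponding Lagrangian Grassmannians which sends $G(d\widehat{f}_t)$ to $G(df_t)$ and the Legendrian rotation $G_t$ to its Lagrangian counterpart, and which preserves $C^0$-closeness. Hence the parameter analysis from the Lagrangian proof (letting $N \to \infty$ and $\gamma, \alpha, \sigma \to 0$ with $N\gamma \to \infty$, $N\gamma\alpha \to 0$, and $\sigma < \sigma(N)$) transfers verbatim to yield that $G(d\widehat{f}_t)$ is $C^0$-close to $G_t$. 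The main obstacle, the delicate lower bound $\partial \xi / \partial q_n \geq - (N+1)\gamma\alpha$ needed to control the Gauss map near the embryos of the zig-zags (where $\zeta_s'$ has the wrong sign but is bounded below by $-\alpha$), has already been overcome in the Lagrangian proof and requires no additional work in the Legendrian setting. The support property $\widehat{f}_t = i$ on $Op(\partial I^n)$ follows from the corresponding Lagrangian property together with the vanishing of $H_t$ there.
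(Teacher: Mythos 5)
Your proposal is correct and follows exactly the route the paper takes: the paper simply says to lift the Lagrangian model $\ell$ to the Legendrian lift $\widehat{\ell}=(\ell,K)$, using the exactness condition $K=0$ near $\partial I^n$, and then to repeat the Lagrangian argument verbatim. You have spelled out the implicit steps (lifting the full oscillating model $w_t$ to $j^1(H_t)$, lifting $\varphi_t$ to the contact isotopy generated by the $z$-independent contact Hamiltonian $\widehat F_t$, and observing that $d\pi_L$ identifies the relevant Lagrangian Grassmannians so the parameter estimates transfer), all of which are consistent with and faithful to the paper's intent.
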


\begin{proof} We proceed exactly like we did in the proof of Lemma \ref{local wrinkling Lagrangians}. The Legendrian model is simply given by the Legendrian lift  $\widehat{\ell}=(\ell,K)$ of the Lagrangian model $\ell$ which exists because of the exactness condition $K=0$ on $[-1,1] \times Op(\partial I^n)$.  
\end{proof}
The parametric versions read as follows. Note that we also localize the problem from a general $m$-dimensional parameter space $Z$ to the unit cube $I^m=[-1,1]^m$.
\begin{lemma}[Parametric local wrinkling for Lagrangians]\label{parametric local wrinkling Lagrangians} Let $G^z_t:I^n \to \Lambda_n(T^*I^n)$ be a family of tangential rotations of the zero section $i:I^n \hookrightarrow T^*I^n$ parametrized by the unit cube $I^m$ which are all quasi-graphical and simple with respect to $H$, such that $G^z_t=G(di)$ on $Op(\partial I^n)$ and such that $G^z_t=G(di)$ for $z \in Op(\partial I^m)$. Then there exists a family of exact homotopies of wrinkled Lagrangian embeddings $f^z_t:I^n \to T^*I^n$, $f^z_0=i$, such that the following properties hold.
\begin{itemize}
\item $G(df^z_t)$ is $C^0$-close to $G^z_t$.
\item $f^z_t=i$ on $Op(\partial I^n)$. 
\item $f^z_t=i$ for $z \in Op(\partial I^m)$.
\end{itemize}
\end{lemma}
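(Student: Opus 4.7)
The plan is to run the same construction as in the non-parametric Lemma, promoting $z \in I^m$ to an additional spatial parameter on equal footing with $t$. First, I extend the time interval from $[0,1]$ to $[0,2]$ by reflecting $\lambda^z_t = \lambda^z_{2-t}$ for $t \in [1,2]$, so that $\lambda^z_t = 0$ on $Op\big(\partial([0,2] \times I^m \times I^n)\big)$; this reduces the proof to the closed setting where no boundary issues arise. Define the parametric cutoff region
\[ \Omega_\tau = \{ (z,t,q) \in I^m \times [0,2] \times I^n : |\lambda^z_t(q)| > \tau \} \]
and call a box $C = C\big((z,t,q), b, c\big) \subset I^m \times [0,2] \times I^n$ special if $|\lambda^z_t(q)| < 2\tau$ near its top and bottom faces. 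Choose finitely many special boxes $C_1,\ldots,C_m \subset \Omega_\tau$ whose $(1-2\delta)$-shrinks $\widetilde{C}_j$ still cover $\Omega_{2\tau}$; as before, this is possible once $\delta$ is sufficiently small and $N$ sufficiently large, and we may arrange the collection to depend continuously on no further data since everything lives in one ambient cube.

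Next, on each box $C_j$ build the scaled and translated oscillating Lagrangian model $\ell_{C_j}$ exactly as in the non-parametric case, treating the $z$-coordinates as part of the $(t,\hat q)$ base of the wrinkling model (the local model $\ell$ was defined over $D^n \times [-1,1]$; here we simply take the relevant $(n+m)$-dimensional disk). Set $\mathrm{sign}(j) = \mathrm{sign}(\lambda^z_t|_{C_j})$ and define
\[ w^z_t(q) = \Big( q,\, \partial_{q_1} H^z_t,\,\ldots,\,\partial_{q_{n-1}} H^z_t,\, \sum_j \mathrm{sign}(j)\, \xi_{C_j}(z,t,\hat q, q_n) \Big), \]
where $H^z_t$ is the corresponding antiderivative in $q_n$. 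Since $w^z_t$ vanishes outside $\Omega_\tau$ and $\Omega_\tau$ is disjoint from a neighborhood of $\partial I^m$, this construction is automatically relative in $z$. Restrict back to $t \in [0,1]$. Define the parametric Hamiltonian $F^z_t(q,p) = \tfrac{1}{2}\cot\big(\lambda^z_t(q)\big)p_n^2$ on $T^*\Omega_\tau$ with time-$1$ flow $\varphi^z_t$, and set $f^z_t = \varphi^z_t \circ w^z_t$ (after the same smoothing reparametrization of the domain as before).

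The convergence analysis is then word-for-word the one carried out in the proof of the non-parametric Lagrangian local wrinkling lemma: along the zero section one computes $d\varphi^z_t(V) = G^z_t$ where $V = \mathrm{span}(\partial_{q_1},\ldots,\partial_{q_{n-1}}, \partial_{p_n})$, and one establishes that as $N \to \infty$ with $\gamma = N^{-1/2}$, $\alpha = N^{-2/3}$ and $\sigma < \sigma(N)$, the Gauss map $G(dw^z_t)$ converges pointwise to $V$ on $\Omega_{2\tau} \cup Op(\widetilde{E})$ and into the sector $\mathcal C$ on $\Omega_\tau \setminus (\Omega_{2\tau} \cup Op(\widetilde E))$. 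The parameter $z$ plays no role in these estimates beyond being evaluated pointwise, and compactness of $I^m$ upgrades the pointwise bounds to uniform ones, yielding $C^0$-closeness of $G(df^z_t)$ to $G^z_t$ uniformly in $z$. Exactness is preserved fiberwise by construction, and the relative condition $f^z_t = i$ for $z \in Op(\partial I^m)$ holds because $\Omega_\tau$, hence the support of $w^z_t$, is disjoint from $Op(\partial I^m) \times [0,1] \times I^n$.

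The main obstacle I anticipate is purely bookkeeping: ensuring that the covering $\{\widetilde{C}_j\}$ of $\Omega_{2\tau}$ and the number $\sigma(N)$ (which prevents the disjoint singularity loci $\Sigma(\ell_{C_j})$ from colliding) can be arranged uniformly across the compact parameter space $I^m$, so that a single choice of parameters $(\tau,\delta,\sigma,\alpha,\gamma,N)$ works globally. This follows from a standard finite-covering argument together with the fact that the entire convergence analysis is local in $(z,t,q)$ and the relevant estimates are continuous in $z$.
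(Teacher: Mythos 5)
Your proposal is correct and follows the same overall strategy as the paper: promote $z$ to a spatial parameter, reflect the time interval, build the oscillating model $w^z_t$ box by box inside $\Omega_\tau$, straighten with the Hamiltonian $F^z_t = \tfrac{1}{2}\cot(\lambda^z_t)p_n^2$, and observe that the relative condition in $z$ is automatic because $\Omega_\tau$ avoids $Op(\partial I^m)$. All of that matches the paper's proof, which simply says to carry $z$ along for the ride.

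Where you genuinely diverge is in the shape of the parametric model, and your choice is the better one. The paper replaces $D^n \times [-1,1]$ by the product $D^n(t,\hat q)\times[-1,1](q_n)\times D^m(z)$ and multiplies $\xi$ by a separate factor $\rho(\|z\|)$, but keeps the subscript of $\zeta$ equal to $\eta\big(\|(t,\hat q)\|\big)$, independent of $z$. Taken literally, this means the locus where $\zeta_s$ is non-smooth --- hence the wrinkle locus of $\ell$ --- is $z$-independent for all $z$ with $\rho(\|z\|)>0$; the amplitude of the oscillation is scaled toward zero, but the wrinkle sphere neither shrinks to a point nor passes through an embryo as $z$ reaches $\partial D^m$, so the fibered wrinkle does not degenerate in the way prescribed by the local model $\cL^{\cF}_{n,m}$ (where $\eta$ depends on $\|z\|^2 + \|\hat q\|^2$, exactly symmetrically). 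Your approach of absorbing $z$ into the base disk, i.e.\ working on $D^{n+m}(t,\hat q,z)\times[-1,1](q_n)$ with $\zeta_{\eta(\|(t,\hat q,z)\|)}$, sidesteps this entirely: the wrinkle loci become spheres in $(z,t,q)$-space that close up as $\|(t,\hat q,z)\|$ approaches $1$, precisely matching the fibered wrinkle and embryo models, and the convergence estimates pass through unchanged because the added $z$-dependence of $\eta$ is no more complicated than the existing $(t,\hat q)$-dependence. So your implementation is not just a cosmetic variant but the one that actually produces a family of fibered wrinkled Lagrangian embeddings in the sense of Section~\ref{Parametric families of wrinkles}; the rest of your argument (uniformity over the compact $I^m$, fiberwise exactness, the covering by special boxes, and the single choice of parameters $(\tau,\delta,\sigma,\alpha,\gamma,N)$ working globally) is sound and complete.
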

\begin{lemma}[Parametric local wrinkling for Legendrians]\label{parametric local wrinkling Legendrians} Let $G^z_t:I^n \to \Lambda_n\big(J^1(I^n, \bR) \big)$ be a family of tangential rotations of the zero section $i:I^n \hookrightarrow J^1(I^n, \bR)$ parametrized by the unit cube $I^m$ which are all quasi-graphical and simple with respect to $H$, such that $G^z_t=G(di)$ on $Op(\partial I^n)$ and such that $G^z_t=G(di)$ for $z \in Op(\partial I^m)$. Then there exists a family of exact homotopies of wrinkled Legendrian embeddings $f^z_t:I^n \to J^1(I^n, \bR)$, $f^z_0=i$, such that the following properties hold.
\begin{itemize}
\item $G(df^z_t)$ is $C^0$-close to $G^z_t$.
\item $f^z_t=i$ on $Op(\partial I^n)$. 
\item $f^z_t=i$ for $z \in Op(\partial I^m)$.
\end{itemize}
\end{lemma}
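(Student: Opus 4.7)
The plan is to imitate the proof of Lemma \ref{local wrinkling Legendrians} (the non-parametric Legendrian case) by reducing to the parametric Lagrangian result Lemma \ref{parametric local wrinkling Lagrangians}, which has already been established. The main idea is that since the parametric oscillating function $\xi^z$ underlying the proof of Lemma \ref{parametric local wrinkling Lagrangians} is odd in the $q_n$ variable, the parametric primitive
\[ K^z(t,q) = \int_{-1}^{q_n} \xi^z(t,\hat{q},u)\, du \]
satisfies $K^z = 0$ on $Op\big(\partial ( I^m \times [0,1] \times I^n )\big)$, so the parametric Lagrangian oscillating model $\ell^z$ admits a well-defined parametric Legendrian lift $\widehat{\ell}^z = (\ell^z, K^z) : I^m \times [0,1] \times I^n \to J^1(I^n,\mathbb{R})$ which agrees with the (family of) zero section(s) near the boundary.

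Given this, the proof of Lemma \ref{parametric local wrinkling Lagrangians} can be copied verbatim with the Legendrian lift replacing the Lagrangian model throughout. First I would cover the parametric region $\Omega_\tau = \{ (z,t,q) : |\lambda^z_t(q)| > \tau\}$ by fibered special boxes $C_j \subset I^m \times [0,2] \times I^n$ chosen so that the corresponding smaller boxes $\widetilde{C}_j$ still cover $\Omega_{2\tau}$ and with pairwise disjoint singular loci, then form the adapted oscillating Legendrian model $\widehat{w}^z_t = \sum_j \mathrm{sign}(j)\, \widehat{\ell}^z_{C_j}$. Next, I would lift the Hamiltonian symplectomorphism $\varphi^z_t$ constructed from $F^z_t(q,p) = \tfrac{1}{2}\cot(\lambda^z_t(q))\, p_n^2$ to a family of contactomorphisms of $J^1(I^n,\mathbb{R})$ (which is possible because this Hamiltonian vanishes to second order along the zero section, so the lift is canonical and fixes the zero section pointwise), yielding the same action on tangent spaces along the zero section, namely $\partial_{p_n} \mapsto \cot(\lambda^z_t)\,\partial_{q_n} + \partial_{p_n}$. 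Setting $f^z_t = \varphi^z_t \circ \widehat{w}^z_t$ then produces the required exact homotopy of wrinkled Legendrian embeddings.

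The verification of $C^0$-closeness of $G(df^z_t)$ to $G^z_t$ is identical to the analysis in the proof of Lemma \ref{local wrinkling Lagrangians}, since the differential computations that bound $\partial \xi / \partial q_n$ from below (in terms of $-N\gamma\alpha$) and control the limiting Gauss map distribution $V = \mathrm{span}(\partial/\partial q_1, \dots, \partial/\partial q_{n-1}, \partial/\partial p_n)$ are not affected by the extra $z$-coordinates, which only contribute additional partial derivatives scaling by $\gamma$ and hence vanishing in the limit. The main (minor) technical point is to ensure that all the parameters $\tau, \delta, \sigma, \alpha, \gamma, N$ can be chosen uniformly in $z \in I^m$; this follows from compactness of $I^m$ and the continuity of the data $\lambda^z_t$ in all variables. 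The relative condition $f^z_t = i$ for $z \in Op(\partial I^m)$ is automatic: the hypothesis $G^z_t = G(di)$ for such $z$ gives $\lambda^z_t \equiv 0$ there, so the region $\Omega_\tau$ does not meet a neighborhood of $\partial I^m$, no wrinkling is performed, and the constructed $f^z_t$ equals the zero section identically.
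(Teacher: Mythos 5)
Your proposal is correct and follows essentially the same route as the paper, which handles Lemma~\ref{parametric local wrinkling Legendrians} by combining the two remarks it makes for Lemmas~\ref{local wrinkling Legendrians} and~\ref{parametric local wrinkling Lagrangians}: pass to the Legendrian lift $(\ell, K)$ of the Lagrangian oscillating model (possible because $\xi$ is odd in $q_n$, hence $K=0$ near the boundary) and adjoin the $z$-parameter to the box by inserting a cutoff $\rho(\lVert z\rVert)$ into $\xi$, which matches your use of fibered special boxes in $I^m \times [0,2] \times I^n$. Your added observations about the contactomorphism lift of $\varphi^z_t$ and the uniformity of $\tau,\delta,\sigma,\alpha,\gamma,N$ in $z$ are both accurate and fill in details the paper leaves implicit; the only nitpick is that the contact lift of $F^z_t$ exists for any Hamiltonian, with the second-order vanishing along the zero section ensuring only that the resulting contactomorphism fixes the zero section pointwise.
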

Lemmas \ref{parametric local wrinkling Lagrangians} and \ref{parametric local wrinkling Legendrians} are proved in the same way as Lemmas \ref{local wrinkling Lagrangians} and \ref{local wrinkling Legendrians}, adapting our construction to the fibered case as in \cite{EM09}. To be more precise, in the local model for the oscillating function $\xi$ we replace the box $D^n(t, \hat{q}) \times [-1,1](q_n)$ by the box $D^n(t, \hat{q}) \times [-1,1](q_n) \times D^m(z)$ and set
\[ \xi (t,q,z) = \gamma \,   \rho( ||z||) \, \rho\big( ||(t, \hat{q}) || \big)  \, \psi\big(|q_n|\big) \,\zeta_{\eta \left( ||(t,\hat{q})||\right)} \Big( \frac{2N+1}{ 2}q_n\Big), \, \, (t,\hat{q}) \in D^n , \, \, q_n \in [-1,1] , \, \, z \in D^m.\]

The rest of the proof can then be repeated carrying the parameter $z \in D^m$ along for the ride.

\subsection{Wrinkling the wiggles}\label{Wrinkling the wiggles} We are now ready to prove that tangential rotations can be globally approximated by Gauss maps of wrinkled embeddings. 

\begin{proof}[Proof of Theorem \ref{simple wrinkling lagrangians}] Let $G_t:L \to \Lambda_n(M)$ be a graphical simple rotation of a wrinkled Lagrangian or Legendrian embedding $f:L \to M$. Let $\Delta$ be a triangulation of $L$ which is compatible with the wrinkles of $f$ as in Section \ref{wiggling the wrinkles}. Set $K=\Delta^{n-1}$, the $(n-1)$-skeleton of $\Delta$. By Theorem \ref{perp-holonomic for wrinkles}, there exists an exact homotopy of wrinkled Lagrangian or Legendrian embeddings $g_t:L \to M$, $g_0=f$, (which is in fact an isotopy in the sense of Remark \ref{isotopyvhomotopy}) and a tangential rotation $R_t:L \to \Lambda_n(M)$ of $f$ such that the following properties hold.
\begin{itemize}
\item $G(dg_t)$ is $C^0$-close to $G_t$ on $Op(K)$.
\item $G(dg_t)$ is $C^0$-close to $R_t$ on all of $L$.
\item $R_t$ is graphical and simple with respect to the same hyperplane field $H$ as $G_t$.
\item $g_t=f$ and $R_t=G(df)$ outside of a slightly bigger neighborhood of $K$ in $L$.
\end{itemize}

Take an open $n-$simplex $D$ in $\Delta^n$, so that $g_t|_D : D \to M$ is an exact homotopy of regular Lagrangian or Legendrian embeddings. Replace the hyperplane field $H|_D$ by a constant hyperplane field, so that we reduce to the model $D \simeq (-1,1)^n$, $H=\text{span}(\partial / \partial q_1 , \ldots , \partial / \partial q_{n-1})$. The error in the approximation can be made arbitrarily small by first subdividing the triangulation $\Delta$ as finely as is necessary. There exists a tangential rotation $S_t:D \to \Lambda_n(M)$ covering the exact homotopy $g_t$ such that the following properties hold.
\begin{itemize}
\item $S_t$ is $C^0$-close to $G_t$.
\item $S_t$ is quasi-graphical and simple with respect to $H$.
\item $S_t=G(dg_t)$ on $Op(\partial D)$.
\end{itemize}

The rotation $S_t$ is nothing other than $G_t$ from the point of view of the deformed embedding $g_t$. Hence we can apply our local wrinkling Lemma \ref{local wrinkling Lagrangians} or \ref{local wrinkling Legendrians} to obtain an exact homotopy of wrinkled Lagrangian or Legendrian embeddings $\widetilde{g}_t:D \to M$ such that the following properties hold.
\begin{itemize}
\item $G(d \widetilde{g}_t)$ is $C^0$-close to $G_t$
\item $\widetilde{g}_t=g_t$ on $Op(\partial D)$.
\end{itemize}

The local wrinklings $\widetilde{g}_t$ glue up to an exact homotopy of wrinkled Lagrangian or Legendrian embeddings $f_t: L \to M$ such that $G(df_t)$ is $C^0$-close to $G_t$. The proof is complete. For an illustration of the argument, see Figure \ref{wigglingwrinkles}. \end{proof}

\begin{figure}[h]
\includegraphics[scale=0.6]{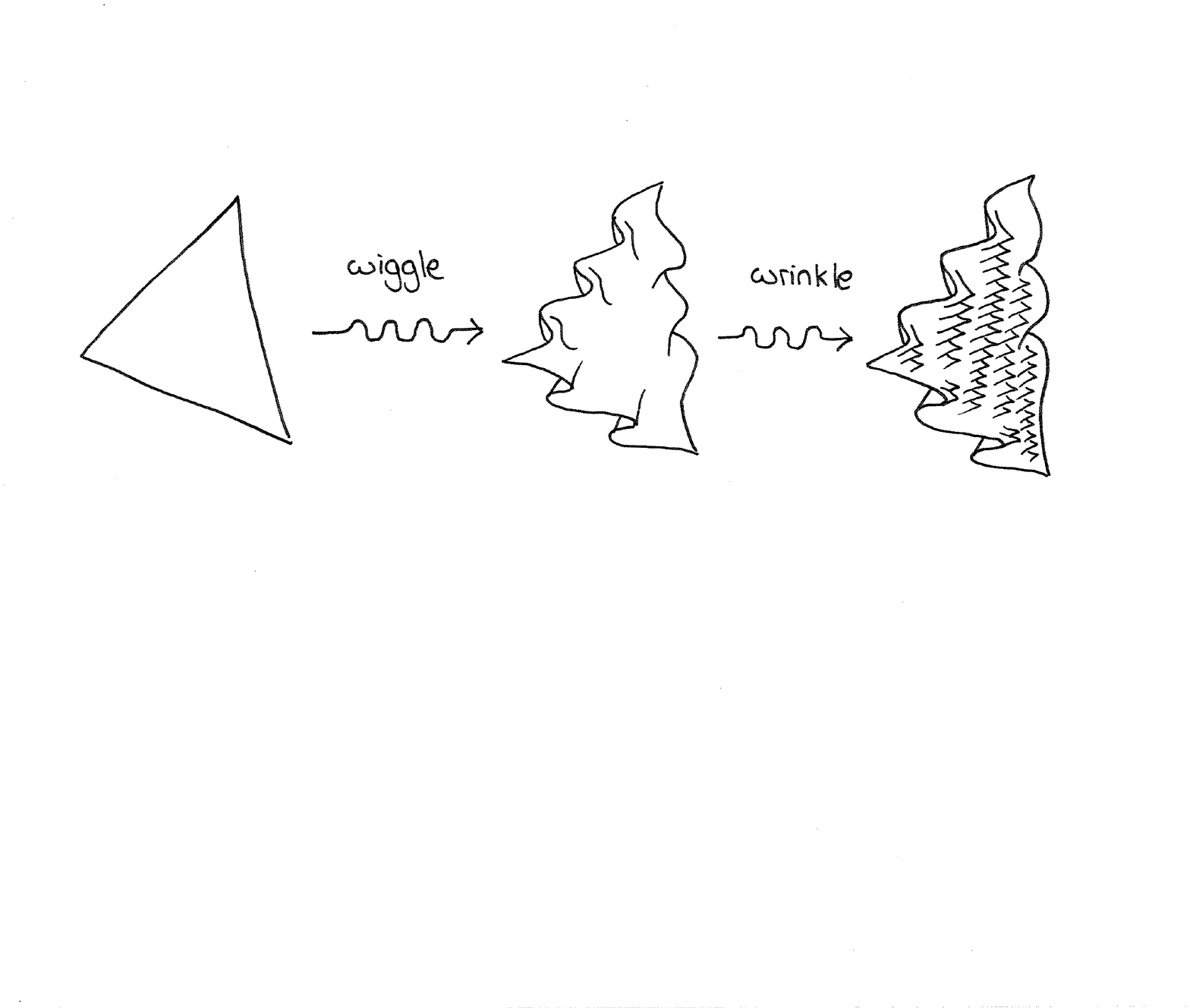}
\caption{The two-step process applied to a given simplex $D$. First we wiggle, then we wrinkle.}
\label{wigglingwrinkles}
\end{figure}

The proof of the parametric Theorem \ref{parametric simple wrinkling lagrangians} follows the same outline, using the parametric Theorem \ref{parametric perp-holonomic} instead of Theorem \ref{perp-holonomic} and using the parametric Lemmas \ref{parametric local wrinkling Lagrangians} and  \ref{parametric local wrinkling Legendrians} instead of Lemmas \ref{local wrinkling Lagrangians} and \ref{local wrinkling Legendrians}. The only essential difference is that in order to localize the parameter space from an arbitrary $m$-dimensional manifold $Z$ to the unit cube $Z=I^m$ we need to choose a triangulation $\Delta$ of $Z \times L$ which is compatible with the wrinkles of $f$ and such that every simplex in the triangulation is in general position with respect to the fibres $z\times L  \subset Z \times L$, $z \in Z$. The existence of such a triangulation was proved by Thurston in \cite{T74}. Once we know that such a triangulation exists, we can take the fibered polyhedron $K=\Delta^{n+m-1} \subset Z \times L$ and work simplex by simplex.

\section{The simplification of singularities}\label{applications to the simplification of singularities}

\subsection{Wrinkles, swallowtails and double folds}
We now return to the setting described in \mbox{Section \ref{introduction and statement of results}}. Let $M$ be a symplectic or contact manifold and let $\cF$ be a foliation of $M$ by Lagrangian or Legendrian leaves. Suppose that $f:L \to M$ is a winkled Lagrangian or Legendrian embedding which is transverse to $\cF$. We can apply the regularization procedure described in Section \ref{Regularization of wrinkles} to $f$ and obtain a regular Lagrangian or Legendrian embedding $\widetilde{f}:L \to M$. We already observed in Remark \ref{sigma type singularities} that $\widetilde{f}$ only has $\Sigma^1$-type singularities with respect to $\cF$, see Figure \ref{regularizeagain} for an illustration. More precisely, $\Sigma(\widetilde{f}, \cF)$ consists of a disjoint union of regularized wrinkles, which are defined as follows.

\begin{definition}
A regularized wrinkle of a regular Lagrangian or Legendrian embedding $g:L \to M$ is a connected component of the singularity locus $\Sigma(g,\cF)$ which consists of a topologically trivial codimension $1$ sphere $S \subset L$ such that we can decompose $S=D_1 \cup E \cup D_2$ into two hemispheres $D_1$ and $D_2$ and an equator $E$ satisfying the following. 
\begin{itemize} 
\item the equator $E$ consists of $\Sigma^{110}$ pleats.
\item the disks $D_1$ and $D_2$ consist of $\Sigma^{10}$ folds.
\end{itemize}
\end{definition} 

\begin{figure}[h]
\includegraphics[scale=0.6]{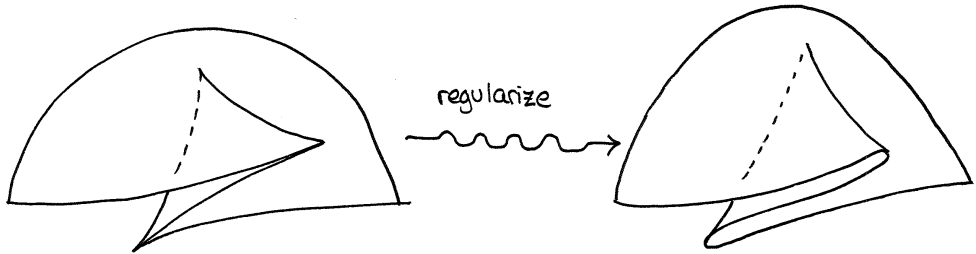}
\caption{One-half of a regularized wrinkle. In this picture, the ambient foliation should be thought of as being vertical.}
\label{regularizeagain}
\end{figure}

For a concrete local model, one can take the standard Lagrangian or Legendrian wrinkle defined in Section \ref{Lagrangian and Legendrian wrinkles}, after regularizing as described in Section \ref{Regularization of wrinkles}. In the Lagrangian case, the foliation $\cF$ of the cotangent bundle is given by the fibres of the standard projection $\pi:T^*\bR^n \to \bR^n$. In the Legendrian case, the foliation $\cF$ of the $1$-jet space $J^1(\bR^n, \bR) = T^*\bR^n \times \bR$ is given by the fibres of the projection $\pi \times id:T^*\bR^n \times \bR \to \bR^n \times \bR$.

\begin{remark}
If the foliation $\cF$ is induced by a Lagrangian fibration $\pi:M^{2n} \to B^n$, then for any regular Lagrangian embedding $f:L^n \to M^{2n}$ the following two conditions are equivalent. 
\begin{itemize}
\item the singularities of tangency of $g$ with respect to $\cF$ consist of a union of regularized wrinkles.
\item the front $\pi \circ g : L^n \to B^n$ is a generalized wrinkled mapping in the sense of \cite{EM09}.
\end{itemize}
\end{remark}

In the contact case where $\pi:M^{2n+1} \to B^{n+1}$ is a Legendrian fibration (which we think of as the front projection), we can think of regularized wrinkles in the following way. The singularities of tangency of a regular Legendrian embedding consist of a union $W=\bigcup_j S_j$ of regularized wrinkles if and only if the front  of the embedding has cusps on each sphere $S_j$ together with swallowtails on the equator $E_j$ of each $S_j$. 

Regularized wrinkles are also close relatives of the double folds introduced in Section \ref{Hierarchy of singularities}. We recall the definition for convenience.

\begin{definition} A double fold is a pair of topologically trivial $(n-1)$-spheres $S_1$ and $S_2$ in the fold locus $\Sigma^{10} \subset L$ which have opposite Maslov co-orientations and such that $S_1 \cup S_2$ is the boundary of an embedded annulus $A \subset L$.
\end{definition} 

Indeed, the Entov surgery of \cite{E97} can be used to open up a regularized wrinkle along its equator, producing a double fold. This is achieved by taking one of the two hemispheres of a regularized wrinkle $S \subset L$ and pushing it slightly away from $S$ while keeping it fixed on the equator $E$. We obtain an embedded disk $D \subset L$ contained in an arbitrarily small neighborhood of $S$ in $L$ such that $\partial D = E$ and $\text{int}(D) \cap S = \varnothing$. In fact, we require that $\text{int}(D)$ is outside of the $n$-ball $B \subset L$ bounded by $S$. The surgery construction removes the $\Sigma^{110}$ pleats from $E$ and trades them for $\Sigma^{10}$ folds on two parallel copies of $D$. One of these two parallel copies of $D$ is surgered onto one of the hemispheres of $S$ and the other parallel copy is surgered onto the other hemisphere, so that the end result consists of a disjoint union of two parallel spheres on which the embedding has $\Sigma^{10}$ folds. The Maslov co-orientations on the two resulting spheres are opposite of each other. Hence we end up with the desired double fold. See \cite{E97} for the details of the surgery construction and see Figure \ref{wrinklestofolds} for an illustration.

\begin{figure}[h]
\includegraphics[scale=0.6]{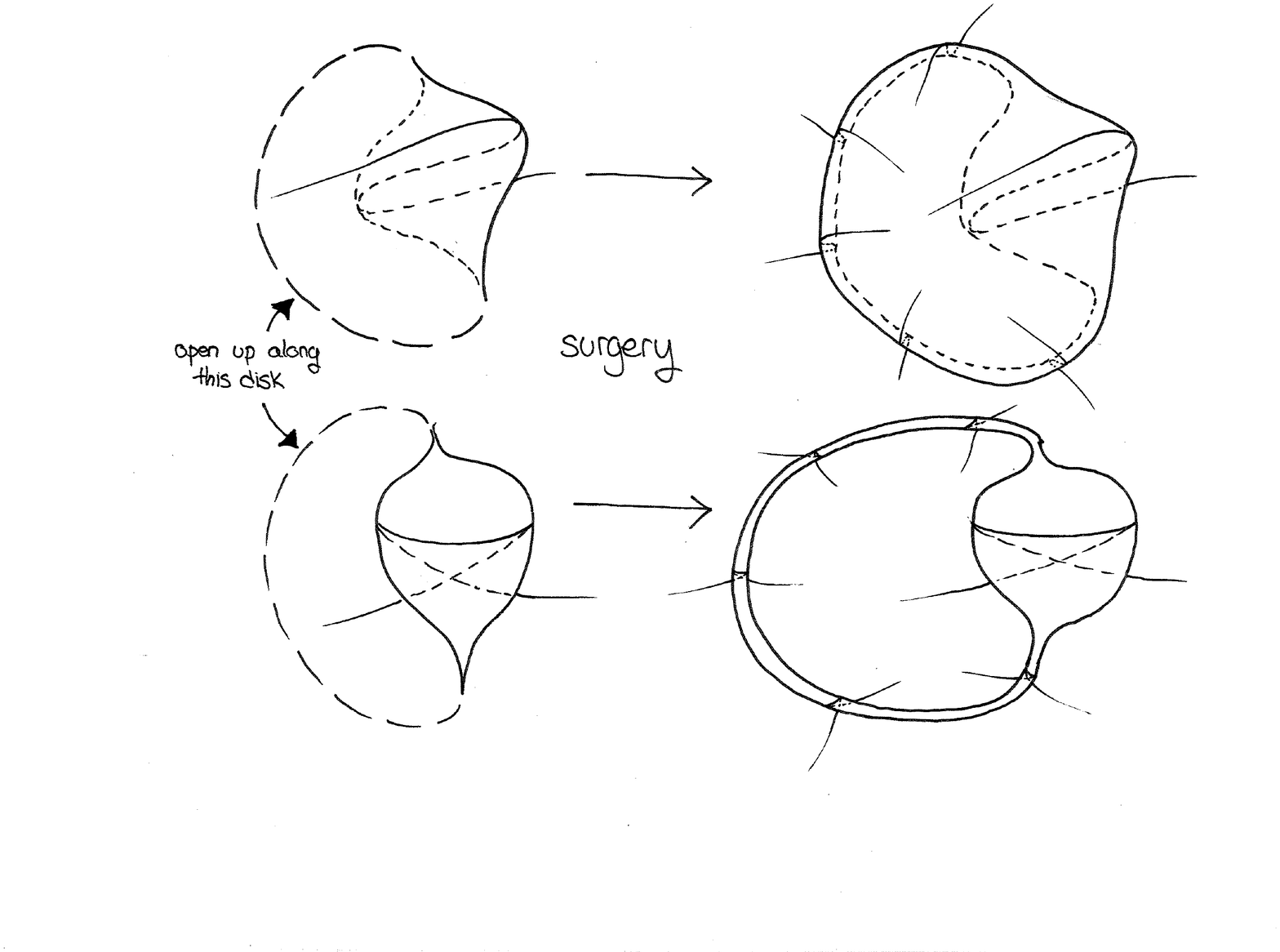}
\caption{Opening up a wrinkle into a double fold. The upper picture corresponds to the Lagrangian projection and the lower picture to the front projection.}
\label{wrinklestofolds}
\end{figure}

The precise statement that we will need is the following. Given a regular Lagrangian or Legendrian embedding $g:L \to M$ and given $S \subset \Sigma(g, \cF)$ a regularized wrinkle, there exists a $C^0$-small ambient Hamiltonian isotopy $\varphi_t:M \to M$ such that $\varphi_t = id_M$ outside of an arbitrarily small neighborhood of $g(S)$ in $M$ and such that inside this neighborhood the regularized wrinkle of $g$ is replaced by a double fold of $\varphi_1 \circ g$. If $g= \widetilde{f}$ is the regularization of a wrinkled Lagrangian or Legendrian embedding $f:L \to M$, then the wrinkles $S$ of $f$ will typically be nested. By this we mean that the ball $B \subset L$ bounded by any wrinkle $S$ of $f$ may contain other wrinkles of $f$. Hence when we apply the surgery construction on each regularized wrinkle of $g=\widetilde{f}$, we obtain a regular Lagrangian or Legendrian embedding $\varphi_1 \circ g$ whose singularity locus consists of a disjoint union of  double folds which are nested in the sense of Section \ref{Hierarchy of singularities}. 

\begin{remark}
We could of course have worked with double folds all along without ever mentioning wrinkles. Instead of defining wrinkled Lagrangian and Legendrian embeddings as we did, we could have defined `doubly cusped' Lagrangian and Legendrian embeddings to be smooth Lagrangian or Legendrian embeddings away from a finite union of pairs of parallel spheres where the embedding has cusps of opposite Maslov co-orientation (the cusps are semi-quintic in the ambient symplectic or contact manifold and semi-cubic in the front projection). 

Our $C^0$-approximation result for a tangential rotation $G_t$ would also hold for the class of doubly cusped Lagrangian and Legendrian embeddings. Moreover, the regularization of a doubly cusped Lagrangian or Legendrian embedding which is transverse to a foliation $\cF$ is a regular Lagrangian or Legendrian embedding whose singularities of tangency with respect to $\cF$ consist of double folds. The $h$-principle for the simplification of singularities proved below then follows with the same proof.

We have chosen to work with wrinkles for two reasons. One is historical, to draw the parallel with the smooth wrinkled embeddings theorem \cite{EM09}. The second is pedagogic, since wrinkling is not only a central notion in flexible geometric topology but also the main idea in our proof, so hiding it in the background would be dishonest.
\end{remark}

Suppose next that $f^z:L \to M$ is a family of wrinkled Lagrangian or Legendrian embeddings parametrized by a compact manifold $Z$. We can also in this case regularize and obtain a family of regular Lagrangian or Legendrian embeddings $\widetilde{f}^z:L \to M$. If $f^z$ is transverse to $\cF$, then the singularities of tangency of the family $\widetilde{f}^z$ with respect to $\cF$ consist of fibered regularized wrinkles. In particular, for some values of the parameter $z \in Z$ the regular Lagrangian or Legendrian embedding $\widetilde{f}^z$ will have regularized embryos in addition to regularized wrinkles. Regularized embryos are non-generic $\Sigma^1$-type singularities of tangency which occur at the instance of birth/death of a regularized wrinkle. One can of course give a concrete local model for the regularized embryo, however it is simpler to think about families as a single object using the fibered terminology. For a concrete local model, one can take the standard fibered Lagrangian or Legendrian wrinkle defined in Section \ref{Parametric families of wrinkles}, after regularizing as described in Section \ref{Regularization of wrinkles}. The foliation $\cF$ is given as in the non-parametric case.

\begin{remark}
If the foliation $\cF$ is induced by a Lagrangian fibration $\pi:M^{2n} \to B^n$, then for any family of regular Lagrangian embeddings $g:Z^m \times L^n \to M^{2n}$ the following two conditions are equivalent. 
\begin{itemize}
\item the singularities of tangency of $g$ with respect to $\cF$ consist of a union of fibered regularized wrinkles.
\item the fibered front $p \circ g :Z^m \times L^n \to Z^m \times B^n$ is a fibered generalized wrinkled mapping in the sense of \cite{EM09}.
\end{itemize}
\end{remark}

In the Legendrian case one can of course reinterpret what fibered regularized wrinkles mean in the front projection in terms of cusps and swallowtails. Note that one can also use the Entov surgery in families to replace fibered regularized wrinkles with fibered double fold singularities. The embryos of regularized wrinkles will become embryos of double folds. An embryo of a double fold is a non-generic locus of $\Sigma^1$-type singularities of tangency consisting of a single codimension $1$ sphere from which the two parallel spheres of folds can either be born or die, see Figure \ref{embryosofdoublefolds}. 

\begin{figure}[h]
\includegraphics[scale=0.6]{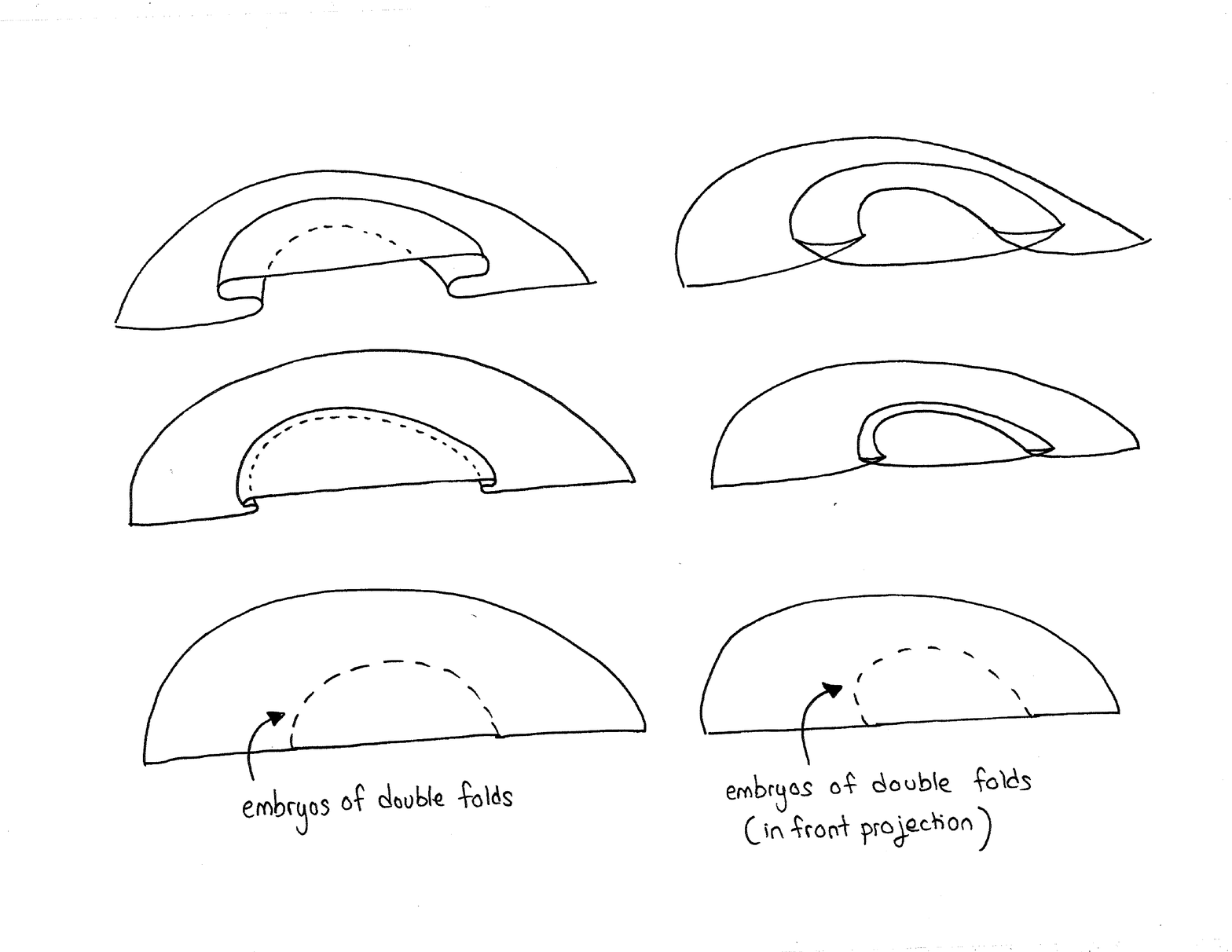}
\caption{One-half of the birth/death of a double fold. The picture on the left corresponds to the Lagrangian projection and the picture on the right corresponds to the front projection.}
\label{embryosofdoublefolds}
\end{figure}

\subsection{The $h$-principle for the simplification of singularities} We are now ready to establish the flexibility of singularities of Lagrangian and Legendrian fronts. As above, $\cF$ denotes a foliation by Lagrangian or Legendrian leaves of a symplectic or contact manifold $M$.

\begin{theorem}\label{main result again} Suppose that there exists a tangential rotation $G_t:L \to \Lambda_n(M)$ of a regular Lagrangian or Legendrian embedding $f:L \to M$ such that $G_1 \pitchfork \cF$. Then there exists a compactly supported ambient Hamiltonian isotopy $\varphi_t:M \to M$ such that the singularities of $\varphi_1 \circ f$ consist of a union of nested regularized wrinkles.
\end{theorem}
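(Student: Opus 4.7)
The plan is to combine the global wrinkled approximation theorem from Section \ref{Wrinkling embeddings} with the regularization procedure of Section \ref{Regularization of wrinkles}. First, I apply Theorem \ref{wrinkling lagrangians} to the tangential rotation $G_t$ to produce a compactly supported exact homotopy of wrinkled Lagrangian or Legendrian embeddings $f_t:L \to M$, with $f_0=f$, such that $G(df_t)$ is $C^0$-close to $G_t$ at every time $t \in [0,1]$. In particular $G(df_1)$ is $C^0$-close to $G_1$, and since transversality to $\cF$ is an open condition on the Lagrangian Grassmannian (the complement of the closed subset $\Sigma(M,\cF) \subset \Lambda_n(M)$), the Gauss map $G(df_1)$ itself lands in the open stratum, i.e.\ $f_1 \pitchfork \cF$ as a wrinkled embedding.

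Next, by the discussion in Section \ref{Exact homotopies}, the exact homotopy $f_t$ is induced by a compactly supported ambient Hamiltonian isotopy $\psi_t:M \to M$ with $\psi_t \circ f = f_t$ (this is automatic in the contact case). I then apply the regularization procedure of Section \ref{Regularization of wrinkles} to $f_1$: in an arbitrarily small neighborhood of each wrinkle $S_j \subset L$, we replace the local model $\cL_n$ (respectively $\widehat{\cL}_n$) by its regularization $\widetilde{\cL}_n$ (respectively $(\widetilde{\cL}_n,\widetilde{H})$), yielding a regular Lagrangian or Legendrian embedding $\widetilde{f}_1:L \to M$ that agrees with $f_1$ outside a small neighborhood of the wrinkling locus. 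The condition $\int_0^{q_n}\phi(\hat q,u)(\partial\eta/\partial q_n)(\hat q,u)\,du = 0$ in that construction is precisely the exactness condition, so the regularization $f_1 \rightsquigarrow \widetilde{f}_1$ is itself induced by a $C^0$-small, compactly supported ambient Hamiltonian isotopy $\chi_t$ supported in a neighborhood of $f_1(L)$.

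Taking the concatenation $\varphi_t$ of $\psi_t$ followed by $\chi_t$ (suitably reparametrized to time $[0,1]$) produces the required compactly supported Hamiltonian isotopy. By Remark \ref{sigma type singularities}, the singularities of tangency of $\widetilde{f}_1=\chi_1\circ\psi_1\circ f$ with respect to $\cF$ consist, near each former wrinkle, of $\Sigma^{10}$ folds on the two hemispheres together with $\Sigma^{110}$ pleats on the equator; that is, precisely a disjoint union of regularized wrinkles, and there are no other singularities because $f_1 \pitchfork \cF$ away from the wrinkling locus. The nesting of these regularized wrinkles is inherited from the local model built in Section \ref{Local wrinkling lemma}, where the wrinkles produced inside a single box $C_j$ come from the nested concentric level sets of the oscillating function $\xi$, and wrinkles from different boxes are either disjoint or contained one inside the other because the boxes $C_j$ themselves can be chosen with that property.

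The main obstacle has already been surmounted, namely the global $C^0$-approximation of an arbitrary tangential rotation by Gauss maps of wrinkled embeddings (Theorem \ref{wrinkling lagrangians}); the remaining work is bookkeeping, with two minor points to check. The first is that transversality survives the $C^0$-approximation, which is genuinely an openness statement on the Grassmannian bundle $\Lambda_n(M) \to M$ because transversality to $\cF$ is a condition only on the Gauss map, not on higher jets. The second is the exactness of the regularization step in the Lagrangian case, which is a feature built into the local model of Section \ref{Regularization of wrinkles} and hence requires no additional argument. The relative, $C^0$-close, and parametric refinements follow by invoking instead Theorem \ref{parametric wrinkling lagrangians} and its relative form, together with the parametric and relative versions of the regularization procedure.
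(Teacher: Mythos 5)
Your overall plan is right — apply Theorem \ref{wrinkling lagrangians}, then regularize — but the way you pass to an ambient Hamiltonian isotopy does not work as written. You decompose $\varphi_t$ into two pieces: $\psi_t$ covering the wrinkling homotopy $f_t$, and $\chi_t$ covering the regularization $f_1 \rightsquigarrow \widetilde{f}_1$. Neither piece is an ambient Hamiltonian isotopy. The fact you cite from Section \ref{Exact homotopies} ("an exact compactly supported homotopy is induced by a Hamiltonian isotopy") is stated only for homotopies of \emph{regular} Lagrangian or Legendrian embeddings, whereas your $f_t$ is a homotopy of \emph{wrinkled} embeddings. Indeed, a Hamiltonian isotopy is an ambient diffeomorphism, so it maps the smooth submanifold $f(L)$ to another smooth submanifold, while $f_1(L)$ is genuinely singular along the image of the wrinkles; there is no $\psi_t$ with $\psi_1 \circ f = f_1$. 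Symmetrically, there is no Hamiltonian isotopy $\chi_t$ sending the singular $f_1(L)$ to the smooth $\widetilde{f}_1(L)$; the remark in Section \ref{Regularization of wrinkles} about ambient Hamiltonian isotopies refers only to the dependence of $\widetilde f$ on the choice of cutoff $\phi$, not to the passage $f_1 \rightsquigarrow \widetilde{f}_1$ itself.

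The fix, and the way the paper actually runs the argument, is to regularize the \emph{whole homotopy} $f_t$, not just its endpoint. Section \ref{Regularization of wrinkles} records precisely that if $f_t$ is an exact homotopy of wrinkled embeddings, then the regularized family $\widetilde{f}_t$ is an exact homotopy of \emph{regular} embeddings. Since $f_0 = f$ has no wrinkles, $\widetilde{f}_0 = f$, and $\widetilde{f}_1$ is the regularization of $f_1$, whose singularity locus with respect to $\cF$ consists of nested regularized wrinkles by Remark \ref{sigma type singularities}. Only at this point, with a compactly supported exact homotopy of regular embeddings in hand, can one invoke Section \ref{Exact homotopies} to produce a single Hamiltonian isotopy $\varphi_t$ with $\widetilde{f}_t = \varphi_t \circ f$. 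Your step that checks openness of transversality and your observations about nesting are fine; it is the factorization of the Hamiltonian isotopy through wrinkled intermediate stages that must be removed.
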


\begin{proof}
Apply the wrinkling Theorem \ref{wrinkling lagrangians} to $G_t$ and $f$. We obtain a compactly supported exact homotopy of wrinkled Lagrangian or Legendrian embeddings $f_t:L \to M$ such that $G(df_1) \pitchfork \cF$. Next, apply the regularization process described in Section \ref{Regularization of wrinkles} to the homotopy $f_t$. We obtain a compactly supported exact homotopy of regular Lagrangian or Legendrian embeddings $\widetilde{f}_t:L \to M$ such that the singularity locus $\Sigma(\widetilde{f}_1, \cF) \subset L$ consists of a disjoint union of regularized wrinkles.  Finally, since the homotopy $\widetilde{f}_t$ is exact and compactly supported, we can write $\widetilde{f}_t=\varphi_t \circ f$ for some compactly supported ambient Hamiltonian isotopy $\varphi_t:M \to M$.
\end{proof}

To deduce the version with double folds stated in Theorem \ref{main result}, we simply apply the Entov surgery construction of \cite{E97} to open up each of the wrinkles as described in the previous section. 

\begin{remark}
At each stage of the proof, when we apply Theorem \ref{wrinkling lagrangians}, the regularization of Section \ref{Regularization of wrinkles} and the Entov surgery, we can always ensure that the resulting homotopy of embeddings is $C^0$-close to $f$. Hence Theorem \ref{main result again} also holds in $C^0$-close form, where we demand that the Hamiltonian isotopy $\varphi_t$ is $C^0$-close to the identity $id_M$. Moreover, we can also ensure that $\varphi_t=id_M$ outside of a neighborhood of $f(L)$ in $M$.
\end{remark}

\begin{remark}
Suppose that $G_t=G(df)$ on $Op(A)$ for some closed subset $A \subset L$. At each stage of the proof, when we apply Theorem \ref{wrinkling lagrangians}, the regularization of Section \ref{Regularization of wrinkles} and the Entov surgery, we can always ensure that the resulting homotopy of embeddings agrees with $f$ on $Op(A)$. Hence Theorem \ref{main result again} also holds in relative form. More precisely, we can demand that $\varphi_t=id_M$ on $Op\big(f(A)\big)\subset M$. 
\end{remark}

The parametric version reads as follows, and is proved in exactly the same way. At each stage we just need to invoke the parametric versions of each of the ingredients of the proof. The corresponding $C^0$-close and relative versions also hold, for the same reasons as in the non-parametric case.

\begin{theorem}\label{parametric main result again} Suppose that there exists a family of tangential rotations $G^z_t:L \to \Lambda_n(M)$ of regular Lagrangian or Legendrian embeddings $f^z:L \to M$ parametrized by a compact manifold $Z$ such that $G^z_1 \pitchfork \cF$ for all $z \in Z$ and such that $G^z_t=G(df^z)$ for $z \in Op(\partial Z)$. Then there exists a family of compactly supported ambient Hamiltonian isotopies $\varphi^z_t:M \to M$ such that the singularities of $\varphi^z_1 \circ f^z$ consist of a union of fibered nested regularized wrinkles and such that $\varphi^z_t=id_M$ for $z \in Op(\partial Z)$.
\end{theorem}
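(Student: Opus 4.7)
The plan is to mimic the non-parametric proof of Theorem \ref{main result again} step by step, replacing each ingredient with its parametric counterpart which has already been established in the preceding sections. The three ingredients are: (i) the parametric wrinkling theorem, (ii) the fibered regularization construction, and (iii) the parametric Entov surgery that opens up regularized wrinkles into double folds. Since the hypothesis already guarantees that $G^z_t = G(df^z)$ for $z \in Op(\partial Z)$, at every stage we will be working relative to $Op(\partial Z)$ without extra effort.

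First, apply the parametric wrinkling Theorem \ref{parametric wrinkling lagrangians} to the family $G^z_t$ and $f^z$. This yields a family of compactly supported exact homotopies of wrinkled Lagrangian or Legendrian embeddings $f^z_t:L\to M$ with $f^z_0 = f^z$ such that $G(df^z_1)$ is $C^0$-close to $G^z_1$, and such that $f^z_t = f^z$ for $z\in Op(\partial Z)$. Since transversality to $\cF$ is an open condition and $G^z_1 \pitchfork \cF$ holds by hypothesis uniformly in $z \in Z$ (with $Z$ compact), we may choose the approximation fine enough to guarantee $G(df^z_1) \pitchfork \cF$ for every $z \in Z$.

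Next, apply the fibered regularization process of Section \ref{Regularization of wrinkles} to the family $f^z_t$. This produces a family of compactly supported exact homotopies of regular Lagrangian or Legendrian embeddings $\widetilde{f}^z_t : L \to M$, obtained by gluing in the local model $\widetilde{\cL}_n$ (or its Legendrian lift) in a fibered neighborhood of each wrinkling locus, with the convex choice of auxiliary function $\phi$ carried out continuously in the parameter $z$. By Remark \ref{sigma type singularities} applied in the fibered setting, the singularities of tangency of the family $\widetilde{f}^z_1$ with respect to $\cF$ consist of a disjoint union of fibered regularized wrinkles. Because the homotopy $\widetilde{f}^z_t$ is exact and compactly supported in both $L$ and $Z$, we can write $\widetilde{f}^z_t = \varphi^z_t \circ f^z$ for a family of compactly supported ambient Hamiltonian isotopies $\varphi^z_t : M \to M$ with $\varphi^z_t = id_M$ for $z \in Op(\partial Z)$; in the Legendrian case, exactness is automatic.

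Finally, we invoke the parametric Entov surgery of \cite{E97} on the fibered regularized wrinkles of $\widetilde{f}^z_1$ to trade them for fibered nested double folds (with embryos appearing at the instants of birth/death of wrinkles). This surgery is performed inside an arbitrarily small neighborhood of the image of each fibered wrinkle, so it is achieved by composing with an additional family of small compactly supported Hamiltonian isotopies, which we absorb into $\varphi^z_t$. The main obstacle I anticipate is bookkeeping rather than conceptual: one must verify that the regularization and surgery can be carried out continuously in the parameter $z$ even across birth/death events of fibered wrinkles, so that the construction proceeds uniformly on all of $Z$ including the codimension-one stratum where embryos occur. Both constructions are local in the fibered model $\cL^{\cF}_{n,m}$ and depend on contractible choices, so this continuity follows from the convexity of the space of auxiliary data.
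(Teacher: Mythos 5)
Your first two steps (parametric wrinkling via Theorem \ref{parametric wrinkling lagrangians}, then fibered regularization, then using exactness to extract the ambient Hamiltonian isotopies) are exactly the paper's proof, and they already establish the theorem as stated: after the fibered regularization, the singularities of $\varphi^z_1\circ f^z$ are precisely a union of fibered nested regularized wrinkles, and you are done.

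The final paragraph on parametric Entov surgery is an overshoot. The conclusion of the theorem you are asked to prove is ``fibered nested regularized wrinkles,'' not double folds; if you absorb the surgery into $\varphi^z_t$ as you propose, the resulting $\varphi^z_1\circ f^z$ will have fibered double folds instead, which no longer matches the statement. In the paper, the Entov surgery step is the \emph{additional} ingredient used to pass from this theorem to the double-fold conclusion (the parametric form of Theorem \ref{main result}), and is stated as a separate remark immediately afterwards. So your reasoning is correct, but you should stop after the regularization step; the surgery belongs to the derivation of the double-fold variant, not to this statement.
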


As in the non-parametric case we can open up the fibered regularized wrinkles into fibered double folds using the Entov surgery construction \cite{E97}.

\begin{remark}
Observe that in the case $n=1$ there is no need to resolve a wrinkle into a double fold. Indeed a $1$-dimensional regularized wrinkle consists of nothing more than a pair of points where the embedding has folds of opposite Maslov co-orientation. For fibered regularized wrinkles the two folds die as in the Legendrian Reidemeister I move. We explore the case $n=1$ further in Section \ref{families of $1$-dimensional Legendrians} below.

\end{remark}

\subsection{The $h$-principle for the prescription of singularities}\label{prescription of singularities}
We next prove a strengthened version of Entov's Theorem \ref{Entov theorem}. More precisely, we apply our $h$-principle Theorem \ref{main result again} to drop the $\Sigma^2$-nonsingularity restriction from his result. As an application we establish some concrete results for the simplification of the caustics of spheres in Section \ref{the caustics of spheres} below.

Consider $f:L  \to M$ a Lagrangian or Legendrian embedding and let $D$ be a Lagrangian distribution  in $TM$ defined along $f(L)$. In the symplectic case, $D$ consists of linear Lagrangian subspaces of $(TM,\omega)$ and in the contact case $D$ consists of linear Lagrangian subspaces of $(\xi, d \alpha)$, where locally $\xi = \ker(\alpha) \subset TM$.

 When $\dim(df(T_qL) \cap D_{f(q)} )<2$ for all $q \in L$ we say that $D$ is $\Sigma^2$-nonsingular. In this case, the structure of the singularity locus $\Sigma = \{ q \in L : \, \, df(T_qL) \cap D_{f(q) } \neq 0 \}$ is quite simple. Indeed, for generic $\Sigma^2-$nonsingular $D$ the locus $\Sigma$ is a codimension $1$ submanifold which is naturally stratified as a flag $\Sigma=\Sigma^1 \supset \Sigma^{11} \supset \cdots  \supset \Sigma^{1^n}$ as described in Section \ref{Hierarchy of singularities}. Moreover, the flag comes equipped with certain co-orientation data which we hinted about in Section \ref{Surgery of singularities} and which more precisely consist of the following.
\begin{itemize}
\item Unit vector fields $v_k$, $k>1$, where each $v_k$ is defined on $\Sigma^{1^k} \setminus \Sigma^{1^{k+1}}$ , is normal to $\Sigma^{1^{k-1}}$ in $\Sigma^{1^{k-2}}$ and cannot be extended (as such a unit normal vector field) to any subset $C \subset \Sigma^{1^k}$ which has a nontrivial intersection with $\Sigma^{1^{k+1}}$.
\item An additional unit vector field $v_1$ defined on the whole of $\Sigma$ which is normal to $\Sigma$ in $L$. This vector field is called the Maslov co-orientation.
\end{itemize}

Adapting Eliashberg's terminology from \cite{E72}, Entov defined in \cite{E97} the chain of singularities associated to $f$ and $D$ to consist of the flag $\Sigma^1 \supset \Sigma^{11} \supset \cdots \supset\Sigma^{1^n}$ together with vector fields $v_k$ as above. The $v_k$ are uniquely determined by the geometry of the singularity. See Figure \ref{chain} for an illustration. Two chains of singularities are said to be equivalent if there exists an isotopy of $L$ that transforms one into the other, including the co-orientation data. We can now state and prove an $h$-principle which allows for the prescription of any homotopically allowable chain of singularities. The result also holds in $C^0$-close and relative forms.

\begin{figure}[h]
\includegraphics[scale=0.8]{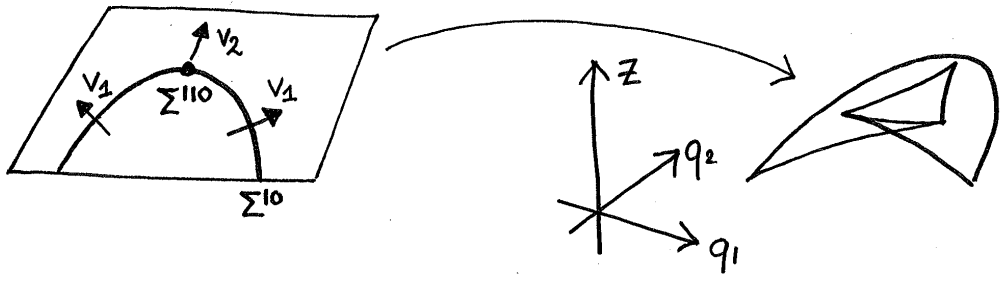}
\caption{The chain of singularities associated to the $\Sigma^{110}$ pleat, which is a swallowtail in the front projection. A flip of the Legendrian front in the $z$ direction would reverse the Maslov co-orientation $v_1$ and fix $v_2$.}
\label{chain}
\end{figure}

\begin{theorem}\label{Entov theorem redux} Let $f:L \to M$ be a regular Lagrangian or Legendrian embedding into a symplectic or contact manifold $M$ equipped with a foliation $\cF$ by Lagrangian or Legendrian leaves. Let $D_t$ be a homotopy of Lagrangian distributions defined along $f(L)$, fixed outside of a compact subset, such that $D_0=T\cF|_{f(L)}$ and such that $f$ is $\Sigma^2$-nonsingular with respect to the distribution $D_1$. We moreover assume that $f \pitchfork \cF$ outside of that compact subset. Then there exists a $C^0$-small compactly supported Hamiltonian isotopy $\varphi_t:M \to M$ such that $\varphi_1 \circ f$ is $\Sigma^2$-nonsingular with respect to $\cF$ and moreover such that the chain of singularities of $\varphi_1 \circ f$ with respect to $\cF$ is equivalent to the chain of singularities of $f$ with respect to $D_1$, together with a union of nested double folds.
\end{theorem}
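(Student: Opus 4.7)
The plan is to translate the hypothesis on $D_t$ into a tangential rotation of $f$, and then feed that rotation into the wrinkling/regularization/Entov-surgery pipeline assembled in the previous sections. This reduces Theorem \ref{Entov theorem redux} to Theorem \ref{wrinkling lagrangians} plus Entov's original surgery construction \cite{E97}, with the extra bookkeeping needed to certify that the prescribed chain of singularities is what one ends up with.

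For the translation step, use the homotopy lifting property for the bundle of symplectic frames of $(TM,\omega)|_{f(L)}$ (respectively $(\xi,d\alpha)|_{f(L)}$ in the contact case) to choose a family of symplectic bundle automorphisms $\Phi_t$ with $\Phi_0=\mathrm{id}$, $\Phi_t(T\cF|_{f(L)})=D_t$, and $\Phi_t=\mathrm{id}$ outside the compact set on which $D_t$ is stationary. Define $G_t=\Phi_t^{-1}\cdot G(df)$; this is a tangential rotation of $f$ satisfying $G_t(q)\cap T_{f(q)}\cF\neq 0$ if and only if $G(df)(q)\cap D_t(q)\neq 0$. In particular $\Phi_t$ carries the Thom-Boardman stratification of the singular locus of $f$ with respect to $D_t$ onto that of $G_t$ with respect to $\cF$ and, because $\Phi_t$ is a bundle isomorphism, preserves the associated normal vector fields $v_k$ and the Maslov co-orientation $v_1$. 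Hence $G_1$ is $\Sigma^2$-nonsingular with respect to $\cF$ and realizes precisely the prescribed chain of singularities.

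Next, feed $G_t$ into Theorem \ref{wrinkling lagrangians} to obtain a compactly supported exact homotopy $f_t:L\to M$ of wrinkled Lagrangian or Legendrian embeddings, with $f_0=f$ and $G(df_t)$ everywhere $C^0$-close to $G_t$. I would insist on carrying out the wrinkling relative to an auxiliary smooth triangulation of $L$ compatible with the prescribed chain of singularities, so that the wrinkling locus of $f_1$ lies in the interior of top-dimensional simplices disjoint from a neighborhood of the chain; this is permissible because the wrinkling lemmas of Section \ref{Wrinkling embeddings} operate simplex-by-simplex on such a triangulation. If the approximation of $G_1$ by $G(df_1)$ is fine enough, transversality to the Thom-Boardman strata $\Sigma^{1^k}(M,\cF)\subset\Lambda_n(M)$ is preserved, and the chain carried by $G(df_1)$ on the regular part of $f_1$ is equivalent (via a small ambient isotopy of $L$) to the prescribed one. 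Then regularize $f_t$ as in Section \ref{Regularization of wrinkles} to produce a compactly supported exact homotopy of regular Lagrangian or Legendrian embeddings $\widetilde f_t$; by Remark \ref{sigma type singularities}, the regularization converts each wrinkle of $f_1$ into a regularized wrinkle of $\widetilde f_1$ with respect to $\cF$, disjoint from the prescribed chain.

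Finally, apply the Entov surgery from \cite{E97} to each regularized wrinkle of $\widetilde f_1$, turning it into a double fold while leaving the rest of the tangency locus intact; the nesting of the wrinkles produced by Theorem \ref{wrinkling lagrangians} is preserved by the surgery and yields the nested double folds in the conclusion. The total exact homotopy of regular Lagrangian or Legendrian embeddings from $f$ to the surgery outcome is compactly supported and $C^0$-small, so it is induced by a compactly supported $C^0$-small Hamiltonian isotopy $\varphi_t:M\to M$, proving the theorem. The hard part I expect in executing this outline is not the wrinkling or the surgery per se (both are black-boxed from earlier in the paper) but rather the bookkeeping: one must check that a sufficiently fine $C^0$-approximation of $G_t$ together with an adapted triangulation truly separates the prescribed chain of singularities from the wrinkling locus and preserves the normal-field data $v_k$ on the nose up to isotopy, so that the notion of equivalence of chains of singularities used in Entov's formulation is respected by the approximation and by the subsequent regularization and surgery.
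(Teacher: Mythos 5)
Your reduction to a tangential rotation $G_t=\Phi_t^{-1}\cdot G(df)$ is exactly the paper's first move, but the rest of your route breaks down at a specific step. You want to feed $G_t$ into Theorem~\ref{wrinkling lagrangians}, get a wrinkled $f_1$ with $G(df_1)$ $C^0$-close to $G_1$, and then conclude that the tangency locus of $\widetilde f_1$ with respect to $\cF$ on the regular part is equivalent to the chain of $G_1$ with respect to $\cF$. That inference is false: the chain of singularities is the preimage of the stratified set $\Sigma(M,\cF)\subset\Lambda_n(M)$ under the Gauss map, and the claim that this preimage is stable, ``transversality to the Thom--Boardman strata is preserved,'' requires $C^1$-control of the Gauss map (equivalently, $C^2$-control of the embedding). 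The wrinkling theorem gives only $C^0$-control of the Gauss map, and by design the derivatives of $G(df_1)$ are wildly uncontrolled (the entire point of wrinkling is to trade $C^1$-control of the Gauss map for the ability to approximate). A $C^0$-small perturbation of $G_1$ can erase or radically alter the locus $\{q : G(q)\cap T_{f(q)}\cF\neq 0\}$ since transversality is $C^0$-open but a prescribed first-order tangency structure is not. Choosing a triangulation adapted to the prescribed chain does not help either: the wrinkling procedure must wrinkle wherever $G_t$ differs appreciably from $G(df)$, which typically includes a neighborhood of $\Sigma$, and even where the wrinkling locus is disjoint from $\Sigma$ the regular part of $\widetilde f_1$ need not have the prescribed tangencies.

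The paper avoids this by never trying to extract the prescribed chain from a $C^0$-approximation. Instead it first applies the holonomic approximation Theorem~\ref{1-holonomic} to make $f$ actually transverse to $\cF$ on a small tubular neighborhood $U$ of a parallel copy $\Sigma_{1/2}$ of $\Sigma$; transversality is $C^0$-open, so this step is safe. It then \emph{implants by hand} a cancelling pair of chains $\Sigma_1,\Sigma_2$ inside $U$, using Entov's Theorem~\ref{Entov theorem} (applied locally in a Weinstein neighborhood, where the requisite $\Sigma^2$-nonsingularity of $\cF$ holds because $f\pitchfork\cF$ on $U$). Entov's generating-function construction produces the chain exactly, with no loss of derivative control. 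Finally, the paper verifies (Claim~\ref{homotopic claim}) that there is a tangential rotation $R_t$ fixed on $Op(\Sigma_2)$ with $R_1\pitchfork\cF$ elsewhere, and applies the relative form of Theorem~\ref{main result again} to convert everything outside $Op(\Sigma_2)$ to nested double folds. The implanted $\Sigma_2$, which is equivalent to $\Sigma$, survives untouched because the $h$-principle is applied relative to $Op(\Sigma_2)$. Your outline omits both the implantation and the nontrivial homotopical verification that $\Sigma$ and $\Sigma_1$ can be formally cancelled while fixing $\Sigma_2$; these are the content of the proof, not bookkeeping, and without them the argument does not close.
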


\begin{proof}

We restrict our attention to the Lagrangian case for concreteness, the Legendrian analogue is no different. Let $\Sigma \subset L$ be the singularity locus of $f$ with respect to $D_1$. By abusing notation, we will also denote by $\Sigma$ the chain of singularities which encodes the flag $\Sigma=\Sigma^1 \supset \Sigma^{11} \supset \cdots \supset \Sigma^{1^n}$ and the corresponding co-orientation data.  Let $\Phi_t$ be a homotopy of linear symplectic isomorphisms of $TM$ defined along $f(L)$ such that $\Phi_0=id$ and $\Phi_t \cdot D_0 = D_t$. Set $G_t=(\Phi_t)^{-1} \cdot G(df)$, a tangential rotation of $f$.  

Our plan will be the following. We will first apply our holonomic approximation lemma for $1-$holonomic sections to $G_t$ to make $f$ transverse to $\cF$ near a parallel copy $\Sigma_{1/2}$ of $\Sigma$. Then we will introduce by hand a cancelling pair of singularity loci $\Sigma_1$ and $\Sigma_2$ in $Op(\Sigma_{1/2})$ such that $\Sigma_2$ is equivalent to $\Sigma$ and such that $\Sigma \cup \Sigma_1$ bounds an embedded annulus which is disjoint from $\Sigma_2$. Formally, $\Sigma$ and $\Sigma_1$ can be cancelled via a rotation $R_t$ which is fixed on $\Sigma_2$ and hence by our relative $h-$principle for the simplification of singularities we are able to keep the singularity locus $\Sigma_2$ and fill in the rest of the Lagrangian submanifold with double folds. See Figure \ref{prescriptionofsingularities} for an illustration of the strategy.

\begin{figure}[h]
\includegraphics[scale=0.7]{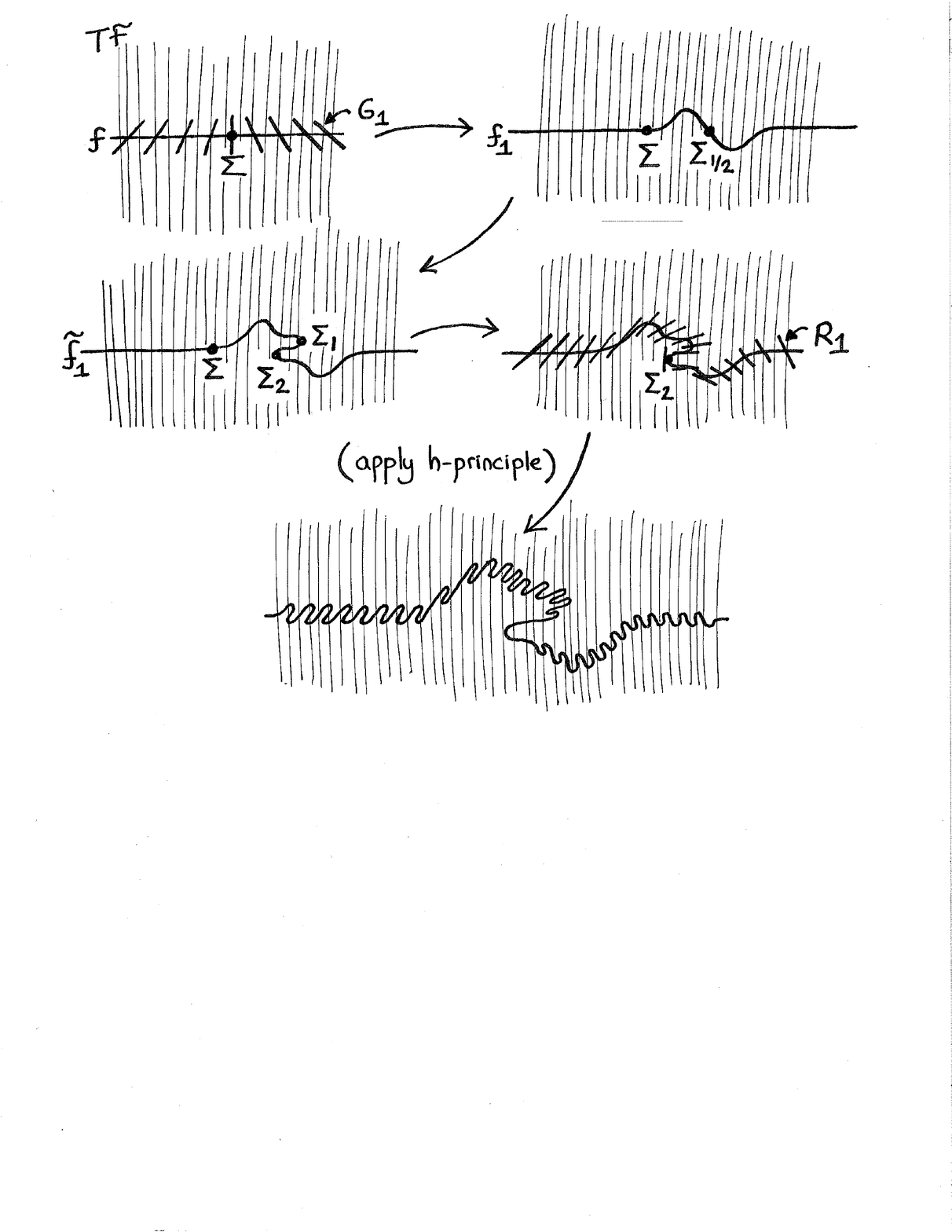}
\caption{The plan for our proof of Theorem \ref{Entov theorem redux}.}
\label{prescriptionofsingularities}
\end{figure}

Let $l=(df)^{-1}(D_1)$, which is a line field on $TL$ defined along $\Sigma$. Extend $l$ to a tubular neighborhood $\cN\simeq \Sigma \times (-1,1)$ of $\Sigma$ in $L$. Denote by $\Sigma_{1/2}$ the parallel copy $\Sigma \times  \frac{1}{2} $ of $\Sigma$ in $\cN$.  Apply Theorem \ref{1-holonomic} to the tangential rotation $G_t$ and the stratified subset $K= \Sigma_{1/2}$. We obtain an exact homotopy of regular Lagrangian embeddings $f_t:L \to M$ such that $G(df_t)$ is $C^0$-close to $G_t$ on $Op(\Sigma_{1/2})$. In particular, $f_1 \pitchfork \cF$ on a neighborhood $U=\Sigma \times (1/2-\varepsilon,\,  1/2 + \varepsilon)$ of $\Sigma_{1/2}$.

Along $f_1(U)$, the Lagrangian distributions $df_1(TU)$ and $T\cF|_{f_1(U)}$ are transverse. We can therefore choose a symplectic isomorphism $ T(T^*U)|_U \simeq TM|_{f_1(U) }$ such that the horizontal distribution $TU$ (which is tangent to the zero section) is mapped to $df_1(TU)$ using $df_1$ and such that the vertical distribution $VU$ (which is tangent to the cotangent fibres) is mapped to $T \cF|_{f_1(U)}$. Choose an $(n-1)$-dimensional complement $P$ for $l$ in $TU$. Set $l^* = P^{\perp} \cap VU$ and $P^* = l^{\perp} \cap VU$, where $\perp$ denotes orthogonality with respect to the symplectic form $dp \wedge dq$. Let $\phi: [1/2 - \varepsilon, 1/2 + \varepsilon] \to \bR$ be a function satisfying the following properties.
\begin{itemize}
\item $\phi(s)=0$ for $s$ near  $1/2 \pm \varepsilon$.
\item $\phi(s) = \pi$  for $s$ near $1/2$.
\item $\phi'(s) \geq 0$ for $s \in (1/2-\varepsilon ,  1/2]$ and $\phi'(s) \leq 0$ for $s \in [1/2, 1/2+ \varepsilon)$.
\end{itemize}

Fix nonzero vector fields $v \in l$ and $w \in l^*$. Define a homotopy of Lagrangian distributions $V_t \subset T(T^*L)$ defined along $U= \Sigma \times (1/2 - \varepsilon, \, 1/2 + \varepsilon)$ by the formula
\[ V_t ( e, s) = \text{span}\big(\sin\big(t\phi(s)\big)v + \cos\big(t\phi(s)\big) w \big)\oplus P^*,  \quad (e,s) \in \Sigma \times (1/2- \varepsilon , \, 1/2 + \varepsilon) . \]

Note that $V_0=VU$, that $V_t= VU$ on $\partial U$ and that $\dim(V_t \cap TU) \leq 1$ for all $t \in [0,1]$. The singularities of tangency of $V_1$ with respect to the zero section $U \hookrightarrow T^*U$ consist of two parallel copies $\Sigma'$ and $\Sigma''$ of $\Sigma$, for concreteness say $\Sigma'$ is between $\Sigma$ and $\Sigma''$. Along these singularitiy loci we have $V_1 \cap TU = l$. The two corresponding chains of singularities, which we also denote by $\Sigma'$ and $\Sigma''$, have opposite Maslov co-orientations but are otherwise equivalent. Replacing the function $\phi$ by the function $-\phi$ if necessary, we may assume that the chain of singularities $\Sigma''$ is equivalent to the chain $\Sigma$.

At this point we wish to use $V_t$ to insert by hand a cancelling pair of singularities modelled on $\Sigma$. The explicit formulas that we need are already written down in Entov's paper \cite{E97}. We could use these formulas to write down a concrete model for the creation of the cancelling pair, but we can make our life even easier by directly applying Entov's Theorem \ref{Entov theorem} to $V_t$. The output of Entov's theorem is an exact homotopy of regular Lagrangian embeddings $g_t:U \to T^*U$ such that $g_0$ is the inclusion of the zero section $U \hookrightarrow T^*U$, such that $g_t$ is fixed on $Op(\partial U)$ and such that the singularities of tangency of $g_1$ with respect to $VU$ are equivalent to those of $g_0$ with respect to $V_1$, together with a union of nested double folds. Furthermore, the homotopy $g_t$ can be assumed to be $C^0$-small, so by taking an appropriate Weinstein neighborhood we can think of this homotopy as happening inside $M$. The result is an exact regular homotopy $\widetilde{f}_t:L \to M$ of $\widetilde{f}_0=f_1$ such that along $U \subset L$ the singularities of tangency of $\widetilde{f}_1$ with respect to $\cF$ consist of a union $\Sigma_1 \cup \Sigma_2 \cup F$, where the chain $\Sigma_1$ is equivalent to $\Sigma'$, the chain $\Sigma_2$ is equivalent to $\Sigma''$ and $F$ is a union of nested double folds. Moreoever, $\Sigma \cup \Sigma_1$ bounds an annulus $A \subset L$ which is disjoint from $\Sigma_2$.

\begin{claim}\label{homotopic claim}
There exists a tangential rotation $R_t:L \to \Lambda_n(M)$ of $\widetilde{f}_1$ which is fixed on $Op(\Sigma_2)$ and such that $R_1 \pitchfork \cF$ away from $\Sigma_2$.
\end{claim}

Once this claim is established we are done, since we can apply the relative version of Theorem \ref{main result again} to construct an exact homotopy of regular Lagrangian embeddings which is fixed on $Op(\Sigma_2)$ and such that at the end of the homotopy the singularities of tangency away from $\Sigma_2$ consist of a union of nested double folds, which is exactly what we wanted to prove. 

To justify the claim, we first observe that there exists a tangential rotation $S_t:L \to \Lambda_n(M)$ of $\widetilde{f}_1$ such that $S_t$ is fixed on $Op(\Sigma_1 \cup \Sigma_2)$, such that $S_1=G_1$ outside of $U$ and such that $S_1 \pitchfork \cF$ away from $\Sigma_1 \cup \Sigma_2 \cup \Sigma$. To define $S_t$, choose $\delta_1 < \delta_2 < \varepsilon$ such that the annuli $U_i=\Sigma \times (1/2 -\delta_i , 1/2 + \delta_i) \subset U$ contain $\Sigma_1 \cup \Sigma_2 \cup F$. Inside of $U_1$, we let $S_t$ kill the double folds of $F$ so that the only remaining singularities are $\Sigma_1 \cup \Sigma_2$. On the rest of $L$ (where we may assume that $\widetilde{f}_1=f_1$ provided that $\delta_1$ and $\delta_2$ are close enough to $\varepsilon$), we construct $S_t$ in three steps.
\begin{itemize}
\item First, rotate $G(d\widetilde{f}_1)=G(df_1)$ to a distribution $W$ which equals $G(df_0)$ away from $U$ and which interpolates between $G(df_0)$ and $G(df_1)$ on $U \setminus U_2$ by means of $G(df_t)$.
\item Since $G(df_t)$ is $C^0$-close to $G_t$ on $U$, we can then rotate $W$ to a distribution $W'$ which equals $G_0=G(df_0)$ away from $U$, which interpolates between $G_0$ and $G_1$ on $U \setminus U_2$ by means of $G_t$ and which then interpolates between $G_1$ and $G(df_1)$ on $U_2 \setminus U_1$. 
\item  We can then rotate $W'$ to a distribution $W''$ which equals $G_1$ outside of $U_2$ and which interpolates between $G_1$ and $G(df_1)$ on $U_2\setminus U_1$. The distribution $W''=S_1$ satisfies the required properties and the rotation $S_t$ is the concatenation of the three steps.
\end{itemize}

Consider now the annulus $A\subset L$ with boundary $\partial A = \Sigma \cup \Sigma_1$. The intersection $\lambda =\text{im}(S_1) \cap T\cF \subset TM$ consists of two line fields defined over the images of $\Sigma$ and $\Sigma_1$. We claim that they extend to a line field $\lambda \subset \text{im}(S_1)$ defined over the image of the whole annulus $A$.

 Indeed, the chain of singularities of $\Sigma_1$ is equivalent to that of $\Sigma$ up to Maslov co-orientation. But the isotopy class of the line field which arises from a $\Sigma^1$-type singularity locus is completely dictated by the flag $\Sigma^1 \supset \Sigma^{11} \supset \cdots \supset \Sigma^{1^n}$ together with the non-Maslov co-orientation data. Hence the line fields are isotopic in $TL$. It follows that we can find a line field $\widetilde{l} \subset TL$ defined along $A$ such that $\widetilde{l}|_{\Sigma_1}=d\widetilde{f}_1^{-1}(\lambda)$ and such that $\widetilde{l}|_\Sigma=df^{-1} \circ \Phi_1(\lambda)$.
 
  Suppose that there exists a family of symplectic isomorphisms $\Psi_t$ of $TM$ such that $\Psi_0=id$, such that $\Psi_t \cdot G(df) = S_t$, such that $\Psi_1 \circ df=d\widetilde{f}_1$ near $\Sigma_1$ and such that $\Psi_1=\Phi_1^{-1}$ near $\Sigma$. Then the line field $\lambda = \Psi_1  \circ df(\widetilde{l})$ is the required extension. It remains to confirm that the family $\Psi_t$ exists. We need to define $\Psi_t$ over $A \times [0,1]$, where $t \in [0,1]$ and we have prescribed $\Psi_t$ over $A \times 0  \cup ( \partial A \times [0,1])$.  Furthermore, we also have prescribed the image of $\Psi_t$ under the map $\Psi_t \mapsto \Psi_t \cdot G(df)$ over all of $A \times [0,1]$. Since this map is a Serre fibration, it follows that we can find a lift to all of $A \times [0,1]$. This completes the proof of the existence of the line field $\lambda \subset \text{im}(S^1)$.
 
Next we observe that the distribution $S_1:A \to \Lambda_n(M)$ satisfies $S_1(\partial A) \subset \Sigma^1(M, \cF) = \bigcup_{x \in M}\{ W \in \Lambda_n(M)_x : \, \dim(W \cap T_x \cF )=1\}$ and $S_1\big( \text{int}(A) \big) \subset \Lambda_n^{\pitchfork}(M, \cF) = \bigcup_{x \in M} \{ W \in \Lambda_n(M)_x : \, W \cap T_x \cF = 0 \}$. Pick a complement $Q \subset \text{im}(S^1)$ to $\lambda$. Set $\lambda^* = Q^{\perp} \cap T\cF$ and $Q^* = \lambda ^{\perp} \cap T\cF $. Pick nonzero vector fields $v \in \lambda$ and $w \in \lambda^*$ such that $\omega(v,w)>0$ on $\text{int}(A)$ and define a rotation $R_t:A \to \Lambda_n(M)$ starting at $R_0=S_1$ by the formula
\[ R_t = \text{span} \big( \cos(\pi t/2) v + \sin ( \pi t /2) w \big) \oplus Q  , \qquad t \in [0,1]. \]

Observe that on $\partial A$ we have $\lambda^* = \lambda$ and hence $R_t=S_1$ for all $t \in [0,1]$. Hence we can extend $R_t$ outside of $A$ by letting it equal $S_1$ elsewhere. Observe also that $R_1 \cap T \cF = \lambda^*$ along $A$ and hence $\text{im}(R_1)|_A \subset \Sigma^1(M, \cF)$. Recall that $\Sigma^1(M, \cF)$ is a two-sided hypersurface of $\Lambda_n(M)$, so that if $\cO \subset \Lambda_n(M, \cF)$ is a small enough neighborhood of $\text{im}(R_1)|_A$, then $\cO \setminus \Sigma^1(M, \cF)$ has exactly two connected components. The fact that the Maslov co-orientations of $\Sigma_1$ and $\Sigma$ are opposite means precisely that $\text{im}(S_1)|_{ Op(A) \setminus A}$ lies in the same connected component of  $\cO \setminus \Sigma^1(M, \cF)$. Hence we can push the image of $R_1$ entirely off of $\Sigma^1(M, \cF)$ by a small deformation which is fixed outside of $Op(A)$. The result is a rotation $\widetilde{R}_t : L \to \Lambda_n(M)$ starting at $\widetilde{R}_0=S_1$ such that $\widetilde{R}_1 = S_1 $ on $Op( \Sigma_2)$ and such that $\widetilde{R}_1 \pitchfork \cF$ away from $\Sigma_2$. This completes the proof of Claim \ref{homotopic claim}, hence also of Theorem \ref{Entov theorem redux}.   \end{proof}

\subsection{Application: the caustics of spheres}\label{the caustics of spheres}

We now return to the first example considered in Section \ref{overview}. Our goal is to study the extent to which it is possible to simply the caustic of an embedded Lagrangian or Legendrian sphere $S \subset M$, where $M$ is a symplectic or contact manifold equipped with a foliation $\cF$ by Lagrangian or Legendrian leaves. For greater clarity of the exposition we will restrict our discussion to the Lagrangian version of the problem, but the Legendrian analogue is no different. 

First we observe that by the Weinstein neighborhood theorem we can immediately reduce to the case where $M=T^*S^n$ and $S$ is the image of the zero section $S^n \hookrightarrow T^*S^n$, which we will also denote by $S^n$. Note that for $n=1$  the problem is uninteresting because the generic caustic consists only of folds, so the simplification of singularities can be trivially achieved. We assume $n>1$ in what follows. Let $V$ be the restriction to $S^n$ of the distribution $T\cF$ of Lagrangian planes tangent to $\cF$. We begin with the following topological obstruction to the simplification of singularities.

\begin{proposition}\label{obstruction}
If $S^n$ is Hamiltonian isotopic to a Lagrangian sphere whose singularities of tangency with respect to $\cF$ consist only of folds, then $V$ is stably trivial as a real vector bundle over the sphere.
\end{proposition}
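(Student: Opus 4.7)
The plan is to reduce the question to a bundle-theoretic computation and then invoke Bott periodicity.

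First, by the Weinstein neighborhood theorem I would identify a neighborhood of $S$ in $M$ with a neighborhood of the zero section in $T^*S^n$, giving a symplectic bundle isomorphism $TM|_S \cong TS^n \oplus T^*S^n$ with $TS^n$ corresponding to the tangent of $S$. For the Hamiltonian isotopy $\varphi_t$ provided by the hypothesis, the construction recalled at the end of Section \ref{Tangential homotopies} produces a family of symplectic bundle isomorphisms $\Phi_t: TM|_S \to TM|_{\varphi_t(S)}$ with $\Phi_0 = \mathrm{id}$ and $\Phi_t(T\cF|_S) = T\cF|_{\varphi_t(S)}$. Setting $\widetilde{S} := \varphi_1(S)$ and $W := \Phi_1^{-1}(T\widetilde{S}) \subset TM|_S$, we obtain a Lagrangian subbundle which is isomorphic as a real vector bundle to $T\widetilde{S} \cong TS^n$ (hence stably trivial since $TS^n \oplus \bR \cong \bR^{n+1}$) and whose intersection pattern with $V$ matches that of $T\widetilde{S}$ with $T\cF|_{\widetilde{S}}$, namely only folds.

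Second, I would invoke the general fact that any Lagrangian subbundle $L$ of a symplectic bundle $(E,\omega)$ equipped with a compatible almost complex structure $J$ satisfies $E \cong L \oplus L$ as real vector bundles (via the decomposition $E = L \oplus JL$ together with the real isomorphism $J: L \to JL$). Applied to both $V$ and $W$ inside $E = TM|_S$, this gives
\[
V \oplus V \;\cong\; E \;\cong\; W \oplus W
\]
as real vector bundles over $S^n$. Since $W$ is stably trivial, so is $V \oplus V$, hence $2[V] = 0$ in $\widetilde{KO}(S^n)$.

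By Bott periodicity, $\widetilde{KO}(S^n)$ is $\bZ$ for $n \equiv 0, 4 \pmod 8$, is zero for $n \equiv 3, 5, 6, 7 \pmod 8$, and is $\bZ/2$ only for $n \equiv 1, 2 \pmod 8$. In the first two regimes the relation $2[V] = 0$ already forces $[V] = 0$, which in particular recovers the automatic stable triviality claimed in Remark \ref{stably trivial assumption}. The main obstacle is the remaining cases $n \equiv 1, 2 \pmod 8$, where the preceding argument only pins $[V]$ down to $\bZ/2$ and the fold hypothesis must be used essentially. For these, I would exploit the structure along the fold locus $\Sigma \subset S^n$ — a codim-$1$ submanifold carrying the Maslov co-orientation and the line bundle $\lambda = V \cap TS$ — together with the symplectic pairing $\omega : TS \otimes V \to \bR$, which defines a bundle map $TS \to V^*$ that is an isomorphism off $\Sigma$ and degenerates with kernel $\lambda$ along $\Sigma$. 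A Poincar\'e--Hopf type computation using this pairing, taking into account the sign change in the transverse identification $TS \cong V^*$ across each fold, should express the remaining $\bZ/2$ obstruction as a sum of local contributions that cancel in pairs along the two sides of $\Sigma$.
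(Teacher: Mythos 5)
There is a genuine gap. Your argument via $V\oplus V\cong TM|_S\cong TS^n\oplus TS^n$ is correct and gives $2[V]=0$ in $\widetilde{KO}(S^n)$, but notice that this step uses only that $V$ is a Lagrangian subbundle of $TM|_S$, not the fold hypothesis. So up to that point you have proved a statement that is true for \emph{any} Lagrangian distribution $V$, and the entire burden of the proposition is deferred to the residues $n\equiv 1,2\pmod 8$, where $\widetilde{KO}(S^n)\cong\bZ/2$ and $2[V]=0$ is vacuous. Precisely there (e.g.\ $n=2$, where the Hopf bundle satisfies $2[V]=0$ but is not stably trivial) the fold hypothesis must do real work, and your final paragraph is only a sketch: the claim that ``a Poincar\'e--Hopf type computation \ldots\ should express the remaining $\bZ/2$ obstruction as a sum of local contributions that cancel in pairs'' is not substantiated, and as stated it does not compile to an argument. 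You have identified exactly where the difficulty lies, but you have not resolved it.

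For comparison, the paper's proof uses the fold hypothesis from the outset and works uniformly in $n$. It observes that the fold locus $\Sigma\subset S^n$ is a co-oriented hypersurface along which, by the local model for $\Sigma^{10}$, the distribution $V$ is homotopic (through Lagrangian distributions) to an explicit model $V_\Sigma$ whose underlying real bundle is the ``folded'' tangent bundle $T_\Sigma S^n$: the bundle obtained from $TS^n$ by cutting along $\Sigma$ and regluing with $\mu=\mathrm{id}_{T\Sigma}\oplus(-1)$ on the normal direction. Stable triviality of $T_\Sigma S^n$ is then established directly: in $T_\Sigma S^n\oplus\varepsilon$ the regluing becomes $\mathrm{id}\oplus(-1)\oplus(1)$, which is isotopic in $GL(2,\bR)$ to $\mathrm{id}\oplus(1)\oplus(-1)$, and this alternative regluing admits a tautological bundle isomorphism to $TS^n\oplus\varepsilon\cong\varepsilon^{n+1}$. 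This regluing trick is, in effect, the rigorous version of the ``cancellation in pairs across $\Sigma$'' you gestured at, and it avoids the case split on $n\bmod 8$ altogether. If you want to salvage your route, the cleanest fix is to replace your final sketch with this $T_\Sigma S^n$ computation; the $\widetilde{KO}$ argument you gave then becomes a shortcut only for the already-trivial residues.
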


We precede the proof with some notation. Let $\Sigma \subset S^n$ be any compact hypersurface. Following \cite{EM12(2)}, it is conceptually useful to introduce a real $n$-dimensional vector bundle $T_\Sigma S^n$ which is obtained from $TS^n$ by regluing along $\Sigma$ with a fold. More precisely, write $S^n=X \cup Y$ for $X,Y\subset S^n$ two compact $n$-dimensional submanifolds whose common boundary $\partial X = X \cap Y = \partial Y$ is the hypersurface $\Sigma$. Fix also an identification $TS^n|_\Sigma \simeq T\Sigma \oplus \varepsilon$, where $\varepsilon$ denotes the trivial line bundle. Define $T_\Sigma S^n$ to be the real $n$-dimensional vector bundle over $S^n$ given by gluing the disjoint union $TX \coprod TY$ over the intersection $X \cap Y = \Sigma$ via the isomorphism
\[ \mu= id \oplus (-1) : T\Sigma \oplus \varepsilon \to T \Sigma \oplus \varepsilon.\]

\begin{figure}[h]
\includegraphics[scale=0.7]{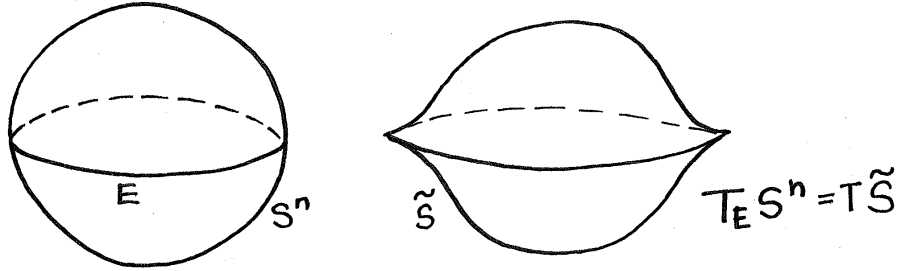}
\caption{The vector bundle $T_ES^n$ can be visualized as the tangent bundle of a singular surface $\widetilde{S}$ as illustrated above.}
\label{foldedtangentspace}
\end{figure}

The bundle $T_\Sigma S^n$ can be realized as a distribution of Lagrangian planes $V_\Sigma$ in $T^*S^n$ defined along the zero section $S^n \hookrightarrow T^*S^n$ whose singularities of tangency with respect to the zero section $S^n$ consist of folds along $\Sigma$. In order to do this, we fix a co-orientation of $\Sigma$, which will agree with the Maslov co-orientation induced by $V_\Sigma$. Let $ \Sigma \times (-1,1) \simeq \cN \subset S^n$ be a tubular neighborhood of $\Sigma$ such that the canonical orientation of the interval $(-1,1)$ induces the chosen co-orientation of $\Sigma$. The Lagrangian Grassmannian $\Lambda_1\big(T^*(-1,1)\big)$ is the trivial circle bundle $T^*(-1,1) \times S^1$. We use the canonical coordinates $(q,p) \in (-1,1) \times \bR = T^*(-1,1)$. Let $l:(-1,1) \to \Lambda_1\big( T^*(-1,1) \big)$ be the rotating line field defined over the zero section $(-1,1) \hookrightarrow T^*(-1.1)$ by the formula 
\[ l_q=  \text{span} \big(  \cos\left( \frac{\pi i q }{ 2}\right)  \frac{\partial}{\partial q} + \sin \left(\frac{ \pi iq }{2} \right)\frac{\partial}{\partial p} \, \big)\subset T_{(q.0)}\big(T^*(-1,1) \big). \]

\begin{figure}[h]
\includegraphics[scale=0.7]{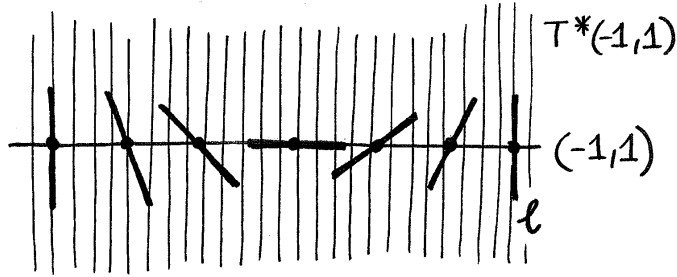}
\caption{The rotating line field $l \subset T\big(T^*(-1,1) \big)$.}
\label{rotatingline}
\end{figure}

Define $V_{\Sigma}:\cN \to \Lambda_n(T^*\cN)$ to be the distribution of Lagrangian planes defined over the zero section $\cN \hookrightarrow T^*\cN$  which corresponds to the product of the cotangent fibres of $T^*\Sigma$ and the line field $l$ under the isomorphism $T^*\cN \simeq T^*\Sigma \times T^*(-1,1)$. The distribution $V_{\Sigma}$ extends to the complement of $\cN$ in $S^n$ by letting it consist of the cotagent fibres of $T^*S^n$ on $S^n \setminus \cN$. The real vector bundle underlying $V_\Sigma$ is isomorphic to $T_\Sigma S^n$.

\begin{proof}[Proof of Proposition \ref{obstruction}]

We first consider the special case where $S^n$ itself has only fold singularities with respect to $\cF$. Then the caustic $\Sigma=\Sigma(S^n,\cF)$ is an embedded hypersurface in $S^n$ co-oriented by the Maslov co-orientation.  A direct consequence of the local model for the $\Sigma^{10}$ fold is that $V$ and $V_\Sigma$ are homotopic in the space of Lagrangian distributions. Since the real vector bundle underlying $V_\Sigma$ is isomorphic to $T_\Sigma S^n$, it remains to show that $T_\Sigma S^n$ is stably trivial. To see this, observe that $T_\Sigma S^n \oplus \varepsilon$ is obtained from $TX \oplus \varepsilon$ and $TY \oplus \varepsilon$ by using the gluing $\mu \oplus(1)=id \oplus (-1) \oplus (1)$ along $X \cap Y = \Sigma$, where we still think of $TS^n|_\Sigma$ as $T\Sigma \oplus \varepsilon $. Nothing changes if instead we use the gluing $\eta=id \oplus (1) \oplus (-1)$, since the two linear isomorphims of $\bR^2$ given by $(x,y) \mapsto (-x,y)$ and $(x,y) \mapsto (x,-y)$ are in the same connected component of $GL(2,\bR)$. We can therefore define a bundle map $T_\Sigma S^n \oplus \varepsilon \to TS^n \oplus \varepsilon$ by sending $TX \oplus \varepsilon \to TS^n \oplus \varepsilon$ via the inclusion $id \oplus (1)$, by sending $TY \oplus \varepsilon \to TS^n \oplus \varepsilon$ via the map $id \oplus (-1)$ and by gluing the two pieces into a global map $T_\Sigma S^n \oplus \varepsilon \to TS^n \oplus \varepsilon$ using $\eta$. This glued up map is an isomorphism, hence $T_\Sigma S^n \oplus \varepsilon \simeq TS^n \oplus \varepsilon \simeq \varepsilon^{n+1}$, as claimed.

Consider now the general case where $\varphi_t:T^*S^n \to T^*S^n$ is a Hamiltonian isotopy such that $\varphi_1(S^n)$ only has fold singularities with respect to $\cF$. Equivalently, $S^n$ only has fold singularities with respect to the pullback foliation $\varphi_1^*\cF$. From the special case already considered it follows that the restriction $V'$ to $S^n$ of the distribution $T(\varphi^*_1\cF)$ must be stably trivial as a real vector bundle over the sphere. But $V$ and $V'$ are homotopic as distributions of Lagrangian planes and therefore isomorphic as real vector bundles. Hence $V$ is also stably trivial.
\end{proof}

We now use our $h$-principle for the prescription of singularities to show that for $n$ even, the necessary condition for the simplification of singularities provided by Proposition \ref{obstruction} is also sufficient. 

\begin{corollary}\label{caustics of spheres} Assume that $n$ is even and that $V=T\cF|_S$ is stably trivial as a real vector bundle over the sphere. Then there exists a compactly supported Hamiltonian isotopy $\varphi_t: T^*S^n \to T^*S^n$ such that the singularities of tangency of $\varphi_1(S^n)$ with respect to $\cF$ consist only of folds. Moreover, we can take $\varphi_t$ to be $C^0$-close to the identity and supported on an arbitrarily small neighborhood of the zero section. 
\end{corollary}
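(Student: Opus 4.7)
The plan is to reduce Corollary \ref{caustics of spheres} to the $h$-principle Theorem \ref{main result again} via a homotopy-theoretic construction of a suitable tangential rotation. By the Weinstein neighborhood theorem we may assume $S^n$ is the zero section of $T^*S^n$ with $\cF$ some Lagrangian foliation defined in a neighborhood. Since any union of nested double folds consists of $\Sigma^{10}$ fold singularities, it suffices to exhibit a tangential rotation $G_t : S^n \to \Lambda_n(T^*S^n)$ of the Gauss map of the zero section satisfying $G_1 \pitchfork \cF$. Theorem \ref{main result again}, in its $C^0$-close and compactly supported form, then delivers a Hamiltonian isotopy $\varphi_t$ with all the required properties, including localization in an arbitrarily small neighborhood of the zero section.

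The construction of the tangential rotation is a purely homotopy-theoretic problem about sections of the bundle $\Lambda_n(T^*S^n)|_{S^n} \to S^n$, whose fiber is the Lagrangian Grassmannian $\Lambda(n) = U(n)/O(n)$. First I would note that a section $G_1$ taking values in the open subset of Lagrangians transverse to $V = T\cF|_{S^n}$ exists, since the relevant subbundle has contractible fibers: in each symplectic vector space the Lagrangians transverse to a fixed Lagrangian form an affine space, parametrized by symmetric bilinear forms on any transverse complement. The remaining task is to verify that such a $G_1$ is homotopic, as a section of the full Lagrangian Grassmannian bundle, to the horizontal section $G_0 = TS^n$.

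This is the main obstacle, and it is where the hypotheses ``$n$ even'' and ``$V$ stably trivial'' come into play. The cells of $S^n$ live only in dimensions $0$ and $n$, so the sole obstruction to a homotopy between the two sections is a single primary class in $H^n\big(S^n;\pi_n(\Lambda(n))\big) \cong \pi_n(\Lambda(n))$. Fixing a compatible complex structure $J$ on $T(T^*S^n)|_{S^n}$ and using the symplectic pairing identification $G_1 \simeq V^*$ arising from transversality, together with the canonical splitting $T(T^*S^n)|_{S^n} = TS^n \oplus T^*S^n$ along the zero section, one may express this obstruction in terms of characteristic classes of $V$. Stable triviality of $V$ (equivalently, of $V^* \simeq G_1$) combined with the always-present stable triviality of $TS^n$ ensures that $G_0$ and $G_1$ agree after one stabilization; the even parity of $n$ is precisely what allows this stable equality to descend to an unstable homotopy, via the vanishing of the surviving higher Maslov-type class in $\pi_n(\Lambda(n))$. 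Once the tangential rotation is in hand, the conclusion is immediate from the $C^0$-close compactly supported version of the $h$-principle.
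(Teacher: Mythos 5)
There is a genuine gap, and it is located exactly where you flag ``the main obstacle.'' You claim that stable triviality of $V$ together with $n$ even forces the section $G_1$ (any Lagrangian distribution transverse to $V$) to be homotopic to $G_0 = TS^n$ in the Lagrangian Grassmannian bundle, via a stabilize-then-destabilize argument. This is false. Under the trivialization used in the paper, the boundary map $\partial:\pi_n(\Lambda_n)\to\pi_{n-1}(O_n)$ is injective for $n$ even, and $\partial[G_0]$ is the clutching class $\gamma$ of $TS^n$, while $\partial[G_1]$ is the clutching class $\beta$ of $G_1\cong V^*\cong V$. Stable triviality only says $\beta=k\gamma$ for some integer $k$ (equivalently $e(V)=2k$); it does not say $k=1$. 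So $[G_1]-[G_0]$ has $\partial$-image $(k-1)\gamma\neq 0$ unless $e(V)=2$, and hence no tangential rotation from $G(df)$ to a foliation-transverse $G_1$ exists in general. Your ``stable equality descends unstably'' step fails because the stabilization $\pi_n(\Lambda_n)\to\pi_n(\Lambda_{n+1})$ is \emph{not} injective: its kernel is precisely $\partial^{-1}(\bZ\gamma)$, which is nonzero because $\gamma\in\ker\big(\pi_{n-1}(O_n)\to\pi_{n-1}(U_n)\big)=\mathrm{im}(\partial)$.

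This is not merely a computational slip; the strategy of reducing to Theorem \ref{main result again} cannot succeed. Theorem \ref{main result again} (and the double-fold version Theorem \ref{main result}) would produce a caustic consisting only of nested double folds, but double folds are homotopically trivial: the resulting $T_\Sigma S^n$ always has $e=\pm 2$. If $e(V)\neq \pm 2$ (e.g.\ $V$ trivial, $e=0$, or $e(V)=4$), no Hamiltonian isotopy can make the caustic a union of double folds, since by Proposition \ref{obstruction} the caustic $\Sigma$ must satisfy $V\simeq V_\Sigma$. So the corollary genuinely requires a fold locus that is \emph{not} a union of topologically trivial double-fold spheres, and a homotopically essential piece must be prescribed. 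The paper handles this by constructing a hypersurface $E\subset S^n$ with $\chi(Y)=1-k$ (for $S^n=X\cup Y$, $E=X\cap Y$) so that the model distribution $V_E$ has $e(V_E)=2k=e(V)$, then uses $\pi_n(\Lambda_n)\hookrightarrow\pi_{n-1}(O_n)$ (valid for $n$ even) to upgrade the real-bundle isomorphism $V_E\cong V$ to a homotopy of Lagrangian distributions, and finally applies the prescription-of-singularities $h$-principle Theorem \ref{Entov theorem redux} — not Theorem \ref{main result again} — to realize $E$ as the actual fold locus, up to extra double folds. Your argument would need to be reworked to prescribe the nontrivial fold locus rather than trying to make it vanish.
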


\begin{remark} From the proof we can also extract a precise description of the permissible fold loci $\Sigma=\Sigma(\varphi_1(S^n),\cF)$ as hypersurfaces of $S^n$ in terms of the Euler number $e(V)$ of $V$. The locus $\Sigma$ can be arranged to consist of the boundary $\partial Y$ of any $n$-dimensional compact submanifold $Y \subset S^n$ of Euler characteristic $\chi(Y)=1 \pm \frac{1}{2}e(V)$, together with a disjoint union of nested double folds.
\end{remark}

\begin{proof}

If $B \subset S^n$ is a closed embedded $n-$ball, it is readily seen that $T_{\partial B} S^n$ is the trivial bundle. Fix a trivialization $V_{\partial B} \simeq S^n \times \bR^n$. We obtain a trivialization $T(T^*S^n)|_{S^n} \simeq S^n \times \bC^n$ by identifying both bundles with $V_{\partial B} \otimes \bC$. Suppose that $B$ is chosen so that $\cF$ is transverse to $S^n$ along $Op(B)$. Then with respect to this trivialization the distribution $V$ determines a class $\alpha \in \pi_n(\Lambda_n )$, where $\Lambda_n=U_n/O_n$ is the Grassmannian of linear Lagrangian subspaces of $\bC^n$ and we choose any $b \in \text{int}(B)$ as a basepoint. Let $\beta \in \pi_{n-1}(O_n)$ be the image of $\alpha$ under the map $\pi_n(\Lambda_n) \to \pi_{n-1}(O_n)$ given by long exact sequence in homotopy groups associated to the fibration $O_n \to U_n \to \Lambda_n$. Observe that $\beta$ is the clutching function corresponding to the real vector bundle underlying the distribution $V$. Note that the choice of ball $B$ induces a choice of orientation on $V$, which is encoded in the class $\beta$.

The stable triviality of $V$ means that $\beta$ is in the kernel of the map $\pi_{n-1} ( O_n ) \to \pi_{n-1} ( O )$, where $O=\lim_k O_k$ is the stable orthogonal group. However, $\pi_{n-1} ( O_k ) \to \pi_{n-1} ( O_{k+1} )$ is an isomorphism as soon as $k>n$, and therefore $\beta \in \ker\big( \pi_{n-1}( O_n) \to \pi_{n-1} ( O_{n+1}) \big) =  \text{im}\big( \pi_n(S^n) \to \pi_{n-1} ( O_n ) \big)$, where the map is given by the long exact sequence in homotopy groups associated to the fibration $O_n \to O_{n+1} \to S^n$. Recall that under this map the fundamental class $1 \in \bZ  \simeq \pi_n(S^n)$ is sent to the clutching function $\gamma \in \pi_{n-1}(O_n)$ corresponding to the tangent bundle $TS^n$. We can therefore write $\beta = k \gamma$ for some $k \in \bZ$.

Let $E \subset S^n$ by any compact hypersurface disjoint from $B$. Let $X$ and $Y$ be as in the construction of $T_ES^n$, so that $S^n=X \cup Y$ and $X \cap Y=E$. We choose the labels so that $B \subset X$, and then we agree to orient $T_ES^n$ so that the inclusion $TX \hookrightarrow T_E S^n$ is orientation preserving. It is straightforward to compute the Euler class $e(T_ES^n)=2-2\chi(Y)$ using for example the Poincar\'e-Hopf index theorem. Since $e(V)=2k$, if we choose the hypersurface $E$ so that $\chi(Y)=1-k$, then it follows that $T_E S^n$ and $V$ are isomorphic as oriented real vector bundles. 

Using the same construction as above, we can exhibit $T_ES^n$ as a distribution $V_E$ of Lagrangian planes in $T^*S^n$ defined along the zero section $S^n \hookrightarrow T^*S^n$. Observe that the singularities of tangency of the zero section $S^n$ with respect to the distribution $V_E$ consist of $\Sigma^{10}$ folds along $E$. 

Since $n$ is even, $\pi_n(U_n)=0$ and hence we have an injection $\pi_n(\Lambda_n) \hookrightarrow \pi_{n-1}(O_n)$. Observe that the homotopy classes in $\pi_n(\Lambda_n)$ determined by the distributions $V_E$ and $V$ have the same image $\beta $ under this map. It follows that $V_E$ and $V$ are homotopic in the space of Lagrangian distributions. The $h$-principle for the prescription of singularities Theorem \ref{Entov theorem redux} applies to produce a $C^0$-small Hamiltonian isotopy $\varphi_t:T^*S^n \to T^*S^n$ supported in a neighborhood of the zero section such that the singularities of tangency of $\varphi_1(S^n)$ with respect to $\cF$ are equivalent to those of $S^n$ with $V$  together with a union of nested double folds, which completes the proof. \end{proof}

In fact, the assumption that $V$ is stably trivial is automatically satisfied for all even $n$ such that $n \not\equiv 2$ mod $8$. One can argue in the following way. Choose a class $\beta \in \pi_{n-1}(O_n)$, which we think of as the clutching function of a real vector bundle. By exactness of the long exact sequence in homotopy groups associated to the fibration $O_n \to U_n \to \Lambda_n$, it is equivalent to ask that $\beta$ is in the image of the map $\pi_{n}( \Lambda_n ) \to \pi_{n-1} ( O_n )$ or to ask that it is in the kernel of the map $\pi_{n-1}(O_n) \to \pi_{n-1}(U_n)$. The first condition says that the vector bundle can be realized as a distribution of Lagrangian planes in $T^*S^n$ defined along the zero section $S^n \hookrightarrow T^*S^n$, while the second condition says that the complexification of the vector bundle is trivial. Suppose that $\beta$ is such a class and let $S(\beta) \in \pi_{n-1}( O_{n+1} )$ be the image of $\beta$ under the stabilization map $S$ induced by the inclusion $O_n \subset O_{n+1}$. By commutativity of the diagram below, observe that $S(\beta)$ lies in the kernel of the map $\pi_{n-1} (O_{n+1}) \to \pi_{n-1}( U_{n+1} )$.

\begin{center}
$ \begin{CD}
\pi_{n-1}\big(O_n \big) @>>>  \pi_{n-1} \big( U_n \big) \\
@VVV @VVV \\
\pi_{n-1}\big(O_{n+1} \big) @>>> \pi_{n-1} \big( U_{n+1} \big)
\end{CD} $
\end{center}

However, $\ker\big( \pi_{n-1}( O_{n+1} ) \to \pi_{n-1}( U_{n+1} ) \big) \simeq \ker\big( \pi_{n-1}(O) \to \pi_{n-1}(U)  \big)$, since both homotopy groups lie in the stable range. This kernel can be computed from Bott periodicity. Indeed, $\Omega(U/O) \simeq \bZ \times BO$ implies that $\pi_k(U/O) \simeq \pi_{k-2}(O)$ and therefore the groups appearing in the exact sequence $\pi_n(U/O) \to \pi_{n-1}(O) \to \pi_{n-1}(U)$ depend on the residue class of $n$ mod $8$ as follows.
\begin{center}
\begin{tabular}{ |c|c|c|c| } 
\hline
$n$ mod $8$ & $\pi_n(U/O)$ & $\pi_{n-1}(O)$ & $\pi_{n-1}(U)$ \\
\hline
$0$ & $0 $& $\bZ$ & $\bZ$  \\ 
$1$ & $\bZ$ & $\bZ/2$ &  $0$ \\ 
$2$ & $\bZ/2$ & $\bZ/2$ & $\bZ$ \\ 
$3$ & $\bZ/2$ & $0$ & $0 $\\ 
$4$ & $0$ & $\bZ$ & $\bZ$ \\ 
$5$ & $\bZ$ & $0$ & $0 $\\ 
$6$ & $0$ & $0$ & $\bZ$ \\ 
$7$ & $0$ & $0$  & $0 $\\ 
\hline
\end{tabular}
\end{center} 
\[ \]

From the table we deduce that $\ker\big( \pi_{n-1}(O) \to \pi_{n-1}(U) \big)=0$ except if $n \equiv 1$ or $ 2$ mod $8$ (in which case the kernel is isomorphic to $\bZ/2$). It follows that if $n$ is even and $n \not\equiv 2$ mod $8$, then we necessarily have $S(\beta)=0$, as claimed.
\begin{remark}
The simplest example of a caustic that cannot be simplified to consist only of folds occurs when $n=2$ and $V$ is the Hopf bundle on $S^2$. It is easy to check that in this case a $\Sigma^{110}$ pleat is unavoidable, in addition to the $\Sigma^{10}$ folds.
\end{remark}

When $n$ is odd, the same reasoning still shows that a necessary and sufficient condition for the simplification of singularities to be possible is that $V$ is homotopic to one of the standard models $V_\Sigma$ in the space of Lagrangian distributions. However, stable triviality of the underlying real vector bundle is not sufficient to guarantee that this condition is satisfied because $\pi_n(U_n) \neq 0$ and hence the map $\pi_n(\Lambda_n) \to \pi_{n-1}(O_n)$ need not be an injection.

We have only touched the surface of the homotopy theoretic calculations which are necessary to understand the formal condition obstructing the simplification of caustics. In the very concrete example of spheres considered above we were able to reason in a fairly hands-on manner. We believe that it should be possible to carry out a more systematic approach in the spirit of obstruction theory to study the general case.

\subsection{Application: families of $1$-dimensional Legendrians}\label{families of $1$-dimensional Legendrians}

We now turn to the second application discussed in Section \ref{overview}. Our goal is to establish that higher singularities are unnecessary for the homotopy theoretic study of the space of Legendrian knots in the standard contact Euclidean $\bR^3$. Recall that we think of $\bR^3$ as the jet space $J^1(\bR,\bR)=\bR q \times \bR p \times \bR z$ which comes equipped with the contact form $dz-pdq$. The Lagrangian projection is the map $\bR^3 \to \bR^2$, $(q,p,z) \mapsto (q,p)$ which corresponds to the forgetful map $J^1(\bR,\bR) \to T^*\bR$. The front projection is the map $\bR^3 \to \bR^2$, $(q,p,z) \mapsto (q,z)$ which corresponds to the forgetful map $J^1(\bR,\bR) \to J^0(\bR,\bR)$. The Reeb direction is $\partial / \partial z$ and it will also be useful to think of the projection along the Reeb direction $\bR^2 \to \bR$ which is the map $(q,z) \mapsto q$. 

The fibres of the front projection form a Legendrian foliation $\cF$ of $\bR^3$. Recall that a Legendrian knot $f:S^1 \to \bR^3$ is said to have mild singularities when the only singularities tangency of $f$ with respect to $\cF$ are folds and embryos. Folds are the generic $\Sigma^{10}$ singularities of a single Legendrian knot and in the front projection correspond to cusps, see Figure \ref{legendriancusp}. Embryos are the generic $\Sigma^{110}$ singularities of a $1$-parametric family of Legendrian knots and in the front projection correspond to Type I Reidemeister moves, namely the instances of birth/death of two cusps. See Figure \ref{reidemesitermove}.

\begin{figure}[h]
\includegraphics[scale=0.6]{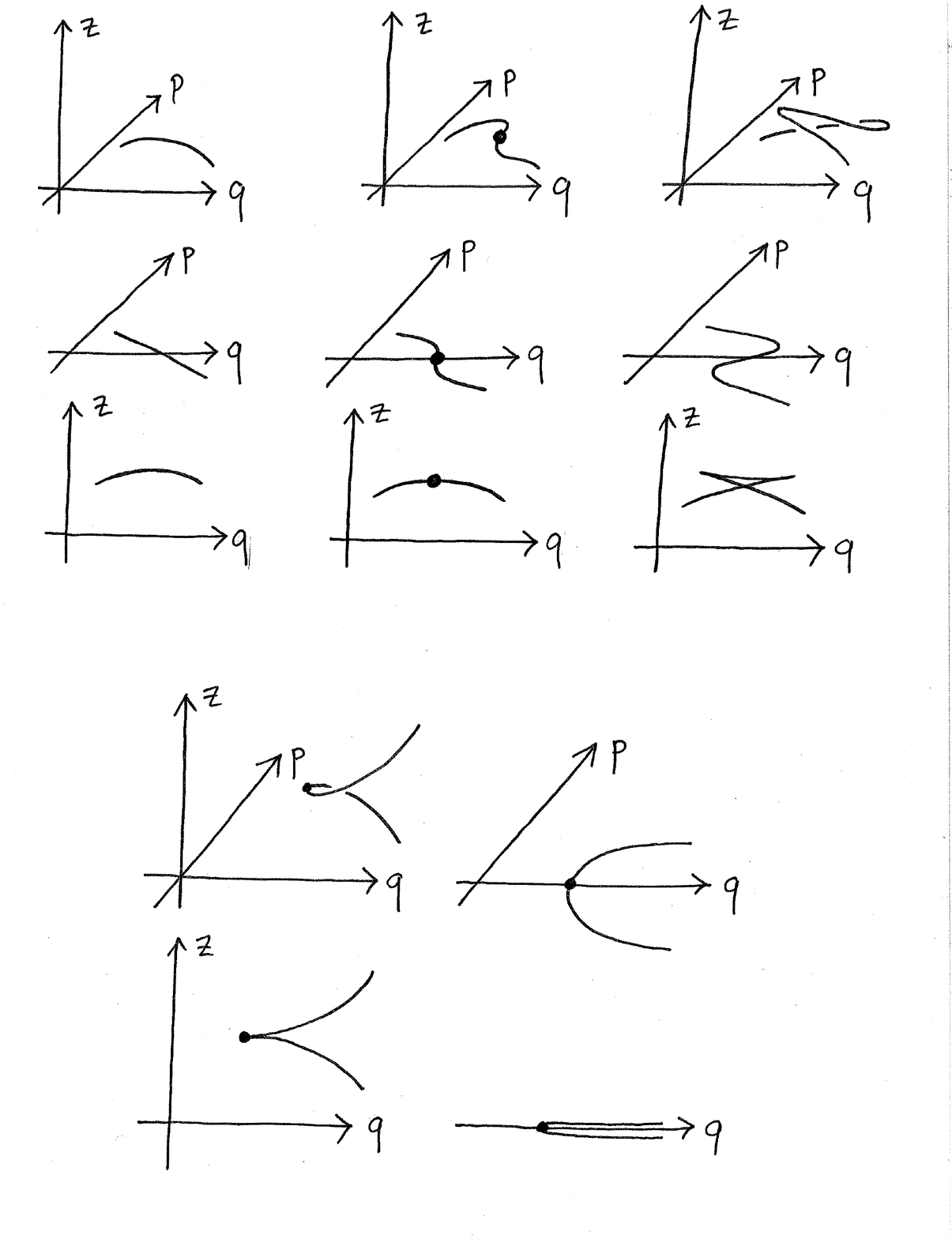}
\caption{The standard fold as seen from the Lagrangian and front projections (top right and bottom left respectively). If we project all the way down to $\bR=\bR(q)$ (bottom right), the germ of the resulting map is equivalent to that of $x \mapsto x^2$.}
\label{legendriancusp}
\end{figure}

\begin{figure}[h]
\includegraphics[scale=0.6]{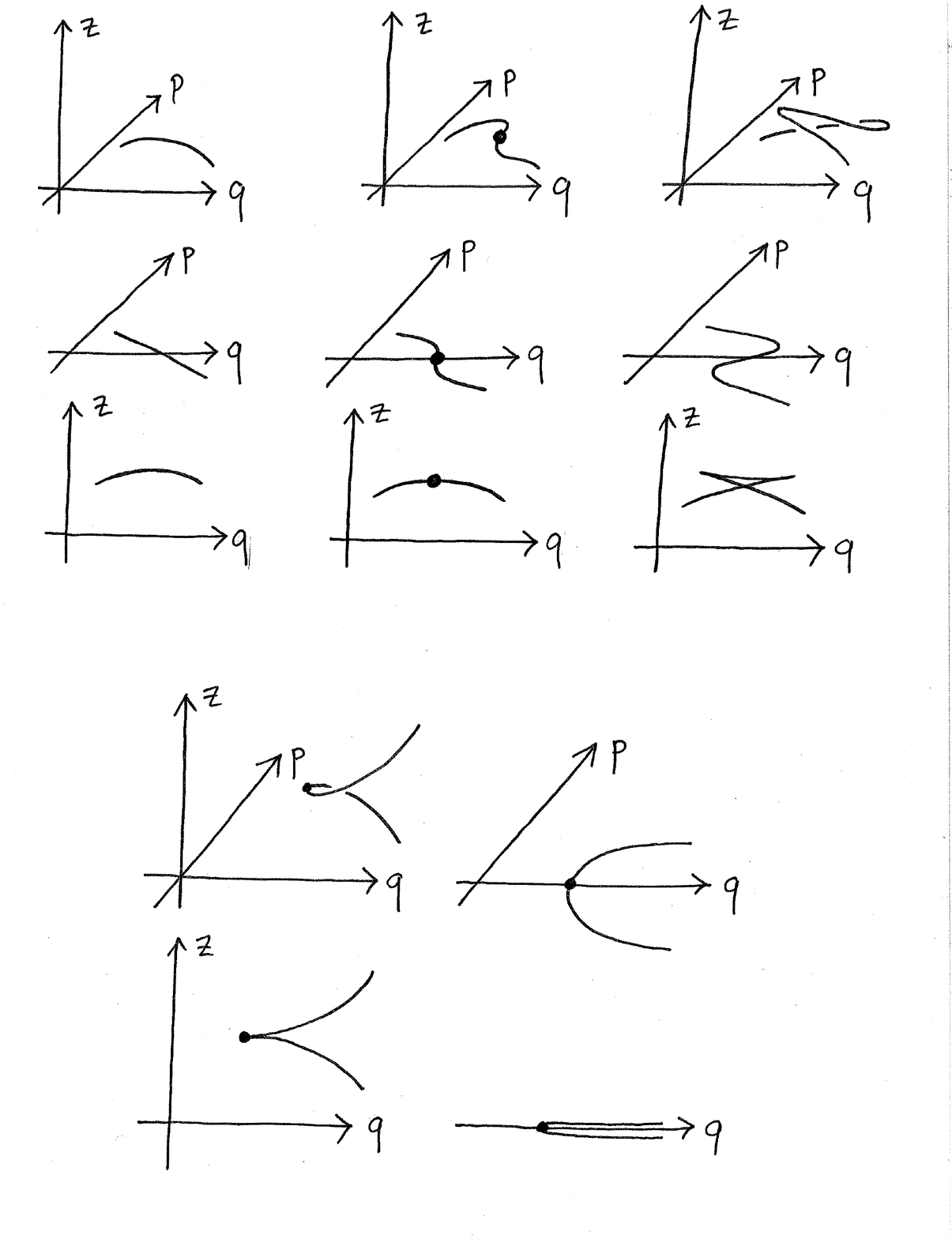}
\caption{The embryo singularity is illustrated in the middle column. We can picture it in the ambient contact $\bR^3$ (top), in the Lagrangian projection (middle) and in the front projection (bottom). An embryo is a generically isolated singularity of a $1-$parametric family of Legendrian knots, which we exhibit from left to right. The bottom row (which takes place in the front projection) gives us the familiar Reidemeister I move for Legendrian fronts.} 
\label{reidemesitermove}
\end{figure}

Generically, a Legendrian knot only has folds and a $1$-parametric family of Legendrian knots only has folds and embryos. However, the caustic of a family of Legendrian knots parametrized by a space of high dimension will generically be very complicated. It is therefore not a priori clear how the topology of the space of Legendrian knots $\cL$ is related to that of the subspace $\cM \subset \cL$ consisting of those Legendrian knots whose singularities are mild. In Section \ref{overview} we defined a space of decorations $\widetilde{\text{C}}(S^1)$ and a space $\cD$ of pairs $(f,D)$ consisting of a Legendrian knot with mild singularities $f \in \cM$ together with a decoration $D \in \widetilde{\text{C}}(S^1)$ of the singularities of $f$. See Figure \ref{figureeightknot} for an example of a decoration $D$ compatible with the standard front projection of the figure eight knot.

\begin{figure}[h]
\includegraphics[scale=0.7]{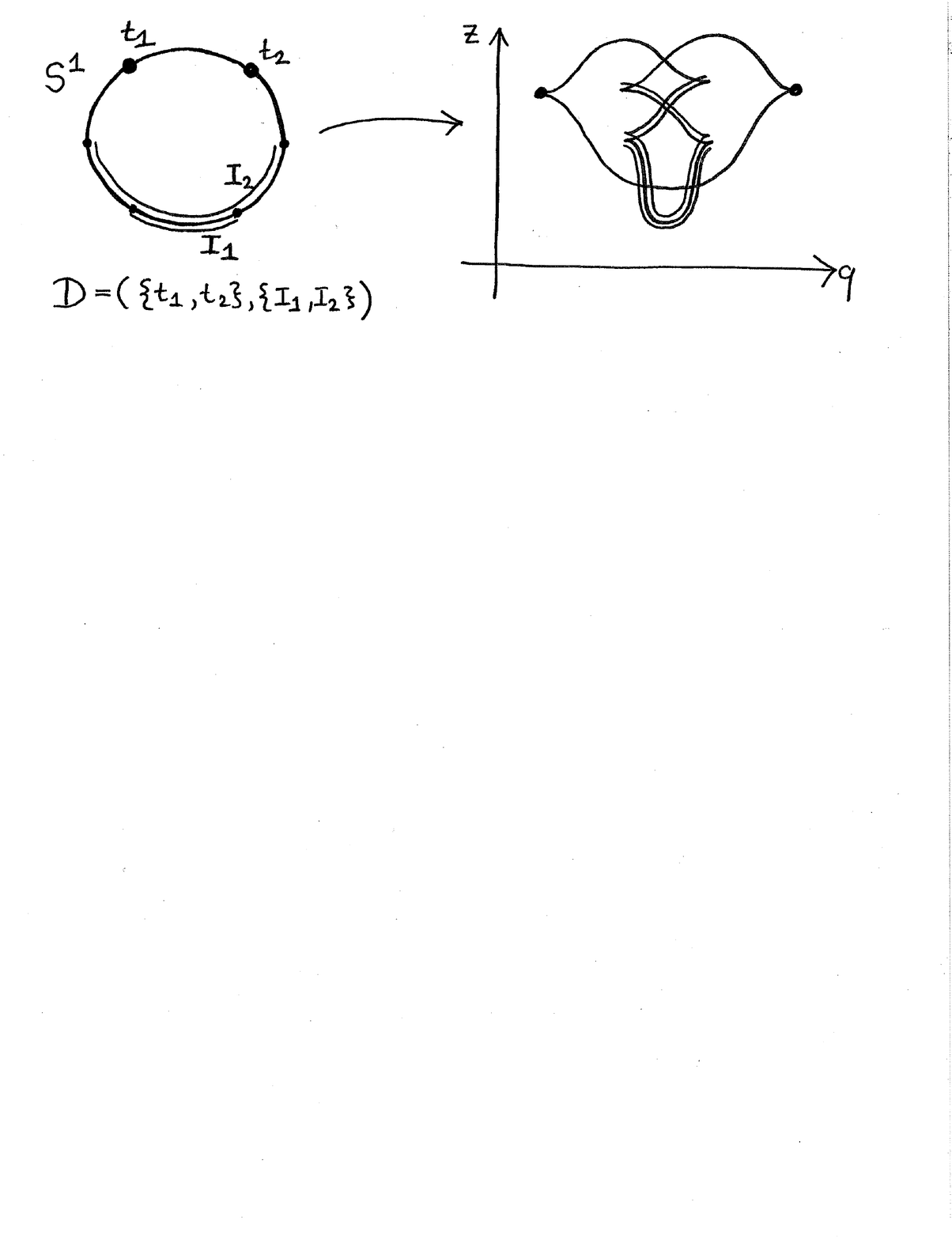}
\caption{An example of a decoration which consists of two points $t_1,t_2$ and two nested intervals $I_1 \subset I_2$.}
\label{figureeightknot}
\end{figure}

By composing the forgetful map $\cD \to \cM$ given by $(f,D) \mapsto f$ with the inclusion $\cM \hookrightarrow \cL$ we obtain a map $\cD \to \cL$. In this section we will prove the following result, which is a consequence of our parametric $h$-principle for the simplification of caustics.

\begin{corollary}\label{higher Reidemeister}
The map $\cD \to \cL$ is a weak homotopy equivalence on each connected component.
\end{corollary}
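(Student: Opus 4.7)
The plan is to apply the parametric relative version of Theorem \ref{Entov theorem redux2} and show that $\pi_k(\cL_0, \cD_0) = 0$ for every connected component $\cD_0 \subset \cD$ and its image component $\cL_0 \subset \cL$. Combined with the obvious bijection $\pi_0(\cD_0) \to \pi_0(\cL_0)$ on each component, this gives the desired weak equivalence via the long exact sequence of the pair. The key observation is that in dimension $n=1$ a regularized wrinkle is simply a pair of cusps with opposite Maslov co-orientations, and a fibered regularized wrinkle in a family is exactly a Reidemeister Type I birth/death of two such cusps --- precisely the structure encoded by the decoration space $\widetilde{\text{C}}(S^1)$.

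Given a relative map $\gamma: (D^k, \partial D^k) \to (\cL_0, \cD_0)$ for $k \geq 1$, represented by a family of Legendrian knots $f^z: S^1 \to \bR^3$ with boundary decoration $D^z = (\{t_i^z\}, \{I_j^z\})$, I would first extend the continuous family $z \mapsto \{t_i^z\}$ of permanent cusp configurations from $\partial D^k$ to all of $D^k$ as a map into $\text{C}_{k_0}(S^1)$, where $k_0$ is the (locally constant) cardinality of $\{t_i^z\}$. This extension is always possible since $\text{C}_{k_0}(S^1)$ is aspherical (it is a $K(\pi,1)$ for the braid group of $S^1$ on $k_0$ strands), so for $k \geq 2$ any boundary sphere into it is nullhomotopic, while for $k=1$ the extension is a choice of path in the connected space $\text{C}_{k_0}(S^1)$. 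Next, I would construct a continuous family of Legendrian line distributions $V^z_t$ along $f^z(S^1)$ with $V^z_0 = T\cF|_{f^z(S^1)}$, stationary for $z \in \partial D^k$ and outside a compact subset, such that $V^z_1$ has $\Sigma^1$-type singularities with respect to $f^z$ exactly at $\{t_i^z\}$ with the Maslov co-orientations dictated by the rotation number of $f^z$ and matching the boundary prescription.

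Since $\Sigma^2$-nonsingularity is automatic in dimension $n=1$, the parametric relative version of Theorem \ref{Entov theorem redux2} then produces a family of compactly supported Hamiltonian isotopies $\varphi^z_t: \bR^3 \to \bR^3$ (with $\varphi^z_t = \mathrm{id}$ for $z \in \partial D^k$) such that $\tilde{f}^z := \varphi^z_1 \circ f^z$ has singularities of tangency with respect to $\cF$ consisting of cusps at $\{t_i^z\}$ together with a union of nested double folds, with births and deaths of double fold pairs realized as regularized embryos. This nested double fold structure translates directly into a continuous family of decorations $\tilde{D}^z \in \widetilde{\text{C}}(S^1)$ extending the boundary data: each nested double fold pair yields a non-degenerate interval $\tilde{I}_j^z$ with endpoints at the two cusps of the pair (automatically with opposite Maslov co-orientations) and nesting inherited from that of the wrinkles, while each regularized embryo yields a degenerate interval, in accordance with the topology on $\widetilde{\text{C}}(S^1)$ which allows innermost intervals to shrink to points and vanish. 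Hence $(\tilde{f}^z, \tilde{D}^z)$ defines a continuous map $D^k \to \cD_0$ extending $\gamma|_{\partial D^k}$ and homotopic to $\gamma$ via the isotopies $\varphi^z_t$, as required. The main obstacle is the construction of the family $V^z_t$ with the prescribed singularity structure, in particular arranging a smooth transition near $\partial D^k$ so that the endpoints of the boundary intervals $\{I_j^z\}$ glue continuously to the endpoints of double folds produced by the $h$-principle in the interior; this requires a careful interpolation in the space of Legendrian line distributions which is continuous in $z$ and compatible with the boundary decoration.
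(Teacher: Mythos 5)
Your high-level intuition is right (regularized wrinkles in dimension $1$ are pairs of opposite-Maslov cusps, fibered embryos are Reidemeister I moves, and the decoration space is exactly designed to record this), but there are three genuine gaps. First, the claim that the extension of the cusp configuration $z\mapsto\{t_i^z\}$ from $\partial D^k$ to $D^k$ is automatic for all $k\geq 2$ because $\text{C}_{k_0}(S^1)$ is aspherical is wrong for $k=2$: each component of $\text{C}_{k_0}(S^1)$ is homotopy equivalent to $S^1$, so $\pi_1\neq 0$, and a map $S^1\to \text{C}_{k_0}(S^1)$ need not be nullhomotopic. The paper handles this case with a Poincar\'e duality argument applied to the Gauss map, showing that the cohomology class dual to the cusp locus $\cC\subset\partial D^2\times S^1$ has no $[pt\times S^1]$-component because it pulls back from $D^2\times S^1$. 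Second, you appeal to ``the parametric relative version of Theorem \ref{Entov theorem redux2},'' but the paper only states and proves the non-parametric, $C^0$-close, and relative versions of that prescription theorem; the parametric version is never established, and the paper's actual proof of Corollary \ref{higher Reidemeister} does not use it --- it instead uses Theorem \ref{parametric main result again} (the parametric $h$-principle producing wrinkles) together with Theorem \ref{parametric perp-holonomic}.

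Third, and most importantly, the step you flag as ``the main obstacle'' --- arranging that the endpoints of the boundary intervals $\{I_j^z\}$ glue continuously to the double folds produced in the interior --- is exactly the hard part, and it cannot be dispatched as ``a careful interpolation in the space of Legendrian line distributions.'' Since the Hamiltonian isotopies are required to be the identity for $z\in\partial D^k$, the family $\tilde f^z$ near the boundary retains its pairs of cusps at $\partial I_j^z$; meanwhile the prescription theorem (or the $h$-principle) would produce \emph{new} double folds born at interior embryos, unrelated to those boundary pairs. Without explicitly killing the boundary pairs as one moves inward there is no continuous matching. The paper resolves this with Lemma \ref{preparatory}, which constructs a concrete Legendrian isotopy near $\partial D^k$ using the explicit front families $L_\eta$ and $L_{\eta,\tau}$: a new small zig-zag is inserted near an endpoint of each interval, grown to absorb and replace the old one, and then killed as $z$ moves inward, producing the arcs $a_j$ with one interior embryo each. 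Only after this preparatory step (and the analogous Lemma \ref{prep2} extending the permanent cusp locus $\cC$ to a properly embedded fold locus $\cI$ via the creation model of Figure \ref{creationoffolds}) can the parametric $h$-principle be applied relative to $\cK\cup\cI$ to fill in the rest with fibered nested regularized wrinkles, which then canonically lift to a continuous family of decorations.
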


\begin{remark}
The decoration $D$ is necessary because the inclusion $\cM \hookrightarrow \cL$ is not a homotopy equivalence, indeed $\pi_2(\cL,\cM) \neq 0$. To see this, let $f^z$ be a family of Legendrian knots parametrized the closed unit $2$-disk $D^2$ which has mild singularities everywhere except for a single $\Sigma^{1110}$ singularity appearing in the interior. Then it is easy to see that the family $\{f^z\}_{z \in \partial D^2}$ represents a nontrivial element of $\pi_2(\cL,\cM)$. The decoration  $D$ is designed to kill this homotopy group.
\end{remark}

\begin{remark}
If $f \in \cL$ is any Legendrian knot, then by a generic perturbation we may assume that the singularities $\Sigma \subset S^1$ of $f$ consist only of a finite number of folds. Then $f$ is compatible with the trivial decoration $D=(\{t_i\},\{I_j\})$ consisting of $\{t_i\}=\Sigma$ and $\{I_j\}=\varnothing$. It follows that $\pi_0(\cD) \to \pi_0(\cL)$ is surjective. However, is it easy to see that $\pi_0(\cD) \to \pi_1(\cL)$ is not injective, since in the space $\cD$ we are keeping track of the decoration $D$.
\end{remark}

To prove Corollary \ref{higher Reidemeister} it suffices to show that $\pi_n(\cL,\cD)=0$ for $n>1$ and that $\pi_1(\cD) \to \pi_1(\cL)$ is surjective.  We deal with each of the statements separately. 

\begin{proof}[Proof that $\pi_n(\cL,\cD)=0$ for $n>1$]

Let $\alpha \in \pi_n(\cL,\cD)$ be any class. We can represent $\alpha$ by a map $F:D^n \to \cL$ such that $F|_{\partial D^n}$ lifts to a map $\widetilde{F}: \partial D^n \to \cD$. To conclude that $\alpha=0$ we must show that there exist a homotopy $F_t: D^n \to \cL$ which is fixed on $Op(\partial D^n)$ and such that $ \widetilde{F}: \partial D^n \to \cD$ extends to a lift $\widetilde{F}_1: D^n \to \cD$ of $F_1$. 

We begin by examining the singularity locus of $F$ on the boundary, which is the subset $\Sigma( F|_{\partial D^n}) \subset \partial D^n \times S^1$ consisting of all pairs $(z,s) \in S^{n-1} \times S^1$ such that the front of the Legendrian knot $F(z): S^1 \to \bR^3$ has a fold or embryo singularity at the point $s \in S^1$. Denote the map  $(f,D) \mapsto D$ which forgets the knot but remembers the decoration by $\text{dec}: \cD \to \widetilde{\text{C}}(S^1)$. The family of decorations $\text{dec} \circ \widetilde{F}: \partial D^n \to \widetilde{\text{C}}(S^1) $ induces a decomposition of the singularity locus $\Sigma( F|_{\partial D^n}) = \cC \cup \cW$, where $\cC$ consists of folds and $\cW$ consists of pairs of folds with opposite Maslov co-orientations together with the embryos that give rise to the birth/death of such pairs. The folds of $\cC$ correspond to the points $t_1, \ldots , t_k$ and the pairs of folds or embryos of $\cW$ correspond to the endpoints of the intervals $I_1, \ldots , I_m$. Note that the number $m$ of intervals may vary with the parameter $z$ but the number $k$ of points is fixed since $n>1$. After a generic perturbation we may assume that $\cC$ and $\cW$ are smooth codimension $1$ submanifolds of $S^{n-1} \times S^1$ and moreover that the set of embryos $\cE$ is a smooth codimension $1$ submanifold of $\cW$.

Our strategy is the following. The first step is to construct $F_t$ near the boundary of the parameter space $\partial D^n$. This involves manually killing all the pairs of folds in $\cW$. The next step is to extend the folds in $\cC$ to the interior of the parameter space $\text{int}(D^n)$. After these two preparatory steps we can apply the relative form of our parametric $h$-principle to construct $F_t$ everywhere else so that the only additional singularities of the deformed family $F_1$ are the folds and embryos resulting from the wrinkling process. By construction the resulting map $F_1:D^n \to \cM$ will have an obvious lift to $\cD$, completing the proof.

We now perform the first of these preparatory steps. The key idea, which appears repeatedly throughout the literature of the wrinkling philosophy, is that to kill a zig-zag one may create a very small new zig-zag near one end of the old zig-zag and then slowly let the new zig-zag take over, eventually killing the old zig-zag and replacing it. The newly created zig-zag does not bother us because it will end up completely contained in the interior of the parameter space $D^n$. 

Fix a collar neighborhood $A \subset D^n$ of $S^{n-1}$, which we parametrize radially as $A=[0,1) \times S^{n-1}$ with $0 \times S^{n-1}$ corresponding to $\partial D^n$. It will be convenient to assume that $F$ is radially invariant on $A$, and indeed by means of an initial homotopy of $F$ fixed on the boundary we can arrange it so that $F( \lambda, z)=F(0, z)$ for all $\lambda \in [0,1)$ and all $z \in S^{n-1}$. Note then that  $F(A) \subset \cM$ and moreover $\Sigma(F|_A) =  [0,1) \times \Sigma(F_{\partial D^n}) $. For an $F$ satisfying this condition we establish the following preparatory result.

\begin{lemma}[Preliminary arrangement near the boundary]\label{preparatory} There exists a homotopy $F_t:D^n \to \cL$ of $F=F_0$ such that the following properties hold.

\begin{itemize}
\item $F_t$ is fixed on $Op\big(\partial D^n \cup (D^n \setminus A)\big)$.
\item $F_t(A) \subset \cM$.
\item The folds in $\cC$ are left untouched throughout the homotopy. To be more precise, the subset $ [0,1) \times \cC \subset \Sigma ( F_t |_A )$ does not vary with time.
\item The pairs of folds in $\cW$ are killed at the end of the homotopy. To be more precise, over each cylinder $[0,1) \times  z   \times S^1 \subset A\times S^1$ the singularity locus $\Sigma(F_1|_A)$ contains arcs $a_1 , \ldots , a_m$ whose interiors lie in $(0, 1) \times z \times S^1$ and whose endpoints lie in $0\times z \times S^1$ and in fact consist precisely of the endpoints of the intervals $I_1 , \ldots ,  I_m$. Moreover, each arc $a_j$ consists everywhere of folds except at a single point in its interior, which is an embryo. 
\end{itemize}
\end{lemma}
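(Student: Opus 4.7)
The plan is to work entirely in the front projection and to construct $F_t$ by introducing explicit Reidemeister I embryos inside the collar that cancel the pairs of cusps in $\cW$. The key local ingredient is the standard embryo family $\phi_\tau : (-1,1) \to \bR^2$, $\tau \in [-1,1]$, a $1$-parameter front homotopy with two cusps for $\tau<0$, an embryo at $\tau=0$, and no cusps for $\tau>0$; this is the familiar Type I Reidemeister move on Legendrian fronts. Using the radial invariance of $F$ on $A$, for each $z \in S^{n-1}$ and each interval $I_j(z) = [s_1^j(z), s_2^j(z)]$ in the decoration $\text{dec} \circ \widetilde{F}(z)$ I would identify a small neighborhood $U_j(z) \subset S^1$ of $I_j(z)$ on which $F(0,z)$ is front-equivalent via a local contactomorphism to the zig-zag $\phi_{-1}$. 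These neighborhoods can be chosen continuously on the complement of the discriminant $\cE \subset S^{n-1} \times S^1$ where intervals degenerate, to be disjoint from the fold points $t_i$ and from the neighborhoods chosen for other intervals at the same nesting level, and to respect the nested hierarchy $I_k \subset I_j$.

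Fix now a smooth profile $\rho:[0,1) \to [-1,1]$ supported in a compact sub-interval $[\varepsilon, 1-\varepsilon]$, equal to $-1$ near the endpoints, strictly positive on an open sub-interval $(\lambda_0, \lambda_1) \subset (\varepsilon, 1-\varepsilon)$, and having simple zeros at $\lambda_0$ and $\lambda_1$. The local construction for a single pair is then to set $F_1$ on $U_j(z) \times [0,1)$ equal to the front $\phi_{\rho(\lambda)}$, transplanted back via the local contactomorphism, and to connect $F_0$ to $F_1$ by any path in $\cL$ supported in the same local model. The resulting cusp locus over $[0,1) \times \{z\} \times U_j(z)$ consists of the lower arc $a_j$ from $(0, s_1^j)$ through the embryo at $(\lambda_0, s_*)$ back to $(0, s_2^j)$, plus a symmetric upper arc near $\lambda = 1$ that is forced by the condition $F_t = F_0$ on $Op(D^n \setminus A)$ but is not part of the prescribed data. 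The folds in $\cC$ are untouched because the $U_j(z)$ avoid the $t_i$ by construction. Summing these disjointly supported local modifications yields the required homotopy $F_t$ on all of $A$.

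The main obstacle will be parametric consistency at the discriminant $\cE$ where two endpoints of an interval collide into an embryo of the original family $F|_{\partial D^n}$. Near such a point, the zig-zag local model $\phi_{-1}$ must be replaced by a two-parameter unfolding of the embryo singularity which incorporates both the original parameter $z$ and the new parameter $\lambda$, and the introduced embryos $\lambda_0(z,j), \lambda_1(z,j)$ must be arranged to merge continuously with the pre-existing embryo of $F|_{\partial D^n}$ as $I_j(z)$ shrinks to a point. The nested structure of $\widetilde{\text{C}}(S^1)$ then allows us to organize the whole construction as a finite induction on the depth of nesting, handling the innermost intervals first inside the smallest disjoint neighborhoods and expanding outward while respecting the previously constructed modifications.
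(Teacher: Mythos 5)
Your overall scaffolding mirrors the paper's quite closely: construct the modification locally in the collar using a one-parameter family of fronts parametrized by the radial coordinate $\lambda$, handle the discriminant $\cE$ with a two-parameter unfolding that lets the introduced embryos merge with the pre-existing ones, and organize the whole construction by induction on nesting depth. The structural outline is the right one. However, the key local ingredient you use is genuinely different from the paper's, and the difference is not cosmetic.

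You propose to use the family $\phi_\tau$ that \emph{directly shrinks the zig-zag to a point} (Reidemeister I in reverse) and then recreates it as $\lambda$ moves past $\lambda_1$. The paper instead uses a family $L_\eta$ with a different character: it \emph{creates a new, very small zig-zag just outside $I_j$ and lets it slowly grow to take over}, so that the old zig-zag dies but a new one (entirely contained in the interior of the parameter space) appears to replace it. The crucial feature of $L_\eta$, stated explicitly in the paper, is that it can be taken $C^1$-close to the constant family $L_0$, so that the induced Legendrian isotopy is $C^0$-small. Your $\phi_\tau$ cannot have this property when the zig-zag is macroscopic: shrinking a zig-zag of definite size to a point moves the corresponding arc of the Legendrian in $\bR^3$ by a definite amount. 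This is a real gap, for two reasons. First, the homotopy must be a path of \emph{embedded} Legendrian knots, and transplanting a non-$C^0$-small local modification via the germ-equivalence you invoke provides no control over collisions between the moving arc and other strands of the knot; the paper's $C^0$-smallness kills this issue at the root, whereas you neither impose smallness nor offer a genericity/clearing-out argument to rule out collisions. Second, even setting embeddedness aside, the paper later remarks (end of Section 5.5) that all local models used are $C^0$-small, which is what upgrades Corollary \ref{higher Reidemeister} to a $C^0$-close statement; your local model would break that chain.

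A secondary, smaller point: your description of the profile $\rho$ is internally inconsistent — it cannot be both ``supported in $[\varepsilon,1-\varepsilon]$'' (hence zero near $\lambda=0$, giving an embryo $\phi_0$ at the boundary) and ``equal to $-1$ near the endpoints'' (which is what you actually need so that $F_1$ agrees with $F_0$ near $\partial D^n$ and near $D^n\setminus A$). The intended $\rho$ should be $\equiv -1$ outside a compact subinterval and go positive in the middle, crossing $0$ transversally twice. This is fixable, but as written your $F_1$ would fail the first bullet of the lemma. The extra arc near $\lambda=1$ that you flag is not itself a problem — the lemma only requires the singularity locus to \emph{contain} the prescribed arcs, and the paper's construction likewise produces extra singularities in the interior (indeed a whole closed curve of folds coming from the new zig-zag).
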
 

\begin{figure}[h]
\includegraphics[scale=0.6]{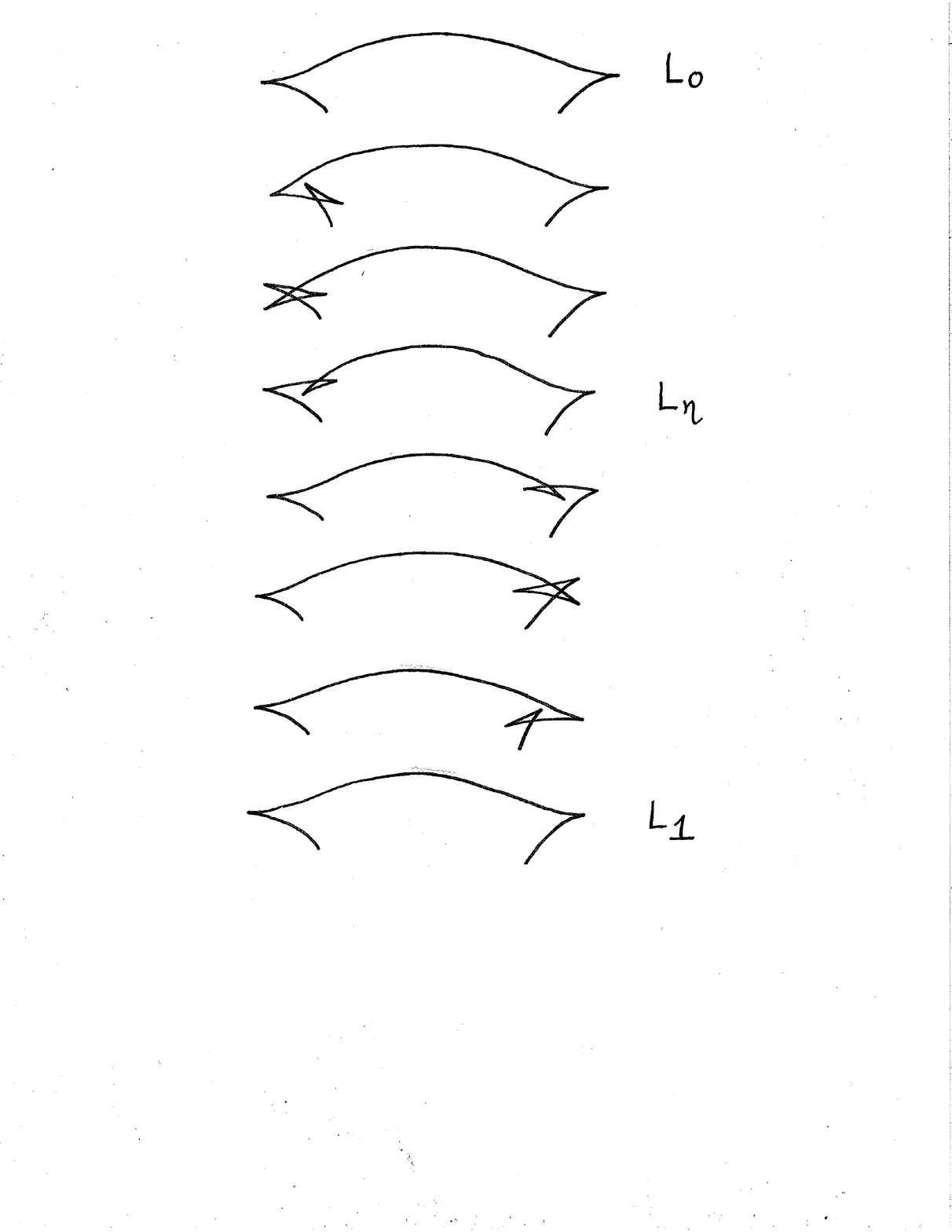}
\caption{The family $L_\eta$. The parameter $\eta$ runs from $0$ to $1$.}
\label{firstfamilyL}
\end{figure}

\begin{proof} To construct the homotopy $F_t$ we will use the $1-$parameter family of Legendrian fronts $L_{\eta}$ exhibited in Figure \ref{firstfamilyL}. Suppose that $I_j$ is a non-degenerate interval appearing in the decoration $D=\text{dec}\big(\widetilde{F}(z) \big)$ for some $z \in \partial D^n$. Assume moreover that $I_j$ is isolated, meaning that there are no other intervals $I_k$ contained inside $I_j$ or containing $I_j$. In a neighborhood of $I_j \subset S^1$ the front of the knot $F(z)$ is equivalent to either the local model $L_{0}$ or to a flip of $L_0$ in the vertical direction, depending on the Maslov co-orientations. By replacing $L_\eta$ by the vertical flip of $L_\eta$ whenever this is needed, we may assume without loss of generality that the former case holds.

Note that the family of fronts $L_\eta$ can be made to be $C^1$-close to the constant family $L_0$, so that the resulting Legendrian isotopy is $C^0$-small. We can therefore think of the $1-$parameter family $L_{ \eta}$ as a Legendrian isotopy of $F(z)$ supported on $Op(I_j)$.

 It is conceptually useful to understand the projection of the family $L_{\eta}$ along the Reeb direction. The front $L_{0}$ projects down to a zig-zag. As the parameter $\eta$ increases from $0$ to $1$, a new zig-zag is created just outside of $I_j$. We then make this new zig-zag bigger and bigger, until it takes over and replaces the old zig-zag, which has died by the time that $\eta$ is close to $1$. This process is illustrated in Figure \ref{zigzagsintarget}

\begin{figure}[h]
\includegraphics[scale=0.5]{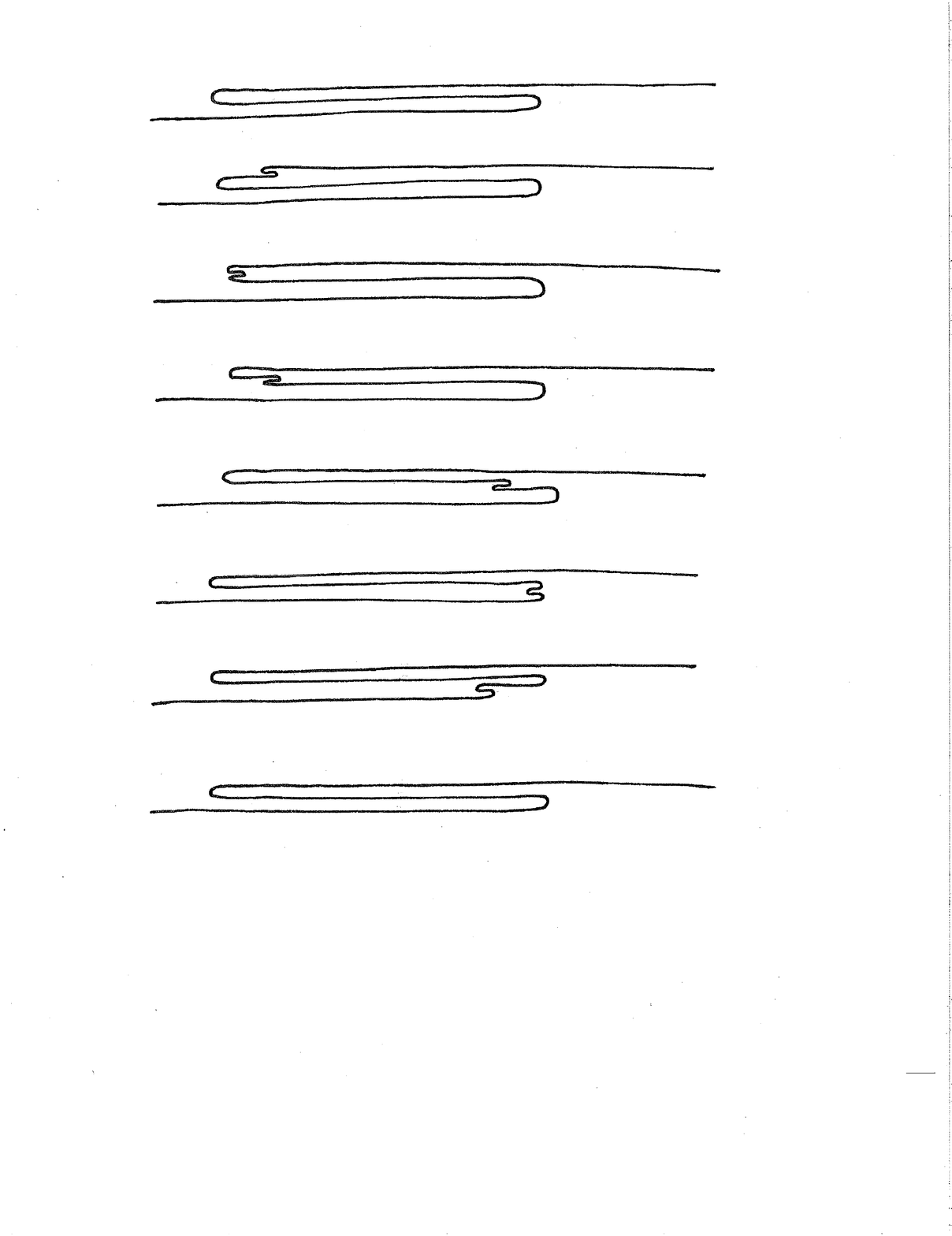}
\caption{The projection of Figure \ref{firstfamilyL} along the Reeb direction.}
\label{zigzagsintarget}
\end{figure}

To define $F_t$ formally, let $\varphi : [0,1] \to [0,1]$ be a function such that the following properties hold.
\begin{itemize}
\item $\varphi=0$ on $Op( \partial [0,1])$.
\item $\varphi=1$ on $Op( \frac{1}{2} )$.
\item $ \varphi$ is non-decreasing on $[0,\frac{1}{2}]$ and non-increasing on $[\frac{1}{2},1]$.
\end{itemize}

We define the homotopy $F_t$ on $[0,1)\times z  \times Op(I_j)$ by the formula $F_t( \lambda , z, s)= L_{t \varphi(\lambda)}(s)$. Suppose next that there are two nested intervals $I_k \subset I_j$ with no other interval either contained or containing $I_k$ or $I_j$. Then we define the homotopy $F_t$ just like we did before, but using a nested version of the family $L_{\eta}$ which we exhibit in Figure \ref{nestedfamilyL}. For more complicated configurations of intervals $I_j$ we repeat this strategy but using the obvious model which is obtained by nesting the $1-$parameter family $L_\eta$ (or its flip in the vertical direction) according to the nesting of the configuration of intervals. 

\begin{figure}[h]
\includegraphics[scale=0.6]{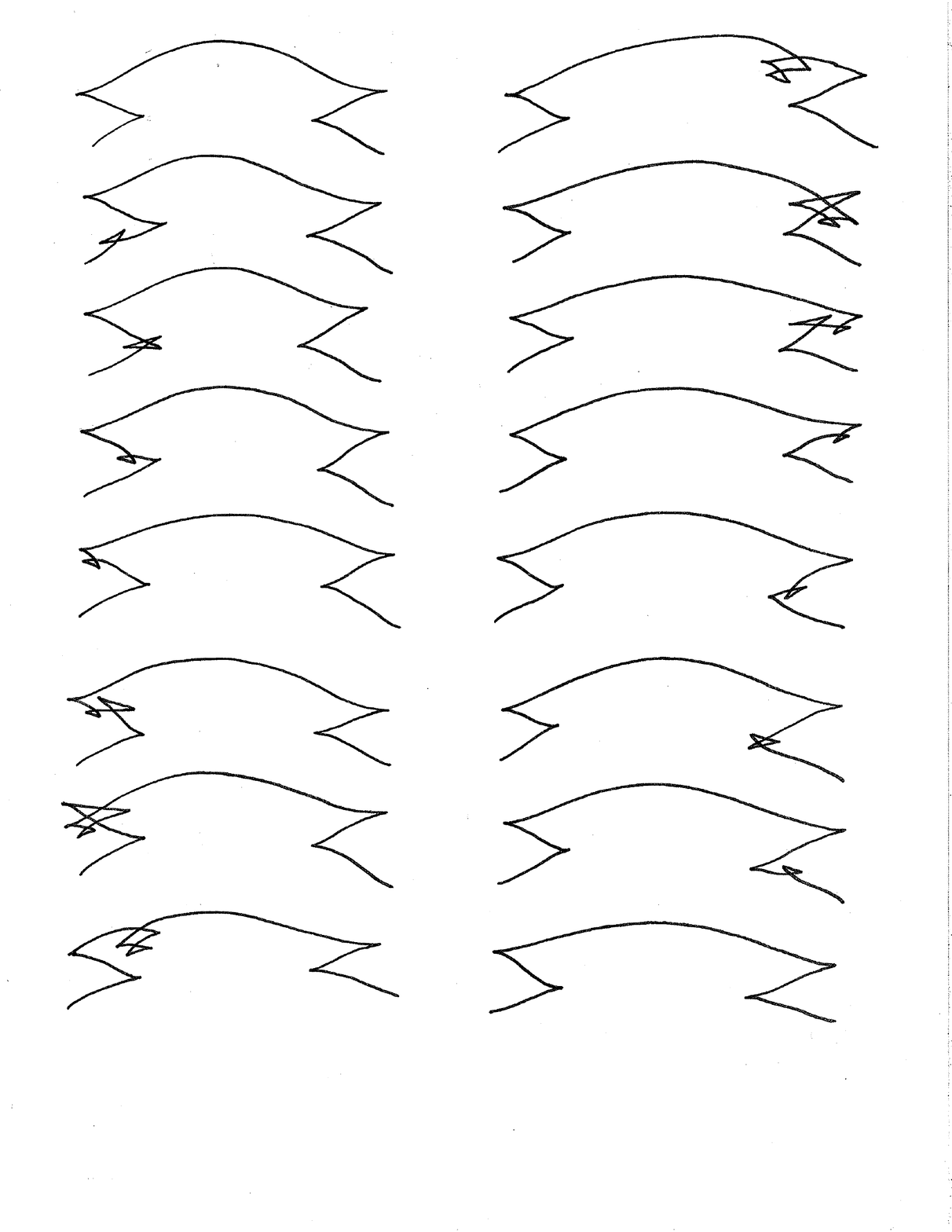}
\caption{A nesting of Figure \ref{firstfamilyL} for two intervals $I_k \subset I_j$.}
\label{nestedfamilyL}
\end{figure}

\begin{figure}[h]
\includegraphics[scale=0.5]{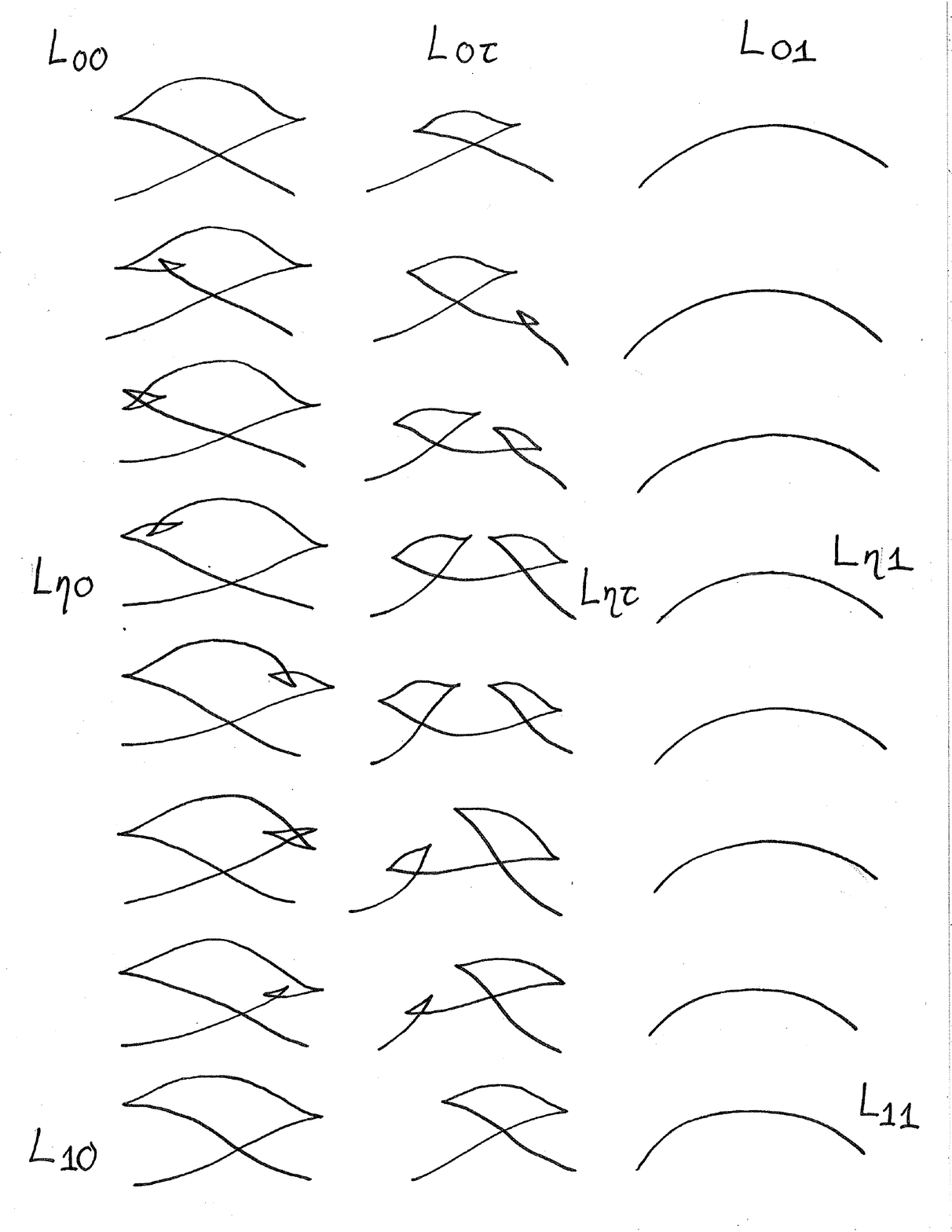}
\caption{The family $L_{\eta, \tau}$. The parameters $\eta$ and $\tau$ both run from $0$ to $1$.}
\label{dyingoutfamilyL}
\end{figure}

The construction described above can be realized parametrically as $z \in S^{n-1}$ varies, as long as no interval $I_j$ degenerates to a point. However, in a neighborhood of the locus $\cE \subset \cW$ of embryos we need a different local model so that the family $L_\eta$ does not degenerate into a higher singularity. The $2$-parametric family $L_{\eta, \tau}$ exhibited in Figure \ref{dyingoutfamilyL} gets the job done. Let us first understand what the locus $\cW$ looks like in a neighborhood of $\cE$. Fix a connected component $\cW_0 \subset \cW$ and set $\cE_0=\cE \cap \cW_0$. Consider the image $\widehat{\cW}_0$ of $\cW_0$ under the projection $S^{n-1} \times S^1 \to S^{n-1}$. Note that $\widehat{\cW}_0 \subset S^{n-1}$ is a smooth codimension $0$ submanifold with boundary, that $\widehat{\cE}_0=\partial \widehat{\cW}_0$  is the image of $\cE_0$, that the map $\cW_0 \to \widehat{\cW}_0$ is a $2$ to $1$ cover away from $\cE_0$ and that along $\cE_0$ the map $\cW_0 \to \widehat{\cW}_0$ has folds. In particular, the restriction $\cE_0 \to \widehat{\cE}_0$ is an embedding, see Figure \ref{singularitylocusinthesource}.

\begin{figure}[h]
\includegraphics[scale=0.6]{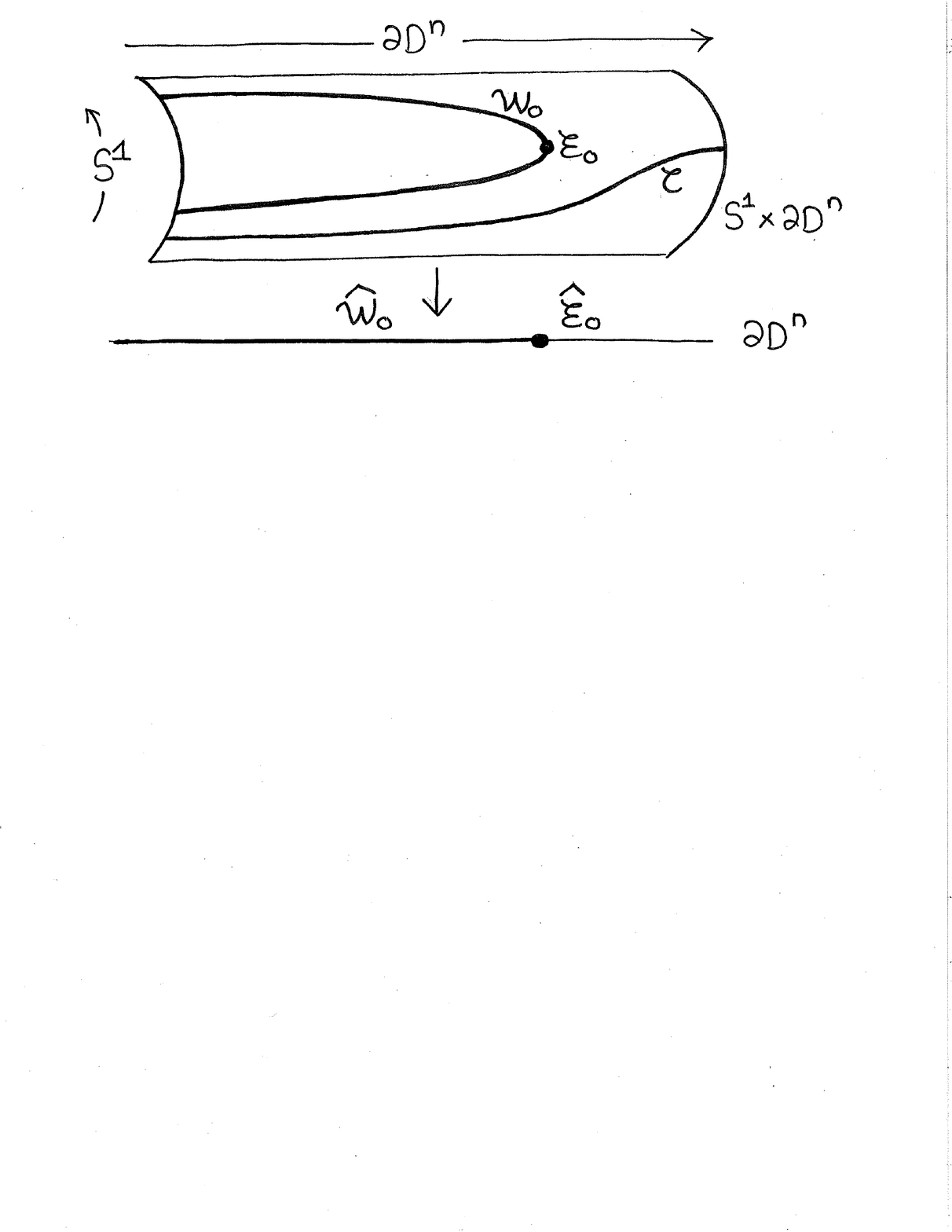}
\caption{The local geometry of the projection $\cW \to \partial D^n$.}
\label{singularitylocusinthesource}
\end{figure}

Let $\widehat{\cE}_0 \times (0,1)$ be a collar neighborhood of $\widehat{\cE}_0=\widehat{\cE}_0 \times \frac{1}{2}$ in $S^{n-1}$ such that $\widehat{\cE}_0 \times (0,\frac{1}{2}] \subset \widehat{\cW}_0$. Given $e \in \cE_0$, let $\widehat{e}$ be its image in $\widehat{\cE}_0$ and let $z_t \in S^{n-1}$, $t \in (0,1)$ correspond to the arc $\widehat{e}  \times (0,1) \subset \widehat{\cE}_0 \times (0,1)$. Then the $1$-parametric family $F(z_t)$ is equivalent in a neighborhood of the embryo point $e$ to the $1$-parametric family $L_{0, \tau}$ exhibited at the top row of Figure \ref{dyingoutfamilyL} (or to its flip in the vertical direction). 

Observe that the family $L_{\eta, \tau}$ can be taken to be $C^1$-close to the family $L_{0, \tau}$ which is constant in $\eta$. We can therefore think of the $2$-parametric family $L_{\eta, \tau}$ as a $C^0$-small Legendrian isotopy of the $1$-parametric family $F(z_t)$ supported in a neighborhood of the embryo point. Notice that $L_{\lambda, 0}=L_\lambda$, so the isotopy is compatible with our previous isotopy. Notice also that $L_{\lambda, 1}$ is constant. We can therefore define the homotopy $F_t$ by the formula $F_t(\lambda, z, s) = L_{t \varphi(\lambda) ,\tau }(s)$, where $z=(\widehat{e},\tau) \in \widehat{\cE}_0 \times (0,1)$. The construction can be realized parametrically in $z$, see Figure \ref{zigzagsinthesource} for an illustration. The construction can also be realized with any configuration of intervals, by nesting the families shown in Figures \ref{nestedfamilyL} and \ref{dyingoutfamilyL} according to the nesting of the intervals. This completes the proof of Lemma \ref{preparatory}.
\end{proof}

\begin{figure}[h]
\includegraphics[scale=0.6]{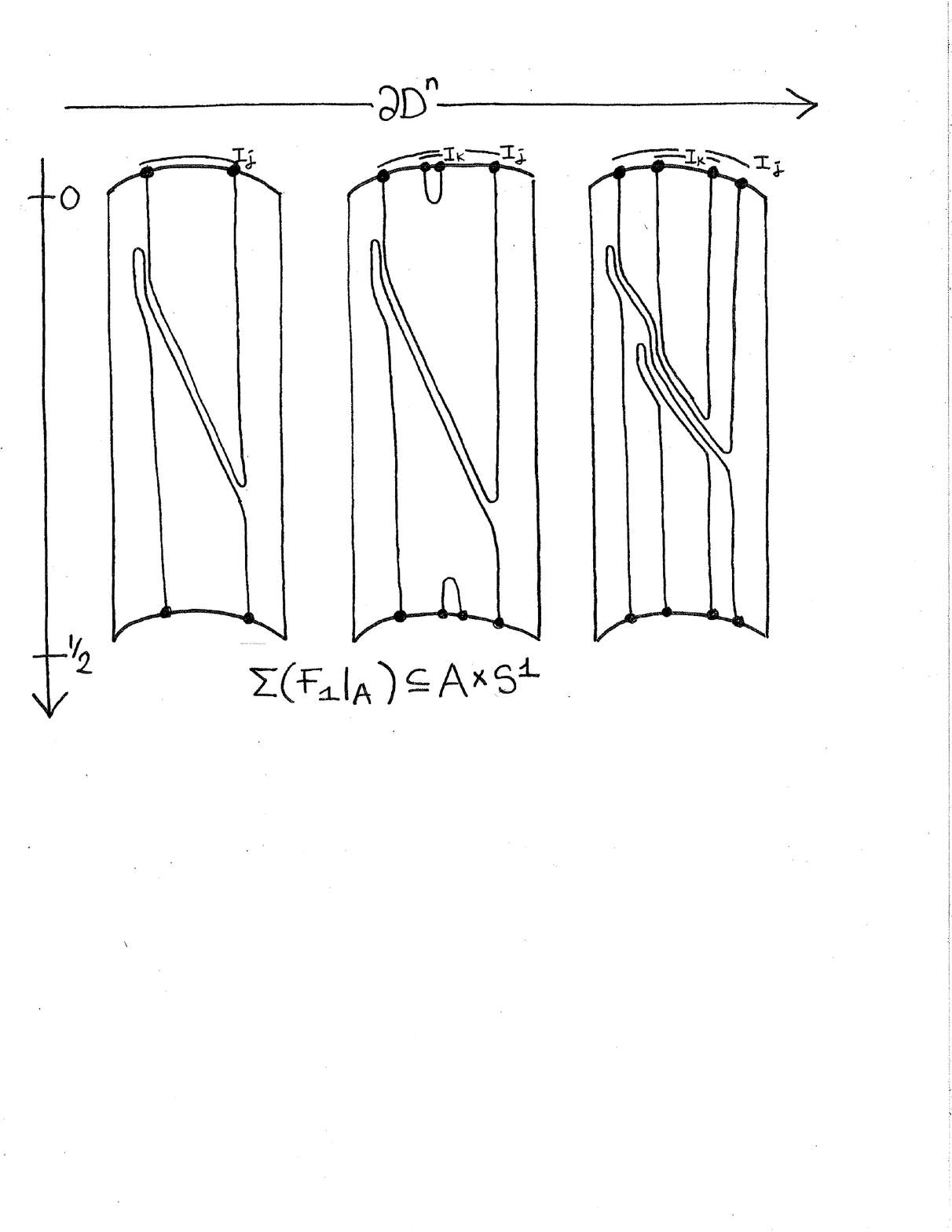}
\caption{The singularity locus of $F_1$ on $\partial D^n \times [0,\frac{1}{2}) \times S^1 \subset A \times S^1$, which is one-half of the full locus $\Sigma(F_1|_A)$.}
\label{zigzagsinthesource}
\end{figure}

The next step is to extend the cusp locus $\cC$ to the interior of the parameter space $D^n$. This is achieved by a second preparatory lemma. For notational convenience, we now forget about our old family and use the letter $F$ to denote the new family $F_1$ produced by Lemma \ref{preparatory}. In particular, all of the properties listed in the conclusion of Lemma \ref{preparatory} are satisfied by $F$.

\begin{lemma}[Preliminary arrangement in the interior]\label{prep2} There exists a homotopy $F_t: D^n \to \cL$ of $F=F_0$ such that the following properties hold.

\begin{itemize}
\item $F_t$ is fixed on $A$.
\item The singularity locus $\Sigma(F_1) \subset D^n \times S^1$ contains a properly embedded submanifold with boundary $\cI$ of codimension $1$ in $D^n \times S^1$ which consists entirely of folds and such that $ \cI \cap( A \times S^1) = \cC \times [0,1)$.
\end{itemize}
\end{lemma}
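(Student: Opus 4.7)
The plan is to extend each fold sphere $\cC_i \subset \cC$ in $\partial D^n \times S^1$ to a codimension-one properly embedded submanifold of $D^n \times S^1$ and to realize the extension as a fold locus of a modified family $F_1$ by performing a parametric Reidemeister I birth (Figure \ref{reidemesitermove}) in the interior of $D^n$.

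First I would address the topological extension. For each connected component $\cC_i \simeq S^{n-1}$ of $\cC$, I would pick an $n$-submanifold $\Gamma_i^{\circ} \subset (D^n \setminus A) \times S^1$ properly embedded with boundary the sphere $\cC_i \times \{1\}$ sitting on the inner edge of the collar (using the parametrization $A = [0,1) \times S^{n-1}$), and then set $\Gamma_i = \Gamma_i^{\circ} \cup (\cC_i \times [0,1))$ and $\cI = \sqcup_i \Gamma_i$. For $n \geq 3$ we have $\pi_{n-1}(D^n \times S^1) = \pi_{n-1}(S^1) = 0$, so each $\Gamma_i^{\circ}$ can be taken to be an embedded $n$-disk. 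A generic perturbation makes the interiors of the $\Gamma_i^{\circ}$ pairwise disjoint and disjoint from the pre-existing singularity locus $\Sigma(F) \cap ((D^n \setminus A) \times S^1)$.

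Next, for each $i$ I would perform a parametric family of Legendrian isotopies modeled on the Reidemeister I birth, supported in a thin tubular neighborhood of $\Gamma_i^{\circ}$ in $(D^n \setminus A) \times S^1$. The local model should introduce on each knot $F(z)$, $z \in D^n \setminus A$, a cancelling pair of cusps whose trace through the parameter space consists of two sheets meeting along an embryo placed precisely on $\cC_i \times \{1\}$ at the inner collar edge. One of these sheets would be identified with $\Gamma_i^{\circ}$, realizing it as a fold locus of $F_1$ that continues $\cC_i \times [0,1)$ smoothly across the inner collar boundary, while the other is a ``phantom'' parallel sheet contributing additional folds to $\Sigma(F_1)$ but lying outside $\cI$. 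Since the modification is supported in $D^n \setminus A$, the homotopy $F_t$ is fixed on $A$, and by construction $\cI \subset \Sigma(F_1)$ with $\cI \cap (A \times S^1) = \cC \times [0,1)$.

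The main technical obstacle will be arranging the parametric Reidemeister I so that its embryo coincides exactly with the sphere $\cC_i \times \{1\}$ in the boundary of $D^n \setminus A$ and so that one branch of the resulting cusp pair is precisely the prescribed $\Gamma_i^{\circ}$. This requires a normal-bundle version of the Reidemeister I model, analogous to the families $L_\eta$ and $L_{\eta,\tau}$ from the proof of Lemma \ref{preparatory}, but flipped so that the embryo sits on the inner collar edge rather than the outer one and twisted along the $(n-1)$-sphere $\cC_i$. The case $n = 2$ with nonzero winding of $t_i: \partial D^2 \to S^1$ is more delicate, since then $\cC_i$ need not bound a surface in $D^2 \times S^1$; in that case $\Gamma_i^{\circ}$ must be taken to be a higher-genus surface, possibly by incorporating compensating auxiliary singularities in the interior so that the total homology class being killed is trivial.
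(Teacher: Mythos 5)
Your topological extension step is morally the paper's, but your mechanism for turning the topological extension into an actual fold locus has a genuine gap, and your treatment of $n=2$ is incorrect.

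The mechanism. A Reidemeister~I birth creates a pair of cusps, with opposite Maslov co-orientations, from a regular point of the front; the number of folds changes by $\pm 2$ across its embryo. What the lemma requires is different: the \emph{single, already-existing} fold sheet $\cC_i \times [0,1)$ must be steered so that its continuation into $D^n\setminus A$ follows the prescribed disk $\Gamma_i^{\circ}$. Placing a Reidemeister~I embryo at $\cC_i\times\{1\}$ produces a parity mismatch at the inner collar edge: on the collar side $F_1=F$ has one fold there, while on the interior side your construction produces the two new folds of the pair \emph{in addition to} the (uncontrolled) continuation of the original fold, and none of the new sheets can be identified with a continuation of $\cC_i\times[0,1)$. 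There is no obvious way to arrange the original fold to be cancelled against the phantom sheet without further, unspecified surgery. The paper sidesteps this entirely: it decouples the Gauss map from the embedding via a tangential rotation $G_t$ with $G_1=\partial/\partial p$ on $\cI$ and $G_t$ fixed on $A$, applies the parametric holonomic approximation lemma to make $G(dF_1)$ $C^0$-close to $\partial/\partial p$ on $Op(\cI)$, and only then implants the local model of Figure~\ref{creationoffolds} to produce honest folds exactly along $\cI$. It is the holonomic approximation step that lets you \emph{steer} the existing fold rather than merely add new ones; that is the missing idea.

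The case $n=2$. You correctly worry that a component $\cC_i$ whose ``clutching function'' $t_i\colon\partial D^2\to S^1$ has nonzero winding does not bound a disk in $D^2\times S^1$. But your fallback is incoherent: a curve with nonzero winding number represents a nonzero class in $H_1(D^2\times S^1;\bZ)\simeq\bZ$, so it bounds no compact surface in $D^2\times S^1$ at all, of any genus, and ``compensating singularities'' cannot change that. The resolution is that nonzero winding simply does not occur: the paper shows, by comparing $\mathrm{PD}[\cC]\in H^1(\partial D^2\times S^1;\bR)$ with the restriction $i^*G(dF)^*u$ of a class on $D^2\times S^1$ (whose image is generated by $x$), that $\mathrm{PD}[\cC]=kx$, i.e.\ the $S^1$-winding vanishes. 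Since the graphs of the $t_i$ are disjoint they all share the same winding, so each individual $\cC_i$ already bounds a disk. This cohomological argument is needed and is absent from your proposal.

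A smaller structural point: the paper phrases the extension not component-by-component but as an extension of the classifying map $\mathrm{conf}\circ\widetilde F\colon\partial D^n\to\mathrm{C}(S^1)$ to $c\colon D^n\to\mathrm{C}(S^1)$, and then sets $\cI=\{(z,t):t\in c(z)\}$. This gives $\cI$ the structure of a $k$-sheeted graph over $D^n$, which is what makes it a polyhedron amenable to holonomic approximation relative to $A$; an arbitrary properly embedded codimension-one submanifold with the right boundary need not have this structure.
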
 

\begin{remark}
Since $F_t$ is fixed on $A$, $\Sigma(F_1)$ also contains the properly embedded codimension $1$ submanifold with boundary $\cK$ formed by the arcs $a_j$ which kill $\cW$. In addition to $\cI$ and $\cK$, the singularity locus $\Sigma(F_1)$ may have other components, but we will not care about them because they are all homotopically trivial and contained in $\text{int}(D^n) \times S^1$.
\end{remark}

\begin{proof}
We assume that $\cC \neq \varnothing$, otherwise the Lemma is trivial. Recall that the space of decorations $\widetilde{\text{C} }(S^1)$ is fibered over the (unordered) configuration space of points on the circle $\text{C}(S^1)= \bigsqcup_k \text{C}_k(S^1)$. The map is $\big( \{ t_j\} , \{ I_i\} \big) \mapsto \{t_j \}$ and its fibers are contractible. Denote by $\text{conf}:\cD \to \text{C}(S^1)$ the composition of $\text{dec} : \cD \to \widetilde{\text{C}}(S^1)$ with the fibration $\widetilde{\text{C}}(S^1) \to \text{C}(S^1)$. We claim that the map $\text{conf} \circ \widetilde{F} : \partial D^n \to \text{C}(S^1)$ extends to a map $c: D^n \to \text{C}(S^1)$.

First observe that each component $\text{C}_k(S^1)$ of $\text{C}(S^1)$ is homotopy equivalent to $S^1$. Hence for $n>2$ there is nothing to prove because $\pi_{n-1}(S^1)=0$. If $n=2$, then we need to justify the claim. Write $H^*(\partial D^2 \times S^1 ; \bR) =\bR[x,y]/(x^2,y^2)$, where $x$ is Poincar\'e dual to  $\partial D^2 \times  pt $ and $y$ is Poincar\'e dual to $ pt \times S^1$. Consider the Gauss map $G(dF):  D^2 \times S^1 \to S^1$, $(z,s) \mapsto G\big(dF(z)\big)(s)$, where $\Lambda_1(\bR^3)=\bR^3 \times S^1$ and we project away the $\bR^3$ factor. Explicitly, an angle $\theta$ corresponds to the line field spanned by $\cos( \theta) \partial / \partial p + \sin(\theta) ( \partial / \partial z + p \partial / \partial q) $. Observe that $ \big(\partial D^2 \times S^1  \big) \cap G(dF)^{-1}\big( \text{span}( \partial / \partial p) \big) = \cC \cup \cW$. Observe also that the fundamental class of $\cC$ is Poincar\'e dual to $k x + ly$ for some $l \in \bZ$, where we recall that $k$ is the number of points $t_1, \ldots , t_k$ in the decorations $\text{dec} \circ F(z)$.  If we write $i : \partial D^2 \times S^1 \hookrightarrow D^2 \times S^1$ for the inclusion and denote by $u \in H^1(S^1; \bR)$ the class which is Poincar\'e dual (PD)  to a point, then we have
\[ kx + ly = \text{PD} [ \cC ] = \text{PD}[ \cC \cup \cW]  = \text{PD}[ \big( G(dF) \circ i \big)^{-1} \big( \text{span}( \partial / \partial p )\big)] = ( G(dF) \circ i )^*u = i^* \big( G(dF)^*u. \]
However, $i^* : H^*(D^2 \times S^1 ; \bR) \to H^*( \partial D^2 \times S^1 ; \bR)$ has image generated by $x$. It follows that $l=0$ and hence that $\cC$ is an embedded curve in $\partial D^2 \times S^1$ which is homologous to $k[ \partial D^2 \times  pt ]$. Note then that $\cC$ has necessarily $k$ components, each of which is homologous to $[ \partial D^2 \times  pt ]$. It is now a triviality to check that $\text{conf} \circ \widetilde{F}: \partial D^2 \to \text{C}(S^1)$ extends to a map $c:D^2 \to \text{C}(S^1)$, as claimed.

Choose then such an extension $c$ and assume without loss of generality that $c$ is radially constant in the annulus $A \subset D^n$. Choose also a tangential rotation $G_t: D^n \times S^1 \to \Lambda_1(\bR^3)$ of the family $F$ such that the following properties hold.
\begin{itemize}
\item $G_t$ is fixed on $A$.
\item $G_1 = \partial / \partial p $ on the subset $\cI= \{ (z,t) : \, \, t \in c(z) \} \subset D^n \times S^1$.
\end{itemize}

Using the parametric version of theorem Theorem \ref{1-holonomic} (which in the $1$-dimensional case is the same as Theorem \ref{parametric perp-holonomic} since all rotations are simple) we obtain a homotopy $F_t$ of the family $F$ which is fixed on $A$ and such that $G(dF_t)$ is $C^0$-close to $G_t$ on $Op(\cI)$. The family $F_1$ does not quite have folds along $\cI$, but $G(dF_1)$ is almost parallel to $\partial / \partial p$ on $Op(\cI)$ and $F_1|_A$ does have folds along $\cI \cap A$. By implanting the local model for the creation of a pair of folds exhibited in Figure \ref{creationoffolds} into $F_1$ we can arrange it so that the new family does have folds precisely along $\cI$. Moreover, we can arrange it so that the new family agrees with the old family inside $A$. Away from $Op(A \cup \cI)$ the singularities of $F$ might be a mess but we don't care. The proof of Lemma \ref{prep2} is complete.\end{proof}

\begin{figure}[h]
\includegraphics[scale=0.6]{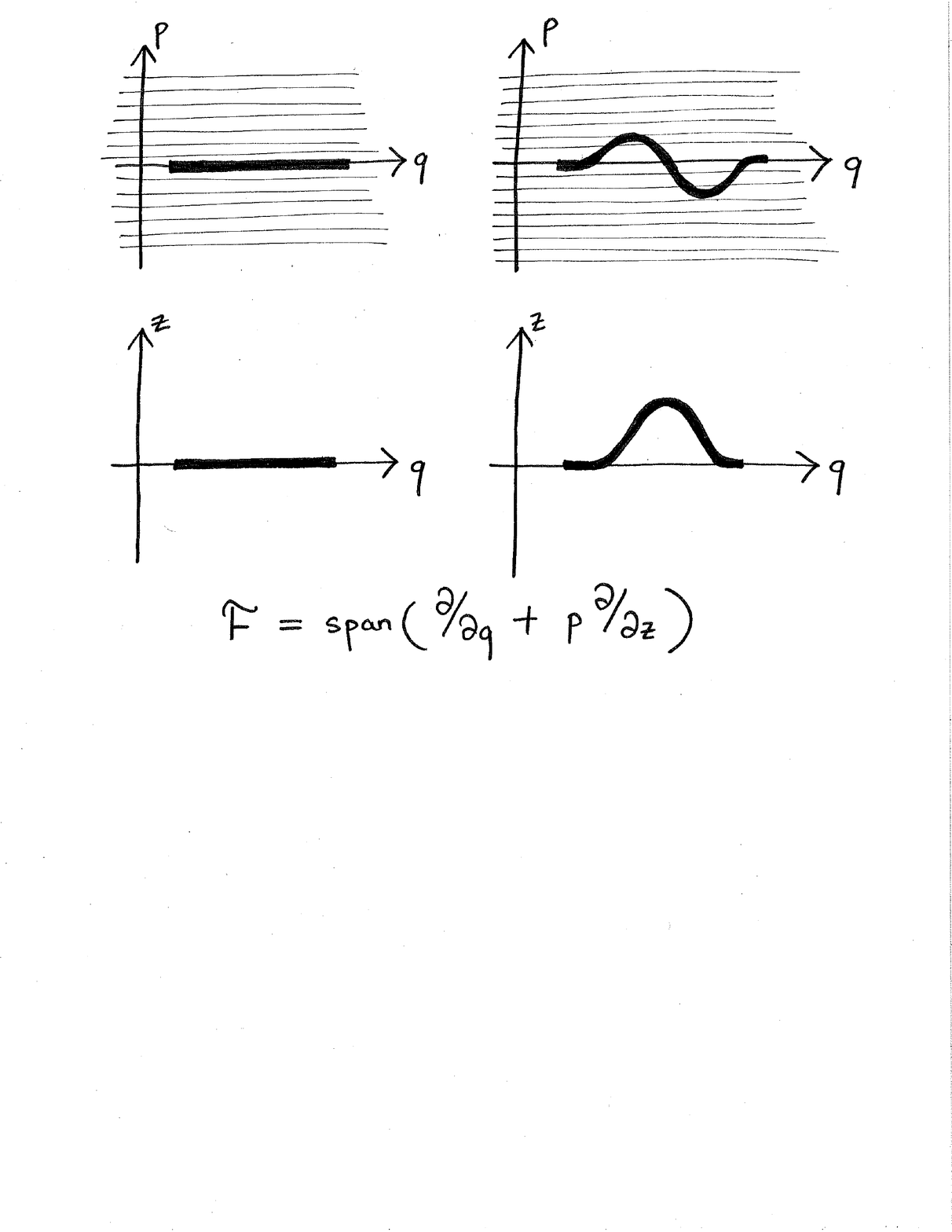}
\caption{The local model for a creation of double folds when the Legendrian is almost tangent to the foliation. For $\cF=\text{span}( \partial/\partial q + p \partial / \partial z)$ the model obviously creates a double fold, hence by stability it also does so for nearby $\cF$.}
\label{creationoffolds}
\end{figure}

We can now conclude the proof that $\pi_{n}(\cL, \cD)=0$ for $n>2$. Given $\alpha \in \pi_{n-1}(\cL, \cD)$ represented by a family $F$, we can apply Lemmas \ref{preparatory} and \ref{prep2} and replace $F$ with the family obtained after performing the two preliminary arrangements, in that order. For the new $F$, we claim the existence of a family of tangential rotations $G_t: D^n \times S^1 \to S^1$ of the family $F$ such that the following properties hold.
\begin{itemize}
\item $G_t$ is fixed on $Op\big( (\partial D^n \times S^1) \cup \cK \cup \cI ) \big).$
\item $G_1 \pitchfork \cF$ away from $\cK \cup \cI$.
\end{itemize}

 To verify the claim, we begin by considering the restriction of the Gauss map $G(dF):D^n \times S^1 \to S^1$ to the annulus $A$. Note that by construction the lift $\widetilde{F} : \partial D^n \times S^1 \to \cD$ extends to a lift $\widetilde{F}: A \times S^1 \to \cD$, where we assign intervals to the new pair of folds created by the family $L_\eta$. The intervals $I_1, \ldots , I_m$ of the decoration $\text{dec} \circ \widetilde{F} : A \to \widetilde{\text{C}}(S^1)$ which do not correspond to pairs of folds in $\cK$ give us a homotopically canonical deformation $G_t:A \times S^1 \to S^1$ of $G(dF)|_A$ such that $G_t$ is fixed on $Op\big( (\partial D^n \times S^1) \cup \cK \cup ( \cI \cap A) \big)$ and such that $G_1 \pitchfork \cF$ away from $\cK \cup ( \cI \cap A)$. Together with the requirement that $G_t$ is fixed near $\cI$, this defines the map $(z,s,t) \mapsto G_t(z,s)$ on $(A \times S^1 \times [0,1] ) \cup (D^n \times S^1 \times 0)\cup (Op(\cI) \times [0,1])$. Each connected component of the complement of $Op(\cI) $ in $ (D^n \setminus A)  \times S^1 $  is diffeomorphic to $(D^n \setminus A) \times J $, where $J$ is a closed interval and the diffeomorphism is of the form $(z,s) \mapsto \big(z, \psi(z,s) \big)$. Consider the cube
 \[ Q =  \partial( D^n \setminus A) \times J \times [0,1]  \, \, \cup \, \, ( D^n \setminus A) \times J \times 0 \,\, \cup  \, \, (D^n \setminus A) \times \partial J \times [0,1] \]
 which we think of as a subset of $(D^n \setminus A) \times S^1 \times [0,1]$ via the above diffeomorphism. See Figure \ref{cubeA}. Note that $Q$ has boundary 
 \[ \partial Q = \partial ( D^n \setminus A) \times J \times 1    \, \, \cup \, \,(D^n \setminus A) \times \partial J  \times 1.  \]
 
 \begin{figure}[h]
\includegraphics[scale=0.6]{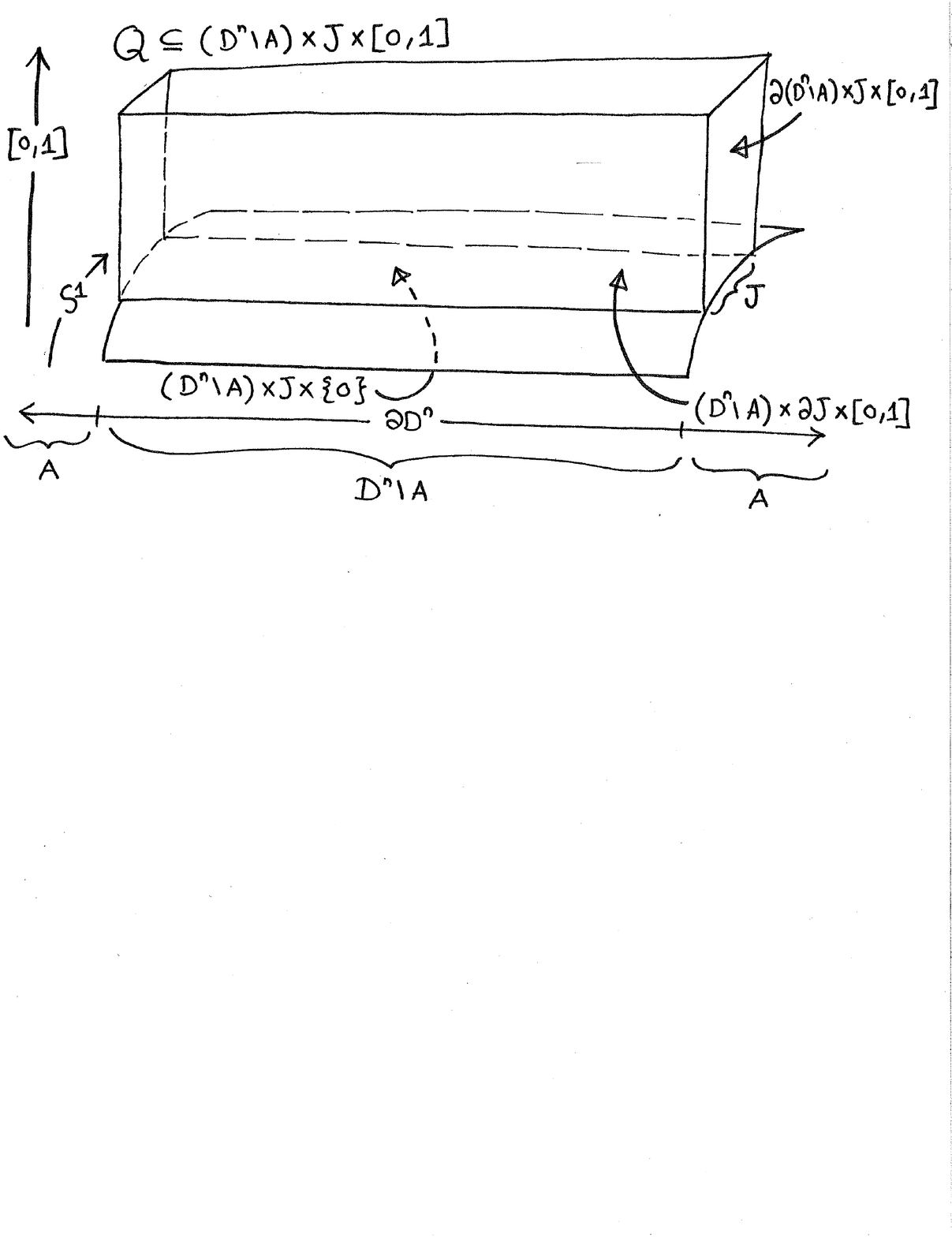}
\caption{The cube $Q$ sits like an open box inside $(D^n \setminus A) \times J \times [0,1]$.}
\label{cubeA}
\end{figure}
 
 The homotopy $G_t$ defined thus far gives a map of pairs $(Q, \partial Q) \to (S^1 , S^1 \setminus pt)$, where $pt= \text{span}( \partial / \partial p)$. Since $\pi_j(S^1, S^1 \setminus pt)=0$ for $j>1$, there exists a homotopy of pairs relative to the boundary so that at the end of the homotopy the image is disjoint from $\text{span}( \partial / \partial p)$. This is precisely what we needed to define $G_t$ on the remaining part of $D^n \times S^1 \times [0,1]$ so that the required conditions are satisfied.
 
 Now that we have established the existence of such a tangential rotation $G_t$, we can invoke Theorem \ref{parametric main result again} to construct a homotopy $F_t : D^n \to \cL$ of $F$ which is fixed on $Op \big( ( \partial D^n \times S^1) \cup \cK \cup \cI) \big)$ and such that away from $\cK \cup \cI$ the singularities of the family $F_1$ consist of a finite union of fibered nested regularized wrinkles. It only remains to show that $ \widetilde{F} : \partial D^n \to \cD$ extends to a lift of $F_1$ to $\cD$. However, this is clear because to the folds of $\cI$ and to the pairs of folds of $\cK$ we can assign points and intervals in the obvious way, while away from $\cK \cup \cI$ the singularities of $F_1$ consist only of the pairs of points in the fibered regularized wrinkles, to which intervals can be canonically assigned. This completes the proof that $\pi_{n}(\cL, \cD) = 0$ for $n>1$. \end{proof}

\begin{proof}[Proof that $\pi_1(\cD) \to \pi_1(\cL)$ is surjective.] Let $\alpha \in \pi_1(\cL)$ be any class. We can represent $\alpha$ by a map $F: [0,1] \to \cL$ such that $F(0)=F(1)=f_0$. Choose any decoration $D_0$ which is compatible with $f_0$. We must show that there exists a homotopy $F_t: [0,1] \to \cL$ of $F=F_0$ such that $F_t(0)=F_t(1)=f_0$ for all $t \in [0,1]$ and such that $F_1: [0,1] \to \cL$ lifts to a map $\widetilde{F}_1:[0,1] \to \cD$ with $\widetilde{F}_1(0)=\widetilde{F}_1(1)=(f_0,D_0)$.

Write $D_0=( \{t_i\} , \{I_j\})$ for points $t_1, \ldots , t_k \in S^1$ and non-degenerate intervals $I_1, \ldots , I_m \subset S^1$. Let $K=\{t_1, \ldots , t_k\}  \cup \partial I_1 \cup \cdots \cup \partial I_m \subset S^1$. Observe that the Gauss map $G(dF):[0,1] \times S^1 \to \Lambda_1(\bR^3)$ of the family $F$ satisfies $G(dF)=\text{span}( \partial / \partial p)$ on $ \partial [0,1] \times K$. Let $G_t:[0,1] \times S^1 \to \Lambda_1(\bR^3)$ be a tangential rotation of the family $F$ such that $G_t$ is fixed on $Op( \partial [0,1] \times S^1)$ and such that $G_1=\text{span}(\partial/\partial p)$ on $[0,1] \times K$. Using Theorem \ref{parametric perp-holonomic} as above, we can construct a homotopy $F_t:[0,1] \to \cL$ which is fixed near $\partial [0,1]$ and such that $G(dF_1)$ is $C^0$-close to $\text{span}(\partial / \partial p)$ on $[0,1] \times K$. 

By the insertion of the local model in Figure \ref{creationoffolds} we can assume that $F_1$ actually has folds along $[0,1] \times K$. Theorem \ref{parametric main result again} can then be used to further homotope $F_1$ rel $Op\big( ( \partial [0,1] \times S^1) \cup [0,1] \times K) \big)$ so that on the complement of $[0,1] \times K$ the only singularities are fibered nested regularized wrinkles.  This new $F_1:[0,1] \to \cL$ admits a canonical lift $\widetilde{F}_1:[0,1] \to \cD$ by assigning intervals to the pairs of points in the fibered regularized wrinkles. This completes the proof that $\pi_1(M) \to \pi_1(\cL)$ is surjective. Hence Corollary \ref{higher Reidemeister} is also proved.  \end{proof}

We conclude this section with a remark. Proving that $\pi_n(\cL, \cD)=0$ for $n>1$ amounts to solving the following lifting problem. Given a diagram of the form \\

\begin{center}
$ \begin{CD}
\cD @>>>  \cL \\
@AA A @AAA \\
S^{n-1} @> >> D^n
\end{CD} $
\end{center}
\[ \]
we must show that there exists a map $D^n \to \cD$ such that when added to the above diagram all compositions commute up to a homotopy fixed on $S^{n-1}$. The proof of Corollary \ref{higher Reidemeister} achieves this, but in fact proves slightly more. Because all of the theorems invoked hold in $C^0$-close form and because all of the local models used are $C^0$-small perturbations, it follows that the composition of the lift $D^n \to \cD$ with the map $\cD \to \cL$ can be taken to be $C^0$-close to the original map $D^n \to \cL$. The analogous $C^0$-approximation result holds for the corresponding lifting property for proving that $\pi_1(\cD) \to \pi_1(\cL)$ is surjective. 

\subsection{Final remarks}

We conclude our discussion with a couple of remarks.

\begin{remark}
All of the results proved in this paper also hold for immersed rather than embedded Lagrangians or Legendrians $f:L\to M$. The reason is that from the onset one can replace $M$ with $T^*L$ or $J^1(L,\bR)$ by choosing a Weinstein neighborhood of the immersion, thereby reducing to the embedded case. The only difference in the conclusion is that the resulting exact homotopy of regular Lagrangian or Legendrian immersions will not be induced by an ambient Hamiltonian isotopy in the original manifold $M$.
\end{remark}

\begin{remark} It is worth giving the following warning. If the singularities of a regular Lagrangian or Legendrian embedding $g:L \to M$ with respect to $\cF$ only consist of a disjoint union of regularized wrinkles (or double folds), then the singularity locus is quite simple in the source. However, in the target the image of the singularity locus is likely to be very complicated. Moreover, this complication is likely to encode most, if not all, of the rigidity of the Lagrangian or Legendrian submanifold.
\end{remark}

\begin{remark}
From Theorems \ref{wrinkling lagrangians} and \ref{parametric wrinkling lagrangians} we can also deduce a full $h$-principle for directed embeddings of wrinkled Lagrangian or Legendrian embeddings analogous to the one deduced by Eliashberg and Mishachev from their wrinkled embeddings theorem \cite{EM09}. Before we can state it, we need a definition.
\begin{definition}
For any Lagrangian or Legendrian embedding $f:L \to M$ and for any subset $A \subset \Lambda_n(M)$, we say that $f$ is $A$-directed if $\text{im}\big(G(df) \big) \subset A$. 
\end{definition}
The result is then the following.
\begin{theorem} Let $f:L \to M$ be a Lagrangian or Legendrian embedding, let $A \subset \Lambda_n(M)$ be any open subset and assume that there exists a tangential rotation $G_t$ of $f$ such that $\text{im} \big( G_1 \big ) \subset A$. Then there exists an exact homotopy of wrinkled Lagrangian or Legendrian embeddings $f_t:L \to M$ such that $f_1$ is $A$-directed. 
\end{theorem}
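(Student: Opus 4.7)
The plan is to obtain this result as a direct consequence of the global wrinkled $C^0$-approximation theorem. Since $A$ is open and the assumption is that the formal rotation $G_1$ already lands in $A$, any exact homotopy $f_t$ of wrinkled Lagrangian or Legendrian embeddings whose Gauss maps $C^0$-approximate $G_t$ closely enough will automatically have $G(df_1)$ landing in $A$ as well.

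More explicitly, the first step is to apply Theorem \ref{wrinkling lagrangians} to the tangential rotation $G_t$. This produces a compactly supported exact homotopy of wrinkled Lagrangian or Legendrian embeddings $f_t:L \to M$ with $f_0=f$ and with $G(df_t)$ as $C^0$-close to $G_t$ as we want. Because $G_t$ is compactly supported by the very definition of a tangential rotation, there is a compact set $K \subset L$ outside of which $G_t = G(df)$ for every $t$, and we may assume in addition that $f_t = f$ on $L \setminus K$. On that complement we have $G(df_1) = G(df) = G_1$, whose image already lies in $A$ by hypothesis, so the $A$-directedness condition is automatic outside $K$.

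The only thing to check is the behavior on $K$. Since $G_1(K)$ is a compact subset of the open set $A \subset \Lambda_n(M)$, there exists $\varepsilon > 0$ (with respect to any auxiliary background metric on $\Lambda_n(M)$) such that the $\varepsilon$-neighborhood of $G_1(K)$ is contained in $A$. Demanding that the approximation provided by Theorem \ref{wrinkling lagrangians} be $\varepsilon$-tight then forces $G(df_1)(K) \subset A$, which together with the previous paragraph shows that $f_1$ is $A$-directed on all of $L$.

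There is no genuine obstacle; the parametric statement is proved by an identical argument using Theorem \ref{parametric wrinkling lagrangians} instead, and taking $\varepsilon$ uniformly over the compact parameter space. All the technical work is absorbed into the wrinkling theorem itself, and the hypothesis that $A$ be open is doing nothing more than making a sufficiently fine $C^0$-approximation preserve directedness.
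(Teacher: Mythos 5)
Your proof is correct and takes essentially the same route as the paper, which simply notes that the result follows immediately from Theorem \ref{wrinkling lagrangians} (and its parametric version) because $A$ is open. You have merely spelled out the standard compactness/$\varepsilon$-neighborhood argument that the paper leaves implicit.
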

This theorem holds in $C^0$-close, relative and parametric forms and follows immediately from Theorems \ref{wrinkling lagrangians} and \ref{parametric wrinkling lagrangians}  since $A$ is assumed to be open. 
\end{remark}

\end{document}